\newcommand{\C}{\mathbb{C}}
\newcommand{\Z}{\mathbb{Z}}
\newcommand{\R}{\mathbb{R}}
\DeclareMathOperator{\diag}{diag}
\makeatletter\@addtoreset{equation}{section}\makeatother
\makeatletter\@addtoreset{figure}{section}\makeatother
\makeatletter\@addtoreset{table}{section}\makeatother
\makeatletter\@addtoreset{equation}{section}\makeatother
\makeatletter\@addtoreset{figure}{section}\makeatother
\makeatletter\@addtoreset{table}{section}\makeatother
\newtheorem{theorem}{Theorem}[section]
\newtheorem{proposition}[theorem]{Proposition}
\newtheorem{lemma}[theorem]{Lemma}
\newtheorem{cor}[theorem]{Corollary}
\newtheorem{questionL}{Question}
\newtheorem*{main theorem}{Main Theorem}
\newtheoremstyle{remark}
{9pt}{9pt}{}{}{\bfseries}{}{.5em}{}
\theoremstyle{remark}
\newtheorem{remark}[theorem]{Remark}
\newtheorem{example}[theorem]{Example}
\newtheoremstyle{definition}
{9pt}{9pt}{}{}{\bfseries}{}{.5em}{}
\theoremstyle{definition}
\newtheorem{definition}[theorem]{Definition}
\numberwithin{equation}{section}
\numberwithin{theorem}{section}
\numberwithin{figure}{section}
\newcommand{\purge}[1]{}
\newcommand{\bb}{\emph{b}}
\newcommand{\ha}{\hat{\alpha}}
\newcommand{\hb}{\hat{\beta}}
\begin{document}

\title{On a symplectic generalization of a Hirzebruch problem}

\author{Leonor Godinho, Nicholas Lindsay and Silvia Sabatini}

\maketitle

\begin{abstract}
Motivated by a problem of Hirzebruch, we study $8$-dimensional, closed, symplectic 
manifolds having a Hamiltonian torus action with isolated fixed points and second
Betti number equal to $1$. Such manifolds are automatically positive monotone. Our
main result concerns those endowed with a Hamiltonian $T^2$-action and fourth Betti
number equal to $2$. We classify their isotropy data, (equivariant) cohomology rings
and (equivariant) Chern classes, and prove that they agree with those of certain
explicit Fano $4$-folds with torus actions.

Moreover, under more general assumptions, we prove several finiteness results
concerning Betti and Chern numbers of $8$-dimensional, positive monotone symplectic
manifolds with a Hamiltonian torus action.
\end{abstract}

\tableofcontents

\section{Introduction}
 
A complex manifold $X$ of complex dimension $n$ is called a compactification of $\mathbb{C}^n$, if there exists a complex analytic subset $V \subset X$ such that $X \setminus V$ is biholomorphically equivalent to $\mathbb{C}^n$. A celebrated problem of Hirzebruch aims at classifying compactifications of $\mathbb{C}^n$ with second Betti number $b_2$
equal to one \cite[Problem 27]{Hi}.

A projective compactification $X$ with $b_{2}(X)=1$ is automatically a Fano variety (see \cite[page 2]{PZ}). 
In this case, there has been steady progress on Hirzebruch's problem since its formulation. In particular, combined work of several authors gives a complete solution when $n \leq  3$ \cite{PS,Fu1, P2, Fu2}. Moreover, when $n = 4$, and the Fano index\footnote{The Fano index of a Fano variety $X$ is  the largest integer $r\geq1$ such that $H^{\otimes r}$ is isomorphic to $K^{-1}_X$ for some divisor $H \in \text{Pic} (X)$, where $K^{-1}_X$ is the anti-canonical line bundle of $X$.} is equal to $3$, the problem was solved by Prokhorov \cite{P}. In this case, $X$ is unique up to isomorphism, and there are only four possibilities for $V \subset X$.

A rich subclass of projective compactifications of $\mathbb{C}^n$ is the class of smooth projective varieties admitting a $\mathbb{C}^*$-action with isolated fixed points. The fact that such varieties are compactifications of $\mathbb{C}^n$ follows from the decomposition theorem of Bialynicki-Birula \cite[Theorem 4.4]{BB}. All of the known projective compactifications of $\mathbb{C}^n$ with second Betti number equal to one and $n \leq 4$ admit a $\mathbb{C}^*$-action with isolated fixed points.  Moreover, the restricted action of the compact torus of an algebraic torus action on a smooth Fano variety is Hamiltonian with respect to a certain invariant K\"{a}hler form (see Lemma \ref{invariant kahler} for a proof).

Motivated by these facts, we study Hamiltonian torus actions on closed, positive monotone symplectic manifolds $(M,\omega)$. 
Note that a symplectic manifold $(M,\omega)$ can be regarded  as an almost
complex manifold $(M,J)$, where $J$ is an almost complex structure compatible with $\omega$. Since the space of such structures is
contractible, complex invariants of the tangent bundle of $(M,\omega)$, such as Chern classes and Chern numbers, are well-defined.  Let $c_1$ be the first Chern class of the tangent bundle of a symplectic manifold $(M,\omega)$. We say that $(M,\omega)$ is
  \textbf{positive monotone} if there exists a positive constant $\lambda>0$ such that $c_{1}= \lambda [\omega]$.
Note that a smooth Fano variety is automatically a positive monotone symplectic manifold
and so  this condition is the symplectic analogue of the Fano condition in algebraic geometry.

In the past few years, there has been intensive work trying to determine
whether these two categories are indeed distinct. In \cite{FP10}, Fine and Panov gave an example of a 12-dimensional,   non simply connected, positive monotone symplectic manifold
$(\widetilde{M},\widetilde{\omega})$ 
which, therefore, cannot be homotopy equivalent to a Fano variety, as these are always simply connected. In a subsequent paper \cite{FP15}, they conjecture that the existence of a 
Hamiltonian circle action on a 6-dimensional, closed, positive monotone symplectic manifold implies the existence of a diffeomorphism with a Fano 3-fold. There has already been
intense work towards proving that this conjecture holds (see for instance \cite{T,LP19,ChaT,CK23}). 
However, in higher dimensions, the existence of a Hamiltonian circle action is not
enough to ensure the existence of a diffeomorphism with a Fano variety (e.\ g.\ consider the product of the above $\widetilde{M}$ with $S^2$ with a Hamiltonian circle action on the second factor that fixes the first).
Nevertheless, the problem remains open when the circle action has isolated fixed points. (For some special actions of higher dimensional tori see also \cite{CSS}).

In this paper, motivated by Hirzebruch's problem, we consider $8$-dimensional, closed symplectic manifolds $(M,\omega)$ with \textbf{second Betti number equal to one}, admitting a Hamiltonian torus action with isolated fixed points. Observe that such manifolds are automatically positive monotone, as $c_1$ must be a multiple of $[\omega]$, and by \cite{AB, LO, O, GHS}, this multiple must be positive.

If we further assume that $b_{4}(M)=1$, or equivalently that the action has 5 fixed points (which, in dimension 8, is the minimal number of fixed points), then the fixed point representations have been completely classified \cite{GS,JaTo}. They are all isomorphic to the ones induced by a linear action on $\mathbb{CP}^4$. Note that, in this case, the fixed point representations also determine the equivariant cohomology ring and Chern classes of $(M,\omega)$ \cite{T}.

If $b_4(M)=2$, there are three known $8$-dimensional examples satisfying $b_{2}(M)=1$, which may be distinguished by their {\bf index} $k_{0}$ (the largest integer such that $c_1=k_0\eta$, for some nonzero $\eta\in H^2(M;\Z)$, modulo torsion
elements). These are the Fano-Mukai fourfold $V$ (with $k_0=2$), the quintic del Pezzo fourfold $W$ (with $k_0=3$) and the Grassmannian $Q=Gr(2,4)$ (with $k_0=4$). 
These smooth Fano varieties are described in detail in Section \ref{section examples}.
Some of their invariants are summarized in Table \ref{Table intro}.

 \begin{table}[h]
\begin{center}
\begin{tabular}{ |c|c|c|c|c|c|c|c|c|} 
 \hline
$M$ & $(b_{2}(M),b_{4}(M))$ &  $c_{1}^4[M]$ & $c_{1}^2c_{2}[M]$ & $c_{2}^{2}[M]$ & $c_{1}c_{3}[M]$ &  $k_0$ \\ 
 \hline
 $V$ & $(1,2)$ &  288  & 168  & 98 &  48 & 2 \\ 
\hline
 $W$ & $(1,2)$ &  405 & 198  & 97  & 48 & 3  \\ 
 \hline
 $Q$ & $(1,2)$ & 512 & 224 &  98 & 48 & 4  \\ 
 \hline
\end{tabular}

\end{center}
\caption{Some of the invariants of $V,W$ and $Q$.}
\label{Table intro}
\end{table}

Each of these manifolds admits a Hamiltonian torus action which is carefully described in Section \ref{section examples}. 
The corresponding moment map polytopes, enriched with the moment map images of their isotropy 
submanifolds $M^w$, for every isotropy weight $w$ (see Remark \ref{weights isotropy manifold}), are shown in Figure \ref{vfig0}.
These can be thought of as the ``moment map image" of the associated multigraphs defined in Section \ref{multigraph section}.

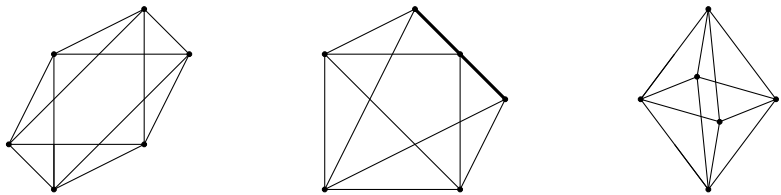
\begin{figure}[h]

 \begin{tikzpicture}[scale = 0.6]

\draw[ black] (-1,-1) -- (-1,-2) -- (1,-1) -- (2,1) -- (1,2) -- (-1,1) -- (-2,-1);

\draw[ black] (-2,-1) -- (1,2);

\draw[ black] (-1,-2) -- (2,1);

\draw[ black] (-2,-1) -- (1,-1);

\draw[ black] (1,-1) -- (1,2);

\draw[ black] (2,1) -- (-1,1);

\draw[ black] (-1,-2) -- (-1,1);

\draw[black] (-1,-2) -- (-2,-1);

\filldraw [black] (-1,-2) circle (1.5pt);

\filldraw [black] (-2,-1) circle (1.5pt);

\filldraw [black] (1,-1) circle (1.5pt);

\filldraw [black] (2,1) circle (1.5pt);

\filldraw [black] (1,2) circle (1.5pt);

\filldraw [black] (-1,1) circle (1.5pt);

\draw (5,1) -- (8,1);

\draw  (5,1) -- (5,-2);

\draw  (5,-2) -- (8,-2);

\draw  (8,-2) -- (8,1);

\draw  (5,-2) -- (7,2);

\draw  (5,-2) -- (9,0);

\draw  (5,1) -- (8,-2);

\draw  (5,1) -- (7,2);

\draw  (8,-2) -- (9,0);

\draw [very thick] (9,0) -- (7,2);


\filldraw [black] (9,0) circle (1.5pt);

\filldraw [black] (7,2) circle (1.5pt);

\filldraw [black] (8,-2) circle (1.5pt);

\filldraw [black] (5,1) circle (1.5pt);

\filldraw [black] (8,1) circle (1.5pt);

\filldraw [black] (5,-2) circle (1.5pt);


\filldraw  (12,0) circle (1.5pt);

\filldraw  (15,0) circle (1.5pt);

\filldraw  (13.75,-0.5) circle (1.5pt);

\filldraw  (13.25,0.5) circle (1.5pt);

\filldraw  (13.5,2) circle (1.5pt);

\filldraw  (13.5,-2) circle (1.5pt);

\draw   (12,0) -- (13.5,2);

\draw  (12,0) -- (13.5,-2);

\draw (15,0) -- (13.5,2);

\draw   (15,0) -- (13.5,-2);

\draw  (13.25,0.5) -- (13.5,2);

\draw   (13.25,0.5) -- (13.5,-2);

\draw   (13.75,-0.5) -- (13.5,2);


\draw  (13.25,0.5) -- (15,0);

\draw  (13.25,0.5) -- (12,0);

\draw   (13.75,-0.5) -- (15,0);

\draw  (13.75,-0.5) -- (12,0);


\draw (13.5,-2) -- (12.75,-1);

\draw  (13.5,-2) --  (13.75,-0.5);




\draw (12,0) -- (12.75,1);






\end{tikzpicture}

\caption{The ``enriched" moment map polytopes corresponding to $V, W$ and $Q$.} \label{vfig0}

\end{figure}

Observe that the first and third examples correspond to Hamiltonian GKM spaces (see Section \ref{Ham GKM spaces}), whereas the
second example does not, as the thick edge is the moment map image of an isotropy submanifold symplectomorphic
to $\C P^2$. Hence, the first and third graphs in Figure~\ref{vfig0} are the GKM graphs associated with the torus action on $V$ and $Q$.

In \cite{GT}, Goldin and Tolman introduce a special basis for the equivariant cohomology ring with $\Z$-coefficients of a Hamiltonian $T$-space
and the elements of this basis are called canonical classes. In particular, the restriction of these classes to the fixed point set, as well as
the associated equivariant structure constants, can be computed explicitly for Hamiltonian GKM spaces admitting an index-increasing component of the
moment map (see Section \ref{section Ham actions}). We use this method to compute the equivariant cohomology ring and Chern classes
of $V$ and $Q$\footnote{Note that the computations for $Q$ are already known, as $Q$ is  the Grassmannian of 
complex planes in $\C^4$ endowed with the standard $3$-dimensional torus action.} (see Theorems \ref{equiv cohomology and chern classes V} and \ref{equiv cohomology and chern classes Q}). 
Even though the torus action on $W$ is not GKM, we can still 
prove the existence of canonical classes and compute its equivariant cohomology ring and Chern
classes (see Theorem \ref{equiv cohomology and chern classes W}).  
As a byproduct,  we obtain the ordinary cohomology rings and Chern classes of $V$ and $W$ (those of $Q$ are well-known).
\begin{cor}\label{cohomology ring V}
Let $V$ be the Fano-Mukai 4-fold of genus 10 and degree 18 described in Section~\ref{subsec V}. Then there exists a basis $\{\zeta_i\}_{i=0,\ldots,4}\cup\{\zeta'_2\}$ of the cohomology ring $H^*(V;\Z)$, with $\zeta_i\in H^{2i}(V;\Z)$,  for   $i=0,\ldots,4$, and  $\zeta'_2\in H^4(V;\Z)$, satisfying the
following properties and relations: 
\begin{align}\label{product V o}
 & \zeta_1^2= 3\zeta_2+3\zeta'_2, \quad  \zeta_1\zeta_3= \zeta_2^2=(\zeta'_2)^2=\zeta_4, \quad   \zeta_1\zeta_2=  \zeta_1\zeta'_2=3\zeta_3,  \quad  \zeta_2\zeta'_2=0\,. 
 \end{align}
 Moreover, the Chern classes of the tangent bundle of $V$ are given by 
 \begin{equation}\label{ord chern classes V}
  c_1=-2 \zeta_1,\quad
  c_2= 7 (\zeta_2+\zeta'_2),\quad
  c_3= - 24 \zeta_3,\quad
  c_4= 6 \zeta_4\,.
 \end{equation}
\end{cor}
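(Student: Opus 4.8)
**The plan is to derive Corollary \ref{cohomology ring V} as a direct specialization of the equivariant computation for $V$.** The corollary is stated just after Theorem \ref{equiv cohomology and chern classes V}, which (per the excerpt) computes the full equivariant cohomology ring and equivariant Chern classes of $V$ using the Goldin–Tolman canonical classes for the GKM structure on $V$. So the natural route is: take the equivariant data as given, and then apply the forgetful (restriction) map $H^*_T(V;\Z)\to H^*(V;\Z)$ obtained by setting the equivariant parameters to zero. Under this map the canonical classes descend to a $\Z$-basis of ordinary cohomology, and the equivariant structure constants specialize to the ordinary ones. I would first record that $H^*(V;\Z)$ is free (the action has isolated fixed points and $V$ is equivariantly formal, so $H^*_T(V;\Z)$ is free over $H^*_T(\mathrm{pt})$ and the forgetful map is surjective), which guarantees that the images $\zeta_i,\zeta'_2$ of the canonical classes form a genuine basis with $\zeta_i\in H^{2i}$.

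\textbf{The main computational content is then purely bookkeeping in ordinary cohomology.} For the product relations \eqref{product V o}, I would evaluate the ordinary cup products of the basis elements by localizing: the product $\zeta_i\zeta_j$ is determined by its restrictions to the five fixed points, which are the (non-equivariant) limits of the equivariant restrictions already tabulated in Theorem \ref{equiv cohomology and chern classes V}. Writing each product in the basis $\{\zeta_k,\zeta'_2\}$ amounts to matching fixed-point data, or equivalently integrating against the known dual basis. The relations $\zeta_1^2=3\zeta_2+3\zeta'_2$, $\zeta_1\zeta_2=\zeta_1\zeta'_2=3\zeta_3$, $\zeta_2^2=(\zeta'_2)^2=\zeta_1\zeta_3=\zeta_4$, and $\zeta_2\zeta'_2=0$ should each fall out by comparing the two sides at each fixed point; the single nontrivial orthogonality $\zeta_2\zeta'_2=0$ is the relation encoding that the two four-dimensional generators are "perpendicular" in the intersection form, and I would verify it by checking that the fixed-point restrictions of $\zeta_2$ and $\zeta'_2$ have disjoint support, or that their product integrates to zero against every degree-zero class.

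\textbf{For the Chern classes \eqref{ord chern classes V}, I would simply apply the forgetful map to the equivariant Chern classes.} The equivariant total Chern class is computed in Theorem \ref{equiv cohomology and chern classes V} from the isotropy weights at the fixed points (the weights being read off the GKM graph, i.e.\ the first graph in Figure \ref{vfig0}); setting the equivariant parameters to zero yields $c_i\in H^{2i}(V;\Z)$, which I then re-express in the basis $\{\zeta_k,\zeta'_2\}$. The expected answers $c_1=-2\zeta_1$, $c_2=7(\zeta_2+\zeta'_2)$, $c_3=-24\zeta_3$, $c_4=6\zeta_4$ should be checked for consistency against Table \ref{Table intro}: using the product relations one computes $c_1^4[V]=16\,\zeta_1^4[V]$, $c_1^2c_2[V]$, $c_2^2[V]$, and $c_1c_3[V]$ and verifies these equal $288,168,98,48$ respectively, which both pins down the normalization $\zeta_4[V]=\int_V\zeta_4=1$ and serves as an independent sanity check. (The sign of $c_1=-2\zeta_1$ reflects a choice of orientation/sign convention for $\zeta_1$ relative to the monotone class; with this convention $-c_1$ is the positive generator, consistent with index $k_0=2$ in Table \ref{Table intro}.)

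\textbf{The only real obstacle is getting the change-of-basis from the fixed-point/canonical-class data into the clean integral basis $\{\zeta_i\}\cup\{\zeta'_2\}$ exactly right,} in particular pinning down the two degree-four generators so that $\zeta_2\zeta'_2=0$ and the stated integrality holds over $\Z$ rather than merely over $\Q$. This requires that the canonical classes of Goldin–Tolman are an \emph{integral} basis (which is part of their theorem) and a careful choice of $\zeta'_2$ as an appropriate integral combination of $\zeta_1^2$ and $\zeta_2$; everything downstream is then forced. I do not anticipate any conceptual difficulty beyond this normalization step, since all the hard work — establishing the equivariant ring and equivariant Chern classes via canonical classes on a GKM space — is already accomplished in Theorem \ref{equiv cohomology and chern classes V}, and the corollary is its non-equivariant shadow.
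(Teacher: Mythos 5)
Your proposal is correct and takes essentially the same route as the paper: Corollary \ref{cohomology ring V} is obtained there precisely by applying the restriction map $r$ to the basis of canonical classes and to the relations of Theorem \ref{equiv cohomology and chern classes V}, i.e.\ by taking the constant terms of the equivariant structure constants as in \eqref{structure constants}. The only superfluous element in your plan is the final normalization worry; no change of basis or separate verification of $\zeta_2\zeta'_2=0$ is needed, since $\zeta_i=r(\tau_i)$ and $\zeta'_2=r(\tau'_2)$ already form the integral basis and every relation in \eqref{product V o}, as well as \eqref{ord chern classes V}, drops out as the degree-zero part of the corresponding equivariant identity in \eqref{product V}.
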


 \begin{cor}\label{cohomology ring W}
Let $W$ be the quintic  del Pezzo $4$-fold defined in Subsection \ref{subsec W}. Then there exists a basis $\{\zeta_i\}_{i=0,\ldots,4}\cup\{\zeta'_2\}$ of the cohomology ring $H^*(W;\Z)$,   with $\zeta_i\in H^{2i}(W;\Z)$, for $i=0,\ldots,4$, and  $\zeta'_2\in H^4(W;\Z)$, satisfying the
following properties and relations: 
\begin{align}\label{product W nonequi}
 & \zeta_1^2= 3\zeta_2+2\zeta'_2, \quad \zeta_1\zeta_3 =\zeta_2^2=\zeta_4,\quad   \zeta_1\zeta_2=\zeta_1\zeta'_2=\zeta_3,  \quad (\zeta'_2)^2 = 2\zeta_4,\quad
 \zeta_2\zeta'_2=-\zeta_4. 
 \end{align}
 Moreover, the Chern classes of the tangent bundle of $W$ are given by
 \begin{equation}\label{chern classes W}
 c_1= -3 \zeta_1 ,\quad c_2=13\, \zeta_1 + 9\, \zeta_2^\prime ,\quad c_3= -16 \zeta_3, \quad c_4 = 6\zeta_4.
 \end{equation}
\end{cor}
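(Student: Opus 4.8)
The plan is to obtain Corollary \ref{cohomology ring W} as the non-equivariant specialization of the equivariant computation carried out in Theorem \ref{equiv cohomology and chern classes W}. Since $W$ carries a Hamiltonian torus action with isolated fixed points, its cohomology is concentrated in even degrees and $W$ is equivariantly formal; consequently the forgetful homomorphism
\[
 \iota^*\colon H^*_T(W;\Z)\longrightarrow H^*(W;\Z)
\]
is surjective and identifies $H^*(W;\Z)$ with $H^*_T(W;\Z)\otimes_{H^*(BT;\Z)}\Z$, that is, with the quotient obtained by setting the equivariant parameters (the generators of $H^*(BT;\Z)\cong\Z[x,y]$) equal to zero.

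First I would invoke Theorem \ref{equiv cohomology and chern classes W}, which produces the canonical classes $\{\tilde\zeta_i\}_{i=0,\ldots,4}\cup\{\tilde\zeta'_2\}$ together with their equivariant products and the equivariant Chern classes. By the Goldin--Tolman theory these canonical classes form a basis of $H^*_T(W;\Z)$ as a module over $H^*(BT;\Z)$. I would then set $\zeta_i:=\iota^*(\tilde\zeta_i)$ and $\zeta'_2:=\iota^*(\tilde\zeta'_2)$; equivariant formality immediately guarantees that this collection is a $\Z$-basis of $H^*(W;\Z)$, with $\zeta_i\in H^{2i}(W;\Z)$ and $\zeta'_2\in H^4(W;\Z)$, as required for the statement.

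Because $\iota^*$ is a ring homomorphism, every product $\zeta_i\zeta_j$ equals $\iota^*(\tilde\zeta_i\tilde\zeta_j)$, so the ordinary relations are read off from the equivariant ones simply by discarding all terms whose coefficients lie in the augmentation ideal $H^{>0}(BT;\Z)$, i.e.\ by setting the equivariant parameters to zero. Carrying this out on the equivariant structure constants of Theorem \ref{equiv cohomology and chern classes W} yields exactly the relations \eqref{product W nonequi}. Likewise, the equivariant Chern classes restrict under $\iota^*$ to the ordinary Chern classes, so applying $\iota^*$ to the equivariant Chern classes furnished by the same theorem produces \eqref{chern classes W}.

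The substantive work is entirely contained in the equivariant Theorem \ref{equiv cohomology and chern classes W}; here the only point requiring care is the bookkeeping of the specialization to zero parameters. The hard part will be to verify that the surviving degree-zero parts of the equivariant products genuinely close up into the claimed ring and respect the Poincar\'e duality pairing --- in particular that the top products $\zeta_2^2=\zeta_4$ and $(\zeta'_2)^2=2\zeta_4$ are the correct specializations rather than artifacts of a particular equivariant lift of the canonical classes. This is a finite check, but it must be performed consistently across all pairwise products to confirm \eqref{product W nonequi} and \eqref{chern classes W}.
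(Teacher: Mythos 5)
Your proposal is correct and takes essentially the same route as the paper: the paper obtains the corollary precisely by applying to Theorem \ref{equiv cohomology and chern classes W} the procedure of Section \ref{restrictions and structure constants}, namely pushing the canonical classes down via the restriction $r\colon H^*_{\mathbb{T}}(W;\Z)\to H^*(W;\Z)$ (whose images form a $\Z$-basis by Proposition \ref{kirwan classes}) and taking the constant terms of the equivariant structure constants, i.e.\ setting the parameters $\alpha,\beta$ to zero. Two minor remarks: your worry about dependence on the equivariant lift is moot because canonical classes, when they exist, are unique; and the specialization actually yields $c_2=13\zeta_2+9\zeta'_2$, so the ``$13\,\zeta_1$'' appearing in \eqref{chern classes W} is a typo (forced by degree reasons, since $c_2\in H^4(W;\Z)$).
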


\subsection{Main results}

We begin by proving several `finiteness' results.
More precisely, in Theorem \ref{thm:ChernNumbers}, we show that, if $(M,\omega)$ is an 8-dimensional, closed, symplectic manifold with $b_2(M)=1$ and index $k_0>1$,
admitting a Hamiltonian circle action with isolated fixed points, then, for each value of $b_4(M)$, there are finitely many possibilities for the Chern numbers of $(M,\omega)$.
As a consequence, for each value of $b_4(M)$, there are finitely many complex cobordism classes of such $(M,\omega)$.
In Theorem \ref{main theorem chern},  we assume that the Euler characteristic is 6 (which is equivalent to the condition $b_2(M)=1$ and $b_4(M)=2$),
and use number theoretical arguments to refine the possibilities for the Chern numbers.
\\$\;$

The Fano manifolds $V$, $W$ and $Q$ are examples of closed, positive monotone symplectic manifolds
of dimension 8, with a Hamiltonian torus action with $6$ fixed points, and so 
 their Euler characteristic is also $6$. Their isotropy data, namely the isotropy weights at each fixed point, are given
respectively in Tables \ref{values c1T V}, \ref{values c1T} and \ref{values c1T Q}.
Our main result says that, in a certain sense, these are
the only possible examples. More precisely, we have the following result.

\begin{main theorem}\label{main theorem}
Let $(M,\omega)$ be an $8$-dimensional, closed, symplectic  manifold admitting a Hamiltonian $T^2$-action with 6 isolated fixed points. Then its index $k_0$ satisfies $2\leq k_0 \leq 4$. Moreover,
the isotropy data, (equivariant) cohomology ring and (equivariant) Chern classes agree with those of
\begin{itemize}
\item[$\bullet$] $V$ with the action described in Section \ref{subsec V}, if $k_0=2$;
\item[$\bullet$] $W$ with the action described in Section \ref{subsec W}, if $k_0=3$;
\item[$\bullet$] $Q$ with a subtorus action of the 3-dimensional torus action described in Section \ref{subsec Q}, if $k_0=4$.

\end{itemize}
\end{main theorem}
Note that, by Lemma \ref{lemma monotone chi six}, $(M,\omega)$ is positive monotone with symplectic form
that can be rescaled to satisfy $c_1=[\omega]$. Moreover, the moment map can be translated so that the weight sum formula \eqref{wsf} holds.
Therefore, our main theorem can be regarded as a classification theorem for positive monotone symplectic manifolds of dimension $8$ and Euler characteristic $6$. 
The weight sum formula also implies that, modulo $GL(2;\Z)$-transformations, the only ``enriched moment map polytopes" are those in Figure \ref{vfig0}.
 
To prove this theorem we use an adaptation of the algorithm introduced in \cite{GS} (see Section \ref{sec:algorithm}). This requires the existence of a combinatorial object (a multigraph)
that describes the action, satisfying a particular positivity condition involving the first equivariant Chern class. This holds, for instance,  for all
positive monotone Hamiltonian
GKM spaces and we prove, in Theorem \ref{existence multigraph dim 8}, that it also holds under the conditions of the main theorem. 

Hence, to prove the main theorem, we run a program based on this algorithm, producing all the possible isotropy data. For $k_0=3$, we
need to use the restrictions on the Chern numbers given by Theorem \ref{main theorem chern}.
Then, it is enough to show that this data is the same as that of $V$, $W$ and $Q$. This is done in Section \ref{sec:class}.

The accompanying software based on the algorithms presented in this paper and all the accompanying files, can be found at 
$$
 \href{https://www.math.tecnico.ulisboa.pt/~lgodin/CircleActions/}{\text{http://www.math.ist.utl.pt/$\sim$lgodin/CircleActions/}}.
$$

Another finiteness result concerns the Betti numbers. Indeed, in Proposition \ref{index betti}, we provide a finite list of the possible Betti numbers of an 8-dimensional, closed, symplectic 
manifold with $k_0>1$, that admits a Hamiltonian circle action  with isolated fixed points, and a multigraph that is
positive w.r.t.\ the first Chern class. More precisely, we obtain
\begin{enumerate}
\item If $k_0=5$ then $b_2=b_4=1$;
\item If $k_0=4$ then $b_2=1$ and $b_4=2$;
\item If $k_0=3$ then $(b_2,b_4)$ is either $(1,2)$ or $(2,3)$;
\item If $k_0=2$ then $1\leq b_2\leq b_4\leq 6$.
\end{enumerate}
Therefore, these restrictions hold for any 8-dimensional, positive monotone Hamiltonian GKM space (see Corollary~\ref{cor GKM spaces betti}). 

All of the above possibilities for the Betti numbers with $k_{0} \geq 3$ are realized by explicit examples. For $k_0=5$, the only possible example is $\C P^4$ (see \cite{GS}).
The cases with $(b_{2},b_{4})=(1,2)$ are discussed in
Section \ref{section examples}. An example with $k_{0}=3$ and  $(b_{2},b_{4})=(2,3)$ is given by the toric Fano $4$-fold $\mathbb{CP}^2 \times \mathbb{CP}^2$. We also note that the bound on $b_{4}$ in the $k_{0}=2$ case is sharp, by considering the toric Fano $4$-fold $(\mathbb{CP}^1)^4$, which has $k_{0}=2$, and $(b_{2},b_{4})=(4,6)$.

Based on these results, we again use the algorithm from \cite{GS} to obtain the following theorem. 
\begin{theorem}\label{thm:add}
 Let $(M,\omega)$ be an 8-dimensional,  closed, symplectic 
manifold with $k_0>1$, that admits a Hamiltonian action of a $2$-dimensional torus with isolated fixed points with a multigraph that is
positive w.r.t.\ the first Chern class.
\begin{itemize}
\item If  $k_0=3$ and $(b_2,b_4) =(2,3)$, then the isotropy data  agrees with that of a subtorus of the standard toric action on $\C P^2 \times \C P^2$. 
\item If $k_0=2$ and $b_2=1$, then $ b_4= 2$  and the isotropy data agrees with that of $V$.
\end{itemize}
\end{theorem}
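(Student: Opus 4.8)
\emph{Proof idea.} The plan is to reduce both statements to a finite combinatorial enumeration performed by the algorithm of \cite{GS}, whose hypotheses coincide with the standing assumptions of the theorem: a Hamiltonian $T^2$-action with isolated fixed points admitting a multigraph that is positive with respect to $c_1$. First I would observe that, since the action is Hamiltonian with isolated fixed points, a generic component $\phi=\langle\mu,\xi\rangle$ of the moment map is a perfect Morse function; hence all odd Betti numbers vanish and the number of fixed points equals $\chi(M)=2+2b_2+b_4$. Thus, when $k_0=3$ and $(b_2,b_4)=(2,3)$ there are exactly $9$ fixed points, while when $k_0=2$ and $b_2=1$ there are $4+b_4$ fixed points, with $b_4\in\{1,\dots,6\}$ by Proposition~\ref{index betti}.

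The enumeration encodes the isotropy data as, for each fixed point $p$, a multiset of four nonzero weights $w^p_1,\dots,w^p_4\in\Z^2$, subject to the following constraints: the Morse index $2\cdot\#\{j:\langle w^p_j,\xi\rangle<0\}$ realizes the distribution prescribed by the Betti numbers; the index condition holds, namely the restriction of the equivariant first Chern class $\sum_j w^p_j$ at $p$ equals $k_0\,\eta|_p$ for a fixed integral class $\eta$, together with the normalization given by the weight sum formula \eqref{wsf}; the multigraph edges satisfy the positivity condition with respect to $c_1$; and the localization relations force the Chern numbers and the equivariant structure constants to be integers. The point that makes this procedure terminate is Theorem~\ref{thm:ChernNumbers}: for each value of $b_4$ there are only finitely many admissible Chern numbers, which bounds the weights and reduces the problem to a finite, computer-assisted backtracking search, carried out by the accompanying software.

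For $k_0=3$ and $(b_2,b_4)=(2,3)$ I would run this search on the $9$ fixed points and check that it returns, up to a $GL(2,\Z)$-change of lattice, a single isotropy datum. It then remains to match this datum with the standard toric $T^4$-action on $\C P^2\times\C P^2$ restricted to a suitable $2$-dimensional subtorus: for a generic such subtorus the fixed set is exactly the nine product vertices, and one verifies that the induced weights coincide with the enumerated ones, proving the first statement. For $k_0=2$ and $b_2=1$ I would run the search separately for each $b_4\in\{1,\dots,6\}$ and show that it yields no admissible isotropy data unless $b_4=2$; in the remaining case $b_4=2$ the configuration has $6$ fixed points, and the output matches the isotropy data of $V$ recorded in Table~\ref{values c1T V}.

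The main obstacle is the case $k_0=2$. Because the index is as small as allowed, the index condition and the multigraph positivity are the weakest, so the search space is largest and the truly delicate step is the elimination of $b_4=3,4,5,6$ rather than the final identification with $V$. Here the finiteness in Theorem~\ref{thm:ChernNumbers} is indispensable to guarantee that the backtracking halts, and one must exploit the positivity of the multigraph with respect to $c_1$ carefully to keep the number of branches tractable.
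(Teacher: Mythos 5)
Your overall strategy is the paper's: both run the algorithm of \cite{GS} reviewed in Section~\ref{sec:algorithm} (enumerate multigraphs, solve for weights in the kernel of $A(\Gamma)-\diag(\mathsf{m}(E))$, filter by the ABBV relations and the compatibility conditions, then identify the surviving families with subtorus actions on $\C P^2\times\C P^2$ and with $V$ via a change of lattice basis). However, your finiteness argument is misattributed, and in a way that breaks the first bullet. You claim that termination rests on Theorem~\ref{thm:ChernNumbers}, calling it ``indispensable''. That theorem assumes $b_2(M)=1$ --- its proof hinges on the positive definiteness of the intersection form, which by Proposition~\ref{b2onepositive} holds \emph{only if} $b_2=1$ --- so it simply does not apply to the case $(b_2,b_4)=(2,3)$, and your stated reason for the search to halt fails exactly where you need it.

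The mechanism that actually makes the enumeration finite is different and works for every $b_2$: by Lemma~\ref{c4 and c1c3} (equivalently Proposition~\ref{PropGS}) the evaluations $\mathsf{m}(e)=c_1^{S^1}[e]$ sum over all edges to $c_1c_3[M]=44+8b_2-2b_4$, a number determined by the Betti numbers alone, and the assumed positivity of the multigraph w.r.t.\ $c_1$, combined with the divisibility argument in the proof of Lemma~\ref{lemma betti numbers}, forces each $\mathsf{m}(e)$ to be a \emph{positive multiple of} $k_0$. Hence the magnitude functions are partitions of a fixed integer into $\lvert E\rvert$ positive multiples of $k_0$, of which there are finitely many; for $(b_2,b_4)=(2,3)$ and $k_0=3$ this is immediate ($54$ split into $18$ multiples of $3$ forces every magnitude to equal $3$), and for $k_0=2$, $b_2=1$ one partitions $52-2b_4$ into $8+2b_4$ even positive integers. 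Relatedly, your phrase ``bounds the weights'' is not correct: the isotropy weights output by the search are \emph{not} finite in number --- they form infinite families parametrized by lattice parameters (see Figures~\ref{fig:Index2Weights} and~\ref{fig:weightsCP2CP2}); only the combinatorial data (multigraphs and magnitudes) is finite. In the paper the Chern-number tables enter only as an auxiliary filter in the index-$3$, $b_2=1$ case of the Main Theorem, not as the termination device for Theorem~\ref{thm:add}; once you replace your finiteness step by the partition argument above, the rest of your plan goes through as in the paper.
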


\smallskip
Motivated by these results, we finish the introduction, with some questions that  we believe  are interesting.
\begin{questionL}
Let $(M,\omega)$ be a closed, positive monotone symplectic manifold, admitting a Hamiltonian action of  a torus with isolated fixed points. 
\begin{enumerate}
\item[i)] Are there finitely many (equivariant) complex cobordism classes  in each dimension?
\item[ii)] Are there finitely many homotopy/homeomorphism/diffeomorphism/symplectomorphism types in each dimension?
\end{enumerate}
\end{questionL}

\section{Preliminaries}

\subsection{Equivariant cohomology: generalities}

We recall that the equivariant cohomology of a topological space $X$ acted on by a torus $T$ of dimension $d$ is given by 
$$
H_T^*(X;R):=H^*(X\times_T ET;R)\,,
$$
where $R$ is a ring with unit and $ET$ is a contractible space on which $T$ acts freely. As $T$ is a torus of dimension $d$, we have that 
$ET$ can be taken to be a product of $d$ copies of $S^\infty\subset \C^\infty$ with the diagonal action of $T\simeq S^1\times \cdots \times S^1$, where each factor acts freely on $S^\infty$. The quotient $BT:=ET/T$
is, therefore, a product of $d$ copies of $\C P^{\infty}$. We remark that $H^*(BT;R)$ is isomorphic to the polynomial ring $R[x_1,\ldots,x_d]$, 
where each $x_i$ has degree 2, and that the constant map $X\to \{pt\}$ gives $H_T^*(X;R)$ the structure of an $R[x_1,\ldots,x_d]$-module.
Moreover, for any subgroup $K$ of $T$, the natural map $M\times_K ET \to M\times_T ET$ sending a $K$-orbit to a $T$-orbit induces a restriction map
$r_K\colon H_T^*(M;R)\to H_K^*(M;R)$ (observe that $ET$ is also a classifying space for $K$). In particular, when $K$ is the trivial group, we obtain 
a restriction map 
$$r\colon H_T^*(M;R)\to H^*(M;R)\,.$$

Let $M^T$ be the set of $T$-fixed  points and assume that $M^T\neq \varnothing$. For $p\in M^T$, consider the (equivariant) inclusion map $\iota_p\colon \{p\} \to M$
and the corresponding restriction $$\iota_p^*\colon H_T^*(M;R)\to H_T^*(\{p\};R)\simeq R[x_1,\ldots,x_d]\,.$$
Given a class $\alpha\in H_T^*(M;R)$, we denote $\iota_p^*(\alpha)$ simply by $\alpha(p)$. 
When $T$ is a circle and $R$ is $\Z$ or $\R$, we have $H_{S^1}^*(\{p\};R)=R[x]$ and so it makes sense to say that a class $\alpha\in H_{S^1}^{2k}(M;R)$ is positive (resp.\ negative). Namely, we say that
\begin{equation}\label{eq:equiv}
\alpha(p)=m\,x^k >0 \,\, \text{(resp. }<0) \quad \text{iff} \quad  m >0\quad \text{(resp.} \;\;m<0 )\,.
\end{equation}

\subsection{Hamiltonian actions and their properties}\label{section Ham actions}

Let $(M,\omega)$ be a closed symplectic manifold of dimension $2n$ that is acted on by a torus $T$ of dimension $d$, preserving the symplectic structure.
Such action is called \textit{Hamiltonian} if it admits a moment map, namely a $T$-invariant map
$\psi\colon M \to \mathfrak{t}^*$ satisfying 
\begin{equation}\label{equation mm}
d\langle \psi, \xi\rangle=-\iota_{\xi^{\#}}\omega, \quad \text{for every }\xi\in \mathfrak{t}\,,
\end{equation}
where $\langle\cdot, \cdot\rangle$ denotes the natural pairing between $\mathfrak{t}^*$ and $\mathfrak{t}$, and $\xi^{\#}$ is the vector field on $M$ corresponding to $\xi\in \frak{t}$. Let us assume that the fixed point set $M^T$ is discrete. 
Consider a $\widetilde{\xi}\in \mathfrak{t}$ such that $\alpha(\widetilde{\xi})\neq 0$ for all $\alpha\in\ell^*$ that occur as weights of
the $T$-action at a fixed point, and the corresponding component of the moment map given by $\varphi:=\langle \psi , \widetilde{\xi}\rangle$, which, for the above properties of $\widetilde{\xi}$, is called \textit{generic}.
We say that a weight $\alpha$ is \textit{positive} (resp. \textit{negative}) if $\alpha(\widetilde{\xi})>0$  (resp. if $\alpha(\widetilde{\xi})<0$). For every $p\in M^T$, let $\lambda_p$ be the number
of negative weights at $p$ and $N_k$ be the number of fixed points with $k$ negative weights.  
 Then it is well known that $\varphi$ is a perfect Morse function
with critical set given by $M^T$. Moreover, 
$H^{2j}(M;\Z)=\Z^{N_j}$  for all $j=0,\ldots,n$ and  $H^j(M;\Z)=0$ for every odd $j$. 
Observe that, since $M$ is compact, $[\omega^j]\in H^{2j}(M;\R)=\R^{N_j}$ is a nontrivial cohomology class for every $j=0,\ldots,n$, and so $N_j>0$ for all $j=0,\ldots,n$.

Once the information on the fixed points is known, computing the cohomology groups is trivial. However, determining the ring structure of
$H^*(M;\Z)$ can be more complicated. Nevertheless, the existence of a Hamiltonian action can help us
understanding the (equivariant) cohomology ring structure (see in particular Section \ref{restrictions and structure constants}).

Let $\Lambda_p^-$ be the product of the negative weights at $p$. 
The following  well-known result by Kirwan \cite{Ki}, provides a basis of $H_T^*(M;\Z)$ related to a generic component $\varphi$ of the moment map. 
This basis depends on an ordering of the fixed points that is induced by $\varphi$.  Namely, we say that 
\begin{equation}\label{ordering fixed points}
p \prec q \quad \text{(resp.   } p\preceq q) \quad \text{iff} \quad \varphi(p)<\varphi(q)\quad\text{(resp.   } \varphi(p)\leq \varphi(q))\,.
\end{equation}

\begin{proposition}[\textbf{Kirwan}]\label{kirwan classes}
Let $(M,\omega)$ be a closed symplectic manifold equipped with a Hamiltonian $T$-action with discrete fixed point set $M^T$
and moment map $\psi\colon M \to \mathfrak{t}^*$. Let $\varphi\colon M \to \R$ be a generic component of the moment map and 
consider the corresponding ordering $\preceq$ on the fixed points. 

Then, for every fixed point $p$, there exists a class $\gamma_p\in H_T^{2\lambda_p}(M;\Z)$, such that 
\begin{enumerate}
\item $\gamma_p(p)=\Lambda_p^-$, and
\item $\gamma_p(q)=0$ for every $q\in M^T\setminus \{p\}$ such that $q\preceq p$\,.
\end{enumerate}
Moreover, for any such classes, the $\{\gamma_p\}_{p\in M^T}$ are a basis for $H_T^*(M;\Z)$ as a module over $H^*(BT;\Z)$
and their restrictions to the ordinary cohomology ring, $\{r(\gamma_p)\}_{p\in M^T}$, are a basis for $H^*(M;\Z)$ as a module over $\Z$.
\end{proposition}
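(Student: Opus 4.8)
The plan is to construct the classes $\gamma_p$ directly via equivariant Morse theory applied to the generic component $\varphi=\langle\psi,\widetilde\xi\rangle$, exploiting that $\varphi$ is a perfect (equivariantly perfect) Morse function whose critical set is exactly $M^T$. First I would recall that, since the fixed points are isolated and $\varphi$ is generic, each $p\in M^T$ is a nondegenerate critical point whose index equals $2\lambda_p$, where $\lambda_p$ is the number of negative weights at $p$. The normal bundle to the stable manifold (equivalently, the negative normal bundle of $p$ for the gradient flow of $\varphi$) is a $T$-equivariant bundle over $\{p\}$ whose equivariant Euler class is precisely $\Lambda_p^-$, the product of the negative weights. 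The standard Atiyah--Bott/Kirwan argument shows that $\varphi$ is equivariantly perfect, so that $H_T^*(M;\Z)$ is a free module over $H^*(BT;\Z)$ with one generator of degree $2\lambda_p$ for each $p$.

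The core step is to produce, for each $p$, a class $\gamma_p\in H_T^{2\lambda_p}(M;\Z)$ whose restriction to $p$ is $\Lambda_p^-$ and which vanishes on all fixed points $q\preceq p$ with $q\neq p$. I would obtain $\gamma_p$ as the equivariant Thom class (image under the Thom isomorphism and the natural map $H_T^*(U_p)\to H_T^*(M)$) of the descending cell at $p$, where $U_p$ is an invariant neighborhood of the closure of the unstable manifold. By the localization/self-intersection computation, the restriction of such a Thom class to the critical point $p$ is the equivariant Euler class of its negative normal bundle, namely $\Lambda_p^-$, giving property (1). For property (2), I would order the fixed points by $\varphi$-value and build $\gamma_p$ from the sublevel-set filtration $M_c=\{\varphi\le c\}$: the class supported near $p$ lives in $H_T^*(M,M_{\varphi(p)-\epsilon})$, and since any $q\prec p$ sits in the sublevel set $M_{\varphi(p)-\epsilon}$, the restriction $\gamma_p(q)$ must vanish. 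The genericity of $\varphi$ guarantees distinct critical values, so the ordering $\preceq$ is a total order on $M^T$ and this vanishing is unambiguous.

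Finally, I would verify the basis statement. Because the restriction matrix $[\gamma_p(q)]_{p,q\in M^T}$, indexed so that rows and columns respect $\preceq$, is upper (or lower) triangular with diagonal entries $\Lambda_p^-\neq 0$, the classes $\{\gamma_p\}$ are $H^*(BT;\Z)$-linearly independent; a degree count against the equivariant perfection of $\varphi$ (which fixes the rank of $H_T^*(M;\Z)$ in each degree as $\sum_p 1$ with the appropriate grading shift) shows they span, hence form a basis. Passing to ordinary cohomology via the restriction map $r\colon H_T^*(M;\Z)\to H^*(M;\Z)$ sends $\{\gamma_p\}$ to generators realizing the Morse decomposition, so $\{r(\gamma_p)\}$ is a $\Z$-basis of $H^*(M;\Z)$.

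I expect the main obstacle to be the integrality and the triangularity together: ensuring that the Thom-class construction yields genuine integral classes (not merely rational ones) and that the restriction to lower fixed points vanishes on the nose rather than up to higher-order terms. The cleanest route is to invoke the equivariant perfection of $\varphi$ to control the module structure abstractly, then pin down the restrictions $\gamma_p(q)$ using the Atiyah--Bott--Berline--Vergne localization theorem, which forces the triangular shape and the diagonal values $\Lambda_p^-$ simultaneously. One should also take care that property (2) as stated only constrains $q\preceq p$ (equivalently $q\prec p$ for $q\neq p$), so the construction need not say anything about $q\succ p$; this asymmetry is exactly what the sublevel-set filtration provides.
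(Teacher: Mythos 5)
The paper itself offers no proof of this proposition --- it is quoted directly from Kirwan \cite{Ki} --- so your reconstruction can only be measured against the standard Atiyah--Bott/Kirwan argument, which is indeed the route you take, and much of it is sound: perfection of $\varphi$ over $\Z$ holds because each $\Lambda_p^-$ is a nonzero element of the integral domain $\Z[x_1,\ldots,x_d]$, and the Thom class of the negative disc bundle at $p$ restricts there to $\Lambda_p^-$. The first genuine error is your sentence ``the genericity of $\varphi$ guarantees distinct critical values, so the ordering $\preceq$ is a total order.'' Genericity of $\widetilde\xi$ means only that no isotropy weight annihilates it, i.e.\ that the critical set of $\varphi$ is exactly $M^T$; it does not separate critical values. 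Already for $S^1$-actions (where there is no freedom to perturb $\widetilde\xi$) distinct fixed points routinely share a critical value, which is precisely why the proposition is phrased with the preorder $\preceq$ and why the paper's Lemma \ref{from restrictions to constants} and Lemma \ref{indexnondec} explicitly allow ties. Since condition (2) demands $\gamma_p(q)=0$ also at points $q\neq p$ with $\varphi(q)=\varphi(p)$, and your sublevel-set argument only kills fixed points strictly below $p$, property (2) is not established as written. The repair is standard: at a critical level $c$ the relative group $H_T^*(M_{c+\epsilon},M_{c-\epsilon};\Z)$ splits as a direct sum over the fixed points at that level, and one lifts the Thom class lying in the $p$-summand, whose restriction to the other level-$c$ fixed points vanishes.

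The second gap is the basis claim. Triangularity does give $H^*(BT;\Z)$-linear independence (restrict a vanishing combination at a $\varphi$-minimal point carrying a nonzero coefficient and use that $\Lambda_p^-$ is not a zero-divisor), but your spanning step --- ``a degree count against the equivariant perfection'' --- is insufficient with integer coefficients: in a free graded $\Z[x_1,\ldots,x_d]$-module, linearly independent homogeneous elements occupying the same degrees as a basis need not span (already $2\Z\subsetneq\Z$ shows this; the count only closes the argument over a field). What is needed is the induction up the critical levels: the short exact sequences coming from perfection show that classes whose images generate each relative group $H_T^*(M_{c+\epsilon},M_{c-\epsilon};\Z)$ generate $H_T^*(M;\Z)$, and the Thom-class construction supplies exactly such classes; alternatively one can combine Kirwan injectivity with the recursion of Lemma \ref{from restrictions to constants}. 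Finally, note that the proposition asserts that \emph{any} collection satisfying (1) and (2) is a basis, not merely the constructed one; this does follow, since two such collections differ by a unitriangular change of basis over $H^*(BT;\Z)$, but your proposal never addresses that clause.
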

Note that Proposition \ref{kirwan classes} also states that $H_T^*(M;\Z)$ is a free module over $H^*(BT;\Z)$.

A class $\gamma_p$ satisfying (1) and (2) above is called a \textit{Kirwan class at} $p$. Although Kirwan classes always exist, they are not unique. 
For instance, if there exists a fixed point $q\in M^T$ such that $p\prec q$ and $\lambda_p=\lambda_q$, then $\gamma_p+k \gamma_q$ is also a Kirwan class at $p$,
for every $k\in \Z$. 

Following \cite{GT}, we recall another set of classes, called \textit{canonical classes}, that may not exist (see \cite[Example 2.2]{GT}) but, if they do,  are unique
(see \cite[Lemma 2.7]{GT}). 
\begin{definition}\label{canonical class def}
Let $(M,\omega)$ be a closed symplectic manifold equipped with a Hamiltonian $T$-action with discrete fixed point set $M^T$
and moment map $\psi\colon M \to \mathfrak{t}^*$. Let $\varphi\colon M \to \R$ be a generic component of the moment map.

Let $p\in M^T$. Then a class $\tau_p\in H_T^{2\lambda_p}(M;\Z)$ is called a \textbf{canonical class} at $p$ if
\begin{enumerate}
\item $\tau_p(p)=\Lambda_p^-$, and
\item $\tau_p(q)=0$ for every $q\in M^T\setminus \{p\}$ such that $\lambda_q \leq \lambda_p$\,.
\end{enumerate}
\end{definition} 
In \cite[Lemma 2.8]{GT} it is proved that, if a canonical class $\tau_p$ exists, then it is also a Kirwan class. Therefore, if a canonical class $\tau_p$ exists for every fixed point $p$,
the set $\{\tau_p\}_{p\in M^T}$ is a basis of $H_T^*(M;\Z)$ as a module over $H^*(BT;\Z)$
and the restrictions   $\{r(\tau_p)\}_{p\in M^T}$ of these classes to the  ordinary cohomology ring,  give a basis for $H^*(M;\Z)$ as a module over $\Z$.

\begin{remark}\label{trivial cc}
There are two canonical classes that always exist: those corresponding to the minimum $p_0$ and to the maximum $p_{\text{max}}$ of $\varphi$. 
In this case, it follows from Definition \ref{canonical class def}, that $\tau_{p_0}(p)=1$ for every $p\in M^T$, and $\tau_{p_{\text{max}}}(p)=0$ for all $p\in M^T\setminus \{p_{\text{max}}\}$.  
\end{remark}

\subsubsection{Restrictions to the fixed points and equivariant structure constants}\label{restrictions and structure constants}

Let $(M,\omega)$ be a closed symplectic manifold equipped with a Hamiltonian $T$-action with a discrete fixed point set $M^T$
and moment map $\psi\colon M \to \mathfrak{t}^*$. 
Consider a basis of $H_T^*(M;\Z)$ as a module over $H^*(BT;\Z)$, for instance the basis $\{\gamma_p\}_{p\in M^T}$ given by the Kirwan classes 
with respect to a generic component $\varphi\colon M \to \R$ of the moment map
or, when they exist,
that of canonical classes. The \textit{equivariant structure constants} with respect to $\{\gamma_p\}_{p\in M^T}$ are defined as the elements $a^s_{p,q}\in H^*(BT;\Z)$
such that 
\begin{equation}\label{e structure constants}
\gamma_p\gamma_q=\sum_{s\in M^T}a^s_{p,q}\gamma_s\,.
\end{equation}
If we identify the dual lattice of $\mathfrak{t}$ with $\Z\langle x_1,\ldots,x_d\rangle$, then $H^*(BT;\Z)=\Z[x_1,\ldots,x_d]$ and, therefore, $a^s_{p,q}$ is a polynomial of degree $\lambda_p+\lambda_q-\lambda_s$.
In order to obtain the \textit{ordinary structure constants} with respect to the basis $\{r(\gamma_p)\}_{p\in M^T}$, i.e. the constants $b_{p,q}^s\in \Z$ such that
\begin{equation}\label{structure constants}
r(\gamma_p)r(\gamma_q)=\sum_{s\in M^T}b^s_{p,q}r(\gamma_s)\,,
\end{equation}
it is enough to take $b^s_{p,q}=r(a^s_{p,q})$,  the constant term of the polynomial $a^s_{p,q}$. 

Computing the equivariant structure constants with respect to a basis $\{\gamma_p\}_{p\in M^T}$ is indeed equivalent to computing the restriction of each $\gamma_p$ to the fixed point set, and this problem has been
extensively studied in the literature (see for instance \cite{GuZa, GT, ST}).  
In order to show that the set of polynomials given by $\gamma_p(p')$, for all $p,p'\in M^T$, determines the equivariant structure constants, it is enough to prove
that the coefficients of every equivariant cohomology class $\alpha$, with respect to the basis $\{\gamma_p\}_{p\in M^T}$,  are determined by the restrictions $\gamma_p(p')$ and $\alpha(p')$, for all $p,p'\in M^T$.
This is illustrated in the following result, whose proof is left to the reader (see \cite{GuZa} and \cite[Lemma 2.4]{GS}).
\begin{lemma}\label{from restrictions to constants}
Let $(M,\omega)$ be a closed symplectic manifold equipped with a Hamiltonian $T$-action with discrete fixed point set $M^T$
and moment map $\psi\colon M \to \mathfrak{t}^*$. Let $\varphi\colon M \to \R$  be a generic component of the moment map and let  $p_0,p_1,\ldots, p_m$ be the fixed points ordered so that 
$$
p_0\prec p_1\preceq p_2 \preceq \cdots \preceq p_{m-1}\prec p_m\,.
$$
Consider a basis of $H^*_T(M;\Z)$ given by Kirwan classes $\{\gamma_i\}_{i=0}^m$ and let $\alpha=\sum_{i=0}^m \alpha^i\gamma_i$, where $\alpha^i\in H^*(BT;\Z)$. 
Then the coefficients $\alpha^i$ can be computed from the restriction of $\alpha$ to the fixed points. More precisely, they can be computed recursively as
$$
\alpha^i=\frac{\alpha(p_i)-\sum_{h\colon \varphi(p_h)<\varphi(p_i)}\alpha^h\gamma_h(p_i)}{\Lambda_i^-}\,.
$$
\end{lemma}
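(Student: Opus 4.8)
The plan is to restrict the defining identity $\alpha=\sum_{i=0}^m\alpha^i\gamma_i$ to each fixed point and to exploit the near-triangular structure of the matrix $\bigl[\gamma_i(p_j)\bigr]$ that is built into the defining properties of Kirwan classes. First I would fix a point $p_j\in M^T$ and apply $\iota_{p_j}^*$ to both sides, which, since each $\alpha^i$ is a scalar in $H^*(BT;\Z)=\Z[x_1,\ldots,x_d]$, yields the identity
\[
\alpha(p_j)=\sum_{i=0}^m\alpha^i\,\gamma_i(p_j)
\]
in $\Z[x_1,\ldots,x_d]$.

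The key observation is the vanishing coming from property (2) of a Kirwan class: $\gamma_i(p_j)=0$ whenever $p_j\preceq p_i$ and $p_j\neq p_i$, that is, whenever $\varphi(p_j)\leq\varphi(p_i)$ and $i\neq j$. Equivalently, the only indices $i$ for which $\gamma_i(p_j)$ may be nonzero are $i=j$ and those with $\varphi(p_i)<\varphi(p_j)$ \emph{strictly}. In particular, if $\varphi(p_i)=\varphi(p_j)$ with $i\neq j$, then $\gamma_i(p_j)=0$, so fixed points sharing the same $\varphi$-value contribute no off-diagonal terms. Substituting this into the restriction identity and using property (1), namely $\gamma_j(p_j)=\Lambda_j^-$, I obtain
\[
\alpha(p_j)=\alpha^j\,\Lambda_j^- + \sum_{h\,:\,\varphi(p_h)<\varphi(p_j)}\alpha^h\,\gamma_h(p_j),
\]
and solving for $\alpha^j$ gives exactly the claimed recursive formula.

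It then remains to check that the recursion is well founded and that the division is legitimate. The expression for $\alpha^j$ involves only coefficients $\alpha^h$ with $\varphi(p_h)<\varphi(p_j)$, all strictly below $p_j$ in the ordering, so the $\alpha^i$ are determined in increasing order of $\varphi$, the base case being the minimum $p_0$, where $\lambda_{p_0}=0$, so that $\Lambda_0^-=1$ (the empty product) and $\alpha^0=\alpha(p_0)$. Division by $\Lambda_j^-$ makes sense because $\Lambda_j^-$ is a nonzero product of (nonzero) weights; and since, by Proposition~\ref{kirwan classes}, the $\alpha^i\in\Z[x_1,\ldots,x_d]$ exist and are uniquely determined by the basis property, the numerator in the formula is necessarily divisible by $\Lambda_j^-$, with quotient again lying in $\Z[x_1,\ldots,x_d]$.

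I do not expect a genuine obstacle here, only a point of care: the bookkeeping around fixed points with equal $\varphi$-values. This is precisely what the strict-versus-nonstrict distinction in property (2) of the Kirwan classes is designed to handle, and once it is noted that the off-diagonal entries vanish within each $\varphi$-level, the matrix $\bigl[\gamma_i(p_j)\bigr]$ is lower triangular with respect to $\varphi$ with invertible diagonal entries $\Lambda_j^-$, from which the recursion and hence the lemma follow immediately.
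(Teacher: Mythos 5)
Your proof is correct and is exactly the argument the paper intends: the paper leaves this lemma to the reader, citing \cite{GuZa} and \cite[Lemma 2.4]{GS}, and those sources use precisely this triangularity-of-$[\gamma_i(p_j)]$ recursion, with the same treatment of equal $\varphi$-levels via the strict/nonstrict distinction in property (2) of Kirwan classes. Your added remarks on well-foundedness of the recursion and on why division by $\Lambda_j^-$ is legitimate (the coefficients exist by the basis property, and $\Lambda_j^-\neq 0$ in the integral domain $\Z[x_1,\ldots,x_d]$) are the right points of care and complete the argument.
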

A set of equivariant cohomology classes, for which the restrictions to the fixed points can be easily computed  from the isotropy data, is that of equivariant Chern classes. 
The next result is a well known fact (see, for example, \cite[Section 8.6]{GuS}).
\begin{lemma}\label{equiv chern classes}
Let $(M,J)$ be a closed almost complex manifold acted on by a torus $T$ preserving $J$ and with discrete fixed point set $M^T$.
Let $w_1,\ldots,w_n\in \ell^*$ be the isotropy weights of the $T$-action at a fixed point $p$, where $\ell^*$ denotes the dual lattice in $\mathfrak{t}^*$. 
Consider the equivariant Chern classes $c_j^T\in H_T^{2j}(M;\Z)$ of the tangent bundle of $(M,J)$. Then for every $p\in M^T$
$$
c_j^T(p)=\sigma_j(w_1,\ldots,w_n)\,,
$$
where $\sigma_j(x_1,\ldots,x_n)$ is the $j$-th elementary symmetric polynomial.
\end{lemma}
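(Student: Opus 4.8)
The plan is to reduce the computation to a single fixed point via the naturality of equivariant Chern classes, and then to apply the equivariant splitting principle. First I would observe that since $p\in M^T$ is fixed, the inclusion $\iota_p\colon \{p\}\to M$ is $T$-equivariant, so the pullback $\iota_p^* TM$ of the (equivariant) tangent bundle is a $T$-equivariant bundle over a point, namely the isotropy representation $T_pM$. By naturality of equivariant Chern classes under equivariant pullback, $c_j^T(p)=\iota_p^*\big(c_j^T\big)=c_j^T(\iota_p^* TM)=c_j^T(T_pM)$, so it suffices to compute the equivariant Chern classes of the representation $T_pM$ regarded as a bundle over a point.

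Next I would use that $T$ is a torus acting $\C$-linearly on $T_pM$, so the action is simultaneously diagonalizable and $T_pM$ splits $T$-equivariantly as a direct sum $\bigoplus_{i=1}^n \C_{w_i}$ of one-dimensional weight spaces, where $w_1,\ldots,w_n\in \ell^*$ are precisely the isotropy weights at $p$. For each weight line $\C_w$, its equivariant first Chern class is $c_1^T(\C_w)=w$ under the standard identification $H_T^2(\{pt\};\Z)=H^2(BT;\Z)\cong \ell^*$ that matches a character with the corresponding element of the dual lattice; since $\C_w$ has complex rank one, its higher equivariant Chern classes vanish and its total equivariant Chern class is $1+w$.

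Finally I would invoke the Whitney sum formula for the total equivariant Chern class (equivalently, the fact that the equivariant Chern classes of a direct sum are the elementary symmetric polynomials in the first Chern classes of the summands), obtaining
$$
c^T(T_pM)=\prod_{i=1}^n c^T(\C_{w_i})=\prod_{i=1}^n (1+w_i)\,.
$$
Extracting the homogeneous component of degree $2j$ then yields $c_j^T(p)=c_j^T(T_pM)=\sigma_j(w_1,\ldots,w_n)$, as claimed.

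The computation is entirely routine once naturality and the equivariant splitting are in place; the only point requiring care is the convention in the second step, namely the identification $c_1^T(\C_w)=w$ between the equivariant first Chern class of a weight line and the weight itself in $\ell^*\cong H^2(BT;\Z)$. Fixing this normalization consistently with the convention used to define the isotropy weights $w_i$ is what guarantees that the elementary symmetric polynomials appear with the correct signs, and this is the only place where one must be attentive rather than mechanical.
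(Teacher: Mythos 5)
Your proof is correct. Note that the paper does not actually prove this lemma: it records it as a well-known fact and cites Guillemin--Sternberg (Section 8.6) for it. The argument you give --- restricting to the fixed point by naturality, splitting the isotropy representation $T_pM$ into weight lines $\C_{w_i}$, identifying $c_1^T(\C_w)=w$ under $H^2_T(\{pt\};\Z)\cong\ell^*$, and applying the Whitney sum formula --- is exactly the standard proof that this citation refers to, including your (appropriate) caveat that the sign convention in the identification must match the one used to define the isotropy weights.
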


\begin{remark}\label{remark chern classes}
From Lemmas \ref{from restrictions to constants} and \ref{equiv chern classes} it follows that, given a basis of Kirwan classes and 
their restrictions to the fixed point set, the coefficients of the equivariant Chern classes with respect to this basis can be easily computed from the isotropy data
at the fixed points.
\end{remark}

\begin{remark}\label{our conventions cc}(\textbf{Convention on canonical classes})
Let $(M,\omega)$ be a closed symplectic manifold equipped with a Hamiltonian $T$-action with discrete fixed point set $M^T$ and
moment map $\psi\colon M \to \mathfrak{t}^*$. Assume that for every $p\in M^T$ a canonical class $\tau_p$ exists.
In this paper we consider a slight modification of the canonical classes, by taking those given by $(-1)^{\lambda_p}\tau_p$. 
We still denote this ``new canonical class'' at $p$ by $\tau_p$. 
Note that, with this convention, $\tau_p(p)$ is a product of positive weights.
\end{remark}

\subsection{Hamiltonian GKM spaces}\label{Ham GKM spaces}
One special subclass of closed symplectic manifolds acted on by a torus in a Hamiltonian way is that of \textbf{Hamiltonian GKM spaces}. 
A Hamiltonian action of a torus $T$ of dimension greater or equal to 2 is said to be GKM (after Goresky-Kottwitz-MacPherson \cite{GKM}) if the fixed points are isolated and, for each fixed point $p$, the weights of the isotropy action at $p$
are pairwise linearly independent. This implies that, for every isotropy weight $\alpha\in \ell^*$, the manifold fixed by the subgroup 
$T_\alpha:=\exp\{\text{Ann}(\alpha)\}$ is a 2-sphere, called an \emph{isotropy sphere}. Each sphere is symplectic
and carries  an effective action of  the quotient circle $T/T_\alpha$,
with two fixed points, $p$ and $q$. Observe that, if $\alpha$ is the weight of the isotropy $T$-action on $S^2$ at $p$, then $-\alpha$ is the weight of
the isotropy $T$-action on $S^2$ at $q$.

One can assign a labeled graph $\Gamma=(V,E)$ to each (Hamiltonian) GKM space, where the vertices are in bijection with the fixed points, and the edges
with the isotropy spheres. If one chooses a direction on each edge, for instance given by a generic component of the moment map, then the label $\eta(p,q)$ of the
edge $e=(p,q)$ (the edge connecting the fixed points $p$ and $q$  on the corresponding $S^2$) is given by the corresponding weight of the $T$-action
at $p$. Since we are assuming that this action is Hamiltonian with moment map $\psi\colon M \to \mathfrak{t}^*$,
 it is natural to see the GKM graph in $\mathfrak{t}^*$, mapping each vertex $v$, corresponding to the fixed point $p$, to $\psi(p)$, and each isotropy sphere to its image
 under the moment map $\psi$. Examples of such graphs are given in Figures \ref{omegaweights}, \ref{vfig1} and \ref{gkm Q} (see the corresponding sections for a
 description of the manifolds and the actions). 

Suppose that there exists a generic component $\varphi$ of the moment map $\psi$ that is \emph{index-increasing}, i.e.
$$p\prec q \implies \lambda_p<\lambda_q \quad \text{for all the edges }e=(p,q) \text{ of the GKM graph.}$$
Then Goldin and Tolman \cite{GT} give a recipe to compute the equivariant cohomology ring with $\Z$ coefficients. This method will be used in Sections
\ref{subsec V} and \ref{subsec Q} to compute the equivariant cohomology rings of the corresponding manifolds.  

\subsection{Monotone Hamiltonian $T$-spaces}\label{monotone Ham spaces}
Let $(M,\omega)$ be a closed symplectic manifold of dimension $2n$ endowed with a Hamiltonian action of a torus $T$. 
We say that $(M,\omega)$ is \textbf{monotone} if there exists $\lambda \in \R$ such that $c_1=\lambda [\omega]$. 
Since the action is Hamiltonian, it is known that $\lambda$ must be positive (see for instance \cite{AB,LO,O,GHS, CSS}).
We can therefore rescale the symplectic form so that $c_1=[\omega]$ holds. 
 
At the level of equivariant cohomology classes, the above equation implies that there exists $a\in \mathfrak{t}^*$ such that
$c_1^T=[\omega-\psi] + a$. Since the moment map is defined up to a constant, by replacing the moment map $\psi$ with the moment
map $\psi-a$, we obtain 
\begin{equation}\label{eq:eqc1}
c_1^T=[\omega-\psi]\,.
\end{equation}
Hence, restricting \eqref{eq:eqc1} to a fixed point $p$, we obtain the so called \textbf{weight sum formula}
\begin{equation}\label{wsf}
\sum_{i=1}^n w_i^p=-\psi(p)\quad \text{for all fixed points  }p\in M^T,
\end{equation}
where $w_1^p,\ldots,w_n^p$ are the weights of the isotropy $T$-action at $p\in M^T$. (See \cite[Section 3.1]{CSS}.)

 \section{The Chern Numbers}

In this section we study the possible Chern numbers of an 8-dimensional, closed, symplectic manifold that can be
endowed with a Hamiltonian circle action and isolated fixed points. 

The \textbf{index} of $(M,\omega)$, which  will play an important role, is
defined as follows. Given an almost complex manifold $(M,J)$, the index of $(M,J)$ is the largest integer such that, modulo torsion,
$c_1=k_0\,\eta$ for some nonzero $\eta\in H^2(M;\Z)$. We then define the index $k_0$ of  a symplectic manifold $(M,\omega)$ by regarding it as an almost
complex manifold $(M,J)$, where $J$ is an almost complex structure compatible with $\omega$. 

If $(M,\omega)$ can be endowed with a Hamiltonian $S^1$-action with isolated fixed points, then its index $k_0$ coincides with the minimal Chern
number\footnote{The minimal Chern number of a closed, symplectic manifold $(M,\omega)$,  with first Chern class $c_1$ which is not torsion, is the non-negative integer $D$ such that $\langle c_1, \pi_2 (M)\rangle =
D \mathbb{Z}$.} and satisfies 
\begin{equation}\label{inequalities index}
1\leq k_0 \leq n+1
\end{equation}
(see \cite[Corollary 1.1]{S}).

The problem of determining which numbers can arise as  Chern numbers of a closed, symplectic manifold
acted on  by a torus in a Hamiltonian way is wide open. Nevertheless, some information  can be recovered from the Todd genus
and the index of $(M,\omega)$ (for an extensive discussion see, for instance, \cite{S}). We recall that the
Todd genus of $(M,\omega)$, simply denoted by $\text{Td}(M)$, is the genus associated to the power series of $\displaystyle\frac{x}{1-e^{-x}}$ and
can be computed in terms of the Chern numbers. For example, in dimension 8 (which is the dimension we are mostly interested in), it is given by

\begin{equation}\label{todd dim 8}
\text{Td}(M)=\frac{-c_1^4+4c_1^2c_2+3c_2^2+c_1c_3-c_4}{720}[M]\,.
\end{equation}
If $(M,\omega)$ is acted on by a circle in a Hamiltonian way, then it is well-known that $\text{Td}(M)=\text{Td}(M_{\text{min}})$,
where $M_{\text{min}}$ denotes the submanifold  on which the moment map attains its minimum (see for instance \cite[Section 5.7]{HBJ}). If the fixed points are
isolated, one  immediately obtains that $\text{Td}(M)=1$.

Let us now consider an 8-dimensional, closed, symplectic manifold that admits a Hamiltonian circle action and isolated fixed points. 
In this case, two of the Chern numbers, namely
$c_4[M]$ and $c_1c_3[M]$, can be expressed as linear combinations of the even Betti numbers (see \cite[Section 3]{GS} and in particular \cite[Corollary 3.1]{GS}). 

\begin{lemma}[Godinho--Sabatini, \cite{GS}]
\label{c4 and c1c3}
Let $(M,\omega)$ be an  $8$-dimensional, closed, symplectic manifold admitting a Hamiltonian $S^1$-action with isolated fixed points. Let $b_{i}(M)$
be the Betti numbers of $M$. Then 
\begin{equation}\label{c1c3c4}
c_4[M]=2+2b_2(M)+b_4(M)\quad\text{and}\quad c_1c_3[M]=44+8b_2(M)-2b_4(M)\,.
\end{equation}
\end{lemma}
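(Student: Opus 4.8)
The plan is to compute both Chern numbers through three ``genus-type'' invariants of $M$ — the Euler characteristic, the Todd genus, and the signature — each of which is on the one hand a universal polynomial in the Chern numbers, and on the other hand computable from the fixed-point/Morse data of the Hamiltonian circle action. The Euler characteristic gives $c_4[M]$ directly, while an algebraic elimination between the Todd genus and the signature will produce $c_1c_3[M]$.

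First I would record the Euler characteristic. Since $c_4[M]$ is the top Chern number it equals $\chi(M)$, which in turn equals the number of fixed points, because the generic component $\varphi$ of the moment map is a perfect Morse function (as recalled in the excerpt). Writing $N_j$ for the number of fixed points with $\lambda_p=j$, the perfect Morse property gives $N_j=b_{2j}(M)$, and Poincaré duality gives $N_0=N_4=b_0=b_8=1$, $N_1=N_3=b_2=b_6$, $N_2=b_4$. Hence $c_4[M]=\chi(M)=\sum_j N_j=2+2b_2+b_4$, which is already the first identity.

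Next I would compute the signature. The key point is that for a Hamiltonian circle action with isolated fixed points the Hirzebruch $\chi_y$-genus is determined by the indices $\lambda_p$: by rigidity of the $\chi_y$-genus together with the holomorphic Lefschetz (Atiyah--Bott) localization formula one obtains $\chi_y(M)=\sum_{p\in M^T}(-y)^{\lambda_p}=\sum_{j=0}^{4}N_j(-y)^j$. Specializing to $y=1$, where $\chi_y$ reduces to the $L$-genus and hence, by the Hirzebruch signature theorem, to $\sigma(M)$, yields $\sigma(M)=\sum_j(-1)^jN_j=b_0-b_2+b_4-b_6+b_8=2-2b_2+b_4$. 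I expect this to be the main obstacle: one must justify the fixed-point formula $\chi_y(M)=\sum_p(-y)^{\lambda_p}$ for the symplectic, a priori non-Kähler, manifold $M$, i.e.\ invoke equivariant rigidity rather than Hodge theory (this is presumably where the cited computation in \cite{GS} does the real work).

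Finally I would combine the three invariants algebraically. Using $\mathrm{Td}(M)=1$ (established in the excerpt just above the statement) together with the Chern-number expression \eqref{todd dim 8} and the signature formula $45\,\sigma(M)=(-c_1^4+4c_1^2c_2+3c_2^2-14c_1c_3+14c_4)[M]$, I would eliminate the common combination $(-c_1^4+4c_1^2c_2+3c_2^2)[M]$. The Todd relation gives $(-c_1^4+4c_1^2c_2+3c_2^2)[M]=720-c_1c_3[M]+c_4[M]$, and substituting collapses everything to $3\,\sigma(M)=48-c_1c_3[M]+c_4[M]$, i.e.\ $c_1c_3[M]=48+c_4[M]-3\sigma(M)$. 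Plugging in $c_4[M]=2+2b_2+b_4$ and $\sigma(M)=2-2b_2+b_4$ then gives $c_1c_3[M]=44+8b_2-2b_4$, as required. The only remaining work is the routine conversion of the $L$-genus from Pontryagin to Chern classes via $p_1=c_1^2-2c_2$ and $p_2=c_2^2-2c_1c_3+2c_4$.
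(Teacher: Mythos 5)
Your proof is correct, but note that the paper itself contains no proof of this lemma: it is imported verbatim from \cite{GS}, where (as reflected in Proposition \ref{PropGS} of this paper) the general-dimension formula for $c_1c_{n-1}[M]$ is obtained from the fixed-point expression of the full Hirzebruch $\chi_y$-genus, $\chi_y(M)=\sum_j N_j(-y)^j$, combined with the Libgober--Wood identity relating the second derivative of $\chi_y$ at $y=-1$ to $c_1c_{n-1}[M]$ and $c_n[M]$. Your argument instead eliminates among three specializations of that genus: Euler characteristic, Todd genus and signature. I checked the algebra and it is right: with $p_1=c_1^2-2c_2$ and $p_2=c_2^2-2c_1c_3+2c_4$ one gets $45\,\sigma(M)=(-c_1^4+4c_1^2c_2+3c_2^2-14c_1c_3+14c_4)[M]$, and subtracting the Todd relation $(-c_1^4+4c_1^2c_2+3c_2^2)[M]=720-c_1c_3[M]+c_4[M]$ collapses this to $c_1c_3[M]=48+c_4[M]-3\sigma(M)$, which with $c_4[M]=2+2b_2+b_4$ and $\sigma(M)=2-2b_2+b_4$ gives $44+8b_2-2b_4$ (sanity check on $\mathbb{CP}^4$: $50=48+5-3$). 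Two remarks on the step you rightly flag as the crux. First, the fixed-point formula $\chi_y(M)=\sum_p(-y)^{\lambda_p}$ for a merely almost complex $M$ is indeed a theorem (rigidity of the $\chi_y$-genus for $S^1$-actions preserving an almost complex structure), so your proof is complete modulo that citation-level input; but second, you only need its value at $y=1$, namely $\sigma(M)=2-2b_2(M)+b_4(M)$, and this is exactly what the paper itself quotes as \cite[Theorem 1.1]{JR} in the proof of Proposition \ref{b2onepositive} (equation \eqref{eq:sign}), so anchoring to that reference makes your argument rest only on inputs already present in the paper. What your three-genus elimination buys is a short, self-contained derivation; what it loses relative to the route of \cite{GS} is generality, since it is specific to dimension $8$, whereas the $\chi_y$/Libgober--Wood argument yields the formula for $c_1c_{n-1}[M]$ in every dimension.
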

Before stating our next results, we also recall the following equations for the other Chern numbers, that are a direct consequence of \cite[Propositions 6.3, 6.4]{S} applied to closed, connected symplectic manifolds that admit a Hamiltonian $S^1$-action with isolated fixed points.

\begin{proposition}[Sabatini, \cite{S}]
\label{chern numbers 8}
Let $(M,\omega)$ be an $8$-dimensional, closed, symplectic, manifold with index $k_0$, admitting a Hamiltonian $S^1$-action with isolated fixed points. 

$\bullet$ If $k_0=5$, then 
\begin{equation}\label{k0=5}
c_1^4[M]=625,\;\; c_1^2c_2[M]=250,\;\;c_2^2[M]=101-2b_2(M)+b_4(M)\,.
\end{equation}

$\bullet$ If $k_0=4$, then 
\begin{equation}\label{k0=4}
c_1^4[M]=512,\;\; c_1^2c_2[M]=224,\;\;c_2^2[M]=98-2b_2(M)+b_4(M)\,.
\end{equation}

$\bullet$ If $k_0=3$, then 
\begin{equation}\label{k0=3}
c_1^2c_2[M]=108+\frac{2}{9}c_1^4[M], \;\; c_2^2[M]=82-2b_2(M)+b_4(M)+\frac{1}{27}c_1^4[M]\,.
\end{equation}

$\bullet$ If $k_0=2$, then 
\begin{equation}\label{k0=2}
c_1^2c_2[M]=96+\frac{1}{4}c_1^4[M],\;\; c_2^2[M]=98-2b_2(M)+b_4(M)\,.
\end{equation}

\end{proposition}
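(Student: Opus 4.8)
The plan is to encode the three remaining Chern numbers $c_1^4[M]$, $c_1^2c_2[M]$, $c_2^2[M]$ into a single Riemann--Roch polynomial and exploit its rigidity. Using the index, write $c_1=k_0\eta$ with $\eta\in H^2(M;\Z)$ primitive (modulo torsion), and set
\[
h(t):=\int_M e^{t\eta}\,\text{Td}(M)=\sum_{j=0}^4\frac{t^j}{j!}\int_M\eta^j\,[\text{Td}(M)]_{4-j},
\]
a polynomial of degree $\le 4$ in $t$, where $[\text{Td}(M)]_i$ denotes the degree-$2i$ component of the Todd class. Since $\text{Td}(M)=1$ for isolated fixed points (as recalled after \eqref{todd dim 8}) and $c_1=k_0\eta$, the $t^0$-coefficient normalizes $h(0)=1$, the $t^4$-coefficient equals $\tfrac{1}{24}\eta^4[M]$, and the $t^2$-coefficient equals $\tfrac{1}{24}\big(k_0^2\eta^4+\eta^2c_2\big)[M]$; thus recovering $h$ is equivalent to computing $c_1^4[M]=k_0^4\,\eta^4[M]$ and $c_1^2c_2[M]=k_0^2\int_M\eta^2c_2$ (the $t^1$- and $t^3$-coefficients only reproduce these via $c_1=k_0\eta$).

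Next I would establish two structural constraints on $h$. First, a symmetry $h(t)=h(-k_0-t)$: applying the formal Chern-root negation $c_j\mapsto(-1)^jc_j$ sends $\eta\mapsto-\eta$ and $\text{Td}(M)\mapsto e^{-c_1}\text{Td}(M)$, and fixes every degree-$4$ characteristic number up to $(-1)^4=1$, so $\int_M e^{t\eta}\text{Td}(M)=\int_M e^{-t\eta}e^{-c_1}\text{Td}(M)=\int_M e^{(-k_0-t)\eta}\text{Td}(M)=h(-k_0-t)$; this is a purely characteristic-class identity, requiring no integrability of $J$ and only that $\eta$ is proportional to $c_1$. Second, and crucially, the vanishing
\[
h(-1)=h(-2)=\cdots=h(-(k_0-1))=0.
\]
In the projective Fano case these are exactly the Kodaira-vanishing zeros of the Hilbert polynomial; in the symplectic setting I would instead read $h(m)$ as the index of a Dolbeault-type operator twisted by a line bundle with first Chern class $m\eta$, and invoke rigidity and fixed-point localization of this equivariant index under the Hamiltonian $S^1$-action, together with positive monotonicity ($c_1=[\omega]$), to force these intermediate indices to vanish. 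This step is the real work and the main obstacle: with no complex structure available, the vanishing must be extracted analytically from the isotropy data, and it is precisely the content packaged in \cite[Propositions 6.3, 6.4]{S}.

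Granting these properties, the endgame is elementary linear algebra. A degree-$\le4$ polynomial symmetric about $-k_0/2$, normalized by $h(0)=1$, and vanishing at $-1,\dots,-(k_0-1)$ spans the space $\langle 1,s^2,s^4\rangle$ in $s=t+\tfrac{k_0}{2}$ subject to $\lceil(k_0-1)/2\rceil+1$ independent conditions, hence is uniquely determined for $k_0\in\{4,5\}$ and determined up to one free parameter (which I take to be $c_1^4[M]=k_0^4\eta^4[M]$) for $k_0\in\{2,3\}$; this accounts for all cases $2\le k_0\le 5$ allowed by \eqref{inequalities index}. Reading the $t^4$- and $t^2$-coefficients off the resulting $h$ gives $\eta^4[M]$ and $\int_M\eta^2c_2$, hence the stated values of $c_1^4[M],c_1^2c_2[M]$ (when $k_0=4,5$) and the stated relations $c_1^2c_2=96+\tfrac14c_1^4$, $c_1^2c_2=108+\tfrac29c_1^4$ (when $k_0=2,3$). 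Finally, to obtain $c_2^2[M]$ I would write $\text{Td}(M)=1$ out via \eqref{todd dim 8} and eliminate $c_4[M]$ and $c_1c_3[M]$ using Lemma~\ref{c4 and c1c3}; since $c_1c_3-c_4=42+6b_2-3b_4$, this yields the single relation
\[
\big(-c_1^4+4c_1^2c_2+3c_2^2\big)[M]=678-6b_2(M)+3b_4(M),
\]
into which the already-determined $c_1^4[M]$ and $c_1^2c_2[M]$ substitute to produce $c_2^2[M]$ in the form $\cdots-2b_2(M)+b_4(M)$ (plus a multiple of $c_1^4[M]$ when $k_0\in\{2,3\}$), matching each case of the statement.
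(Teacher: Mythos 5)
Your proposal is correct, but the comparison is lopsided: the paper offers no proof of Proposition \ref{chern numbers 8} at all --- it is quoted as a direct consequence of \cite[Propositions 6.3, 6.4]{S}, specialized to Hamiltonian $S^1$-manifolds with isolated fixed points (where $\text{Td}(M)=1$). What you have written is, in effect, a reconstruction of the argument inside that cited reference: the polynomial $h(t)=\int_M e^{t\eta}\,\text{Td}(M)$, the symmetry $h(t)=h(-k_0-t)$ (your characteristic-class derivation of this is sound, using only $c_1=k_0\eta$ and the evenness of $n=4$), the vanishing of $h$ at $-1,\dots,-(k_0-1)$, and the normalization $h(0)=\text{Td}(M)=1$. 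The endgame algebra checks out in every case: for $k_0=5$ these constraints force $h(t)=\tfrac{1}{24}(t+1)(t+2)(t+3)(t+4)$, for $k_0=4$ they force $h(t)=\tfrac{1}{12}(t+1)(t+2)^2(t+3)$, and for $k_0=2,3$ they leave one-parameter families; reading off the $t^4$- and $t^2$-coefficients reproduces exactly the values and relations for $c_1^4[M]$ and $c_1^2c_2[M]$ in \eqref{k0=5}--\eqref{k0=2}, and your elimination of $c_1c_3[M]$ and $c_4[M]$ via \eqref{todd dim 8} and Lemma \ref{c4 and c1c3} then yields the stated $c_2^2[M]$ in each case. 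The one genuinely hard ingredient --- the vanishing of $h$ at the intermediate negative integers --- you explicitly import from \cite{S}; since the paper imports the entire proposition from the same source, this matches the paper's own standard of rigor and I do not count it as a gap. Two small corrections to your sketch, though: within \cite{S} that vanishing is established separately as one of the main theorems, whereas Propositions 6.3 and 6.4 there are precisely the dimension-8 Chern-number conclusions you are trying to prove; and positive monotonicity ($c_1=[\omega]$) is neither a hypothesis of Proposition \ref{chern numbers 8} nor needed for the vanishing in \cite{S}, so it should not be invoked --- if it were genuinely required at that step, you would only have proved a weaker statement than the one asserted.
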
 

Let $M$ be a closed, orientable manifold of dimension $4k$, for some $k\in \Z_{>0}$, with chosen orientation class $[M]\in H_{4k}(M;\Z)$ and consider the intersection form 
\begin{equation}\label{eq:intform}
 \langle \cdot , \cdot \rangle \colon H^{2k}(M;\Z)  \times H^{2k}(M;\Z)  \to \Z 
 \end{equation}
 $$
 (\alpha, \beta)  \mapsto \langle \alpha, \beta \rangle  :=(\alpha \cup \beta)[M].
$$
If we use $\R$-coefficients, \eqref{eq:intform} is a bilinear, symmetric form on $H^{2k}(M;\R)$. Let $b_{2k}^+(M)$ (resp.\ $b_{2k}^-(M)$) be the dimension of the maximal subspace on which $\langle \cdot , \cdot \rangle$ is positive (resp.\ negative) definite. 
The signature $\sigma(M)$ of $M$ is the signature of $\langle \cdot , \cdot \rangle$, namely $\sigma(M)=b_{2k}^+(M)-b_{2k}^-(M)$. 
If the manifold is symplectic and admits a Hamiltonian circle action with isolated fixed points,
the signature of $M$ can be computed from its Betti numbers \cite{JR}. The next result, originally proved in \cite{L}, uses this fact to 
obtain a necessary and sufficient condition for  $\langle \cdot , \cdot \rangle$ to be positive definite in dimension $8$.

\begin{proposition} \label{b2onepositive}
Let $(M,\omega)$ be an $8$-dimensional,  closed, symplectic manifold admitting a Hamiltonian $S^1$-action with isolated fixed points. 
Then the intersection form $\langle \cdot , \cdot \rangle$ of $M$ is positive definite if and only if  $b_{2}(M)=1$. 
\end{proposition}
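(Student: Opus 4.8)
The plan is to translate positive-definiteness of the intersection form into a numerical statement about the signature, and then use the fact \cite{JR} that the signature is determined by the Betti numbers. First I would identify the relevant form: since $\dim M=8$, the pairing in \eqref{eq:intform} is the cup product on $H^4(M;\R)$, which is symmetric and, by Poincar\'e duality, nondegenerate. Hence $H^4(M;\R)$ splits as an orthogonal sum of a maximal positive-definite and a maximal negative-definite subspace, of dimensions $b_4^+(M)$ and $b_4^-(M)$, with $b_4(M)=b_4^+(M)+b_4^-(M)$ and $\sigma(M)=b_4^+(M)-b_4^-(M)$. Subtracting gives $b_4(M)-\sigma(M)=2b_4^-(M)$, so nondegeneracy yields that the form is positive definite if and only if $b_4^-(M)=0$, that is, if and only if $\sigma(M)=b_4(M)$.

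Second, I would express the signature in terms of Betti numbers. The odd Betti numbers vanish and, by the Morse theory of a generic component of the moment map, $b_{2j}(M)=N_j$; Poincar\'e duality then gives $N_0=N_4=1$, $N_1=N_3=b_2(M)$ and $N_2=b_4(M)$. The result of \cite{JR} computes $\sigma(M)$ from these data, and in our situation this yields
\[
\sigma(M)=2-2b_2(M)+b_4(M).
\]
Concretely this can be read off from the localization identity $\sigma(M)=\sum_{p\in M^T}(-1)^{\lambda_p}=\sum_{j=0}^{4}(-1)^{j}N_j$, and the answer is consistent with $\sigma(\mathbb{CP}^4)=1$ (where $b_2=b_4=1$), which is a good sanity check.

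Finally, combining the two steps: the form is positive definite if and only if $\sigma(M)=b_4(M)$, which by the displayed formula holds if and only if $2-2b_2(M)=0$, i.e.\ $b_2(M)=1$. This settles both implications at once. The hard part is really just pinning down the exact signature formula with the correct coefficients — one must invoke \cite{JR} (or carry out the localization computation) carefully — after which the remaining argument is elementary linear algebra. A secondary point worth checking is that the equivalence ``positive definite $\iff \sigma(M)=b_4(M)$'' genuinely selects positive- rather than negative-definiteness: this is automatic once one uses nondegeneracy together with $b_4(M)=N_2>0$, the latter guaranteed because the nonzero class $[\omega^2]$ forces a fixed point with exactly two negative weights.
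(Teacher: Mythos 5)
Your proposal is correct and follows essentially the same route as the paper: both invoke the signature formula $\sigma(M)=2-2b_{2}(M)+b_{4}(M)$ from \cite{JR} together with Poincar\'e duality, and then reduce positive definiteness to $\sigma(M)=b_{4}(M)$ via $b_{4}=b_{4}^{+}+b_{4}^{-}$, $\sigma=b_{4}^{+}-b_{4}^{-}$. Your added sanity checks (the localization identity $\sigma(M)=\sum_{p}(-1)^{\lambda_p}$ and the observation that $[\omega^2]\neq 0$ forces $N_2>0$) are fine but not needed beyond what the paper does.
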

\begin{proof}
By \cite[Theorem 1.1]{JR} and Poincar\'{e} duality, we have 
\begin{equation}\label{eq:sign}
\sigma(M) = 2 - 2b_{2}(M)+ b_{4}(M).
\end{equation}
The condition that the bilinear form is positive definite is equivalent to $\sigma(M)=b_{4}(M)$.  To see this, write $b_{4}(M)$  as $b_{4}^{+}(M) + b_{4}^{-}(M)$ and $\sigma(M)$ as  $b_{4}^{+}(M) - b_{4}^{-}(M)$. This immediately implies that  $\langle \cdot, \cdot \rangle$ is positive definite (i.e. $b_{4}^{-}(M) = 0$)  if and only if  $b_{2}(M)=1$.
\end{proof}

Consider an $8$-dimensional,  closed, symplectic manifold admitting a Hamiltonian $S^1$-action with isolated fixed points. From Proposition \ref{chern numbers 8}, it is clear that,
if the index $k_0$ is either 4 or 5, then the Betti numbers $b_2(M)$ and $b_4(M)$ determine all the Chern numbers.
The cases in which $k_0$ is equal to 2 or 3 are more complicated. In the following, we prove that, once we assume
$b_2(M)=1$ and $k_0>1$, then there are only finitely many possible Chern numbers for each value of $b_4(M)$ and give an explicit range in which they may vary. The key role is played by the fact that, for $b_2(M)=1$, the intersection form on $H^4(M,\Z)$
is positive definite (see Proposition \ref{b2onepositive}).

\begin{theorem}\label{thm:ChernNumbers} 
Let $(M,\omega)$ be an  $8$-dimensional,  closed, symplectic manifold admitting a Hamiltonian $S^1$-action with isolated fixed points. Assume that the index $k_0$ is not one and that $b_2(M)=1$.  
Then, for each value of $b_4:=b_4(M)$, there are finitely many possibilities for the Chern numbers of $(M,\omega)$.

More precisely we have
\begin{equation}\label{c4 c1c3 1}
c_4[M]=4+b_4\,,\quad c_1c_3[M]=52-2b_4\,,
\end{equation}
and for the Chern numbers $c_1^4[M]$, $c_1^2c_2[M]$ and $c_2^2[M]$ we have:
\begin{itemize}
\item If $k_0=4,5$, then they are determined by the Betti numbers and  are given by \eqref{k0=5} and \eqref{k0=4}.
\item If $k_0=3$, then
$$c_2^2[M]=80 + b_4 + \frac{1}{27} c_1^4[M], \quad c_1^2c_2[M]= 108 + \frac{2}{9}c_1^4[M]$$
and
\begin{equation}\label{eq:c1}
\frac{c_1^4[M]}{81}\in \left[ \frac{b_4+32- \sqrt{b_4^2+64\,b_4+448}}{2}, \,\frac{b_4+32+\sqrt{b_4^2+64\,b_4+448}}{2}\right] \cap \Z.
\end{equation}
\item If $k_0=2$, then $$c_2^2[M]=96 + b_4, \quad c_1^2c_2[M]= 96 + \frac{1}{4}c_1^4[M]$$
and
$$
\frac{c_1^4[M]}{16}\in \left[ \frac{(\sqrt{96+b_4} - \sqrt{b_4})^2}{4}, \,\frac{(\sqrt{96+b_4} + \sqrt{b_4})^2}{4}\right] \cap \Z.
$$ 
\end{itemize}

\end{theorem}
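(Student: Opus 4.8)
The plan is to reduce everything to the single unknown $c_1^4[M]$ and then to bound it using positivity of the intersection form. First I would record the two formulas in \eqref{c4 c1c3 1}: these follow immediately from Lemma \ref{c4 and c1c3} by setting $b_2(M)=1$, since then $c_4[M]=2+2+b_4=4+b_4$ and $c_1c_3[M]=44+8-2b_4=52-2b_4$. For $k_0\in\{4,5\}$ the remaining three Chern numbers $c_1^4[M]$, $c_1^2c_2[M]$ and $c_2^2[M]$ are read off directly from \eqref{k0=5} and \eqref{k0=4} of Proposition \ref{chern numbers 8} (again using $b_2=1$), so they are completely determined by $b_4$ and there is nothing further to prove.

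The substantive cases are $k_0=2$ and $k_0=3$. Here Proposition \ref{chern numbers 8} still expresses both $c_1^2c_2[M]$ and $c_2^2[M]$ as explicit affine functions of the single quantity $c_1^4[M]$ and of $b_4$, which yields the displayed formulas for these two numbers; but it leaves $c_1^4[M]$ itself unconstrained. The key step, and the whole source of finiteness, is to invoke Proposition \ref{b2onepositive}: since $b_2(M)=1$, the intersection form $\langle\cdot,\cdot\rangle$ on $H^4(M;\R)$ is positive definite. I would then apply the Cauchy--Schwarz inequality to the two classes $c_1^2,\,c_2\in H^4(M;\R)$, obtaining
\begin{equation*}
\big(c_1^2c_2[M]\big)^2=\langle c_1^2,c_2\rangle^2\le \langle c_1^2,c_1^2\rangle\,\langle c_2,c_2\rangle=c_1^4[M]\cdot c_2^2[M].
\end{equation*}

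Substituting the affine expressions for $c_1^2c_2[M]$ and $c_2^2[M]$ turns this into a quadratic inequality in $c_1^4[M]$ with positive leading coefficient, whose solution set is therefore a bounded closed interval between the two roots. Writing $y:=c_1^4[M]/81$ in the case $k_0=3$ and $z:=c_1^4[M]/16$ in the case $k_0=2$ cleans up the algebra: one obtains $y^2-(b_4+32)y+144\le 0$ and $z^2-(b_4+48)z+576\le 0$ respectively, and their roots reproduce exactly the endpoints in \eqref{eq:c1} and in the stated interval for $k_0=2$. Finiteness is then sealed by integrality: since $c_1=k_0\,\eta$ for some $\eta\in H^2(M;\Z)$ modulo torsion, one has $c_1^4[M]=k_0^4\,\eta^4[M]$, so $y$ and $z$ are integers, and a bounded interval contains only finitely many of them. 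The main obstacle is conceptual rather than computational: recognizing that the positive-definiteness of the intersection form is precisely what converts the otherwise free parameter $c_1^4[M]$ into a bounded one via Cauchy--Schwarz. Once this is seen, every remaining step is routine algebra.
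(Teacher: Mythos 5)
Your proof is correct and takes essentially the same approach as the paper's: the finiteness comes from Proposition \ref{b2onepositive} (positive definiteness of the intersection form on $H^4(M;\R)$ when $b_2(M)=1$) together with the Cauchy--Schwarz inequality applied to $c_1^2$ and $c_2$, which, combined with the linear relations of Proposition \ref{chern numbers 8}, yields exactly the quadratic inequalities in $c_1^4[M]$ whose roots give the stated intervals, with integrality of $c_1^4[M]/k_0^4$ sealing finiteness. The only cosmetic difference is that the paper derives the relation between $c_1^2c_2[M]$ and $c_1^4[M]$ by substituting into the Todd-genus identity $\mathrm{Td}(M)=1$ (equation \eqref{Todd}) rather than quoting it directly from Proposition \ref{chern numbers 8} as you do; the two are equivalent, and your normalizations $y=c_1^4[M]/81$ and $z=c_1^4[M]/16$ reproduce the paper's endpoints exactly.
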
 
\begin{proof}
Equation \eqref{c4 c1c3 1} is a direct consequence of Lemma \ref{c4 and c1c3}.
Then, since $k_0$ is bounded above by 5 (see \eqref{inequalities index}) and since,  for $k_0=4,5$, the claim follows as a special case of \eqref{k0=5} and \eqref{k0=4}, we only have to consider the cases $k_0=2$ and $3$.

Let $b_4=N$ and let us fix an integral basis of $H^{4}(M,\mathbb{Z}) = \mathbb{Z}^N$.   Let  $X = (x_{1}, \ldots, x_{N})$ and $Y = (y_{1}, \ldots, y_{N} )\in \mathbb{Z}^N$ be the coordinates of  $c_{1}^2[M]$ and  $c_{2}[M]$ with respect to this basis. Since $b_2(M)=1$, we have from Proposition \ref{b2onepositive} that the intersection from on $H^{4}(M,\mathbb{Z})$ is a positive definite inner product. Let $\lVert \cdot\rVert$ be the associated norm. Then we have 
$$\lVert X \rVert^2=c_1^4[M], \quad \langle X , Y \rangle = c_{1}^2 c_2[M] \quad \text{and} \quad \lVert Y\rVert^2 = c_{2}^2[M].$$ 
Using the formula for $c_{1}c_{3}[M]$ in \eqref{c1c3c4} and the fact that $c_{4}[M]$ is the Euler characteristic of $M$, the equation $\text{Td}(M)=1$  and \eqref{todd dim 8}
give
\begin{equation}\label{Todd} 
720 = -\lVert X\rVert^2 + 4 \langle X , Y\rangle + 3 \lVert Y\rVert ^2 + 48 - 3 N.
 \end{equation}

$\bullet$ $k_{0} = 3$: From \eqref{k0=3}, we have
\begin{equation}\label{eq:Silvia2}
\lVert Y\rVert ^2 = 80 + N + \frac{\lVert X\rVert^2}{27} .
\end{equation}
Then, substituting \eqref{eq:Silvia2} into \eqref{Todd}, gives
\begin{equation}\label{eq: Silvia 3}
 9  \langle X ,  Y \rangle = 2 \lVert X\rVert ^2 +972. 
 \end{equation}
Applying the Cauchy-Schwarz inequality we obtain
$$  2 \lVert X \rVert^2 +972 \leq 9 \lVert X\rVert \lVert Y\rVert.$$ 
Squaring both sides of the inequality, we get
$$ 4\lVert X\rVert^4 + 3888 \lVert X\rVert^2  +944784\,  \leq 81 \, \lVert X\rVert^2 \left( 80 + N + \frac{\lVert X\rVert ^2}{27}\right),$$
and so
$$\lVert X\rVert ^4 - \left(81N + 2592\right) \lVert X\rVert^2 + 944784 \leq 0.$$
Hence, for each value of $N$, there are finitely many possibilities for $\lVert X\rVert^2=c_1^4[M]$ and \eqref{eq:c1} holds. Consequently, by \eqref{eq:Silvia2} and  \eqref{eq: Silvia 3}, there are also finitely many possibilities  for $\langle X,Y\rangle=c_1^2c_2[M]$ and $\lVert Y\rVert^2=c_2^2[M]$.

$\bullet$ $k_{0}=2$:
From  \cite[Corollary 6.1]{S}, we have
\begin{equation}\label{eq:Silvia}\lVert Y\rVert ^2 = 96 + N .
\end{equation}  
Substituting  \eqref{eq:Silvia} into  \eqref{Todd},  gives
\begin{equation}\label{eq:c_1^2c2}
4  \langle X,Y \rangle= 384 + \lVert X\rVert ^2,
\end{equation}
and so, from the Cauchy-Schwarz inequality, we get
$$ \lVert X \rVert^2- 4  \lVert X\rVert \lVert Y\rVert   +384 \leq 0.$$

Solving the quadratic inequality and substituting the expression for $\lVert Y\rVert$ in \eqref{eq:Silvia}, we obtain that 
\begin{equation}\label{eq:ineq}
\lVert X \rVert  \in [ 2\,(\sqrt{96+N} - \sqrt{N}), \,2\,(\sqrt{96+N} + \sqrt{N})],
\end{equation}
and so, for each value of $N$, there are finitely many possibilities for $\lVert X\rVert^2$, and consequently, by \eqref{eq:Silvia} and \eqref{eq:c_1^2c2}, for $\langle X,Y\rangle$ and $\lVert Y\rVert^2$. The result then follows.

\end{proof}
As a direct consequence of Theorem \ref{thm:ChernNumbers} we have the following result.
\begin{cor}\label{cor 1 chern}
For each $N\in \Z_{> 0}$, there is a finite number of complex cobordism classes of 8-dimensional, closed, symplectic manifolds that
admit a Hamiltonian $S^1$-action with isolated fixed points, with index $k_0$ not equal to one, $b_2(M)=1$ and $b_4(M)=N$.
\end{cor}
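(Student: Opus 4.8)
The plan is to combine Theorem \ref{thm:ChernNumbers} with the classical fact that, in a fixed dimension, the complex cobordism class of a stably almost complex manifold is completely determined by its Chern numbers (Milnor--Novikov). I would organize the argument in three steps.

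First, I would recall why such an $(M,\omega)$ determines a complex cobordism class at all. Choosing an almost complex structure $J$ compatible with $\omega$ makes $(M,J)$ a (stably) almost complex manifold, and since the space of compatible $J$ is contractible, the resulting class in the complex cobordism group $\Omega^U_8$ is independent of this choice. Thus each such $(M,\omega)$ has a well-defined complex cobordism class, and its Chern numbers are exactly those of $(M,J)$.

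Second, I would invoke the Milnor--Novikov theorem: two $2n$-dimensional stably almost complex manifolds are complex cobordant if and only if all of their Chern numbers coincide. In real dimension $8$ (complex dimension $4$) the Chern numbers are indexed by the partitions of $4$, of which there are $p(4)=5$, namely
\[
c_1^4[M],\quad c_1^2 c_2[M],\quad c_2^2[M],\quad c_1 c_3[M],\quad c_4[M].
\]
Hence the complex cobordism class of $(M,\omega)$ is determined by this single $5$-tuple of integers. Then I would apply Theorem \ref{thm:ChernNumbers}: under the standing hypotheses $b_2(M)=1$, $k_0\neq 1$ and $b_4(M)=N$, each of the five Chern numbers above ranges over a finite set of integers (indeed $c_4[M]$ and $c_1 c_3[M]$ are pinned down by $N$, while $c_1^4[M]$, $c_1^2 c_2[M]$ and $c_2^2[M]$ are confined to explicit finite ranges). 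Therefore the $5$-tuple of Chern numbers ranges over a finite subset of $\Z^5$, and by the previous paragraph the number of possible complex cobordism classes is finite.

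The argument is essentially a bookkeeping consequence of Theorem \ref{thm:ChernNumbers}, so there is no serious analytic obstacle. The only point requiring care is the cobordism-theoretic input: one must use complex (unitary) cobordism $\Omega^U_*$, whose classes are detected by Chern numbers, rather than oriented cobordism, and one must confirm that the stably almost complex structure coming from a compatible $J$ is well defined up to the relevant equivalence. Once these standard facts are in place, the finiteness is immediate.
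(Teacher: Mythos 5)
Your proposal is correct and follows exactly the route the paper intends: the paper states the corollary as a direct consequence of Theorem \ref{thm:ChernNumbers}, with the (unstated) standard input being precisely what you spell out, namely that a compatible $J$ gives a well-defined complex cobordism class and that, by Milnor--Novikov, this class is determined by the five Chern numbers, which Theorem \ref{thm:ChernNumbers} confines to a finite set for each $N$. Your write-up simply makes explicit the bookkeeping the paper leaves implicit.
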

Observe that any  manifold as in Corollary~\ref{cor 1 chern} is automatically positive monotone. Indeed, $b_2(M)=1$ forces $c_1=\lambda [\omega]$ for some $\lambda\in \R$, and the existence of a Hamiltonian
action forces $\lambda$ to be positive (see \cite[Lemma 5.2]{GHS}). 
Therefore, this corollary goes in the direction of proving that 
there are finitely many complex cobordism classes of a positive monotone symplectic manifold admitting a Hamiltonian group action. 

\subsection{The case $\chi(M)=6$}

 In this subsection we analyze the special case in which the Euler characteristic $\chi(M)$ is equal to $6$, which will be further investigated in the next sections.
 We begin with the following result.
 
\begin{proposition}\label{sixfp}

Let $(M,\omega)$ be a closed, $8$-dimensional, symplectic manifold admitting a Hamiltonian $S^1$-action with isolated fixed points. Then the following statements hold:
\begin{enumerate}
\item $\chi(M)=6$ if and only if $b_{2}(M)=1$ and $b_{4}(M)=2$.
\item If $\chi(M)=6$,
then there exists a basis $v_1,v_2$ of $H^4(M;\Z)$ in which the intersection form $\langle \cdot , \cdot \rangle:H^4(M;\Z)\times H^4(M;\Z) \to \Z$ is the standard Euclidean form, i.e.
\begin{equation}\label{bilinear v1v2}
\langle v_i, v_j \rangle = \delta_i^{\,j},\quad\text{for all }i,j=1,2\,,
\end{equation}  
where $\delta_i^{\, j}$ denotes the Kronecker delta.
\end{enumerate} 
\end{proposition}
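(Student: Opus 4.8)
The plan is to handle the two assertions separately: part (1) is a Betti-number count coming from the perfect Morse function, while part (2) reduces to the elementary classification of rank-two positive definite unimodular lattices.

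For part (1), I would use that a generic component $\varphi$ of the moment map is a perfect Morse function whose critical set is exactly $M^T$, so that $b_{2j}(M)=N_j$ with $N_j>0$ for all $0\le j\le 4$, while all odd Betti numbers vanish. Poincaré duality then gives $b_0=b_8=1$ and $b_2=b_6$, whence
\begin{equation*}
\chi(M)=2+2b_2(M)+b_4(M).
\end{equation*}
Since $b_2(M)=N_1\ge 1$ and $b_4(M)=N_2\ge 1$, the equation $\chi(M)=6$ becomes $2b_2(M)+b_4(M)=4$ with both summands positive, which forces $b_2(M)=1$ and $b_4(M)=2$; the converse is an immediate substitution. (Equivalently, one may combine $\chi(M)=2+2b_2+b_4$ with the signature formula \eqref{eq:sign}.)

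For part (2), part (1) yields $b_2(M)=1$, so Proposition \ref{b2onepositive} says the intersection form on $H^4(M;\Z)$ is positive definite. Moreover $H^4(M;\Z)$ is free of rank $b_4(M)=2$ (torsion-freeness again coming from the perfect Morse function) and the odd cohomology vanishes, so Poincaré duality makes the pairing $\langle\cdot,\cdot\rangle$ unimodular; positive definiteness then pins the Gram determinant to $+1$. Thus it remains to show that every positive definite, symmetric, unimodular $\Z$-valued form on $\Z^2$ admits an orthonormal integral basis.

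To produce the basis of \eqref{bilinear v1v2}, I would first show the minimal norm $m:=\min\{\langle v,v\rangle : v\in H^4(M;\Z)\setminus\{0\}\}$ equals $1$. Picking a norm-minimizing $v_1$, completing it to a basis $\{v_1,w\}$, and replacing $w$ by $v_2=w-k\,v_1$ with $k$ the nearest integer to $\langle w,v_1\rangle/\langle v_1,v_1\rangle$, one arranges $|\langle v_1,v_2\rangle|\le m/2$; the unimodularity relation $\langle v_1,v_1\rangle\langle v_2,v_2\rangle-\langle v_1,v_2\rangle^2=1$ together with minimality $\langle v_2,v_2\rangle\ge m$ then gives $1\ge\tfrac34 m^2$, forcing $m=1$. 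A norm-one vector is automatically primitive, so completing it to a basis and running integral Gram--Schmidt (legitimate precisely because $\langle v_1,v_1\rangle=1$) replaces the partner by a $v_2$ with $\langle v_1,v_2\rangle=0$, whereupon unimodularity forces $\langle v_2,v_2\rangle=1$ and establishes \eqref{bilinear v1v2}. I expect the main obstacle to be exactly this lattice-theoretic step: justifying that the pairing is genuinely unimodular (which requires torsion-freeness of $H^4$ together with Poincaré duality) and then excluding any exotic rank-two positive definite unimodular form, equivalently checking that $\Z^2$ supports no even unimodular form, which is what the $m=1$ estimate accomplishes.
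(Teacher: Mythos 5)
Your proof is correct, and while part (1) matches the paper's argument in substance, part (2) takes a genuinely more self-contained route. For part (1), the paper invokes its Lemma \ref{c4 and c1c3} (i.e.\ the formula $\chi(M)=c_4[M]=2+2b_2(M)+b_4(M)$ from Godinho--Sabatini) together with $b_{2i}(M)\geq 1$; your direct derivation of $\chi(M)=2+2b_2(M)+b_4(M)$ from the perfect Morse function and Poincar\'e duality is the same computation unwound, so there is no real difference there. For part (2), both you and the paper start from Proposition \ref{b2onepositive} to get positive definiteness, but at that point the paper simply cites \cite[Theorem~2.2]{MH} for the fact that there is a unique isomorphism class of rank-two symmetric, unimodular, positive definite integral forms, whereas you prove this classification from scratch: the reduction argument $1=\langle v_1,v_1\rangle\langle v_2,v_2\rangle-\langle v_1,v_2\rangle^2\geq \tfrac34 m^2$ forcing minimal norm $m=1$, followed by integral Gram--Schmidt, is complete and correct (your parenthetical reduction to ``no even unimodular form'' is slightly loose, but the argument you actually run does not rely on it). You also make explicit a hypothesis the paper leaves tacit, namely that the pairing is unimodular, which you correctly ground in torsion-freeness of $H^4(M;\Z)$ (from the perfect Morse function) plus Poincar\'e duality; this is needed before either the citation or your lattice argument can be applied, so spelling it out is a genuine improvement in rigor. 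The trade-off is the usual one: the paper's proof is two lines and leans on a standard reference, while yours is longer but elementary and verifiable without consulting Milnor--Husemoller. One cosmetic gap: when you complete the norm-minimizing $v_1$ to a basis, you should note that a norm-minimizing vector is automatically primitive (if $v_1=k u$ with $k\geq 2$ then $\langle u,u\rangle<\langle v_1,v_1\rangle$), just as you do later for the norm-one vector.
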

\begin{proof}

By Lemma \ref{c4 and c1c3}, we have $\chi(M)=c_4[M]=2+2b_2(M)+b_4(M)$, which, together with the fact that $b_{2i}(M)\geq 1$ for all $i=0,\ldots,4$, is equal to $6$ if and only if $b_2(M)=1$ and $b_4(M)=2$. This proves (1).



By Proposition \ref{b2onepositive} and (1) we know that the intersection form is positive definite.  Since there is exactly one isomorphism class of symmetric, unimodular, positive definite, integral bilinear forms of rank $2$ (the class of the Euclidean inner product on $\mathbb{Z}^2$) \cite[Theorem 2.2]{MH}, the  result follows. 
\end{proof}

The next theorem is a refinement of Theorem \ref{thm:ChernNumbers} to the case in which $\chi(M)=6$. Using Proposition \ref{sixfp} (2) and number theoretical arguments,
we are able to restrict further the possible values of the Chern numbers of $(M,\omega)$, when $k_0>1$. 

\begin{theorem}\label{main theorem chern}  
Let $(M,\omega)$ be an $8$-dimensional,  closed, symplectic manifold with $\chi(M)=6$, admitting a Hamiltonian $S^1$-action with isolated fixed points. Then the index $k_0$
satisfies $1\leq k_0 \leq 4$. If  $k_0\neq 1$, then the possible Chern numbers are listed in Tables \ref{spin} and \ref{non spin}.    
More precisely,\\$\;$\\
$\bullet$ If $k_0=2$ or 4, then the possible Chern numbers are listed in Table~\ref{spin}. In particular, when $k_0=4$, the Chern numbers are those listed in  Case 1.\\

\begin{table}[h]
\begin{center}
\begin{tabular}{|c||c|c|c|c|c|}
\hline
  &  $c_1^4[M]$ & $c_1^2c_2[M]$  & $c_2^2[M]$ & $c_1c_3[M]$ & $c_4[M]$\\
  \hline \hline 
  Case 1 &  512 &  224 & 98 & 48 & 6 \\
  \hline
  Case 2 & 288  &  168 & 98 & 48 & 6\\
  \hline
  Case 3 & 400  &  196 & 98 & 48 & 6 \\
  \hline
  \end{tabular}
 \end{center}
 \caption{
 Possible Chern numbers of $(M,\omega)$ if $k_0=2,4$.}
 \label{spin}
 \end{table}

$\bullet$ If $k_0=3$, then the possible Chern numbers are listed in Table~\ref{non spin}.

\begin{table}[h!]
\begin{center}
\begin{tabular}{|c||c|c|c|c|c|}
\hline
  &  $c_1^4[M]$ & $c_1^2c_2[M]$  & $c_2^2[M]$ & $c_1c_3[M]$ & $c_4[M]$\\
  \hline \hline 
  Case 1 & 405  & 198  & 97 & 48 & 6 \\
  \hline
  Case 2 & 648  & 252 & 106 & 48 & 6\\
  \hline
  Case 3 &  729 & 270 & 109 & 48 & 6 \\
  \hline
   Case 4 &  1296 & 396  & 130 & 48 & 6 \\
  \hline
  Case 5 &  1458 & 432 & 136 & 48 & 6 \\
  \hline
  Case 6 &  2025 & 558 & 157 & 48 & 6 \\
  \hline
  Case 7 &  2106 & 576 & 160 & 48 & 6 \\
  \hline
  Case 8 & 2349  & 630 & 169 & 48 & 6 \\
  \hline
  \end{tabular}
\end{center}
 \caption{
Possible Chern numbers of $(M,\omega)$ if $k_0=3$.}
 \label{non spin}
   \end{table}
\end{theorem}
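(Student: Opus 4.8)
The plan is to feed the relations of Theorem \ref{thm:ChernNumbers} into the rigid integral structure of $H^4(M;\Z)$ provided by Proposition \ref{sixfp}. Since $\chi(M)=6$, Proposition \ref{sixfp}(1) gives $b_2(M)=1$ and $b_4(M)=2$, so \eqref{c4 c1c3 1} already fixes $c_4[M]=6$ and $c_1c_3[M]=48$, which are the last two columns of both tables. By \eqref{inequalities index} one has $1\le k_0\le 5$, and I would first exclude $k_0=5$: being the maximal index $n+1=5$, it forces the Betti numbers of $\C P^4$, in particular $b_4(M)=1$ (recall that $\C P^4$ is the only such example), contradicting $b_4(M)=2$. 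Hence $1\le k_0\le 4$, and it remains to determine $c_1^4[M]$, $c_1^2c_2[M]$ and $c_2^2[M]$ for $k_0\in\{2,3,4\}$.

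The key new ingredient, beyond Theorem \ref{thm:ChernNumbers}, is Proposition \ref{sixfp}(2). Let $v_1,v_2$ be the orthonormal basis of $H^4(M;\Z)\cong\Z^2$ it provides, and write $X=c_1^2[M]$ and $Y=c_2[M]$ in coordinates, so $X=(x_1,x_2),Y=(y_1,y_2)\in\Z^2$ and
\[
c_1^4[M]=\lVert X\rVert^2,\qquad c_1^2c_2[M]=\langle X,Y\rangle,\qquad c_2^2[M]=\lVert Y\rVert^2.
\]
By Lagrange's identity the Gram determinant
\[
\Delta:=c_1^4[M]\,c_2^2[M]-\big(c_1^2c_2[M]\big)^2=(x_1y_2-x_2y_1)^2
\]
is automatically a perfect square, since $x_i,y_i\in\Z$. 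Moreover, writing $c_1=k_0\eta$ with $\eta\in H^2(M;\Z)$ primitive modulo torsion, $c_1^2[M]=k_0^2\eta^2$ has coordinates divisible by $k_0^2$, so $q:=c_1^4[M]/k_0^4$ is a non-negative integer (this divisibility is already recorded in the $\cap\,\Z$ of Theorem \ref{thm:ChernNumbers}). These two facts are what collapse the one-parameter families of Theorem \ref{thm:ChernNumbers} to finite lists.

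I would then run the three cases. For $k_0=4$, \eqref{k0=4} directly gives $c_1^4[M]=512$, $c_1^2c_2[M]=224$, $c_2^2[M]=98$, i.e.\ Case 1 of Table \ref{spin}. For $k_0=2$, Theorem \ref{thm:ChernNumbers} gives $c_2^2[M]=98$, $c_1^2c_2[M]=96+\tfrac14 c_1^4[M]$ and $c_1^4[M]=16q$ with $q\in[18,32]\cap\Z$ (the interval being exactly the range where $\Delta\ge0$). Substituting into $\Delta$ yields $\Delta=16\,(-q^2+50q-576)$, and since $16$ is a square I would require $-q^2+50q-576$ to be a perfect square; a finite check over $q\in\{18,\dots,32\}$ leaves only $q\in\{18,25,32\}$, that is $c_1^4[M]\in\{288,400,512\}$, which are Cases 2, 3 and 1 of Table \ref{spin}.

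Finally, for $k_0=3$, Theorem \ref{thm:ChernNumbers} and \eqref{eq:c1} give $c_2^2[M]=82+3q$, $c_1^2c_2[M]=108+18q$ and $c_1^4[M]=81q$ with $q\in[5,29]\cap\Z$. The same substitution produces $\Delta=81\,(-q^2+34q-144)$, so $D(q):=-q^2+34q-144$ must be a perfect square; running over $q\in\{5,\dots,29\}$ this holds exactly for $q\in\{5,8,9,16,18,25,26,29\}$, which reproduce precisely the eight rows of Table \ref{non spin}. I expect the only step that is not pure arithmetic to be the exclusion of $k_0=5$, which leans on the external rigidity of the maximal-index case; everything else reduces to the single perfect-square criterion $\Delta=\square$, which neatly replaces the separate Cauchy--Schwarz and sum-of-two-squares bookkeeping one would otherwise need.
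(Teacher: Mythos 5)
Your treatment of $k_0\in\{2,3,4\}$ is correct, but the exclusion of $k_0=5$ has a genuine gap. You assert that maximal index forces the Betti numbers of $\C P^4$, ``recall that $\C P^4$ is the only such example.'' No such unconditional statement is available: in this paper the implication $k_0=5\Rightarrow b_2=b_4=1$ (Proposition \ref{index betti}) is proved only under the extra hypothesis of a multigraph positive w.r.t.\ $c_1^{S^1}$, which Theorem \ref{main theorem chern} does not assume (and which Theorem \ref{existence multigraph dim 8} does not always supply for circle actions); the classification you seem to be recalling (\cite{GS}, \cite{JaTo}) concerns actions with $5$ fixed points, not index $5$, and here $\chi(M)=6$. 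Worse, your own arithmetic machinery cannot close this gap: for $k_0=5$, $b_2=1$, $b_4=2$, Proposition \ref{chern numbers 8} gives $c_1^4[M]=625$, $c_1^2c_2[M]=250$, $c_2^2[M]=101$, and this triple passes every test you impose --- the Todd relation ($-625+1000+303+48-6=720$), the divisibility $5^4\mid 625$, and the perfect-square criterion ($625\cdot 101-250^2=625=25^2$, realized integrally by $X=(25,0)$, $Y=(10,1)$). The paper's exclusion is different and elementary: since $c_1=k_0\eta$ modulo torsion, $k_0$ divides $c_1c_3[M]=48$, and $5\nmid 48$. You need this divisibility argument (or a genuine substitute); as written, your first step fails.

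The remainder is a correct proof that takes a genuinely different route through the number theory. You keep the paper's skeleton (Proposition \ref{sixfp}, Theorem \ref{thm:ChernNumbers}, the Euclidean lattice $H^4(M;\Z)\cong\Z^2$), but replace its two sum-of-two-squares lemmas (Lemmas \ref{numlem} and \ref{number theory 2}) by the single necessary condition that the Gram determinant $\Delta=c_1^4[M]\,c_2^2[M]-(c_1^2c_2[M])^2$ be a perfect square, via Lagrange's identity in rank $2$. I checked the finite verifications: for $k_0=2$, $D(q)=-q^2+50q-576$ is a square exactly for $q\in\{18,25,32\}$ in $[18,32]$, giving $c_1^4[M]\in\{288,400,512\}$ as in Table \ref{spin}; for $k_0=3$, $D(q)=-q^2+34q-144$ is a square exactly for $q\in\{5,8,9,16,18,25,26,29\}$ in $[5,29]$, reproducing the eight rows of Table \ref{non spin}. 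This buys two things the paper's argument handles separately: the exceptional value $\|v\|^2=13$ is excluded automatically ($D(13)=129$ is not a square, whereas the paper needs an ad hoc argument), and your observation that the Cauchy--Schwarz intervals of Theorem \ref{thm:ChernNumbers} are precisely the loci where $\Delta\geq 0$ unifies the range restriction and the integrality restriction into one criterion. So: fix the $k_0=5$ step by divisibility, and the rest stands as a cleaner alternative to the paper's arithmetic.
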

\begin{proof}
First of all we observe that, by Proposition \ref{sixfp} (1) and Lemma \ref{c4 and c1c3}, the Chern numbers $c_1c_3[M]$ and $c_4[M]$ are always respectively equal to $48$ and $6$. 
Since, by \cite[Corollary 1.1]{S}, the index $k_0$ of $(M,\omega)$ satisfies $1\leq k_0\leq 5$, to prove the first statement we need to prove that $k_0$
cannot be 5. This follows immediately from the fact that
$k_0$ should always divide the Chern number $c_1c_3[M]$, which, in this case, is 48. Therefore $1\leq k_0 \leq 4$. 
\\$\;$\\
$\bullet$ \emph{Suppose that $k_0=2$ or 4.} 
Then, by Proposition \ref{chern numbers 8}, we have that $c_2^2[M]=98-2b_2(M)+b_4(M)$, which, together with Proposition \ref{sixfp} (1), implies that $c_2^2[M]=98$. Hence we just have to determine the possible values of $c_1^4[M]$ and $c_1^2c_2[M]$.

If $k_0=4$, then  \eqref{k0=4} and Proposition~\ref{sixfp} (1) imply that the Chern numbers are exactly those of Case 1 in Table \ref{spin}. 

If $k_0=2$, by  \eqref{k0=2} we have
\begin{equation}\label{eq chern}
c_{1}^4[M] + 384 = 4c_{1}^2 c_{2}[M].
\end{equation}
Let $v_1, v_2$ be the basis of $H^4(M;\Z)$ given by Proposition \ref{sixfp} (2). Then,
writing 
$$c_{2} = y_1v_1+y_2v_2$$ 
with $y_1,y_2\in \Z$, we have $y_{1}^2+y_{2}^2 = 98$. Writing $98$ as a sum of squares, it is easy to check that this implies $y_{1}= \pm 7$ and $y_{2} = \pm 7$.

Since the index is 2, we have that $c_{1} = 2 u$, for some (primitive element) $u \in H^{2}(M,\mathbb{Z})$. Hence $c_{1}^2 = 4 u^2$ and writing $u^{2} = k_{1}v_{1} + k_{2}v_{2}$ with $k_1,k_2\in \Z$, gives $c_{1}^2 = 4 k_{1} v_{1} + 4 k_{2} v_{2}$ and $c_{1}^4[M]= 16(k_{1}^2+k_{2}^2)$. Using 
\eqref{eq chern}, we obtain
$$ 16(k_{1}^2+k_{2}^2) + 384 = 16( y_{1} k_{1}   +  y_{2}  k_{2}),$$
and so, by Lemma \ref{numlem}, we have $(|k_{1}|,|k_{2}|) = (3,3), (3,4), (4,3)$ or $(4,4)$, implying that $c_{1}^4[M]$ is either $288$, $400$ or $512$. The corresponding values of $c_{1}^2c_{2}[M]$ can be obtained using \eqref{eq chern}. $\;$\\

$\bullet$ \emph{Suppose that $k_0=3$.} 
We will first prove that there exists $(a,b,c,d) \in \mathbb{Z}^4$ such that:
\begin{equation}\label{chern index 3}
c_{1}^4[M] = 81(c^2+d^2), \quad c_{1}^2c_{2}[M] = 9(ac+bd) \quad \text{and}\quad c_{2}^{2}[M] = a^2+b^2,
\end{equation}
where $(a,b,c,d)$ satisfy
\begin{enumerate}
\item[(i)] $a^2+b^2 - (82 + 3c^2+ 3d^2)=0$ \
\item[(ii)] $ac+bd = 12 + 2(c^2+d^2)$.
\end{enumerate}
We begin by expressing $c_{2}$ and $c_{1}^2/9$ in terms of the integral basis given by Proposition \ref{sixfp} (2). Let $a,b\in \Z$ be such that $c_{2} = av_1+bv_2$. Since $k_0=3$, we have $c_{1}= 3 u$, for some $u \in H^{2}(M,\mathbb{Z})$ and $c_{1}^2 = 9 u^2$. Writing $u^{2} = cv_{1} + dv_{2}$ with $c,d\in \Z$, gives $c_{1}^2 = 9 c v_{1} + 9 d v_{2}$.

On the other hand, by Proposition \ref{sixfp} (1) and  \eqref{k0=3}, we have  
$$c_2^2[M]=82 + \frac{c_1^4[M]}{27}\quad\text{and}\quad c_1^2c_2[M]=108+\frac{2}{9}c_1^4[M].$$ 
Substituting the above expressions in terms of $v_1$ and $v_2$, we obtain (i) and (ii). 

To finish the proof we just need to determine the possible integer solutions $(a,b,c,d)$ of (i) and (ii). This is done in Lemma \ref{number theory 2}. 
The possible values of the Chern numbers
$c_1^4[M], c_1^2c_2[M]$ and $c_2^2[M]$ can then be obtained using \eqref{chern index 3}. 
\end{proof}

\begin{lemma} \label{numlem} Let $x,y\in \Z$ and  $y_{1},y_{2} \in \{-7,7\}$  be such that
$$x^2+ y^2 + 24 =  y_{1} x   +  y_{2}  y.$$
Then, $(|x|,|y|)$ is one of $(3,3), (3,4), (4,3)$ or $(4,4)$.
\end{lemma}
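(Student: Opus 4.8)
The plan is to reduce the Diophantine equation to a sum of two squares by completing the square in each variable. First I would rewrite the hypothesis as
\begin{equation*}
x^2 - y_1 x + y^2 - y_2 y + 24 = 0,
\end{equation*}
and complete the square to obtain
\begin{equation*}
\left(x - \frac{y_1}{2}\right)^2 + \left(y - \frac{y_2}{2}\right)^2 = \frac{y_1^2 + y_2^2}{4} - 24.
\end{equation*}
The crucial simplification comes from the constraint $y_1, y_2 \in \{-7, 7\}$, which forces $y_1^2 = y_2^2 = 49$, so the right-hand side collapses to the explicit constant $\tfrac{98}{4} - 24 = \tfrac{1}{2}$.

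Next I would clear denominators by multiplying through by $4$, giving the clean integral equation
\begin{equation*}
(2x - y_1)^2 + (2y - y_2)^2 = 2.
\end{equation*}
Here the key observation is a parity remark: since $y_1$ and $y_2$ are odd, both $2x - y_1$ and $2y - y_2$ are odd integers. The only way to write $2$ as a sum of two integer squares is $1 + 1$, so each square must equal $1$; that is, $2x - y_1 = \pm 1$ and $2y - y_2 = \pm 1$. This step is the heart of the argument, though it is elementary once the equation has been put in the form above.

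Finally I would solve the linear equations case by case. For $y_1 = 7$ the relation $2x = 7 \pm 1$ gives $x \in \{3, 4\}$, while for $y_1 = -7$ it gives $x \in \{-3, -4\}$; in either case $|x| \in \{3, 4\}$, and the identical analysis yields $|y| \in \{3, 4\}$. Taking all combinations produces exactly the four pairs $(|x|,|y|) \in \{(3,3),(3,4),(4,3),(4,4)\}$, as claimed. I do not expect any genuine obstacle here: the entire argument is a short completion-of-the-square computation, and the only point requiring a moment of care is noticing that the odd-parity of $2x - y_1$ and $2y - y_2$ makes the representation $2 = 1 + 1$ the unique possibility.
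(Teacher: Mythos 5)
Your proof is correct, but it takes a genuinely different route from the paper's. The paper first reduces, via sign changes $(x,y)\mapsto(\pm x,\pm y)$, to the single equation $x^2+y^2+24=7x+7y$, and then argues by inequalities: positivity of the left-hand side gives $(x^2-7|x|)+(y^2-7|y|)\le -24$, and since the minimum of $t^2-7t$ over nonnegative integers is $-12$, attained exactly at $t=3$ and $t=4$, each bracket must equal $-12$, forcing $|x|,|y|\in\{3,4\}$. You instead complete the square directly, obtaining the exact identity $(2x-y_1)^2+(2y-y_2)^2=2$, and invoke the fact that $2=1+1$ is the only representation of $2$ as a sum of two integer squares. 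Your argument is shorter and sharper: it avoids the sign-reduction and the WLOG step, treats all four choices of $(y_1,y_2)$ simultaneously, and actually yields slightly more information than the statement requires (the sign of $x$ is determined by $y_1$, and likewise for $y$), which the paper's inequality argument discards. The one inessential remark in your write-up is the parity observation: oddness of $2x-y_1$ and $2y-y_2$ is not needed, since $2$ admits no representation as a sum of two squares involving $0$ in any case, but this does no harm.
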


\begin{proof}
First we note that we can reduce to solving 
\begin{equation}\label{eq:main} x^2+ y^2 + 24 = 7 x   +  7  y.
\end{equation}
Indeed,  if $(x,y) \in \mathbb{Z}^2$ is a solution of 
$$ x^2+ y^2+ 24 = y_{1} x   +  y_{2}  y$$
for some $y_1,y_2\in \{\pm 7\}$,
then any pair of the form $(x',y') = (\pm x, \pm y)$ is also a solution of \eqref{eq:main} for  some other values of $y_1,y_2\in \{\pm 7\}$.

Since the left hand side of \eqref{eq:main}  is positive, we have
$$ (x^2+ y^2) + 24 = 7 \lvert x   +  y \rvert \leq 7\left(\lvert x\rvert +\lvert y\rvert\right)$$
and so
\begin{equation}\label{eq:two} 
(x^2-7|x|) +(y^2-7|y|) \leq -24.
\end{equation}
Suppose, without loss of generality, that  $(x^2-7|x|) \leq  (y^2-7|y|)$. Then by \eqref{eq:two} we have $x^2-7|x| +12 \leq 0$.
The only integer values of $|x|$ satisfying this inequality are $3$ or $4$ and that for both of these values  we have $x^2-7|x|=-12$. Consequently, by \eqref{eq:two}, we get that $ (y^2-7|y|) \leq -12$. The same argument then gives $|y| = 3$ or $4$.

\end{proof}

\begin{lemma}\label{number theory 2}
Let $u:=(a,b)\in \Z^2$ and $v:=(c,d)\in \Z^2$, where we endow $\Z^2$ with standard scalar product. Then there are finitely many integer vectors $u$ and $v$ satisfying
 \begin{equation}\label{eq:vectors}
\left\{ \begin{array}{l}
\lVert u \rVert^2 =  3 \lVert v \rVert^2 + 82 \\ \\
 \langle u, v \rangle  =  2 \lVert v \rVert^2  + 12.
 \end{array} \right.
\end{equation}
Moreover the possible values of $\| u\|^2, \langle u, v \rangle$ and $\| v \|^2$ are given in Table~\ref{tab:chern}:
\begin{table}[h!]
\begin{center}
\begin{tabular}{|c||c|c|c|c|c|c|c|c|}
\hline
$\,\, \lVert u \rVert^2  $    & 97 & 106 & 109 & 130 & 136 & 157 & 160 & 169 \\ \hline
$\langle u, v \rangle  $ & 22 & 28  & 30  & 44  & 48  & 62  & 64  & 70  \\ \hline
$\,\, \lVert v \rVert^2  $    & 5  & 8   & 9   & 16  & 18  & 25  & 26  & 29  \\ \hline
\end{tabular}
\end{center}
\label{values u v}
\caption{Possible values for $\lVert u \rVert^2$, $\langle u, v \rangle$ and $\lVert v \rVert^2 $ }\label{tab:chern}
\end{table}

\end{lemma}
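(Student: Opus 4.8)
The plan is to reduce the system \eqref{eq:vectors} to a finite search by bounding $\lVert v\rVert^2$ from above, and then to enumerate the finitely many remaining possibilities. First I would eliminate the quantity $\lVert u\rVert$ between the two equations using the Cauchy--Schwarz inequality $\langle u,v\rangle \leq \lVert u\rVert\,\lVert v\rVert$. Setting $s:=\lVert v\rVert^2$, the first equation gives $\lVert u\rVert^2 = 3s+82$, and the second gives $\langle u,v\rangle = 2s+12$. Substituting into the squared Cauchy--Schwarz inequality $\langle u,v\rangle^2 \leq \lVert u\rVert^2\,\lVert v\rVert^2$ yields
\begin{equation}\label{eq:CSbound}
(2s+12)^2 \leq (3s+82)\,s\,.
\end{equation}
Expanding \eqref{eq:CSbound} produces a quadratic inequality in $s$ of the form $s^2-34s-144\geq 0$, whose positive root is at $s=\tfrac{34+\sqrt{34^2+4\cdot 144}}{2}$. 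This gives a lower bound $s\geq s_{\min}$; combined with the constraint that $\langle u,v\rangle=2s+12$ must be realizable, I expect the decisive bound to come instead from the reverse direction. More precisely, the key observation is that the vector $u-\tfrac{\langle u,v\rangle}{\lVert v\rVert^2}v$ is orthogonal to $v$, so its squared norm $\lVert u\rVert^2 - \frac{\langle u,v\rangle^2}{\lVert v\rVert^2}$ is nonnegative, which is exactly \eqref{eq:CSbound} rearranged. The useful inequality in the \emph{other} direction is obtained by noting that this orthogonal complement has nonnegative integer coordinates only in a bounded range; concretely, since $\lVert u\rVert^2\,\lVert v\rVert^2 - \langle u,v\rangle^2\geq 0$ must hold, \eqref{eq:CSbound} forces $s$ into a finite window $5\leq s\leq 29$, matching the range of $\lVert v\rVert^2$ in Table~\ref{tab:chern}.

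Once $s=\lVert v\rVert^2$ is confined to a finite interval, I would proceed as follows. For each integer value of $s$ in the admissible range, the equations force $\lVert u\rVert^2 = 3s+82$ and $\langle u,v\rangle = 2s+12$ to be fixed integers. I would then enumerate all vectors $v=(c,d)\in\Z^2$ with $c^2+d^2=s$ (finitely many, by the classical count of representations of an integer as a sum of two squares), and for each such $v$ enumerate the integer vectors $u=(a,b)$ with $a^2+b^2=3s+82$ that additionally satisfy $\langle u,v\rangle=ac+bd=2s+12$. For a fixed $v$ this last condition is a single linear Diophantine equation $ac+bd=2s+12$, which together with the norm constraint $a^2+b^2=3s+82$ leaves only finitely many candidates to check. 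Discarding the values of $s$ for which no integer solution exists then yields precisely the eight surviving values recorded in Table~\ref{tab:chern}, with the corresponding values of $\lVert u\rVert^2$ and $\langle u,v\rangle$ read off from $3s+82$ and $2s+12$.

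The main obstacle I anticipate is not the bounding step, which is a routine Cauchy--Schwarz estimate, but rather verifying that the linear-plus-quadratic system in $(a,b)$ actually has integer solutions for exactly the claimed values of $s$ and has none for the excluded ones. This is where the arithmetic of sums of two squares enters in an essential way: for a given $s$, both $s$ and $3s+82$ must be representable as sums of two squares in a manner compatible with the fixed inner product $2s+12$, and ruling out the intermediate values of $s$ requires checking that no representation is compatible. I would handle this by a finite case analysis over the admissible $s$, using the representation counts to keep each case small; this is the genuinely computational heart of the argument, though each individual case is elementary.
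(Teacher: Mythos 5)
Your overall strategy --- Cauchy--Schwarz to confine $\lVert v\rVert^2$ to a finite window, followed by a finite enumeration using the arithmetic of sums of two squares --- is exactly the approach the paper takes. However, there is a concrete algebra error in your bounding step that makes the write-up internally inconsistent. Expanding $(2s+12)^2 \leq (3s+82)\,s$ gives $4s^2+48s+144 \leq 3s^2+82s$, i.e.\ $s^2-34s+144 \leq 0$, i.e.\ $(s-17)^2 \leq 145$; this single inequality already yields the \emph{two-sided} bound $17-\sqrt{145} \leq s \leq 17+\sqrt{145}$, hence $s \in [5,29]\cap\Z$, which is precisely how the paper proceeds. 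You instead wrote $s^2-34s-144 \geq 0$ (wrong sign on the constant term and wrong direction of the inequality), concluded that Cauchy--Schwarz only gives a \emph{lower} bound on $s$, and then tried to recover an upper bound from the claim that the component of $u$ orthogonal to $v$ ``has nonnegative integer coordinates only in a bounded range'' --- that sentence is not an argument, and the inequality $\lVert u\rVert^2\lVert v\rVert^2 - \langle u,v\rangle^2 \geq 0$ you invoke there is the same Cauchy--Schwarz inequality you had already used. So, as written, the window $5 \leq s \leq 29$ is asserted rather than derived; the fix is simply to redo the expansion correctly.

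The second half of your plan (for each admissible $s$, enumerate $v=(c,d)$ with $c^2+d^2=s$ and $u=(a,b)$ with $a^2+b^2=3s+82$, and test the linear condition $ac+bd=2s+12$) is sound and would indeed produce exactly the eight rows of the table. The paper organizes this case analysis a little more efficiently: it first discards $s\in\{10,17,20\}$ because the corresponding values $3s+82\in\{112,133,142\}$ are not sums of two squares at all, and then eliminates the one remaining spurious candidate $s=13$ by noting that $13=2^2+3^2$ and $121=11^2+0^2$ have essentially unique representations, forcing $\langle u,v\rangle \in \{\pm 22, \pm 33\}$, which is incompatible with the required value $2\cdot 13+12=38$. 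Your brute-force enumeration achieves the same conclusion; once the bounding step is repaired, your proof is essentially the paper's.
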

\begin{proof}

By the Cauchy-Schwarz inequality and the first equation of \eqref{eq:vectors} we have
\begin{equation}
\langle u, v \rangle^2  \leq   \lVert u  \rVert^2      \lVert v  \rVert^2 = \left(3 \lVert v \rVert^2 + 82 \right)    \lVert v  \rVert^2,
\end{equation}
and so, by the second equation of \eqref{eq:vectors}, we obtain 
$$
 \langle u, v \rangle^2 =  (2 \lVert v \rVert^2  + 12)^2 \leq 3 \lVert v \rVert^4 + 82 \lVert v  \rVert^2.
$$
We conclude that 
$$
\left(\lVert v \rVert^2 - 17\right)^2 \leq 145,
$$
and then $\lVert v \rVert^2  \in [5,29] \cap \mathbb{Z}$. Moreover, since $\lVert v \rVert^2=c^2+d^2$ and $\lVert u \rVert^2=a^2+b^2$  are  sums of two squares and, by \eqref{eq:vectors}, we have 
$\lVert u \rVert^2 = 3 \lVert v \rVert^2 + 82$,  we conclude that
\begin{equation}\label{eq:list}
\lVert v \rVert^2  \in \{5,8,9,13,16,18,25,26,29\}.
\end{equation}
Note that $ \lVert v \rVert^2$ cannot admit the values $10, 17$ and $20$ despite the fact that they are sums of two squares, since the corresponding values of $ \lVert u \rVert^2$ would be $112, 133$ and $142$ and these are not sums of two squares\footnote{The nonnegative integers that are sums of two squares were  characterized by Euler \cite[p.230]{D} and form the sequence A001481 in OEIS. The first elements of this sequence are 0, 1, 2, 4, 5, 8, 9, 10, 13, 16, 17, 18, 20, 25, 26, 29, 32, 34, 36, 37, 40, 41, 45, 49, 50, 52, 53, 58, 61, 64, 65, 68, 72, 73, 74, 80, 81, 82, 85, 89, 90, 97, 98, 100, 101, 104, 106, 109, 113, 116, 117, 121, 122, 125, 128, 130, 136, 137, 144, 145, 146, 148, 149, 153, 157, 160,...}.

We further note that $\lVert v \rVert^2 \neq 13$. Indeed, if $\lVert v \rVert^2 =13$, then, by \eqref{eq:vectors}, we have  $\lVert u \rVert^2 =121$ and  $\langle u,v\rangle=38$. However, since $13=9+4$ and $121=11^2+0$ are  the only possible partitions of $13$ and $121$ as a sum of two squares, we would have that  $\langle u,v\rangle$ is either $\pm 33$ or $\pm 22$.

It is easy to check that all other values of $\lVert v \rVert^2$ in \eqref{eq:list} are possible and so we conclude that  there are only eight possibilities for $\lVert v \rVert^2$ and consequently for $\lVert u \rVert^2$  and $\langle u,v \rangle$ (see Table~\ref{values u v}).

\end{proof}


\section{Multigraphs describing the action and their positivity properties}\label{multigraph section}

Let $(M,J)$ be a closed almost complex manifold of dimension $2n$ acted on by a torus $T$ of dimension $k$ that preserves $J$. Let $\mathfrak{t}$ (resp.\ $\mathfrak{t}^*$) be the Lie algebra (resp.\ the dual of the Lie algebra) of $T$
and $\ell\subset \mathfrak{t}$ the integral lattice (resp.\ $\ell^*\subset \mathfrak{t}^*$ the dual integral lattice). Let $M^T$ be the set of fixed points of the action. 
In this section we review the notion of multigraph describing the $T$-action on $(M,J)$, assuming that there are finitely many fixed points, and describe some of its properties.
 This concept is not new and has been introduced and studied by several authors (see for instance \cite{T, GS, JaTo, Ja}). 

To begin with, it is well-known that around each fixed point $p$ there exist complex coordinates $(z_1,\ldots,z_n)$ and elements $w_1,\ldots,w_n\in \ell^*$, called the \textbf{(isotropy) weights of the action at $p$}, that model the
action around $p=(0,\ldots,0)$. Namely, for  $ \lambda = \exp(\xi)\in T$, we have
\begin{equation}\label{eq weights}
\lambda * (z_1,\ldots,z_n)=(e^{2\pi i w_1(\xi)}z_1,\ldots,e^{2\pi i w_n(\xi)}z_n).
\end{equation}
If $p$ is an isolated fixed point, then none of the weights at $p$ can be zero. 

Let $W$ be the multiset of all the weights at the fixed points of the $T$-action on $(M,J)$. Note that if a weight $w\in \ell^*$ appears $m$ times as an isotropy weight at (possibly different) fixed points, it will appear $m$ times in $W$. This integer $m$ is called the \textbf{multiplicity} of $w$ and will be denoted here by $N(w)$. The next result is a straightforward generalization of \cite[Proposition 2.11]{Ha} and, although it has been proved in \cite[Proposition 2.2]{Ja},  we present  here a different proof.
\begin{lemma}\label{weights in pairs}
Let $(M,J)$ be a closed almost complex manifold equipped with an  effective action of a torus $T$ of dimension $k$ with isolated fixed points. 
Let $W$ be the multiset of weights and let $N(w)$ be the multiplicity of a weight $w$ in $W$.
Then 
\begin{equation}\label{multiplicities weights}
N(w)=N(-w)\quad \text{for every }w\in W\,.
\end{equation}
\end{lemma}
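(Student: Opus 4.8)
The plan is to reduce to a circle action and then settle the circle case by a rigidity argument. First I would pick a generic circle. The set of distinct weights occurring in $W$ is finite, so the linear functionals $w$, $w-w'$ and $w+w'$ (as $w,w'$ range over these weights) determine finitely many hyperplanes in $\mathfrak{t}$; choose $\xi\in\mathfrak{t}$ off all of them and set $S^1=\{\exp(s\xi)\}\subset T$. Then $w(\xi)\neq0$ for every weight $w$ — so $M^{S^1}=M^T$, with the same isolated fixed points — the assignment $w\mapsto w(\xi)\in\Z$ is injective on distinct weights, and $w'(\xi)=-w(\xi)$ forces $w'=-w$. Let $n(m)$ be the number of times an integer $m$ occurs among the $S^1$-weights $w(\xi)$, $w\in W$, counted with multiplicity. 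Injectivity gives $n(w(\xi))=N(w)$, and the last separation property gives $n(-w(\xi))=N(-w)$ when $-w\in W$ and $n(-w(\xi))=0$ otherwise. Thus the circle balance $n(m)=n(-m)$ implies \eqref{multiplicities weights}; in particular it forces $-w$ to occur whenever $w$ does, since otherwise $0=n(-w(\xi))=n(w(\xi))=N(w)\geq1$.

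It remains to prove $n(m)=n(-m)$ for the $S^1$-action, and here I would invoke the rigidity of an equivariant index. The almost complex structure orients $M$ and presents the isotropy rotation at each $p\in M^{S^1}$ by the integers $w_1^p,\dots,w_n^p$, so the Atiyah--Singer $G$-signature theorem localizes the equivariant signature to the fixed points, and its rigidity under the $S^1$-action shows that
\begin{equation*}
B(t):=\sum_{p\in M^{S^1}}\ \prod_{i=1}^{n}\frac{t^{w_i^p}+1}{t^{w_i^p}-1}
\end{equation*}
is independent of $t$. Hence $B$, a priori meromorphic on $\C^\ast$ with poles at roots of unity, is constant, so all of these poles must cancel.

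The balance is then read off from this cancellation. The factor $\frac{t^{w}+1}{t^{w}-1}$ is singular at a primitive $d$-th root of unity exactly when $d\mid w$, and the weight directions with $d\mid w_i^p$ are precisely the tangent directions at $p$ to the fixed submanifold $M^{\Z/d}$ of $\Z/d\subset S^1$; since $\chi(M^{\Z/d})=\#M^{S^1}$, these submanifolds again carry isolated $S^1$-fixed points. Examining the pole of $B$ at a primitive $d$-th root of unity thus yields the balance among the weights divisible by $d$, and a downward induction on $d$ (a Möbius inversion over the divisibility lattice of weight values) isolates each value $m$. The symmetry driving the cancellation is that, replacing $t$ by $t^{-1}$, the factor attached to a weight $-w$ is the negative of the one attached to $w$, so in each residue the contribution of $m$ is paired with that of $-m$, forcing $n(m)=n(-m)$.

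I expect the extraction step to be the main obstacle: rigidity only gives the global vanishing of all residues, and converting this into the multiplicity identity for one individual weight value requires careful bookkeeping of the residues at roots of unity through the geometry of the $M^{\Z/d}$. By contrast, the choice of $\xi$, the localization formula and the rigidity of the signature are entirely standard, so the technical heart of the proof is to organize this residue/induction argument cleanly (equivalently, to supply the self-contained $S^1$-version underlying Hattori's computation).
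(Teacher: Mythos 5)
Your overall route is the same as the paper's: pass to a suitably generic subcircle and then invoke the circle case (which the paper does not reprove, but quotes as Hattori's Proposition 2.11). The first genuine gap is in your reduction: the implication ``$w(\xi)\neq 0$ for every weight $w$, hence $M^{S^1}=M^T$'' is unjustified, and under the lemma's stated hypotheses it is false. The weights you control live only at the $T$-fixed points, so avoiding their kernels (and those of $w\pm w'$) guarantees that each point of $M^T$ is an \emph{isolated} point of $M^{S^1}$, but it cannot exclude positive-dimensional components of $M^{S^1}$ containing no $T$-fixed point at all. Concretely, let $M$ be the disjoint union of the Hopf surface $(\C^2\setminus\{0\})/(z\sim 2z)$ with the holomorphic action $(s,t)\cdot[z_1,z_2]=[sz_1,tz_2]$ (it has no $T^2$-fixed points, but the subcircle $\{s=1\}$ fixes an elliptic curve) and of $S^2\times S^2$ rotated with speeds $(1,2)$ and $(1,3)$ on the two factors. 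This is a closed almost complex manifold with an effective $T^2$-action whose fixed points are four isolated points with weights $\pm(1,2),\pm(1,3)$; the direction $\xi=(0,1)$ passes all of your hyperplane tests, yet $M^{\exp(\R\xi)}$ contains an elliptic curve, so no circle-case result requiring isolated fixed points applies to this $S^1$. (This does not contradict the lemma --- the weights of this $M$ do pair up --- it defeats your choice of $\xi$.) The paper is careful about exactly this point: its observation (ii) builds the condition $M^G=M^T$ into the admissible set of directions, and the density of that set rests on the finiteness of orbit types of a compact group action on a compact manifold, i.e.\ one must also avoid the finitely many isotropy Lie algebras occurring at points \emph{outside} $M^T$ --- information not visible from the fixed-point weights. (Minor: $\xi$ must also be chosen rational, e.g.\ in $\ell$, for $\{\exp(s\xi)\}$ to be a circle.)

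The second gap is the circle case itself, which you set up but do not prove, as you acknowledge. Two points on why the missing step is more than bookkeeping. For $|m|\geq 2$ no index theory is needed: the weights divisible by $d=|m|$ are exactly the tangent weights of $M^{\Z_d}$, a lower-dimensional closed almost complex $S^1$-manifold with isolated fixed points, so induction on dimension plus downward induction on $|m|$ already gives the pairing there. The genuinely global case is $m=\pm 1$, and for it your $B(t)$ has its only relevant pole at $t=1$, whose Laurent coefficients are sums of expressions of the form $P(w_1^p,\dots,w_n^p)/\prod_i w_i^p$ mixing all the weights at each fixed point; it is far from clear that these relations, even combined with pairing for $|m|\geq 2$, force $N(1)=N(-1)$. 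The proofs in the literature (Hattori's included) use a strictly stronger rigidity input than the signature, namely rigidity of the equivariant $\chi_y$-genus (equivalently of Dolbeault/Spin$^c$ indices): differentiating $\sum_p\prod_i\frac{1-y\,t^{-w_i^p}}{1-t^{-w_i^p}}=\sum_p y^{\lambda_p}$ at $y=1$ collapses the products and yields $\sum_{w\in W}(1-t^{w})^{-1}\equiv \#W^-$, a relation on the weight \emph{multiset} alone, whose residues at primitive $d$-th roots of unity give $\sum_{d\mid w}1/w=0$ for every $d$, and then your downward induction finishes immediately. So the plan can be completed, but only after replacing the $G$-signature theorem by this stronger rigidity theorem (itself nontrivial for non-integrable $J$); the economical alternative is what the paper does --- cite Hattori's result and spend the proof only on the reduction, done correctly.
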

\begin{proof}
In \cite[Proposition 2.11]{Ha} Hattori proves \eqref{multiplicities weights} for circle actions with isolated fixed points. Hence, in order to prove it for torus actions, it is sufficient to show that,
if \eqref{multiplicities weights} does not hold  for the action of a torus $T$ of dimension $k$ with isolated fixed points, then there exists a circle subgroup of $T$ 
acting with isolated fixed points whose weights do not satisfy \eqref{multiplicities weights}.

We begin with the following observations:
\begin{enumerate}
\item[$(i)$] Let $G$ be a circle subgroup of $T$ and assume that $\xi$ is a primitive element of the lattice $\ell_G$ of $\text{Lie}(G)$ that determines the orientation of $G$, i.e. $G=\exp\langle \xi\rangle$. Then 
the set $M^G$ of fixed points  of $G$ contains $M^T$ and, for each $p\in M^T$, the weights of the $G$-action at $p$ are given by the integers $w_i(\xi)$, for  $i=1,\ldots,n$, where $w_1,\ldots,w_n$
are the weights of the $T$-action at $p$. 
Moreover, if $N(w(\xi))$ denotes the multiplicity of the weight $N(w(\xi))$ of the $G$-action at $p\in M^T$, then
$$
N(w(\xi))\geq N(w),
$$
and equality holds iff $w(\xi)\neq w'(\xi)$ for all the weights $w'$ of the $T$-action different from $w$. 
\item[$(ii)$]  The subset $D\subset \mathfrak{t}$ of $\xi\in \frak{t}$ for which $h\xi\in \ell$ for some $h\in \Z$ and such that the corresponding circle subgroup $G:=\{\exp(t\xi), t\in \R\}$ satisfies $M^G=M^T$ is dense in $\mathfrak{t}$.
\end{enumerate}
Suppose that there exists $\widetilde{w}\in W$  such that $N(\widetilde{w})\neq N(-\widetilde{w})$. The  set 
$$
U:=\{\xi \in \mathfrak{t}\mid \widetilde{w}(\xi)\neq \pm w(\xi)\text{ for all }w\in W\setminus\{\widetilde{w}\}\}
$$
is open in $\mathfrak{t}$ and is invariant under multiplication by $h\in \R$. As  $D$ is dense in $\mathfrak{t}$ and invariant under multiplication by $h\in \Z$, we have that $D\cap U \cap \ell\neq \varnothing$. Consider a primitive element $\xi$ in this set and the action of the corresponding circle subgroup $G$. Then, by $(i)$ and the definition of $U$, it is easy to check that 
$N(\widetilde{w}(\xi))=N(\widetilde{w})$ and $N(-\widetilde{w}(\xi))=N(-\widetilde{w})$. As we assumed that $N(\widetilde{w})\neq N(-\widetilde{w})$, the existence of such $\widetilde{w}$ would then
contradict \cite[Proposition 2.11]{Ha}.
\end{proof}

\begin{remark}\label{weights isotropy manifold}
For any fixed point $p\in M^T$ and every  weight $w\in \ell^*$ of the $T$-action at  $p$,  the set  $M^w$ of fixed points of the subgroup 
\begin{equation}\label{eq:Hw}
H_w:=\exp\{\xi \in \mathfrak{t}\mid w(\xi)\in \Z\}
\end{equation}
of $T$  is a closed almost complex submanifold, invariant under the action of $T$.
The weights of the $T$-action on $M^w$ at the points in $M^T\cap M^w$ are all integral multiples of $w$.  
By Lemma \ref{weights in pairs} applied to $M^w$,   we have that for every $p\in M^T$ and every weight $w$ at $p$, there exists 
another fixed point $q\in M^T$ such that one of the weights at $q$ is $-w$, and such that $p$ and $q$ belong to $M^w$. 
Moreover, let $p\in M^T\cap M^w$ such that $k\,w$ is a weight at $p$
for some $k\in \Z$. Since $M^{k\,w}\subseteq M^w$ for every $k\in \Z\setminus\{0\}$, the connected component $N$ of $M^{k\,w}$ containing $p$ is contained in $M^w$.
Therefore there exists another fixed point
$q\in N\subset M^w$ such that  $-k\,w$ is a weight at $q$.
\end{remark}

As a consequence of Lemma \ref{weights in pairs}, the multiset $W$ can be written as a disjoint union
\begin{equation}\label{disjointunion} 
W= W^+  \sqcup W^- 
\end{equation}
of two multisets $W^+$ and $W^-$ of the same cardinality, where 
$w\in W^+$ if and only if $-w\in W^-$. 
 
 In the following, the triple $(M,J,T)$ denotes a closed almost complex manifold equipped with an effective action of a torus $T$ preserving $J$
with isolated fixed points.
\begin{definition}\label{def:multigraph}
We say that a \textbf{multigraph $\Gamma=(V,E)$ describes $(M,J,T)$} if
\begin{itemize}
\item The vertex set $V$ is in bijection with the fixed point set $M^T$;
\item There exists a bijection $g\colon W^+\to W^-$, such that $g(w)=-w$ for every $w\in W^+$  so that,
for each edge $e\in E$ with endpoints $p,q$, there exists a weight $w_{p,e}$ at $p$ and a weight $w_{q,e}$ at $q$, such that $g(w_{p,e})=w_{q,e}$ or $g(w_{q,e})=w_{p,e}$. 
Moreover, $p$ and $q$ must belong to the same connected component of $M^w$, where $w$ is  $w_{p,e}$ and  $M^w$ is the fixed point set of the subgroup $H_w$ defined  in \eqref{eq:Hw}.
\end{itemize}
\end{definition}
Note that the edges of the multigraph are \emph{undirected} and that, in the notation above, $M^{w_{p,e}}=M^{w_{q,e}}$.

\begin{remark}\label{rmk:circle1}
When $T\simeq S^1$ is a circle the weights at fixed points are integers and so we can actually take the multisets $W^+$ and $W^-$ to be the multisets of positive and negative weights. Moreover, for any weight $w$ of $S^1$ we have 
that $M^{\Z_{\lvert w\rvert}}:=M^w $ is the submanifold of $M$ formed by all of the points that are fixed by the subgroup $\Z_{\lvert w \rvert}$ of $S^1$. 
\end{remark}

\begin{remark}\label{rmk multigraph hamiltonian}
Let $(M,\omega)$ be a closed, symplectic manifold admitting a symplectic action of a torus $T$ with isolated fixed points. In this case,
we choose $J$ to be a compatible, invariant almost complex structure and say that a multigraph describing $(M,J,T)$ is a multigraph describing $(M,\omega,T)$. If the action is Hamiltonian with moment map $\psi$, we say that a multigraph,   describing $(M,J,T)$, describes $(M,\omega,\psi)$. 
\end{remark}

\begin{remark}\label{graph GKM}
Observe that, for Hamiltonian GKM spaces, there is exactly one multigraph describing the action and that this multigraph is given
exactly by the GKM graph (see Section \ref{Ham GKM spaces}).
\end{remark}

Lemma \ref{weights in pairs} and Remark \ref{weights isotropy manifold} assert that every action of a torus on a closed almost complex manifold with isolated fixed points can be described by a multigraph.
\begin{remark}\label{restriction multigraph}
For every $w\in W$, consider the almost complex manifold $(M^w,J)$ with the restriction of the $T$-action. Then it is clear that any multigraph describing $(M,J,T)$ restricts to a multigraph   describing $(M^w,J,T)$. 
\end{remark}
The next result is important for the definition of the subclass of multigraphs that we are interested in.

\begin{lemma}\label{integral multigraph lemma}
 Let $\Gamma=(V,E)$ be a multigraph
describing $(M,J,T)$. Let $e\in E$ be en edge with endpoints $p,q$ and let $w:=w_{p,e}=-w_{q,e}$.
Then, for every class $\alpha\in H^2_T(M;\Z)$, we have
\begin{equation}\label{condition multi}
\alpha(p)-\alpha(q)=  k \,w, \quad\text{for some }k\in\Z.
\end{equation}
\end{lemma}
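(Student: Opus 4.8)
The plan is to reduce the computation to the (possibly disconnected) isotropy subgroup $H_w$ defined in \eqref{eq:Hw} and to exploit that, with $\Z$-coefficients, the degree-two equivariant cohomology of a point records the \emph{full} character group, torsion included. By Definition \ref{def:multigraph} (together with Remark \ref{weights isotropy manifold}), the endpoints $p$ and $q$ lie in a single connected component $N$ of $M^{w}=M^{H_w}$, and $H_w$ acts trivially on $N$. I identify $H^2_T(\{\mathrm{pt}\};\Z)$ with the character lattice $\ell^*$ of $T$ and $H^2_{H_w}(\{\mathrm{pt}\};\Z)$ with the Pontryagin dual $\widehat{H_w}=\operatorname{Hom}(H_w,S^1)$; under these identifications the restriction $\rho\colon H^2_T(\{\mathrm{pt}\};\Z)\to H^2_{H_w}(\{\mathrm{pt}\};\Z)$ is simply the restriction of characters $\ell^*\to\widehat{H_w}$.

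First I would show that $\rho(\alpha(p))=\rho(\alpha(q))$. Since $H_w$ acts trivially on $N$, the Borel construction factors as $N\times_{H_w}EH_w=N\times BH_w$, and the two fixed-point inclusions $\iota_p,\iota_q\colon \{\mathrm{pt}\}\hookrightarrow N$ are homotopic because $N$ is connected. Hence $\iota_p\times\mathrm{id}_{BH_w}$ and $\iota_q\times\mathrm{id}_{BH_w}$ are homotopic and induce the same map on cohomology, so the restrictions of $(r_{H_w}\alpha)|_N$ to $p$ and to $q$ coincide in $H^2_{H_w}(\{\mathrm{pt}\};\Z)$. By naturality of the subgroup-restriction map (restricting $\alpha$ to a fixed point and then to $H_w$ agrees with performing the two operations in the opposite order), this says precisely that $\rho(\alpha(p))=\rho(\alpha(q))$, i.e.\ $\alpha(p)-\alpha(q)\in\ker\rho$. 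Note this argument uses only that $N$ is path-connected, so no compactness or vanishing of $H^*(N)$ is needed.

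The remaining step is to identify $\ker\rho$. The weight $w\in\ell^*$ defines a character $\chi_w\colon T\to S^1$, $\chi_w(\exp\xi)=e^{2\pi i\,w(\xi)}$, which is well defined on $T=\mathfrak{t}/\ell$ and whose kernel is exactly $H_w$; since $w\neq 0$, $\chi_w$ is surjective and induces an isomorphism $T/H_w\cong S^1$. Therefore the characters of $T$ trivial on $H_w$ are precisely the powers of $\chi_w$, that is $\ker\rho=\{n\,w : n\in\Z\}=\Z w$. Combining this with the previous step yields $\alpha(p)-\alpha(q)=k\,w$ for some $k\in\Z$, as claimed.

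The main subtlety—and the reason to work with the full group $H_w$ over $\Z$ rather than with a circle subgroup—is obtaining the \emph{sharp} integrality. A circle subgroup of the identity component $H_w^0$ only detects the hyperplane $\ker w\subset\mathfrak{t}$, which would merely give $\alpha(p)-\alpha(q)\in\R w\cap\ell^*=\Z w_0$, where $w_0$ is the primitive vector in the direction of $w$; if $w=m\,w_0$ with $m>1$ this is strictly weaker than the assertion. It is exactly the component group $\pi_0(H_w)\cong\Z/m$, recorded by the torsion in $H^2(BH_w;\Z)\cong\widehat{H_w}$, that forces the coefficient to be divisible by $m$ and upgrades the conclusion to an integer multiple of $w$ itself. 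This is where using $\Z$-coefficients (rather than $\R$ or $\Q$) is essential.
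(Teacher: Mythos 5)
Your proof is correct, and while it shares the paper's overall skeleton --- reduce to the subgroup $H_w$ of \eqref{eq:Hw}, exploit that $p$ and $q$ lie in a single connected component $N$ of $M^w$, and conclude from the fact that characters of $T$ agreeing on $H_w$ differ by an element of $\Z w$ --- the mechanism you use for the key step is genuinely different. The paper realizes $\alpha$ geometrically: by the classification of equivariant line bundles (\cite[Theorem A.1]{HL}) there is a unique $T$-equivariant line bundle $V$ with $c_1^T(V)=\alpha$; the fibers $V(p)$, $V(q)$ carry $T$-representations of weights $\alpha(p)$, $\alpha(q)$, and since the isomorphism class of the $H_w$-representation on the fibers of $V|_N$ is locally constant on the connected set $N$, the restrictions of $\alpha(p)$ and $\alpha(q)$ to $H_w$ agree. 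You obtain the same equality purely cohomologically, from the splitting $N\times_{H_w}EH_w=N\times BH_w$ (triviality of the $H_w$-action on $N$) together with the homotopy between the two point inclusions and the naturality of the restriction $r_{H_w}$; no line bundles are invoked. What your route buys is self-containedness --- the only nonformal input is the standard identification $H^2(BG;\Z)\cong\operatorname{Hom}(G,S^1)$ for a compact abelian Lie group $G$ --- whereas the paper's route buys brevity and a concrete geometric picture. You also spell out two points that the paper's closing ``thus implying \eqref{condition multi}'' leaves implicit: the computation $\ker\bigl(\widehat{T}\to\widehat{H_w}\bigr)=\Z w$, via $\ker\chi_w=H_w$ and $T/H_w\cong S^1$, and the observation that the sharp divisibility by $w$ itself (rather than by the primitive vector in its direction) is detected exactly by the component group $\pi_0(H_w)$, i.e.\ by the torsion in $H^2(BH_w;\Z)$, which is why integer coefficients are essential. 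Both clarifications are accurate and strengthen the exposition.
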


\begin{proof}
Each class $\alpha\in H^2_T(M;\Z)$ defines a (unique) $T$-equivariant line bundle $V$ on $M$ such that $c_1^T(V)=\alpha$ (see for instance \cite[Theorem A.1]{HL}). 
Moreover, for each fixed point $p\in M^T$, the complex line $V(p)$ inherits a representation of $T$ with weight given exactly by $\alpha(p)\in \ell^*$. 
By definition of multigraph, the endpoints $p,q$ of each edge  are in the same connected component of $M^w$ and so the $H_w$-representations at
$V(p)$ and $V(q)$ are isomorphic (see Remark \ref{weights isotropy manifold}), thus implying \eqref{condition multi}. 

%
\end{proof}

Consider the field of fractions $Q(\mathfrak{t}^*)$ of  the symmetric algebra $\mathbb{S}(\mathfrak{t}^*)$ of $\mathfrak{t}^*$. Then, a priori, using  the above notation,
$$
\frac{\alpha(p)}{w_{p,e}}+\frac{\alpha(q)}{w_{q,e}}\in Q(\mathfrak{t}^*).
$$  
However, an immediate consequence of Lemma~\ref{integral multigraph lemma} is the following result.
\begin{cor}\label{real edge}
Let $\Gamma=(V,E)$ be a multigraph describing $(M,J,T)$. Then, for each edge $e\in E$ with endpoints $p,q$ and each cohomology class $\alpha\in H^2_T(M;\Z)$, we have
\begin{equation}\label{defining volume alpha}
\alpha[e]:=\frac{\alpha(p)}{w_{p,e}}+\frac{\alpha(q)}{w_{q,e}}\in \Z\,.
\end{equation}
\end{cor}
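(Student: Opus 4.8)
The plan is to read off the statement of Corollary~\ref{real edge} as a direct arithmetic consequence of Lemma~\ref{integral multigraph lemma}, so the proof should be quite short. First I would recall the setup: $e$ is an edge with endpoints $p,q$, and by the definition of a multigraph describing $(M,J,T)$ we have $w_{p,e}=-w_{q,e}=:w$. Writing out the expression to be controlled,
\begin{equation*}
\alpha[e]=\frac{\alpha(p)}{w_{p,e}}+\frac{\alpha(q)}{w_{q,e}}=\frac{\alpha(p)}{w}+\frac{\alpha(q)}{-w}=\frac{\alpha(p)-\alpha(q)}{w}\in Q(\mathfrak{t}^*).
\end{equation*}
The whole point is that, although $a~priori$ this lies only in the field of fractions $Q(\mathfrak{t}^*)$, Lemma~\ref{integral multigraph lemma} tells us that the numerator $\alpha(p)-\alpha(q)$ is an \emph{integer multiple} of $w$.

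Next I would invoke the lemma directly: it gives $\alpha(p)-\alpha(q)=k\,w$ for some $k\in\Z$. Substituting this into the displayed fraction yields $\alpha[e]=\dfrac{k\,w}{w}=k\in\Z$, which is exactly the claimed integrality. I would be a little careful to note that the element $w\in\ell^*\subset\mathfrak{t}^*$ is nonzero (since $p$ is an isolated fixed point none of its weights can vanish, as recorded after \eqref{eq weights}), so division by $w$ in $Q(\mathfrak{t}^*)$ is legitimate and cancellation of $w$ against $w$ gives the scalar $k$ unambiguously. This completes the argument.

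I do not expect any serious obstacle here, since the corollary is essentially a restatement of the lemma after the algebraic manipulation $\frac{\alpha(p)}{w}-\frac{\alpha(q)}{w}=\frac{\alpha(p)-\alpha(q)}{w}$. The only point deserving a sentence of care is the bookkeeping of signs: the convention $w_{p,e}=-w_{q,e}$ built into Definition~\ref{def:multigraph} is what turns the sum of the two fractions into a single difference over the common denominator $w$, and it is precisely this difference that Lemma~\ref{integral multigraph lemma} controls. Once that sign convention is made explicit, the integrality is immediate and the proof concludes.
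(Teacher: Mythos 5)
Your proof is correct and takes exactly the paper's route: the paper also deduces the corollary immediately from Lemma \ref{integral multigraph lemma} together with the observation that $w_{p,e}=-w_{q,e}$, which is precisely the sign bookkeeping and cancellation you spell out. Your extra remark that $w\neq 0$ (so division in $Q(\mathfrak{t}^*)$ is legitimate) is a fine point of care the paper leaves implicit.
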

\begin{proof}
This is an immediate consequence of Lemma \ref{integral multigraph lemma}, observing that $w_{p,e}=-w_{q,e}$.
\end{proof}

\begin{definition}\label{evaluation alpha} Let $\Gamma=(V,E)$ be a multigraph describing $(M,J,T)$ and let 
$\alpha\in H^2_T(M;\Z)$. 
For each edge $e\in E$, the integer defined in \eqref{defining volume alpha} is called the \textbf{evaluation of $\alpha$ on the edge $e$}.
\end{definition}

\begin{remark}
Results similar to those in Lemma \ref{integral multigraph lemma} and Corollary \ref{real edge} hold for equivariant cohomology classes with real coefficients. In particular,
given $\alpha\in H^2_T(M;\R)$, an edge $e$ with endpoints $p,q$, and  $w=w_{p,e}=-w_{q,e}$, one has 
\begin{equation}\label{real alpha}
\alpha(p)-\alpha(q)=  k \,w, \quad\text{for some }k\in\R,
\end{equation} 
and so the evaluation $\alpha[e]$ of a 2-form $\alpha\in H_T^2(M;\R)$ on $e$ can be defined as
\begin{equation}\label{evaluation real}
\alpha[e]:=\frac{\alpha(p)}{w_{p,e}}+\frac{\alpha(q)}{w_{q,e}}\in \R\,.
\end{equation}
A simple proof of \eqref{real alpha} can be obtained using the Cartan model for
equivariant cohomology, which is the idea behind the introduction of polynomial assignments in \cite[Section 3]{GSZ}.
\end{remark}

\begin{remark}\label{geometric interpretation}
If there exists a $J$-invariant 2-sphere $S^2$ in $(M,J)$, which is $T$-invariant and such that the $T$-action
 on $S^2$ has exactly $p$ and $q$ as fixed points, respectively  with weights $w_{p,e}$ and $w_{q,e}$, then there is a geometric interpretation of the evaluation of a class $\alpha\in H^2_T(M;\R)$ on the edge $e$ with endpoints $p$ and $q$. Indeed, by the Atiyah-Bott-Berline-Vergne Localization Theorem \cite{AB,BV}, the evaluation of $\alpha$
on $e$ is exactly the integral of $\alpha$ on $S^2$, i.e.
$$
\int_{S^2}\alpha=\frac{\alpha(p)}{w_{p,e}}+\frac{\alpha(q)}{w_{q,e}}=\alpha[e]\,.
$$
\end{remark}
Remark~\ref{geometric interpretation} leads to a generalization of the concept of toric $1$-skeleton as defined in \cite[Definition 4.10]{GHS} for Hamiltonian $S^1$-spaces.
\begin{definition}\label{toric one skeleton} Let $(M,J)$ be a closed almost complex manifold equipped with an  effective action of a torus $T$ with isolated fixed points and
let $\Gamma=(V,E)$ be a multigraph describing $(M,J,T)$. 
Assume that for each edge $e\in E$ with endpoints $p,q$, there exists a smoothly embedded  $J$-holomorphic sphere $S^2_e$ which is $T$-invariant  and
such that the fixed points are exactly $p,q$ with weights (respectively) $w_{p,e}, \,w_{q,e}$.  In this case, we say that $\Gamma$ \textbf{admits a toric 1-skeleton}, and
 the set $\mathcal{S}=\{S^2_e\}_{e\in E}$  is the \textbf{toric 1-skeleton} associated to $\Gamma$\footnote{See also \cite{Cha} for a 
generalization of  toric $1$-skeletons to complexity-one spaces whose fixed point sets are not necessarily discrete. }.
\end{definition}
\begin{example}\label{example spaces toric one skeleton}
There are several examples of spaces  that admit a toric $1$-skeleton (see \cite[Lemma 4.11]{GHS}). For instance,
$(M,\omega,\psi)$ can be described by a multigraph that admits a toric $1$-skeleton if the $T$-action is, or extends to,
\begin{itemize}
\item[(i)] a GKM action and, in particular, to a symplectic toric action (see Section \ref{Ham GKM spaces}),
\item[(ii)] an $S^1$-action with isolated fixed points such that at each fixed point the weights are pairwise coprime and not equal to 1,
\item[(iii)] an $S^1$-action with isolated fixed points and $\dim M \leq 4$.
\end{itemize}
\end{example}

An important geometric consequence of the existence of a toric  $1$-skeleton is the following result, which is a straightforward generalization of \cite[Lemma 4.13]{GHS}.
\begin{lemma}\label{poincare dual} Let $(M,J)$ be a closed almost complex manifold of dimension $2n$ equipped with an  effective action of a torus $T$ with isolated fixed points and
let $\Gamma=(V,E)$ be a multigraph describing $(M,J,T)$.  Consider the natural restriction map 
$$
r\colon H^2_T(M;\Z)\to H^2(M;\Z).
$$
Then, for any equivariant cohomology class $\alpha\in H^2_T(M;\Z)$, we have
\begin{equation}\label{poincare dual alpha}
\sum_{e\in E}\alpha[e]=\int_M r(\alpha)\cup c_{n-1}\,.
\end{equation}
Moreover, if  $r$ is surjective and $\Gamma$ admits a toric 1-skeleton $\mathcal{S}=\{S^2_e\}_{e\in E}$, then 
the Chern class $c_{n-1}\in H^{2(n-1)}(M;\Z)$ is the Poincar\'e dual to the class of $\mathcal{S}\in H_2(M;\Z)$.

\end{lemma}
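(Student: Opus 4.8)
The plan is to compute the right-hand side $\int_M r(\alpha)\cup c_{n-1}$ by equivariant localization and then to reorganize the resulting sum of local contributions according to the edges of $\Gamma$, adapting the argument of \cite[Lemma 4.13]{GHS}. First I would lift the integrand to equivariant cohomology: since $r$ is a ring homomorphism sending the equivariant Chern class $c_{n-1}^T\in H_T^{2(n-1)}(M;\Z)$ to $c_{n-1}$, the class $\beta:=\alpha\cup c_{n-1}^T\in H_T^{2n}(M;\Z)$ satisfies $r(\beta)=r(\alpha)\cup c_{n-1}$. As $\beta$ has top degree $2n=\dim M$, its equivariant pushforward to a point lies in $H^0(BT;\Z)=\Z$ and coincides with the ordinary integral $\int_M r(\beta)=\int_M r(\alpha)\cup c_{n-1}$.

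Next I would apply the Atiyah--Bott--Berline--Vergne localization theorem \cite{AB,BV} to this pushforward, obtaining
$$
\int_M r(\alpha)\cup c_{n-1}=\sum_{p\in M^T}\frac{\alpha(p)\,c_{n-1}^T(p)}{\prod_{i=1}^n w_i^p}\,,
$$
where $w_1^p,\ldots,w_n^p$ are the isotropy weights at $p$ and the denominator is the equivariant Euler class of $T_pM$. By Lemma~\ref{equiv chern classes} one has $c_{n-1}^T(p)=\sigma_{n-1}(w_1^p,\ldots,w_n^p)$, and dividing the elementary symmetric polynomial $\sigma_{n-1}$ by $\sigma_n=\prod_i w_i^p$ in the field of fractions $Q(\mathfrak{t}^*)$ gives $\sum_{j=1}^n (w_j^p)^{-1}$. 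Hence the right-hand side becomes a single sum $\sum_{p\in M^T}\sum_{j=1}^n \alpha(p)/w_j^p$ over the multiset of all (fixed point, weight) pairs.

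The key step is then to regroup this sum edge by edge. By Definition~\ref{def:multigraph} the edges of $\Gamma$ are organized by the bijection $g\colon W^+\to W^-$, $g(w)=-w$; thus, as $e$ ranges over $E$, the pairs $(p,w_{p,e})$ and $(q,w_{q,e})$ run exactly once through every element of $W=W^+\sqcup W^-$, that is, through every (fixed point, weight) pair. Since each individual term $\alpha[e]=\frac{\alpha(p)}{w_{p,e}}+\frac{\alpha(q)}{w_{q,e}}$ is an \emph{integer} by Corollary~\ref{real edge}, this regrouping is a legitimate rearrangement in $Q(\mathfrak{t}^*)$ and yields $\sum_{e\in E}\alpha[e]=\sum_{p}\sum_j \alpha(p)/w_j^p$, which proves \eqref{poincare dual alpha}. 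For the second statement, assuming $r$ surjective and a toric $1$-skeleton $\mathcal{S}=\{S_e^2\}_{e\in E}$, I would take any $\beta\in H^2(M;\Z)$, choose a lift $\alpha\in H_T^2(M;\Z)$ with $r(\alpha)=\beta$, and use Remark~\ref{geometric interpretation} applied to each invariant sphere to write $\alpha[e]=\int_{S_e^2}\beta$. Then \eqref{poincare dual alpha} reads $\int_M\beta\cup c_{n-1}=\sum_{e}\int_{S_e^2}\beta=\langle\beta,[\mathcal{S}]\rangle$ with $[\mathcal{S}]=\sum_e[S_e^2]\in H_2(M;\Z)$; as this holds for all $\beta$, the non-degeneracy of the Poincar\'e pairing identifies $c_{n-1}$ with the Poincar\'e dual of $[\mathcal{S}]$.

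I expect the main point (and the only genuinely delicate spot) to be the regrouping step: the individual localization contributions $\alpha(p)/w_j^p$ are merely rational functions in $Q(\mathfrak{t}^*)$ and are not individually integers, so one must verify that they reassemble into the integer edge-evaluations $\alpha[e]$ without over- or under-counting. This is precisely what the bijection $g\colon W^+\to W^-$ underlying the multigraph guarantees, together with the integrality provided by Corollary~\ref{real edge}. The final passage from ``equal pairing against all of $H^2(M;\Z)$'' to ``equality of classes'' is routine whenever $H^*(M;\Z)$ is torsion-free (as it is, for instance, for Hamiltonian actions with isolated fixed points, where a generic component of the moment map is a perfect Morse function), and otherwise holds modulo torsion.
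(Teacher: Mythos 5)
Your proof is correct and follows essentially the same route as the paper's: the paper's own proof is a one-line citation of \cite[Lemma 4.13]{GHS} (noting that only the surjectivity of $r$ is used) with ``details left to the reader,'' and those details are exactly what you supply --- the ABBV localization with $c_{n-1}^T(p)/\prod_j w_j^p=\sum_j 1/w_j^p$, the edge-by-edge regrouping guaranteed by the bijection $g\colon W^+\to W^-$ underlying the multigraph, and the Poincar\'e-pairing argument for the second claim. Your closing caveat about torsion is the same point the paper glosses over (in the Hamiltonian setting of \cite{GHS} the integral cohomology is torsion-free), so nothing is missing relative to the paper's own argument.
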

\begin{proof}
This is a generalization of \cite[Lemma 4.13]{GHS}, where the only fact used about the existence of a Hamiltonian $T$-action is the surjectivity of $r$. The details are left to the reader. 
\end{proof}
\begin{remark}
The condition that  $r\colon H^2_T(M;\Z)\to H^2(M;\Z)$ is surjective is equivalent to the equivariant formality of the $T$-action. For a nice account of these facts see \cite[Section 7]{GZ}.
\end{remark}

\begin{definition}\label{evaluation alpha and positive multigraph}
Let $(M,J)$ be a closed almost complex manifold equipped with an  effective action of a torus $T$ with isolated fixed points and let $\Gamma=(V,E)$  be a multigraph describing $(M,J,T)$.
Let $\alpha\in H^2_T(M;R)$, where $R$ is either $\Z$ or $\R$.  We say that an \textbf{edge $e\in E$ is positive w.r.t.\ $\alpha$} if the evaluation of $\alpha$ on $e$ is positive, i.e. if
\begin{equation}\label{positive edge}
\alpha[e]=\frac{\alpha(p)}{w_{p,e}}+\frac{\alpha(q)}{w_{q,e}}>0,
\end{equation}
where $p,q\in V$ are the endpoints of $e$.
The \textbf{multigraph}  $\Gamma=(V,E)$ is said to be \textbf{positive w.r.t.\ $\alpha$} if all the edges are positive w.r.t.\ $\alpha$.
\end{definition}
\begin{remark}\label{positivity for symplectic}
The condition for an edge to be positive w.r.t.\ an equivariant cohomology class $\alpha$ can be interpreted as follows:
An edge $e$ with endpoints $p,q$ and weights $w_{p,e}, w_{q,e}$ is positive w.r.t.\ $\alpha$ if and only if 
\begin{center}$\alpha(p)-\alpha(q)$, regarded as a vector in $\mathfrak{t}^*$, is a positive multiple of $w_{p,e}$.
\end{center} 
Note that this condition is equivalent to saying that $\alpha(q)-\alpha(p)$ is a positive multiple of $w_{q,e}$. 
Therefore the concept of ``positive edge'' (w.r.t.\ $\alpha$) belongs to undirected edges.
\end{remark}
\begin{example}
For Hamiltonian $T$-actions with isolated fixed points, the positivity of an edge $e$ with endpoints $p,q$ w.r.t.\, the class of the equivariant symplectic form $[\omega-\psi]$ says that
the vector $\psi(q)-\psi(p)$ is a positive multiple of $w_{p,e}$.
\end{example}

\begin{remark}\label{rmk:circle2} \textbf{(Orientation on $\Gamma$)} 
In the particular case of a circle action and, taking the sets $W^+$ and $W^-$ to be respectively those of positive and negative weights (see Remark~\ref{rmk:circle1}), we can 
direct the edges of any multigraph $\Gamma$ describing $(M,J,S^1)$, constructed using a bijection $g:W^+\to W^-$ as in Definition~\ref{def:multigraph}. 
Namely, we orient an edge $e$ with endpoints $p,q$ from $p$ to $q$ if $g(w_{p,e})=w_{q,e}$, therefore if $w_{p,e}>0$. In this case, we define 
$i(e)$ to be the initial point (also known as \textbf{tail}) of $e$  and $t(e)$ to be the terminal point (also known as \textbf{head}) of $e$. 

Note that, according to Definition~\ref{evaluation alpha and positive multigraph}, an oriented multigraph $\Gamma=(V,E)$ describing $(M,J,S^1)$ is said to be positive w.r.t.\ some $\alpha\in H^2_{S^1}(M;R)$, where $R$ is either $\Z$ or $\R$, if and only if for every oriented edge $e\in E$, we have
$$
\alpha(i(e)) > \alpha(t(e))
$$
(see \eqref{eq:equiv}).

When $T$ is a torus of higher dimension, 
we can orient the graph by considering a generic $\xi\in \mathfrak{t}$ (namely $w(\xi)\neq 0$ for any isotropy weight $w\in \ell^*$)
 such that $\xi$ generates a circle $S^1=\exp\langle \xi \rangle$, and use this circle to orient the edges of $\Gamma$, as described above.
\end{remark}

There is another natural consequence of the existence of a toric $1$-skeleton for symplectic manifolds with a Hamiltonian action.
\begin{proposition}\label{symplectic positive}
Let $(M,\omega)$ be a Hamiltonian $T$-space with isolated
fixed points and moment map $\psi\colon M \to \mathfrak{t}^*$. Assume that there exists a multigraph $\Gamma$
describing $(M,\omega,\psi)$ which admits a toric $1$-skeleton.  Then $\Gamma$ is positive w.r.t.\ $[\omega-\psi]$.

\end{proposition}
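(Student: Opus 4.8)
The plan is to verify the defining condition of positivity (Definition~\ref{evaluation alpha and positive multigraph}) one edge at a time: I must show that for every edge $e\in E$ with endpoints $p,q$ one has $[\omega-\psi][e]>0$. Fix such an edge and set $w:=w_{p,e}=-w_{q,e}$. Since by hypothesis $\Gamma$ admits a toric $1$-skeleton (Definition~\ref{toric one skeleton}), there is a smoothly embedded, $T$-invariant, $J$-holomorphic sphere $S^2_e$ whose only fixed points are $p$ and $q$, with isotropy weights $w_{p,e}$ and $w_{q,e}$; here $J$ is the $\omega$-compatible invariant almost complex structure used to define the multigraph (Remark~\ref{rmk multigraph hamiltonian}). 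I would then invoke the geometric interpretation of the evaluation in Remark~\ref{geometric interpretation}, applied to the class $\alpha=[\omega-\psi]\in H^2_T(M;\R)$ and the sphere $S^2_e$, which is exactly a sphere of the type required there, to obtain
\[
[\omega-\psi][e]=\int_{S^2_e}[\omega-\psi].
\]

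The next step is to identify this equivariant integral with an honest symplectic area. In the Cartan model the equivariant form $\omega-\psi$ has $\omega$ as its component of form-degree $2$ and $-\psi$ as its component of form-degree $0$. Because $\dim_\R S^2_e=2$, the equivariant pushforward $\int_{S^2_e}$ detects only the top form-degree component, so the degree-$0$ part $\psi$ contributes nothing and
\[
\int_{S^2_e}[\omega-\psi]=\int_{S^2_e}\omega.
\]
Thus $[\omega-\psi][e]$ equals the ordinary symplectic area of $S^2_e$, computed with the orientation implicit in the localization formula (the one determined by the $J$-complex coordinates at the fixed points, from which the weights $w_{p,e},w_{q,e}$ are read off).

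Finally, positivity follows from $J$-holomorphicity. Since $J$ is compatible with $\omega$, the tensor $g(\cdot,\cdot)=\omega(\cdot,J\cdot)$ is a Riemannian metric, and on any $J$-holomorphic curve the restriction of $\omega$ coincides with the induced area form. As $S^2_e$ is a genuine (non-constant, embedded) sphere with the two distinct fixed points $p$ and $q$, we conclude
\[
[\omega-\psi][e]=\int_{S^2_e}\omega=\operatorname{Area}_g(S^2_e)>0.
\]
Since $e$ was arbitrary, every edge is positive, and hence $\Gamma$ is positive w.r.t.\ $[\omega-\psi]$.

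I expect the only genuinely delicate point to be the bookkeeping of orientations: one must check that the orientation of $S^2_e$ entering the localization formula of Remark~\ref{geometric interpretation} is precisely the complex orientation determined by $J$, so that the two descriptions of $\int_{S^2_e}\omega$ carry the same sign. This is what ties the combinatorial evaluation $[\omega-\psi][e]$, defined via the weights, to the positive area of a pseudoholomorphic curve. Once this sign is pinned down, the identification $\int_{S^2_e}[\omega-\psi]=\int_{S^2_e}\omega$ together with the standard positivity of areas of $J$-holomorphic curves makes the conclusion immediate.
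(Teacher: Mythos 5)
Your proposal is correct and follows essentially the same route as the paper's proof: both apply Remark~\ref{geometric interpretation} (ABBV localization) to identify $[\omega-\psi][e]$ with $\int_{S^2_e}(\omega-\psi)$, drop $\psi$ for degree reasons, and conclude positivity from the fact that the $J$-holomorphic (hence symplectic) spheres of the toric $1$-skeleton have positive symplectic area. Your extra remark on orientation bookkeeping is a fair point the paper leaves implicit, but it does not change the argument.
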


\begin{proof}
Since the spheres of the toric $1$-skeleton are, by definition, $J$-invariant, they are symplectic surfaces in $M$, and therefore $\int_{S^2} \omega>0$. The result then
follows from Remark \ref{geometric interpretation} and from the fact that, for degree reasons, we have $\int_{S^2}\omega=\int_{S^2}(\omega-\psi)$.
\end{proof}
\begin{remark}\label{rmk:existence positive multigraph}
Let $(M,\omega)$ be a Hamiltonian $T$-space with isolated fixed points and moment map $\psi\colon M \to \mathfrak{t}^*$. To the best of our knowledge, every known example of such $(M,\omega,\psi)$ admits a multigraph describing it which is positive w.r.t.\ $[\omega-\psi]$. If a multigraph describing $(M,\omega,\psi)$ admits a toric $1$-skeleton (which is the case, for instance, for Hamiltonian GKM spaces, and, in particular, for symplectic toric manifolds, see Example \ref{example spaces toric one skeleton}), then its positivity w.r.t.\ $[\omega-\psi]$ comes from Proposition \ref{symplectic positive}. However, for other examples of 
$(M,\omega,\psi)$ for which the existence of a $1$-skeleton is not known, the existence of a positive multigraph w.r.t.\ $[\omega-\psi]$ can be checked directly.  
\end{remark}
Given the remark above, we believe that the following question is interesting.
\begin{questionL}
Let $(M,\omega)$ be a Hamiltonian $T$-space with isolated fixed points and moment map $\psi\colon M \to \mathfrak{t}^*$. 

Does there always exist a
multigraph describing $(M,\omega,\psi)$ that is positive w.r.t.\ $[\omega-\psi]$? 

Given the existence of such multigraph, is there a toric $1$-skeleton associated to it?
\end{questionL}
Before stating the next lemma, we introduce the following notation. We say that a subtorus $K$ of a torus $T$  is \textit{generic} if, for every $w\in \mathfrak{t}^*$ that is a weight of the isotropy $T$-action at some fixed point, we have
\begin{equation}\label{eq:neq0}
w\neq 0 \quad \text{iff} \quad \pi_K(w)\neq 0, 
\end{equation}
where $\pi_K\colon \mathfrak{t}^*\to \mathfrak{k}^*$ is the dual of the inclusion  map $\iota_K\colon \mathfrak{k}\to \mathfrak{t}$, and $\mathfrak{k}$
is the Lie algebra of $K$. Note that, since  $K$ is generic, we have $M^K=M^T$.
The projection $\pi_K$ can be extended to a map from $\mathbb{S}(\mathfrak{t}^*)$ to $\mathbb{S}(\mathfrak{k}^*)$ by imposing that it is an algebra morphism.

\begin{lemma}\label{positive subactions} Let $(M,J)$ be a closed almost complex manifold equipped with an  effective action of a torus $T$ with isolated fixed points and let $\alpha\in H^2_T(M;R)$, where $R$ is either $\Z$ or $\R$. 
If $K$ is a subtorus of $T$, let $r_K\colon H_T^2(M;R)\to H_K^2(M;R)$ be the restriction map in equivariant cohomology.
Then the following statements are equivalent:
\begin{itemize}
\item[(i)] There exists a multigraph describing $(M,J,T)$ that is positive w.r.t.\ $\alpha$;
\item[(ii)] For all generic subtori $K\subset T$, there exists a multigraph describing $(M,J,K)$ that is positive w.r.t.\ $r_K(\alpha)$;
\item[(iii)] There exists a generic $K\subset T$ and a multigraph describing $(M,J,K)$ that is positive w.r.t.\ $r_K(\alpha)$.
\end{itemize}  
\end{lemma}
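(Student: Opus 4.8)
The plan is to prove the cyclic chain (i) $\Rightarrow$ (ii) $\Rightarrow$ (iii) $\Rightarrow$ (i). The engine behind all three implications is an edge-wise \emph{sign-preservation} principle. If $\Gamma=(V,E)$ describes $(M,J,T)$ and $e\in E$ joins $p,q$ with $w:=w_{p,e}=-w_{q,e}$, then by Lemma~\ref{integral multigraph lemma} (in its real form when $R=\R$) the difference $\alpha(p)-\alpha(q)$ equals $k\,w$ for a single scalar $k$, and this $k$ is exactly the evaluation $\alpha[e]$ of Definition~\ref{evaluation alpha and positive multigraph}. Since $K$ is generic, $\pi_K(w)\neq 0$, and applying $\pi_K$ gives $r_K\alpha(p)-r_K\alpha(q)=k\,\pi_K(w)$; hence the projected edge has the same evaluation $k$. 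Therefore $e$ is positive w.r.t.\ $\alpha$ if and only if its $\pi_K$-image is positive w.r.t.\ $r_K\alpha$.

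For (i) $\Rightarrow$ (ii) I would take a positive $\Gamma$ for $(M,J,T)$ and push it forward along $\pi_K$. Genericity gives $M^K=M^T$, so the vertex sets agree; since $M^{w}$ is contained in the $K$-isotropy submanifold determined by $\pi_K(w)$, the connectivity condition of each edge descends, so $\pi_K(\Gamma)$ genuinely describes $(M,J,K)$; and by the principle it is positive w.r.t.\ $r_K\alpha$ for \emph{every} generic $K$. The implication (ii) $\Rightarrow$ (iii) only requires that a generic subtorus exists, which is immediate: choose a rational subspace $\mathfrak{k}\subseteq\mathfrak{t}$ avoiding the finitely many hyperplanes $\ker w$ cut out by the isotropy weights.

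The substance is (iii) $\Rightarrow$ (i). First I would reduce to the case where $K$ is a circle, by projecting the given positive $K$-multigraph to a circle $S^1\subseteq K$ that is generic in $T$ (such a circle exists, and the projection stays positive by the principle). Then I would induct on $\dim T$, the cases $\dim T=1$ (where $K=T$) being trivial. For $\dim T=n\geq 3$, pick a generic codimension-one subtorus $K'$ with $S^1\subseteq K'\subseteq T$; by the inductive hypothesis applied to $(M,K')$ the positive $S^1$-multigraph yields a positive $K'$-multigraph on $M$. Now decompose $M$ along the components $N$ of the $K'$-isotropy submanifolds $M^{\bar v}$ (primitive $K'$-weights $\bar v$): these are \emph{proper} since $\dim K'\geq 2$, and the $K'$-multigraph restricts to each $N$ by Remark~\ref{restriction multigraph}. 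Letting $H\subset K'$ be the codimension-one subtorus fixing $N$, the residual torus $\bar T_N=T/H$ has dimension $2<n$ and contains the residual circle $K'/H$ as a generic subtorus, so the inductive hypothesis produces a positive $\bar T_N$-multigraph on $N$. These pieces assemble into a positive $T$-multigraph on $M$: the $T$-weights tangent to $N$ annihilate $\mathfrak h=\operatorname{Lie}(H)$, and on that subspace $\bar T_N$-negation coincides with exact $T$-negation and $\bar T_N$-isotropy submanifolds coincide with $T$-isotropy submanifolds, so each piece is a bona fide sub-multigraph; every weight-occurrence is used exactly once (by Lemma~\ref{weights in pairs} and Remark~\ref{weights isotropy manifold}), and positivity is checked edge by edge.

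The induction bottoms out at the genuinely hard case $\dim T=2$, $\dim K=1$: a generic circle $S^1\subset T^2$ carrying a positive $S^1$-multigraph, from which I must build a positive $T^2$-multigraph. Equivalently, I must show that the positive $S^1$-multigraph restricts to a positive residual-circle multigraph on each $T^2$-isotropy component $C=M^{\bar u}$ (primitive $T^2$-weight $\bar u$), after which the pieces glue exactly as above. On such a $C$, integrality (Lemma~\ref{integral multigraph lemma}) forces all values $\alpha(p)$, $p\in C^T$, to differ by multiples of $\bar u$, so writing $\alpha(p)=c_C+f_C(p)\,\bar u$ reduces positivity of a residual edge to the scalar condition $f_C(\mathrm{tail})>f_C(\mathrm{head})$; what remains is to match the residual $\pm$-weight occurrences on $C$ so that $f_C$ strictly decreases along each edge. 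I expect this Hall-type matching to be the main obstacle: $C$ is \emph{not} an $S^1$-isotropy submanifold, so the ambient multigraph does not formally restrict, and the required domination must be produced on each component $C$ separately rather than merely globally on $M$. The plan is to localize the ambient positivity to each $C$, using the integrality relations together with the edge-connectivity to pin down the $\alpha$-values along $C$, and using genericity of $S^1$ to ensure that the circle-weight values indexing the occurrences on $C$ are not conflated with those coming from other isotropy directions.
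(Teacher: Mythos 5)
Your ``sign-preservation principle'' and the implications (i) $\Rightarrow$ (ii) $\Rightarrow$ (iii) are correct, and they are in fact the \emph{entire} content of the paper's proof: the paper disposes of the lemma with exactly the three observations that $(r_K\alpha)(p)=\pi_K(\alpha(p))$, that $T$-weights at $p$ project to the $K$-weights at $p$, and that the evaluation $\alpha[e]$, being an element of $R$, is unchanged by $\pi_K$ --- i.e.\ it reads $\Gamma\mapsto\pi_K(\Gamma)$ as an evaluation-preserving dictionary between multigraphs for $T$ and multigraphs for $K$ and stops there. Where you diverge from the paper is in refusing to treat this dictionary as valid in both directions, and your reason is sound: under the literal Definition~\ref{def:multigraph}, a multigraph describing $(M,J,K)$ need not be the projection of one describing $(M,J,T)$. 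An edge of the $K$-multigraph may pair an occurrence of a $T$-weight $w_1$ at $p$ with an occurrence of $w_2$ at $q$ where $\pi_K(w_1)=-\pi_K(w_2)$ but $w_2\neq -w_1$ (even a self-loop at $p$ is possible), and its endpoints need only lie in the same component of the $K$-isotropy submanifold attached to $\pi_K(w_1)$, which contains $M^{w_1}$ but is in general strictly larger. So (iii) $\Rightarrow$ (i) genuinely requires a lifting argument, which the paper's three facts do not supply.

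The problem is that your lifting argument is not completed, and you say so yourself. After the reduction to a generic circle $S^1\subset T^2$ (the reduction steps themselves contain further unverified assembly claims, e.g.\ that the pieces coming from the components $N$ ``assemble into a positive $T$-multigraph'' with each weight-occurrence used exactly once), the whole implication rests on the following statement: for every $T^2$-weight $w$ and every connected component $N$ of $M^{w}$, the occurrences of $w$ and of $-w$ at fixed points of $N$ can be matched so that $\alpha(p)-\alpha(q)$ is a \emph{positive} multiple of $w$ for every matched pair $(p,q)$. The hypothesis hands you only a positive $S^1$-multigraph, which matches occurrences of the integers $\pm m$ inside the coarser components of $M^{\mathbb{Z}_m}$, conflating all $T^2$-weights with the same image under $\pi_K$ and ignoring the finer $T^2$-isotropy components; nothing in your text shows how to refine or re-route that coarse matching into one respecting the individual $T^2$-weights. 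You label this ``the main obstacle'' and offer only ``the plan is to localize the ambient positivity to each $C$.'' A plan is not a proof: this matching step is precisely the point on which (iii) $\Rightarrow$ (i) turns, so as it stands your proposal establishes strictly less than the lemma --- only (i) $\Rightarrow$ (ii) $\Rightarrow$ (iii), which is the easy half that the paper's projection formalism already covers.
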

\begin{proof}
The equivalences follow easily from the facts that $(r_K(\alpha))(p)=\pi_K(\alpha(p))$, that if $w$ is a weight of the $T$-representation at $p\in M^T$, then $\pi_K(w)$ is the weight of the $K$-representation at $p\in M^K$ and that
$$R\ni \frac{\alpha(p)}{w_{p,e}}+\frac{\alpha(q)}{w_{q,e}} = \pi_k\left( \frac{\alpha(p)}{w_{p,e}}+\frac{\alpha(q)}{w_{q,e}} \right)\,.$$
\end{proof}

The following result will be useful in the next section.
\begin{lemma}\label{lemma graph submanifolds}
Let $(M,J)$ be a closed almost complex manifold equipped with an  action of a torus $T$ with isolated fixed points and let $\alpha\in H^2_T(M;R)$, where $R$ is either $\Z$ or $\R$. Let $W$ be the multiset of all isotropy weights and, for any $w\in W$, let $M^w$ be  the fixed point set of the group $H_w$ defined in \eqref{eq:Hw}.
Let $\Gamma=(V,E)$ be a multigraph describing $(M,J,T)$ and, for every $w\in W$, let $\Gamma_w=(V_w,E_w)$ be the restriction of $\Gamma$ to $M^w$, which is a multigraph
describing $(M^w,J,T)$ (see Remark \ref{restriction multigraph}).

Then $\Gamma$ is positive w.r.t.\ $\alpha$, if and only if, for every $w\in W$, $\Gamma_w$ is positive w.r.t.\ $\alpha$.
\end{lemma}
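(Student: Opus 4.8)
The plan is to reduce the asserted equivalence to two elementary facts: a combinatorial identification of the edge set $E$ of $\Gamma$ with the union of the edge sets $E_w$ of its restrictions, and the observation that the evaluation $\alpha[e]$ of an edge is an intrinsic quantity that is unaffected by passing from $\Gamma$ to $\Gamma_w$. Since, by Definition~\ref{evaluation alpha and positive multigraph}, positivity of a multigraph w.r.t.\ $\alpha$ means exactly that $\alpha[e]>0$ on each of its edges, both implications will then follow by simply matching definitions.

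First I would establish the two containments between edge sets. By the definition of the restriction (Remark~\ref{restriction multigraph}), every edge of $\Gamma_w$ is an edge of $\Gamma$, so $E_w\subseteq E$ for every $w\in W$. Conversely, I would take an arbitrary edge $e\in E$ with endpoints $p,q$ and associated weight $w:=w_{p,e}=-w_{q,e}$; by Definition~\ref{def:multigraph} the points $p$ and $q$ lie in the same connected component of $M^w$, and its label $w$ is trivially a weight along $M^w$, so $e$ is in particular an edge of $\Gamma_w$. Hence each edge of $\Gamma$ occurs in at least one $\Gamma_w$, and $E=\bigcup_{w\in W}E_w$.

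Next I would verify that the evaluation of $\alpha$ on a fixed edge $e$ with endpoints $p,q$ is the same whether $e$ is viewed inside $\Gamma$ or inside $\Gamma_w$. The evaluation $\alpha(p)/w_{p,e}+\alpha(q)/w_{q,e}$ depends only on the restrictions of $\alpha$ to the fixed points $p,q$ and on the labels $w_{p,e},w_{q,e}$. Restricting $\alpha$ along the $T$-equivariant inclusion $M^w\hookrightarrow M$ leaves $\alpha(p)$ and $\alpha(q)$ unchanged, and the labels $w_{p,e},w_{q,e}$, being integral multiples of $w$ by Remark~\ref{weights isotropy manifold}, are genuine weights of the $T$-action along $M^w$ that agree with the corresponding weights on $M$. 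Thus the two evaluations coincide, and the predicate ``$e$ is positive w.r.t.\ $\alpha$'' has the same meaning in $\Gamma$ and in $\Gamma_w$.

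Granting these steps, the equivalence is immediate: if $\Gamma$ is positive then $\alpha[e]>0$ for all $e\in E\supseteq E_w$, so each $\Gamma_w$ is positive; conversely, if every $\Gamma_w$ is positive then for each $e\in E$ I would pick $w=w_{p,e}$, giving $e\in E_w$ and hence $\alpha[e]>0$, so $\Gamma$ is positive. I expect the only genuinely delicate point to be the invariance of the evaluation under restriction in the third step---specifically the identification of the edge labels as weights along $M^w$; the rest is pure bookkeeping and should present no real obstacle.
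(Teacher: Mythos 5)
Your proposal is correct and follows essentially the same route as the paper, whose entire proof is the single observation that every edge $e\in E$ is an edge of $E_w$ for some $w\in W$ (namely $w=w_{p,e}$, using that the endpoints of $e$ lie in the same connected component of $M^w$ by Definition~\ref{def:multigraph}). Your additional steps---the reverse containment $E_w\subseteq E$ and the invariance of the evaluation $\alpha[e]$ under restriction to $M^w$---are exactly the bookkeeping the paper leaves implicit, so there is no gap and no genuine difference in approach.
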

\begin{proof}
In order to prove the statement it is enough to observe that any edge $e\in E$ is an edge of $E_w$, for some $w\in W$. 
\end{proof}

\subsection{Existence of positive multigraphs}$\;$\\
The main goal of this section is to prove Theorem \ref{existence multigraph dim 8}. We begin with the following observations.
\begin{lemma}\label{lemma monotone chi six}
Let $(M,\omega)$ be an 8-dimensional,   closed,  symplectic manifold 
with a Hamiltonian action of a torus $T$ with isolated fixed points. Let $\psi\colon M \to \mathfrak{t}^*$ be the corresponding
moment map.

If $\chi(M)=6$,
then the symplectic form can be rescaled and the moment map can be translated so that 
\begin{equation}\label{c1 equal omega}
c_1^T=[\omega-\psi].
\end{equation} 
In particular, with the aforementioned rescaling, the weight sum formula \eqref{wsf} holds and we have $c_1=[\omega]$.
\end{lemma}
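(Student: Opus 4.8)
The plan is to reduce the statement to the monotone machinery of Section~\ref{monotone Ham spaces}, the only genuinely new input being that $\chi(M)=6$ forces monotonicity. By Proposition~\ref{sixfp}(1), the hypothesis $\chi(M)=6$ is equivalent to $b_2(M)=1$ and $b_4(M)=2$; in particular $\dim_\R H^2(M;\R)=1$. Since $M$ is closed and symplectic, $[\omega]^4[M]\neq 0$, so $[\omega]\neq 0$ and therefore spans $H^2(M;\R)$. Hence $c_1=\lambda[\omega]$ for some $\lambda\in\R$, i.e.\ $(M,\omega)$ is monotone, and because the action is Hamiltonian the cited results (see \cite{AB,LO,O,GHS,CSS}) give $\lambda>0$. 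Replacing $\omega$ by $\lambda\omega$ (and $\psi$ by $\lambda\psi$) we may assume $c_1=[\omega]$.

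Next I would carry out the equivariant lift exactly as in Section~\ref{monotone Ham spaces}. The restriction map $r\colon H_T^2(M;\R)\to H^2(M;\R)$ sends $c_1^T$ to $c_1$ and $[\omega-\psi]$ to $[\omega]=c_1$, so $c_1^T-[\omega-\psi]\in\ker r$. The key point is to identify this kernel: since the action is Hamiltonian with isolated fixed points, Proposition~\ref{kirwan classes} exhibits $H_T^*(M;\R)$ as a free $H^*(BT;\R)$-module (equivariant formality), so in degree two $\ker\!\big(r|_{H_T^2}\big)=H^2(BT;\R)\cong\mathfrak{t}^*$. Thus $c_1^T=[\omega-\psi]+a$ for some $a\in\mathfrak{t}^*$, and since the moment map is defined only up to a constant, replacing $\psi$ by $\psi-a$ gives $[\omega-(\psi-a)]=[\omega-\psi]+a=c_1^T$, which is \eqref{c1 equal omega}; applying $r$ recovers $c_1=[\omega]$.

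For the weight sum formula I would restrict \eqref{c1 equal omega} to a fixed point $p\in M^T$: by Lemma~\ref{equiv chern classes} one has $c_1^T(p)=\sigma_1(w_1^p,\dots,w_n^p)=\sum_{i=1}^n w_i^p$, while the equivariant symplectic class restricts to $-\psi(p)$ since the $\omega$-component vanishes at a point, yielding \eqref{wsf}. I expect the only delicate step to be the middle one, namely making precise that $c_1^T-[\omega-\psi]$ is a genuine constant in $\mathfrak{t}^*$ rather than merely an element of $\ker r$; this rests on the equivariant formality supplied by the Kirwan basis of Proposition~\ref{kirwan classes}. The remaining steps are bookkeeping.
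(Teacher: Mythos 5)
Your proposal is correct and follows essentially the same route as the paper: Proposition \ref{sixfp}(1) gives $b_2(M)=1$, hence $c_1=\lambda[\omega]$ with $\lambda>0$ by the Hamiltonian hypothesis, rescale so $c_1=[\omega]$, note the equivariant lifts differ by an element $a\in\mathfrak{t}^*$, absorb $a$ into the moment map, and restrict to fixed points for the weight sum formula. The only difference is that you justify the step ``$c_1^T-[\omega-\psi]\in\mathfrak{t}^*$'' explicitly via equivariant formality from the Kirwan basis, which the paper (following its Section on monotone Hamiltonian $T$-spaces) simply asserts; this is a sound and welcome elaboration, not a different argument.
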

\begin{proof}
By Proposition \ref{sixfp} (1), the second Betti number is one. Therefore $c_1=\lambda [\omega]$ for some $\lambda\in \R$ and, by \cite[Lemma 5.2]{GHS} applied to a generic subcircle $S^1\subset T$, we have that $\lambda >0$. Therefore the symplectic form can be rescaled so that $c_1=[\omega]$. As a consequence, if we consider the equivariant extensions of $c_1$ and $\omega$, namely $c_1^T$ and $[\omega-\psi]$, there must be an element $a$ of $\mathfrak{t}^*$ such that $c_1^T=[\omega-\psi]+a$. 
If we replace the moment map $\psi$ with $\psi':=\psi-a$,  we have that $c_1^T=[\omega-\psi']$. Restricting the previous equality to a fixed point $p$, yields the weight sum formula \eqref{wsf}.
\end{proof}
\begin{remark}
Henceforth, if the manifold we are working with satisfies the hypotheses of Lemma \ref{lemma monotone chi six}, we will always assume that the symplectic form is rescaled and the moment map is
translated so that \eqref{c1 equal omega} holds. 
\end{remark}
In order to prove the main result of this section we need the following results. For circle actions, we orient the multigraph as in Remark \ref{rmk:circle2}. 
\begin{lemma}[Jang, Tolman \cite{JaTo}]\label{lemma jang tolman}
Let $(M,J)$ be a closed almost complex manifold acted on by a circle $S^1$ with isolated fixed points. 
Then there exists an oriented multigraph $\Gamma=(V,E)$ describing
$(M,J,S^1)$ such that, for each $e\in E$, the index of $i(e)$ in the isotropy submanifold $M^{\Z_{w(e)}}$ is two less than the index
of $t(e)$ in $M^{\Z_{w(e)}}$. 
\end{lemma}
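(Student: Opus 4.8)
The plan is to construct the bijection $g\colon W^{+}\to W^{-}$ underlying $\Gamma$ (see Definition~\ref{def:multigraph}, with $W^{+},W^{-}$ the positive and negative weights as in Remark~\ref{rmk:circle1}) one isotropy level at a time, reducing the statement to a counting identity for effective circle actions. Here the \emph{index} of a fixed point in a $J$-invariant submanifold means twice the number of negative weights of the $S^{1}$-action on its tangent space, so the condition is equivalent to requiring that $i(e)$ have exactly one negative weight fewer than $t(e)$ inside $M^{\Z_{w(e)}}$.

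First I would reduce to the effective case. Fix $m\geq 1$ and a connected component $N$ of the isotropy submanifold $M^{\Z_{m}}$ on which $\pm m$ actually occurs as a weight, so that $S^{1}$ acts on $N$ effectively through $S^{1}/\Z_{m}$. At a fixed point $p\in N$ the tangent space $T_{p}N$ is spanned by the weight spaces of $T_{p}M$ whose weights are divisible by $m$, and dividing these weights by $m$ gives the weights of the effective action; in particular the original weight $+m$ (resp.\ $-m$) becomes the primitive weight $+1$ (resp.\ $-1$), and, since dividing by $m>0$ preserves signs, the index of $p$ in $M^{\Z_{m}}$ equals twice the number of negative weights of the effective action at $p$. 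Because each positive weight $+m$ at a fixed point $p$ is tangent to a unique component of $M^{\Z_{m}}$ (namely the one through $p$), where it is the primitive weight, matching inside each such $N$ the $(+1)$-weight directions with the $(-1)$-weight directions of the effective action assembles into a globally well-defined $g$ with no weight matched twice. Since $N$ is connected, the ``same connected component of $M^{w_{p,e}}$'' requirement of Definition~\ref{def:multigraph} holds automatically, and by Lemma~\ref{weights in pairs} applied to $N$ there are equally many $(+1)$- and $(-1)$-weights available.

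Everything thus reduces to the following statement for an effective $S^{1}$-action on a closed connected almost complex manifold $N$ with isolated fixed points: the $(+1)$-weight directions can be matched bijectively with the $(-1)$-weight directions so that in each matched pair $(p,q)$, with $+1$ at $p$ and $-1$ at $q$, the point $q$ has exactly one more negative weight than $p$. I would deduce this from the graded identity
\[
a_{k}=b_{k+1}\qquad\text{for all }k,
\]
where $a_{k}$ (resp.\ $b_{k}$) denotes the number of $(+1)$-weights (resp.\ $(-1)$-weights) occurring at fixed points with exactly $k$ negative weights: once this holds, any bijection pairing the index-$k$ sources with the index-$(k+1)$ sinks yields admissible edges (all such pairs being allowed since $N$ is connected), and orienting each edge from its positive to its negative endpoint as in Remark~\ref{rmk:circle2} gives the required index jump. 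To prove $a_{k}=b_{k+1}$ I would apply Atiyah--Bott--Berline--Vergne localization (as in Remark~\ref{geometric interpretation}) to the $S^{1}$-equivariant Todd/arithmetic genus of $(N,J)$, whose fixed-point contributions are $\prod_{i}(1-t^{-w_{i}^{p}})^{-1}$: the requirement that the total be an honest virtual character, with the apparent poles at roots of unity cancelling, forces exactly these relations among the $\pm1$ weights graded by index. (When a moment map is present this is nothing but the classical Morse theory of the gradient spheres tangent to the weight-$1$ directions, each of which raises the index by two.)

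The main obstacle is precisely this counting identity. The delicate point is that along a candidate edge $(p,q)$ many weights besides the matched pair may differ, so it is not a priori clear that only a single negative weight is gained; the content of the lemma is that these discrepancies can always be arranged into a globally consistent, index-increasing matching, which is exactly what the localization argument provides. The remaining ingredients---the reduction to the effective actions on the submanifolds $M^{\Z_{m}}$, the check that $g$ is a well-defined global bijection, and the passage between index and number of negative weights---are then routine.
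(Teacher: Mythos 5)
First, a remark on the comparison itself: the paper contains no proof of this lemma — it is quoted from Jang--Tolman \cite{JaTo} — so your proposal can only be measured against the argument in that reference and its relatives. Your first half follows the same (essentially unavoidable) route and is correct: fix $m$ and a connected component $N$ of $M^{\Z_m}$ on which $\pm m$ occurs as a weight; since every weight on $T_pN$ is divisible by $m$ and $m$ itself occurs, the kernel of the $S^1$-action on $N$ is exactly $\Z_m$, so dividing the weights by $m$ gives an effective action in which $\pm m$ becomes $\pm 1$ and all signs — hence all indices in $M^{\Z_m}$ — are unchanged; and once the counting identity $a_k=b_{k+1}$ holds on each such $N$, any index-respecting pairing of $(+1)$-directions with $(-1)$-directions assembles into a multigraph as in Definition~\ref{def:multigraph} (connectedness of $N$ makes every pair admissible, and the index condition automatically excludes self-loops). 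You have correctly isolated the crux.

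The gap is in your proof of the crux. The untwisted Todd character $\sum_p\prod_i\bigl(1-t^{-w_i^p}\bigr)^{-1}$ cannot yield a family of relations graded by $\lambda_p$, because its fixed-point contributions carry no record of $\lambda_p$ whatsoever. Concretely: the poles at a primitive $m$-th root of unity with $m\geq 2$ involve only weights divisible by $m$, so they say nothing about $\pm1$ weights; and what the Todd character does give — it is a Laurent polynomial in $t$ with finite limits as $t\to 0,\infty$, hence constant — produces $a_0=b_1$ from the vanishing of the $t^{-1}$-coefficient, but already the $t^{-2}$-coefficient mixes $a_1$ and $b_2$ with the number of $\pm2$ weights and with pairs of $+1$ weights at points of index $0$, and higher coefficients get worse. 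The identity $a_k=b_{k+1}$ for all $k$ is strictly stronger than anything the arithmetic genus alone knows. The repair is to use the index-graded refinement, the equivariant Hirzebruch $\chi_y$-character
\[
\chi_y(t)=\sum_p\prod_i\frac{1-y\,t^{-w_i^p}}{1-t^{-w_i^p}}\,,
\]
each coefficient of $y^j$ being the index of the $\Lambda^j(T^{1,0}N)^*$-twisted Dolbeault-type operator, hence an honest character (here the relevant tool is the Atiyah--Bott/Atiyah--Segal--Singer fixed point theorem, not ABBV localization, which is the cohomological statement). Every fixed-point contribution is holomorphic off the unit circle with limits $y^{\lambda_p}$ at $t=\infty$ and $y^{n-\lambda_p}$ at $t=0$, so $\chi_y(t)$ is constant in $t$; expanding in $s=t^{-1}$, only the $\pm1$ weights contribute to the $s^1$-coefficient, which equals $(1-y)\bigl(\sum_k a_k y^k-\sum_k b_k y^{k-1}\bigr)$, and its vanishing is exactly $a_k=b_{k+1}$ for all $k$. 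The auxiliary variable $y$, recording $y^{\lambda_p}$, is precisely the grading device your argument lacks; it is also what replaces your parenthetical Morse-theoretic picture, which is unavailable here since a general almost complex $S^1$-manifold has no moment map.
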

The following is an immediate consequence of \cite[Lemma 3.1]{T}.
\begin{lemma} \label{hamindex}
Suppose that $(M,\omega)$ is a closed symplectic manifold equipped with a Hamiltonian $S^1$-action with isolated fixed points and moment map $\psi: M \rightarrow \mathbb{R}$. 
Let $p_{\min}$ be the fixed point where $\psi$ attains its minimum. If $q$ is a fixed point such that there is no fixed point in $\psi^{-1}(\psi(p_{\min}), \psi(q))$, then $\lambda(q)=1$. Furthermore, every fixed point  $q^\prime$ with $\psi(q^\prime)=\psi(q)$ has $\lambda(q')=1$.
\end{lemma}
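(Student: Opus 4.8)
The plan is to show, in order, three things: that $p_{\min}$ is the \emph{unique} global minimum; that consequently $q$ (assumed strictly above $p_{\min}$, otherwise there is nothing to prove) has at least one negative weight; and that it has at most one. Recall from the discussion preceding Proposition~\ref{kirwan classes} that a generic component of the moment map, and in particular $\psi$ itself, is a perfect Morse function whose critical set is $M^{S^1}$ and for which $b_{2j}(M)=N_j$, the number of fixed points with $j$ negative weights. Since $M$ is connected, $N_0=b_0(M)=1$, so there is exactly one fixed point with zero negative weights, namely the global minimum $p_{\min}$. Now let $x$ be any fixed point with $\psi(x)<\psi(q)$. By hypothesis $\psi^{-1}\bigl((\psi(p_{\min}),\psi(q))\bigr)$ contains no fixed point, so $\psi(x)=\psi(p_{\min})$; hence $x$ is itself a global minimum, has zero negative weights, and therefore equals $p_{\min}$ by uniqueness. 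In particular $q\neq p_{\min}$ forces $\lambda(q)\geq 1$. The ``furthermore'' clause requires no extra work: the hypothesis on $q$ depends only on the value $\psi(q)$, so it holds verbatim for every fixed point $q'$ with $\psi(q')=\psi(q)$, and the conclusion $\lambda(q')=1$ will follow by the same argument.

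The heart of the matter is the upper bound $\lambda(q)\leq 1$. First I would make the lower part of the gradient flow explicit: for each negative weight $-a$ (with $a>0$) of the $S^1$-action at $q$, the connected component of the isotropy submanifold $M^{\Z_a}$ through $q$ meets $q$ as a local maximum for $\psi$ along that direction, and by Lemma~\ref{weights in pairs} (applied inside that submanifold, cf.\ Remark~\ref{weights isotropy manifold}) it contains a second fixed point $q'$ at which the corresponding weight is $+a$, so $\psi(q')<\psi(q)$. By the first paragraph $q'=p_{\min}$, so every negative weight of $q$ yields an invariant, hence symplectic, $2$-sphere joining $q$ down to $p_{\min}$, and $\lambda(q)$ equals the number of such descending spheres. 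To bound this number I would pass to symplectic reduction. Fix a regular value $c\in(\psi(p_{\min}),\psi(q))$; since $p_{\min}$ is an isolated minimum with positive weights $\alpha_1,\dots,\alpha_n$, the reduced space $M_c=\psi^{-1}(c)/S^1$ is the link of the minimum, i.e.\ the weighted projective space $\mathbb{P}(\alpha_1,\dots,\alpha_n)$, and because no critical value lies in $(\psi(p_{\min}),c)$ this identification holds for the whole interval up to $\psi(q)$. Crossing the critical value $\psi(q)$ modifies $M_c$ by collapsing the weighted projective subspace spanned by the negative directions at $q$, a subvariety of complex dimension $\lambda(q)-1$, and replacing it by the one spanned by the positive directions. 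The orientation of the normal data built from the negative weights means this collapsed locus must have \emph{negative} normal bundle; but a positive-dimensional projective subspace of $\mathbb{P}(\alpha_1,\dots,\alpha_n)$ has positive normal bundle, so the collapsed locus can only be a point, forcing $\lambda(q)-1=0$, i.e.\ $\lambda(q)=1$.

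The main obstacle is precisely this last step: making the reduced-space surgery and its normal-bundle constraint rigorous for an isolated fixed point of arbitrary index (in particular handling the weighted, rather than smooth, structure of $M_c$ and the low-dimensional cases $n\le 2$, where the conclusion instead follows immediately from $b_2(M)=N_1\geq 1$ together with the uniqueness of the maximum). This is exactly the analysis carried out in \cite[Lemma~3.1]{T}, so in the write-up I would isolate the elementary first paragraph as the new content and invoke \cite[Lemma~3.1]{T} for the index computation, thereby obtaining $\lambda(q)=1$ for $q$ and, by the remark above, for every fixed point at the same level of $\psi$.
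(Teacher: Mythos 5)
Your proposal is correct and ultimately rests on the same foundation as the paper: the paper gives no argument beyond declaring the lemma ``an immediate consequence of \cite[Lemma 3.1]{T}'', and your write-up likewise delegates the essential index bound $\lambda(q)\leq 1$ to \cite[Lemma 3.1]{T}, after making explicit the routine perfect-Morse bookkeeping (uniqueness of the minimum, hence $\lambda(q)\geq 1$, and the reduction of the ``furthermore'' clause) that the paper leaves implicit. Your intermediate wall-crossing sketch is heuristic rather than load-bearing, so the two proofs are essentially identical in substance.
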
 
\begin{remark}\label{rmk:reverse}
Suppose that the dimension of $M$ is equal to $2n$. 
Reversing the circle action, it is easy to check that if  $p_{\max}$ is the fixed point where $\psi$ attains its maximum and $q$ is a fixed point of the original $S^1$-action for which there is no fixed point in $\psi^{-1}(\psi(q),\psi(p_{\max}))$, then $\lambda(q)=n-1$. Furthermore, every fixed point  $q^\prime$ with $\psi(q^\prime)=\psi(q)$ must have $\lambda(q')=n-1$.
\end{remark}
When the manifold is $8$-dimensional and the number of fixed points is $6$, then the Euler characteristic of $M$ is $6$ and so, by Proposition \ref{sixfp} (1) we have exactly one fixed point $p$ with $\lambda(p)=1$ and two fixed points with $\lambda(p)=2$. Then Lemma~\ref{hamindex} and Remark~\ref{rmk:reverse} imply the following statement.
\begin{lemma} \label{indexnondec}
Let  $(M,\omega)$ be an $8$-dimensional, closed, symplectic manifold equipped with a Hamiltonian $S^{1}$-action with $6$ isolated fixed points and moment map $\psi: M \rightarrow \mathbb{R}$. 
Name the fixed points  $p_{0},p_1,p_2,p_{2}' , p_3,p_{4}$  so that   $\lambda(p_{i}) =i$ for each $i$ and $\lambda(p_{2}')=2$. Then we have
$$\psi(p_{0}) < \psi(p_{1}) < \psi(p_{2}) \leq \psi(p_{2}') <\psi(p_{3}) < \psi(p_{4}). $$
\end{lemma}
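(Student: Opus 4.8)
The plan is to read off the Morse-index distribution of $\psi$ from the Betti numbers and then pin down the extreme fixed points using Lemma~\ref{hamindex} and Remark~\ref{rmk:reverse}. First I would recall that, since the action is Hamiltonian with isolated fixed points, $\psi$ is a perfect Morse function whose critical set is $M^T$ and whose Morse index at a fixed point $p$ is $2\lambda(p)$; hence the number of fixed points with $\lambda=j$ equals $b_{2j}(M)$. Because $\chi(M)=6$, Lemma~\ref{c4 and c1c3} and Proposition~\ref{sixfp}(1) give $b_2(M)=1$ and $b_4(M)=2$, and Poincar\'e duality gives $b_0=b_8=1$ and $b_6=1$. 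Thus exactly one fixed point has each of the indices $0,1,3,4$ and exactly two have index $2$, which is precisely what makes the naming $p_0,p_1,p_2,p_2',p_3,p_4$ legitimate. In particular the global minimum of $\psi$ is the unique index-$0$ point $p_0$ and the global maximum is the unique index-$4$ point $p_4$.

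Next I would locate $p_1$. Consider the set $S$ of fixed points attaining the smallest moment-map value that is strictly greater than $\psi(p_0)$. For every $q\in S$ there is, by construction, no fixed point in $\psi^{-1}(\psi(p_0),\psi(q))$, so Lemma~\ref{hamindex} forces $\lambda(q)=1$. Since $p_1$ is the only fixed point of index $1$, this yields $S=\{p_1\}$. Consequently $\psi(p_0)<\psi(p_1)$, and every fixed point other than $p_0$ has moment-map value $\geq \psi(p_1)$, with equality only at $p_1$; in particular $\psi(p_2),\psi(p_2'),\psi(p_3),\psi(p_4)$ are all strictly larger than $\psi(p_1)$.

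A symmetric argument, applied to the reversed action via Remark~\ref{rmk:reverse}, locates $p_3$: the set of fixed points attaining the largest value strictly below $\psi(p_4)$ consists of points of index $n-1=3$, hence equals $\{p_3\}$ by uniqueness. Therefore $\psi(p_3)<\psi(p_4)$ and the remaining points $p_0,p_1,p_2,p_2'$ all have value strictly below $\psi(p_3)$. Combining the two sandwiching statements gives $\psi(p_1)<\psi(p_2),\psi(p_2')<\psi(p_3)$; after relabelling if necessary so that $\psi(p_2)\leq\psi(p_2')$, the full chain $\psi(p_0)<\psi(p_1)<\psi(p_2)\leq\psi(p_2')<\psi(p_3)<\psi(p_4)$ follows.

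The argument is essentially a bookkeeping exercise, so I do not expect a serious obstacle. The one point that needs care is the uniqueness step: one must use that $b_2(M)=b_6(M)=1$ (so that there is a single fixed point of index $1$ and a single one of index $3$) to rule out any other fixed point sharing the value $\psi(p_1)$ or $\psi(p_3)$, which is exactly what upgrades the inequalities at those two levels from $\leq$ to $<$ and prevents $p_2$ or $p_2'$ from sliding to the extreme levels. The remaining $\leq$ between $p_2$ and $p_2'$ is unavoidable, since $\psi$ need not separate the two index-$2$ points.
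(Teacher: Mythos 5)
Your proof is correct and follows essentially the same route as the paper: the paper likewise deduces the index distribution ($b_2=1$, $b_4=2$, hence one fixed point of each index $0,1,3,4$ and two of index $2$) from $\chi(M)=6$ via Proposition~\ref{sixfp}(1), and then invokes Lemma~\ref{hamindex} and Remark~\ref{rmk:reverse} to pin down the moment-map ordering. Your write-up just fills in the sandwiching details that the paper leaves implicit.
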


Suppose that $(M,\omega,\psi)$ is a Hamiltonian $T$-space of dimension $2n$ with $b_2(M)=1$ and discrete fixed point set $M^T$.
Consider a generic element $\xi$ of $\mathfrak{t}$ generating a circle $\exp\langle \xi \rangle = S^1$. Then, by the perfection of the
associated moment map $\varphi=\langle \psi , \xi \rangle$, there is exactly one fixed point $p_1$ with $\lambda(p_1)=1$ and, by Poincar\'e duality, 
there is exactly one fixed point $p_{n-1}$ with $\lambda(p_{n-1})=n-1$.
The next result illustrates another easy feature of multigraphs associated to Hamiltonian actions of this kind. 

\begin{lemma}\label{multigraph b2=1}
Let $(M,\omega,\psi)$ be a Hamiltonian $T$-space of dimension $2n$ with $b_2(M)=1$ and discrete fixed point set $M^T$.
Let $\xi$ be a generic element of $\mathfrak{t}$ generating a circle $\exp\langle \xi \rangle = S^1$.
Let $p_i$ be the only fixed point with $\lambda(p_i)=i$, for $i\in \{0,1,n-1,n\}$. 

Then there exists a multigraph describing $(M,\omega,\psi)$ that has one edge between $p_0$ and $p_1$, and one edge between $p_{n-1}$ and $p_n$.
\end{lemma}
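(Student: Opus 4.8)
The plan is to construct the two required edges separately and then assemble them into a single bijection $g\colon W^+\to W^-$ as in Definition \ref{def:multigraph}. Since passing from $\xi$ to $-\xi$ interchanges the roles of $(p_0,p_1)$ and $(p_n,p_{n-1})$ (by Remark \ref{rmk:reverse}), it suffices to explain how to produce the edge between $p_0$ and $p_1$; the edge between $p_{n-1}$ and $p_n$ is then obtained by running the identical argument for the reversed action. Throughout I write $\varphi=\langle\psi,\xi\rangle$ for the associated generic component of the moment map, and I recall that, since $b_2(M)=1$ (so that $N_0=N_1=N_{n-1}=N_n=1$ by perfection of $\varphi$ and Poincar\'e duality), the fixed points $p_0,p_1,p_{n-1},p_n$ are indeed the unique ones of their respective indices. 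By Lemma \ref{hamindex}, $p_1$ is the fixed point immediately above $p_0$, so $p_0$ is the \emph{only} fixed point $q$ with $\varphi(q)<\varphi(p_1)$.

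The heart of the matter is to isolate the unique negative weight $-w$ at $p_1$ (so $w(\xi)>0$) and to show that $w$ is actually a weight at $p_0$: since all weights at $p_0$ are positive, the only conceivable edge joining $p_0$ and $p_1$ must pair $w$ at $p_0$ with $-w$ at $p_1$. To this end I would let $N$ be the connected component of the isotropy submanifold $M^w$ containing $p_1$. This $N$ is a closed almost complex submanifold on which, by Remark \ref{weights isotropy manifold}, all weights are integral multiples of $w$; hence $N$ carries an effective circle action for which $w$ is the primitive weight, and $p_1$ has exactly one negative weight in $N$, namely $-w$. The minimum $p_0^N$ of $\varphi|_N$ has Morse index $0$ in $N$, so $p_0^N\neq p_1$ and $\varphi(p_0^N)<\varphi(p_1)$, which by the previous paragraph forces $p_0^N=p_0$; in particular $p_0\in N$. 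Now I apply Lemma \ref{lemma jang tolman} (Jang--Tolman) to the circle action on $N$: the edge $e$ issued by the negative weight $-w$ at $p_1$ (so that $t(e)=p_1$) has primitive edge-weight, whence $N^{\Z_{w(e)}}=N$; since $p_1$ has Morse index $2$ in $N$, the index identity gives that the tail $i(e)$ has Morse index $0$ in $N$, i.e. $i(e)=p_0$. Therefore $w$ is a weight at $p_0$, and $p_0,p_1$ lie in the same component $N\subseteq M^w$, producing an admissible edge.

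With this in hand I would build the multigraph by first declaring the two distinguished pairings --- $w$ at $p_0$ with $-w$ at $p_1$, and (from the reversed-action argument) the unique $\xi$-positive weight $w'$ at $p_{n-1}$ with $-w'$ at $p_n$ --- each admissible since its endpoints lie in a common component of $M^w$, respectively $M^{w'}$. These involve distinct weight-occurrences, even in the degenerate case $n=2$ where $p_1=p_{n-1}$, because at that vertex one uses its negative weight $-w$ and its positive weight $w'$. The remaining weights are then matched by the general construction guaranteed by Lemma \ref{weights in pairs} and Remark \ref{weights isotropy manifold}, applied componentwise: deleting a matched $\pm$-pair from a component of some $M^u$ preserves the equality $N(u)=N(-u)$ there, so a completing bijection $g$ exists. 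Finally, because $-w$ occurs with multiplicity one at $p_1$ while all weights at $p_0$ are positive, the edge between $p_0$ and $p_1$ is forced to be unique; the same reasoning gives uniqueness of the edge between $p_{n-1}$ and $p_n$.

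The main obstacle is precisely the step in the second paragraph: proving that the matching weight $+w$ genuinely appears at $p_0$. The ordering of the fixed points only shows that $p_0$ is the unique fixed point below $p_1$, which pins down \emph{which} vertex the down-edge from $p_1$ may reach, but not that the exact weight $w$ --- as opposed to some higher multiple of it --- is realized there. It is the reduction to the isotropy submanifold $M^w$, where $w$ becomes primitive and the Jang--Tolman index identity becomes available, that resolves this; geometrically this reflects the fact that the closure of the unstable manifold of the index-one fixed point $p_1$ is a $T$-invariant $2$-sphere with poles $p_0$ and $p_1$ carrying the opposite weights $\pm w$.
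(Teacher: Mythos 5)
Your proof is correct and rests on exactly the same two ingredients as the paper's own proof: the Jang--Tolman index lemma (Lemma \ref{lemma jang tolman}) and the fact, coming from Lemma \ref{hamindex} together with $b_2(M)=1$, that $p_0$ is the unique fixed point with $\varphi<\varphi(p_1)$. The only difference is economy: the paper applies Jang--Tolman directly to $M$, so in that single multigraph the unique edge $e$ with $t(e)=p_1$ already has $i(e)$ of index $0$ in $M^{\Z_{w(e)}}$, forcing $i(e)=p_0$ (and symmetrically for $p_{n-1},p_n$); this makes your detour through the component $N$ of $M^{w}$, the passage to the quotient circle action to render the weight primitive, and the final componentwise re-assembly of the bijection $g$ unnecessary, though all of those steps are sound.
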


\begin{proof}
Consider the moment map $\varphi =\langle \psi , \xi \rangle$ of the $S^1$-action on $(M,\omega)$. 
As there is exactly one negative isotropy weight at $p_1$, by the definition of multigraph, there is exactly one edge $e\in E$ with $t(e)=p_1$. 
By Lemma \ref{lemma jang tolman}, there exists a multigraph such that the index of $i(e)$ is zero on the isotropy submanifold of $M^{\Z_{w(e)}}$,
therefore $i(e)$ is a minimum for the restriction of $\varphi$ to $M^{\Z_{w(e)}}$.
Since $p_0$ is the only fixed point of the $S^1$-action on $M$ with $\varphi(p_0)<\varphi(p_1)$, it follows that $i(e)=p_0$.

A similar argument applies to the unique fixed point $p_{n-1}$ with $\lambda(p_{n-1})=n-1$.
\end{proof}

\begin{theorem}\label{existence multigraph dim 8}
Let $(M,\omega)$ be an 8-dimensional,  closed,  symplectic manifold with an effective Hamiltonian action of a torus $T$ with moment map $\psi:M\to \frak{t}^*$ and isolated fixed points. 
Assume that $\chi(M)=6$. 
Then $(M,\omega,\psi)$ can be described by a multigraph $\Gamma=(V,E)$ that is positive w.r.t. $[\omega-\psi]$ if 
\begin{itemize}
\item[(a)] the dimension of $T$ is at least 2, or
\item[(b)] the dimension of $T$ is one and there is no 6-dimensional isotropy submanifold with $6$ fixed points.
\end{itemize}
Moreover, let  $\varphi: M\to \R$  be a generic component of the moment map  with minimum  $p_0$ and maximum $p_4$. Then, there exists a multigraph associated to $(M,\omega,\psi)$ that has one edge connecting  $p_{0}$ to the unique fixed point $p_{1}$ with $\lambda(p_{1})= 1$, and one edge connecting the unique fixed point $p_{3}$ with 
$\lambda(p_{3})=3$ to  $p_{4}$.  
\end{theorem}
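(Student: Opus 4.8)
The plan is to first normalize via Lemma \ref{lemma monotone chi six}, so that $c_1^T=[\omega-\psi]$, $b_2(M)=1$, and the weight sum formula holds, and then to build $\Gamma$ by the Jang--Tolman recipe and verify positivity edge by edge. By Remark \ref{weights isotropy manifold}, every isotropy submanifold $M^w$ carries an effective circle action in the direction $w$, and for $p,q$ in one connected component of $M^w$ the difference $\psi(q)-\psi(p)$ is a multiple of $w$; hence an edge of that component is positive w.r.t.\ $[\omega-\psi]$ precisely when the circle moment map $\varphi=\langle\psi,\xi\rangle$ (with $w(\xi)>0$) strictly increases from tail to head. I would therefore fix, for each primitive weight direction $u$, the oriented multigraph on the maximal isotropy submanifold $M^u$ given by Lemma \ref{lemma jang tolman}, and assemble these into $\Gamma$; by Lemma \ref{lemma graph submanifolds} the positivity of $\Gamma$ reduces to the positivity of each piece, so it suffices to show that every Jang--Tolman edge is $\varphi$-increasing.

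The analysis proceeds by $\dim M^{\Z_{w(e)}}$, using that along an edge $e$ the number of negative weights in $M^{\Z_{w(e)}}$ increases by exactly one from $i(e)$ to $t(e)$, and that $\varphi$ is Morse with even indices, so on a connected $2d$-dimensional component the number of negative weights runs through $\{0,\dots,d\}$. If $\dim M^{\Z_{w(e)}}=2$ the edge joins the minimum and maximum of $\varphi$ on the sphere; if $\dim M^{\Z_{w(e)}}=4$ every edge either leaves the unique minimum or enters the unique maximum; in all these cases $\varphi$ increases, so $e$ is positive. (Equivalently, these low-dimensional pieces are positive via the toric $1$-skeleton of Example \ref{example spaces toric one skeleton}(iii) and Proposition \ref{symplectic positive}.)

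The delicate case, which I expect to be the main obstacle, is $\dim M^{\Z_{w(e)}}=6$: the admissible transitions are $0\to1$, $1\to2$, $2\to3$ negative weights, the edges out of the minimum ($0\to1$) and into the maximum ($2\to3$) are automatically $\varphi$-increasing, but a $1\to2$ edge need not be unless the order of $\varphi$ on the component coincides with the order by number of negative weights. Here the hypotheses enter. By Poincar\'e duality a connected $6$-dimensional Hamiltonian $S^1$-component has an even number of fixed points, at least $4$; so if it does not contain all $6$ fixed points it has exactly $4$, and then Lemma \ref{hamindex} and Remark \ref{rmk:reverse}, applied inside the component, force the $\varphi$-order to agree with the order by number of negative weights, making every $1\to2$ edge positive. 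It thus remains to exclude a $6$-dimensional component containing all $6$ fixed points. In case (b) this is exactly the standing hypothesis. In case (a), since $\dim T\ge 2$, such a component is fixed by the codimension-one subtorus $T_w=\exp(\mathrm{Ann}(w))$, so $\psi$ is constant along it in the $\mathrm{Ann}(w)$-directions and its image is contained in a single affine line of $\mathfrak{t}^*$; if it carried all $6$ fixed points their images would be collinear, contradicting that $\psi(M)=\mathrm{conv}(\psi(M^T))$ has dimension $\dim T\ge 2$. Hence no such component exists.

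There is a final top level only in case (b): the weight-one edges lie on $M^{\Z_1}=M$ and are contained in no proper isotropy submanifold, but along them the number of negative weights increases by one, and by Lemma \ref{indexnondec} the order $\psi(p_0)<\psi(p_1)<\psi(p_2)\le\psi(p_2')<\psi(p_3)<\psi(p_4)$ is compatible with this, so these edges are positive too; in case (a), $\dim T\ge 2$ forces every $M^w$ to be proper, so no such level occurs. This yields a multigraph positive w.r.t.\ $[\omega-\psi]$. For the \emph{Moreover} statement I would apply Lemma \ref{multigraph b2=1} with $n=4$ (legitimate since $b_2(M)=1$), producing a multigraph with one edge from the minimum $p_0$ to the unique fixed point $p_1$ with $\lambda(p_1)=1$ and one edge from the unique fixed point $p_3$ with $\lambda(p_3)=3$ to the maximum $p_4$; since this is the Jang--Tolman multigraph near the extrema, it is compatible with the positive multigraph constructed above. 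The bulk of the work lies in the $6$-dimensional analysis, in making the collinearity/convexity exclusion of case (a) precise, and in the bookkeeping relating the Jang--Tolman index increments to increase of the moment map.
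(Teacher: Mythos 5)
Your proposal is correct, and its skeleton coincides with the paper's: reduce positivity to the isotropy submanifolds $M^w$ (Lemma \ref{lemma graph submanifolds}), handle the case $M^w=M$ (only possible when $\dim T=1$ and $w=\pm1$) via Lemma \ref{lemma jang tolman} together with Lemma \ref{indexnondec}, show in the $6$-dimensional case that the relevant component has exactly $4$ fixed points (your collinearity/convexity argument for case (a) and the appeal to hypothesis (b) are word-for-word the paper's), and obtain the \emph{Moreover} clause from Proposition \ref{sixfp}(1) and Lemma \ref{multigraph b2=1}. Where you genuinely diverge is in how the $6$-dimensional, $4$-fixed-point components are dispatched: the paper invokes Tolman's classification \cite{T} of $6$-dimensional Hamiltonian $S^1$-spaces with $4$ fixed points and asserts that in each classified case the action admits a positive multigraph, then lifts back to $T'$ and $T$ via Lemma \ref{positive subactions}; you instead argue directly that on such a component the moment-map order must agree with the order by number of negative weights (Lemma \ref{hamindex} and Remark \ref{rmk:reverse} force $\varphi(q_0)<\varphi(q_1)<\varphi(q_2)<\varphi(q_3)$), so the Jang--Tolman edges, whose index in the relevant isotropy submanifold increases by two, are automatically $\varphi$-increasing, with the higher-weight edges falling into the already-settled $2$- and $4$-dimensional cases. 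Your route buys self-containedness: it avoids the external classification entirely (and, as you note, could also replace the paper's toric $1$-skeleton argument of Example \ref{example spaces toric one skeleton}(iii) and Proposition \ref{symplectic positive} in dimensions $\le 4$ by the same uniform Morse reasoning), and it makes explicit the positivity verification that the paper leaves implicit in the phrase ``in all of the possible cases.'' What the paper's route buys is brevity and consistency, since Tolman's classification is already a standing tool elsewhere in the paper. The only points you should tighten in a write-up are the ones you yourself flag: the coherence of assembling the per-direction Jang--Tolman multigraphs into a single multigraph describing $(M,\omega,\psi)$ (which works because the weight multiset decomposes by primitive direction and $(M^u)^{\Z_k}=M^{ku}$), and the passage to the effective circle action on each component so that higher-weight edges really do live in proper, hence lower-dimensional, isotropy submanifolds.
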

\begin{proof}
In both cases (a) and (b), the strategy used is the same: For every weight $w\in W$, consider
an edge $e$ with endpoints $p,q$ such that $w_{p,e}=w$ or $-w$. This edge is also an edge for the restriction $\Gamma^w$ of the
multigraph $\Gamma$ on the isotropy submanifold $M^w$ with the restricted action of $T$ (see Remark \ref{restriction multigraph}).
Therefore, it is enough to prove that the edge $e$, regarded as an edge of $\Gamma^w$, is positive w.r.t. the restriction of $[\omega-\psi]$
on $M^w$. 
(See Lemma \ref{lemma graph submanifolds}.)

We first observe that $M^w=M$ only if $\dim(T)=1$ and $w=\pm 1$. 
This follows from the effectiveness of the action of $T$, the fact that $M$ is connected, and that $M^w$ is fixed by $H_w$, which is the trivial subgroup only if $\dim(T)=1$ and $w=\pm 1$. Let us  assume first that $w=\pm 1$ and consider the corresponding edge $e\in \Gamma$. 
Then the fact that $e$ is positive w.r.t.\ $[\omega-\psi]$ follows from Lemmas \ref{lemma jang tolman} and \ref{indexnondec}.

Now suppose that $H_w$ is not the trivial subgroup, and therefore $M^w$ is a proper submanifold of $M$. 
Consider the effective action of $T':=T/H_w$ on $M^w$. This action is Hamiltonian w.r.t. the restriction of the symplectic form, which we still call $\omega$, and the
restriction of the moment map $\psi$ to $M^w$ (here we identify $(\text{Lie}(T'))^*$ with $\text{Ann(Lie}(H_w))\subset \mathfrak{t}^*$). Moreover, it has isolated fixed points.

If $\dim(M^w)\leq 4$, then $(M^w,\omega,T')$ admits a toric $1$-skeleton (see Example \ref{example spaces toric one skeleton} (iii)), and then, by Proposition \ref{symplectic positive}, it can be described by a positive  multigraph w.r.t. $[\omega-\psi]$. 

If $\dim(M^w)=6$, then we claim that the number of fixed points of the $T'$-action is exactly $4$. 
Indeed, we first note that, since the action of $T^\prime$ on  $M^w$ is Hamiltonian, the number of fixed points is at least $4$. Moreover, by Poincar\'e duality, the number of fixed points must be even. Since we are assuming that $\chi(M)=6$, we just need to prove that, in case (a), the action cannot have $6$ fixed points.
Therefore assume that $\dim(T)\geq 2$. In this case $H_w$ is not discrete, so $\dim (T^\prime)=1$ and the image of $M^w$ by the $T'$-moment map is on an affine line in $\mathfrak{t}^*$ (see identifications above). If the number of fixed points of the $T'$-action on $M^w$ were six, then  the images of all the $T$-fixed points by the $T$-moment map  would be on the same affine line  in $\mathfrak{t}^*$.
However, as $\psi(M)$ is the convex hull of the images of these points, this would imply that the $T$-moment polytope were $1$--dimensional, which is impossible since we are assuming
$\dim(T)\geq 2$ and the $T$-action is effective. It follows that the number of fixed points of the $T'$-action on the $6$-dimensional isotropy submanifold $M^w$ is 4. 
Now consider a generic subcircle $S^1$ of $T'$ acting on $M^w$. As this $S^1$-action has 4 fixed points and $M^w$ is $6$-dimensional, the possible weights were classified by Tolman in \cite{T}. In all of the possible cases the manifold $M^w$ and the $S^1$-action can be described by a multigraph that is positive w.r.t. (the restriction of) $[\omega-\psi]$. Therefore, by Lemma \ref{positive subactions}, the action of $T'$
on $M^w$ can be described by a multigraph that is positive w.r.t. (the restriction of) $[\omega-\psi]$ and therefore so does the action of $T$ on $M^w$. 

In order to prove the last claim, it is enough to observe that, by Proposition \ref{sixfp} (1), we have $b_2(M)=1$, and so the conclusion follows from Lemma \ref{multigraph b2=1}.
\end{proof}
\begin{remark}
If $c_1=\lambda[\omega]$ for some $\lambda>0$, then 
the existence of a multigraph which is positive w.r.t. $[\omega-\psi]$
is equivalent to that of a multigraph which is positive w.r.t. $c_1^T$. Therefore, by Lemma \ref{lemma monotone chi six}, this is the case when 
$\chi(M)=6$.

If $T$ is a circle $S^1$, then, for every fixed point $p$, we have $c_1^{S^1}(p)=\Sigma(p)\,x$ where $\Sigma(p)\in \Z$ denotes the sum of the weights at $p$ and $H_{S^1}(\{p\};\Z)=\Z[x]$. 
We conclude that, if $\chi(M)=6$, a multigraph $\Gamma=(V,E)$ describing $(M,\omega,\psi)$ is positive w.r.t. $[\omega-\psi]$ if and only if 
\begin{equation}\label{positive circle cdt}
\Sigma(i(e))> \Sigma(t(e))\,\quad\text{for every edge  }e\in E\,.
\end{equation}
(see also Remark \ref{rmk:circle2}).
\end{remark}

The next result shows that the only case in Theorem \ref{existence multigraph dim 8} in which the existence of a positive multigraph cannot be ensured 
concerns circle actions in which the isotropy submanifold of dimension 6 containing all the fixed points is fixed by $\Z_{2^j}$, for some $j\in \Z_{\geq 1}$. 
\begin{proposition} \label{isoall}
Let $(M,\omega)$ be a closed symplectic manifold of dimension $2n$ equipped with an effective Hamiltonian $S^{1}$-action with discrete fixed point set $M^{S^1}$.
If there exists a connected isotropy submanifold $M^{{\Z}_m}$ of dimension $2n-2$ containing $M^{S^1}$, then $m=2^j$ for some $j\in \Z_{\geq 1}$.
\end{proposition}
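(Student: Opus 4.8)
The plan is to analyze the action in a neighbourhood of $N:=M^{\Z_m}$ through its (complex) normal line bundle $\nu\to N$, and to extract a parity obstruction from the $\pm$-symmetry of the isotropy weights. First I would fix an $S^1$-invariant compatible almost complex structure $J$, so that $N$ is an almost complex submanifold of real codimension $2$ and $\nu$ is a complex line bundle. At each fixed point $x\in M^{S^1}\subset N$, among the $n$ isotropy weights exactly $n-1$ are tangent weights to $N$ — these are divisible by $m$, since $T_xN$ is the subspace fixed by $\Z_m$ — and exactly one, say $a_x$, is the normal weight, with $m\nmid a_x$.

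The first genuinely substantive step, and what I expect to be the main obstacle, is to upgrade $m\nmid a_x$ to the stronger statement $\gcd(a_x,m)=1$. For this I would argue by contradiction: if $d:=\gcd(a_x,m)>1$, then the order-$d$ subgroup $\Z_d\subset\Z_m$ acts trivially on $T_xN$ (being contained in the pointwise stabiliser of $N$) and also trivially on the normal fibre $\nu_x$ (its generator $\zeta^{m/d}$ acts on $\nu_x$ by $e^{2\pi i a_x/d}=1$, as $d\mid a_x$); hence it acts trivially on all of $T_xM$. By Bochner linearisation the generator of $\Z_d$ is then the identity on a neighbourhood of $x$, and since its fixed set is a closed submanifold containing an open subset of the connected manifold $M$, it is the identity on $M$ — contradicting effectiveness of the $S^1$-action.

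With $\gcd(a_x,m)=1$ in hand, I would record the key structural observation that a weight in the multiset $W$ is coprime to $m$ if and only if it is a normal weight, because every tangent weight is divisible by $m\ge 2$. Consequently the multiset $\{a_x\}_{x\in M^{S^1}}$ of normal weights is exactly the sub-multiset of $W$ of weights coprime to $m$. Since negation preserves coprimality to $m$, Lemma \ref{weights in pairs} (which gives $N(w)=N(-w)$) applied to these weights shows that $\{a_x\}$ is symmetric under $w\mapsto -w$: for every value $a$ occurring as some $a_x$, there is a fixed point $x'$ with $a_{x'}=-a$.

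Finally I would derive the contradiction from an odd prime. Suppose $\ell$ is an odd prime dividing $m$ and consider $\Z_\ell\subset\Z_m$, which fixes $N$ pointwise and therefore acts fibrewise-linearly on $\nu$ covering the identity. As $\nu$ is a complex line bundle over the connected base $N$, this action is given by a single character $b\in\Z/\ell$, so that $a_x\equiv b\pmod\ell$ for every $x$; moreover $b\neq 0$, because $\ell\mid m$ together with $\gcd(a_x,m)=1$ forces $\ell\nmid a_x$. Combining the constancy of $b$ with the $\pm$-symmetry of $\{a_x\}$ yields $b\equiv -b\pmod\ell$, i.e.\ $\ell\mid 2b$; since $\ell$ is odd and $\ell\nmid b$, this is impossible. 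Hence $m$ has no odd prime divisor, and as $N$ is a proper submanifold we have $m\ge 2$; therefore $m=2^j$ for some $j\ge 1$.
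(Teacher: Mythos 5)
Your proof is correct and follows essentially the same route as the paper's: both arguments hinge on pairing a normal weight $w$ with $-w$ via the symmetry $N(w)=N(-w)$ (Lemma \ref{weights in pairs}/Hattori) and on the fact that normal weights at fixed points lying in the same connected component of the isotropy submanifold are congruent modulo a prime divisor of $m$, so that $2w\equiv 0$ forces that prime to be $2$. The differences are only organizational: the paper first reduces to prime $m$ and cites Tolman's Lemma 2.6 for the congruence, whereas you prove that congruence directly via the constancy of the $\Z_\ell$-character on the normal line bundle and handle composite $m$ through your $\gcd(a_x,m)=1$ step.
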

\begin{proof}
First of all we observe that, for every prime $m'$ dividing $m$, the isotropy submanifold $M^{\Z_{m'}}$ contains
$M^{{\Z}_m}$, and therefore it contains $M^{S^1}$. Then, in order to prove the claim, it is enough to prove that $m'$ can only be 2.

Suppose that $M^{S^{1}}$ is contained in a connected component of  $M^{\mathbb{Z}_{m}}$ of dimension $2n-2$ for some prime integer $m \geq 2$. Since the $S^1$-action is effective, exactly one of the isotropy weights at the minimum is not divisible by $m$.  Let  $w$ be this isotropy weight. Then, by \cite[Proposition 2.11]{Ha}, there exists another fixed point which has $-w$ as an isotropy weight. Since, by assumption, both fixed points are in the same connected component of $M^{\mathbb{Z}_{m}}$, we have  by \cite[Lemma 2.6]{T} that
$$ w=-w \mod m.$$ 
It follows that $2w = 0 \mod m$, and then, since $m$ is prime and $w$ is not divisible by $m$, we have  that $m=2$ as required.

\end{proof}

\begin{remark} Here we give an example of a Hamiltonian $S^1$-action on a $6$-dimensional symplectic manifold with isolated fixed points, and with a connected isotropy submanifold of dimension $4$ containing all the fixed points. Consider the $S^1$-action on  $M=Gr^{+}_{\mathbb{R}} (2,5)$ induced by the linear circle action on $\mathbb{R}^5 = \mathbb{R} \oplus \mathbb{C}^2$ given by
$$ z \cdot  (x,w_1,w_2) = (x, z^{-1}w_1, z w_{2}).$$
It is easy to check that this action has $4$ isolated fixed points: $\mathcal{L}\{(0,w_1,0)\}$, $\mathcal{L}\{(0,0,w_2)\}$, $\mathcal{L}\{(0,\overline{w}_1,0)\}$ and  $\mathcal{L}\{(0,0,\overline{w}_2)\}$.
Moreover, the subgroup $\mathbb{Z}_{2} \subset S^1$ fixes all the oriented planes in the subspace $\{0\} \times \mathbb{C}^2 \subset \mathbb{R}^5$, and so $M^{\mathbb{Z}_{2}} = Gr^{+}_{\mathbb{R}} (2,4) \subset Gr^{+}_{\mathbb{R}} (2,5) $ is $4$-dimensional and contains the four fixed points.
\end{remark}

\subsection{Positive multigraphs and Betti numbers}
In this section we study  the consequences of the existence of a positive multigraph on an 8-dimensional, closed, symplectic manifold admitting a Hamiltonian $S^1$-action with finitely many fixed points, proving two finiteness results, one for the Betti numbers (Proposition \ref{index betti}) and other for the Chern numbers (Corollary \ref{finitely many chern numbers}). 

We begin with the following result.
\begin{lemma}\label{lemma betti numbers}
Let $(M,\omega)$ be a closed symplectic manifold of dimension $2n$ with index $k_0$, and let $\mathbf{b}=(b_0,\ldots,b_{2n})$ be the
vector of its even Betti numbers. Suppose that $(M,\omega)$ admits a Hamiltonian circle action with moment map $\psi:M\to \R$ with isolated fixed points and consider the integer $C(k_0,n,\mathbf{b})$ defined as 
$$
C(k_0,n,\mathbf{b}):=
\begin{cases}
\displaystyle\sum_{k=1}^{\frac{n}{2}}\Big[12k^2-n(k_0+1)\Big]b_{n-2k}- \frac{n}{2}(k_0+1)\,b_n\,, \quad \mbox{ for }n\mbox{ even}\\
& \\
\displaystyle\sum_{k=1}^{\frac{n-1}{2}}\Big[12k(k+1)+3-n(k_0+1)\Big]b_{n-1-2k}- \big[n(k_0+1)-3\big]b_{n-1}, \mbox{ for }n\mbox{ odd.}
\end{cases}
$$
If $(M,\omega,\psi)$ can be described by a multigraph $\Gamma=(V,E)$ that is positive w.r.t.\ $c_1^{S^1}$, then 
$C(k_0,n,\mathbf{b})$ is a \emph{non-negative} multiple of $k_0$. 

Moreover $C(k_0,n,\mathbf{b})$ vanishes if and only if $c_1^{S^1}[e]=k_0$ for all $e\in E$. 
\end{lemma}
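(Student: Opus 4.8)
The plan is to read $C(k_0,n,\mathbf b)$ as a sum over the edges of $\Gamma$ of the quantities $c_1^{S^1}[e]-k_0$, and then to combine the index hypothesis with positivity to control each summand. The two global facts I would exploit are that the evaluation of an integral equivariant class on an edge is governed by the ordinary first Chern class, and that the total of these evaluations computes $c_1c_{n-1}[M]$.

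First I would record the consequences of the action being Hamiltonian with isolated fixed points: a generic component of $\psi$ is a perfect Morse function, so $H^*(M;\Z)$ is torsion free and concentrated in even degrees, the action is equivariantly formal, and $r\colon H^2_{S^1}(M;\Z)\to H^2(M;\Z)$ is surjective with $\ker r$ in degree two equal to $\Z x$. Applying Lemma \ref{poincare dual} to $\alpha=c_1^{S^1}$ gives
$$\sum_{e\in E}c_1^{S^1}[e]=\int_M r(c_1^{S^1})\cup c_{n-1}=c_1c_{n-1}[M],$$
and, since each of the $n\,\chi(M)$ isotropy weights labels exactly one edge-end, the edge count is $|E|=\frac{n}{2}\chi(M)$, independent of the chosen $\Gamma$.

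Next I would prove the key divisibility-and-positivity statement: for every edge $e$ one has $c_1^{S^1}[e]=k_0\,\eta[e]$ with $\eta[e]\in\Z_{>0}$. The evaluation $\beta\mapsto\beta[e]$ is linear and integral by Corollary \ref{real edge}, and the pulled-back class $x$ satisfies $x[e]=\frac{x}{w_{p,e}}+\frac{x}{w_{q,e}}=0$ because $w_{q,e}=-w_{p,e}$; hence, as $\ker r$ is spanned by $x$ in degree two, $c_1^{S^1}[e]$ depends only on $r(c_1^{S^1})=c_1$. Since $H^2(M;\Z)$ is torsion free, the index condition yields $c_1=k_0\eta$ with $\eta\in H^2(M;\Z)$; lifting $\eta$ to some $\tilde\eta\in H^2_{S^1}(M;\Z)$ (possible by surjectivity of $r$) gives $c_1^{S^1}[e]=k_0\,\tilde\eta[e]$ with $\tilde\eta[e]\in\Z$. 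Positivity of $\Gamma$ w.r.t.\ $c_1^{S^1}$ means $c_1^{S^1}[e]>0$, which forces $\tilde\eta[e]\ge 1$.

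Finally I would assemble the pieces. Writing $\eta[e]:=\tilde\eta[e]$ and combining the two steps,
$$\sum_{e\in E}\bigl(c_1^{S^1}[e]-k_0\bigr)=c_1c_{n-1}[M]-k_0\,\frac{n}{2}\,\chi(M)=k_0\sum_{e\in E}\bigl(\eta[e]-1\bigr),$$
a non-negative multiple of $k_0$ that vanishes exactly when $\eta[e]=1$, i.e.\ $c_1^{S^1}[e]=k_0$, for every $e$. It then remains to verify the purely algebraic identity $C(k_0,n,\mathbf b)=c_1c_{n-1}[M]-k_0\frac{n}{2}\chi(M)$. I would split $C$ into its $k_0$-linear and $k_0$-independent parts: using Poincar\'e duality $b_{2n-j}=b_j$ to write $\chi(M)=2\sum_{2i<n}b_{2i}+b_n$ (with $b_n=0$ for $n$ odd), the $k_0$-linear part of $C$ is precisely $-k_0\frac{n}{2}\chi(M)$, while the $k_0$-independent part must reproduce the known Betti-number formula for $c_1c_{n-1}[M]$ (the case $n=4$ being Lemma \ref{c4 and c1c3}). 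I expect this last identity to be the main obstacle, since it requires the general expression of $c_1c_{n-1}[M]$ in terms of Betti numbers, which comes from the rigidity of the $\chi_y$-genus together with the Libgober--Wood relation, plus careful bookkeeping of the Poincar\'e-duality symmetry; the conceptual heart, by contrast, is the elementary observation that positivity together with the index force each edge evaluation to be at least $k_0$.
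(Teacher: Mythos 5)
Your proposal is correct and follows essentially the same route as the paper's proof: both establish that each edge evaluation $c_1^{S^1}[e]$ is a positive integer multiple of $k_0$ (by lifting $\eta$ with $c_1=k_0\eta$ to an equivariant class, using that classes pulled back from a point evaluate to zero on edges, and invoking positivity), then combine $\sum_{e\in E} c_1^{S^1}[e]=c_1c_{n-1}[M]$ from Lemma \ref{poincare dual}, the count $|E|=\frac{n}{2}\chi(M)$, and the Godinho--Sabatini Betti-number formula for $c_1c_{n-1}[M]$ to identify $C(k_0,n,\mathbf{b})$ with $\sum_{e\in E}c_1^{S^1}[e]-k_0|E|$. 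The final Poincar\'e-duality bookkeeping you flag as the remaining obstacle is exactly the step the paper also leaves implicit ("it is easy to see"), resting on the same external formula \cite[Corollary 3.1]{GS} that you attribute to the $\chi_y$-genus and Libgober--Wood relation.
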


\begin{proof}
This result is a straightforward generalization of \cite[Corollary 5.5]{GHS}. 

We first note that, under the assumption of this lemma, we have that  $c_1^{S^1}[e]$ is a positive multiple of the index $k_0\in \Z_{>0}$, for every $e\in E$.
 Indeed, by definition of index, there exists a (primitive) class $\eta\in H^2(M;\Z)$ such that $c_1=k_0\eta$. Since the restriction map $r\colon H_{S^1}^2(M;\Z)\to H^2(M;\Z)$
is surjective, we can consider an equivariant extension $\eta^{S^1}$ of $\eta$. Then 
\begin{equation}\label{c1 eta}
c_1^{S^1}=k_0 \eta^{S^1}+ \beta, \text{   for some   } \beta \in H^2_{S^1}(\{pt\};\Z)\,.
\end{equation}
 Since, by assumption, $\Gamma$ is positive w.r.t.\  $c_1^{S^1}$ and $k_0>0$, it follows that $\Gamma$ is positive w.r.t. $\eta^{S^1}$. Therefore, $\eta^{S^1}[e]$
 is a positive integer for every $e\in E$, and, from \eqref{c1 eta}, it follows that $c_1^{S^1}[e]$ is a positive multiple of $k_0$, for every $e\in E$. Therefore, 
 \begin{equation}\label{c1 edges}
 \sum_{e\in E}c_1^{S^1}[e]-k_0|E|=\sum_{e\in E}c_1^{S^1}[e]-k_0\frac{n}{2}\chi(M)
 \end{equation}
 is a non-negative multiple of $k_0$ and it is zero precisely if $c_1[e]=k_0$ for all $e\in E$. 
 
 Then, by \eqref{poincare dual alpha}, we have $\sum_{e\in E}c_1^{S^1}[e]=\int_M c_1c_{n-1}$ which, by \cite[Corollary 3.1]{GS}, can be expressed as a linear combination of even Betti numbers.
 Combining this expression with the definition of $\chi(M)$, it is easy to see that $C(k_0,n,\mathbf{b})$ is exactly $ \sum_{e\in E}c_1^{S^1}[e]-k_0|E|$, and the conclusion follows. 
\end{proof}

\begin{remark}\label{special values C}
The possible values of $C(k_0,4,\mathbf{b})$ are:
$$
C(k_0,4,\mathbf{b})=
\begin{cases}
6(6-b_4) & \text{if  }k_0=2\\
4(8-b_2-2b_4) & \text{if  }k_0=3\\
2(14-4b_2-5b_4) & \text{if  }k_0=4\\
12(2-b_2-b_4) & \text{if  }k_0=5
\end{cases}
$$
(see \cite[Table 1]{GHS}).
\end{remark}
We are now ready to prove the following result.
\begin{proposition}\label{index betti}
Let $(M,\omega)$ be an 8-dimensional,  closed, symplectic manifold admitting a Hamiltonian $S^1$-action with isolated fixed points and moment map $\psi\colon M \to \R$.
 Suppose that $(M,\omega,\psi)$ can be described by a multigraph that is positive with respect to $c_1^{S^1}$.  
If the index $k_0$ of $(M,\omega)$ is not $1$, then there are finitely many possibilities for the Betti numbers of $M$. More precisely,
\begin{enumerate}
\item If $k_0=5$ then $b_2=b_4=1$;
\item If $k_0=4$ then $b_2=1$ and $b_4=2$;
\item If $k_0=3$ then $(b_2,b_4)$ is either $(1,2)$ or $(2,3)$;
\item If $k_0=2$ then $1\leq b_2\leq b_4\leq 6$.
\end{enumerate}
\end{proposition}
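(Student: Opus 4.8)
The plan is to combine two \emph{a priori} restrictions on the pair $(b_2,b_4)$ --- one purely cohomological, one coming from the positive multigraph --- and then to separate the four values of $k_0$ by a short case analysis. First I would record the general inequalities $1\le b_2\le b_4$. The lower bounds are immediate: since $[\omega]^j$ is a nonzero class in $H^{2j}(M;\R)$ for every $j$, each even Betti number is positive. For $b_2\le b_4$ I would analyze the intersection form on $H^4(M;\R)$. By \cite[Theorem 1.1]{JR} and Poincar\'e duality, exactly as in the proof of Proposition~\ref{b2onepositive}, the signature satisfies $\sigma(M)=2-2b_2+b_4$; writing $b_4=b_4^++b_4^-$ and $\sigma=b_4^+-b_4^-$ gives $b_4^+=b_4+1-b_2$. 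Since $[\omega]^2$ is a nonzero class with $\langle[\omega]^2,[\omega]^2\rangle=\int_M\omega^4>0$, the maximal positive-definite subspace is at least one-dimensional, so $b_4^+\ge 1$, which yields $b_2\le b_4$.

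Next I would feed the positivity hypothesis into Lemma~\ref{lemma betti numbers}. Because $\dim M=8$ and the describing multigraph is positive with respect to $c_1^{S^1}$, the integer $C(k_0,4,\mathbf b)$ is a \emph{non-negative} multiple of $k_0$, and its explicit value is read off from Remark~\ref{special values C}. The key observation is that this single statement supplies two independent conditions: a linear inequality (non-negativity) and a divisibility condition ($k_0\mid C$). It is the interplay of the two that pins the Betti numbers down.

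I would then run the four cases. For $k_0=5$, non-negativity of $C=12(2-b_2-b_4)$ forces $b_2+b_4\le 2$, which with $b_2,b_4\ge 1$ gives $b_2=b_4=1$. For $k_0=4$, $C=2(14-4b_2-5b_4)$ being divisible by $4$ forces $b_4$ even, and then non-negativity leaves only $(b_2,b_4)=(1,2)$. For $k_0=2$, $C=6(6-b_4)\ge 0$ gives $b_4\le 6$, so $1\le b_2\le b_4\le 6$ follows at once from the general inequalities.

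The case $k_0=3$ is the delicate one, and I expect it to be the main obstacle. Here non-negativity of $C=4(8-b_2-2b_4)$ only bounds $b_2+2b_4\le 8$, which together with $1\le b_2\le b_4$ still allows $(1,1),(1,2),(2,2),(1,3),(2,3)$; non-negativity alone therefore does not finish the argument. The extra cases are eliminated precisely by the divisibility condition: since $\gcd(4,3)=1$, the requirement $3\mid C$ is equivalent to $b_2+2b_4\equiv 2\pmod 3$, and checking this congruence against the five candidates leaves exactly $(1,2)$ and $(2,3)$. Thus the crucial point throughout --- and especially in the $k_0=3$ case --- is that Lemma~\ref{lemma betti numbers} yields not merely $C\ge 0$ but the full statement that $C$ is a non-negative multiple of $k_0$.
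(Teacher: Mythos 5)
Your proof is correct, and its core is exactly the paper's argument: feed the positivity hypothesis into Lemma~\ref{lemma betti numbers} to conclude that $C(k_0,4,\mathbf b)$ is a non-negative multiple of $k_0$, read off its value from Remark~\ref{special values C}, and run the case analysis. The paper compresses this into one sentence, leaving the case analysis implicit; you spell it out and correctly identify the key subtlety, namely that non-negativity alone suffices only for $k_0=2,5$, while the divisibility clause is what kills $(1,1),(2,2),(1,3)$ when $k_0=3$ (via $b_2+2b_4\equiv 2\pmod 3$) and $(1,1)$ when $k_0=4$ (via $b_4$ even); your congruence computations check out. The one place you genuinely diverge is the auxiliary inequality $b_2\le b_4$: the paper simply quotes unimodality of the even Betti numbers from \cite[Theorem 1.3]{CK}, whereas you rederive it from the signature formula $\sigma(M)=2-2b_2+b_4$ of \cite{JR} (already used in Proposition~\ref{b2onepositive}) together with $b_4^+\ge 1$, which follows from $\langle[\omega]^2,[\omega]^2\rangle=\int_M\omega^4>0$ in the symplectic orientation; your identity $b_4^+=b_4+1-b_2$ is right, so this step is sound. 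The trade-off: your route is self-contained given inputs the paper already invokes and makes the inequality transparent (it is just $b_4^-=b_2-1\ge 0$ read off from the signature), while the paper's citation of \cite{CK} is shorter and yields unimodality of the whole sequence of even Betti numbers rather than this single inequality.
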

\begin{proof}
Since $(M,\omega)$ is a closed symplectic $8$-manifold admitting a Hamiltonian $S^1$-action with isolated fixed points, its even Betti numbers are unimodal \cite[Theorem 1.3]{CK}, that is $b_{2}(M) \leq b_{4}(M)$. 
Then the result follows from Lemma \ref{lemma betti numbers} and the expressions for $C(k_0,4,\mathbf{b})$ in Remark \ref{special values C}. 

\end{proof}

\begin{remark}
Note  that, even when $k_{0}=1$, we can retrieve some non-trivial information  from Lemma \ref{lemma betti numbers}. 
Indeed $C(1,4,\mathbf{b})$ is given by $4(10+b_2-b_4)$, and then, if $(M,\omega,\psi)$ can be described by a multigraph that is positive w.r.t.\ $c_1^{S^1}$, Lemma \ref{lemma betti numbers} gives that 
$$b_{4}- b_{2} < 10.$$ 
Moreover, since the Betti numbers are unimodal \cite[Theorem 1.3]{CK}, we have $b_4-b_2\geq0$, and so  $b_4-b_2\in [0,9]$. Therefore, the (interesting) questions of whether there exist upper bounds for $b_{2},b_{4}$ or $\chi(M)$  are equivalent. 
\end{remark}

Let $(M,\omega)$ be a closed symplectic manifold which can be endowed with a Hamiltonian $T$-action with moment map $\psi$ and isolated fixed points. 
If $(M,\omega,\psi)$ admits a toric $1$-skeleton w.r.t.\ a multigraph $\Gamma$ describing $(M,\omega,\psi)$, and if $(M,\omega)$ is monotone, namely $c_1=\lambda [\omega]$ for some positive $\lambda$, 
then, by Proposition \ref{symplectic positive}, $\Gamma$ is also positive w.r.t.\ $c_1^T$, as the equation $c_1=\lambda [\omega]$ implies $c_1^T=\lambda [\omega-\psi]+ \beta$, for some
$\beta\in H_T(\{ pt \};\R)$. Since (Hamiltonian) GKM spaces always admit a toric $1$-skeleton, the above argument proves the following result.
\begin{cor}\label{cor GKM spaces betti}
Let $(M,\omega)$ be an 8-dimensional,  closed,  symplectic manifold which can be endowed with a Hamiltonian GKM action with moment map $\psi$. Assume that $(M,\omega)$ is monotone and that 
the index $k_0$ of $(M,\omega)$ is not $1$. Then there are finitely many possibilities for the Betti numbers of $M$. More precisely,
\begin{enumerate}
\item If $k_0=5$ then $b_2=b_4=1$;
\item If $k_0=4$ then $b_2=1$ and $b_4=2$;
\item If $k_0=3$ then $(b_2,b_4)$ is either $(1,2)$ or $(2,3)$. 
\item If $k_0=2$ then $1\leq b_2\leq b_4\leq 6$.
\end{enumerate}

\end{cor}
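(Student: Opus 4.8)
The plan is to reduce the statement to Proposition~\ref{index betti}, which already supplies precisely the four cases of Betti numbers, provided one knows that $(M,\omega)$ carries a Hamiltonian $S^1$-action with isolated fixed points that is described by a multigraph positive with respect to $c_1^{S^1}$. So the whole task is to manufacture such a circle action and such a positive multigraph out of the GKM hypothesis.

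First I would record that a Hamiltonian GKM space always admits a toric $1$-skeleton: the isotropy $2$-spheres attached to the edges of the GKM graph are $J$-holomorphic, $T$-invariant, and each contains exactly two fixed points with opposite weights (Section~\ref{Ham GKM spaces} and Example~\ref{example spaces toric one skeleton}(i)). By Remark~\ref{graph GKM} the GKM graph is the unique multigraph $\Gamma$ describing $(M,\omega,\psi)$, and it carries this toric $1$-skeleton, so Proposition~\ref{symplectic positive} gives that $\Gamma$ is positive with respect to $[\omega-\psi]$. Monotonicity then upgrades this to positivity with respect to $c_1^T$: writing $c_1=\lambda[\omega]$ with $\lambda>0$ forces $c_1^T=\lambda[\omega-\psi]+\beta$ for a constant $\beta\in H_T^2(\{pt\};\R)$, and since $w_{q,e}=-w_{p,e}$ on each edge the constant contributes $\beta\,(1/w_{p,e}+1/w_{q,e})=0$ to the evaluation, whence $c_1^T[e]=\lambda\,[\omega-\psi][e]>0$ for every $e\in E$. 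This is exactly the content of the paragraph preceding the statement.

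Next I would descend to a circle. Since a GKM action has $\dim T\geq 2$, I would pick a generic $\xi\in\mathfrak{t}$ (so that $w(\xi)\neq 0$ for every isotropy weight $w$), generating $S^1=\exp\langle\xi\rangle$; genericity yields $M^{S^1}=M^T$, so the restricted $S^1$-action has isolated fixed points and is Hamiltonian with moment map $\langle\psi,\xi\rangle$. Applying Lemma~\ref{positive subactions} (the implication (i)$\Rightarrow$(ii)) with $\alpha=c_1^T$ and $K=S^1$ produces a multigraph describing $(M,J,S^1)$ that is positive with respect to $r_{S^1}(c_1^T)=c_1^{S^1}$. Because the index $k_0$ is an invariant of $(M,\omega)$ as an almost complex manifold and does not depend on which torus acts, the circle action has the same index $k_0\neq 1$, and Proposition~\ref{index betti} applies verbatim, producing cases (1)--(4).

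I do not expect a genuine obstacle here, since the argument is an assembly of results already in place. The only point requiring care is the passage from the torus to a subcircle, where one must check that positivity survives the restriction; this is precisely what Lemma~\ref{positive subactions} guarantees, together with the observation that both the index $k_0$ and the fixed-point set are unchanged when $T$ is replaced by a generic subcircle.
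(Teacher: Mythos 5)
Your proposal is correct and follows essentially the same route as the paper: the paper's proof is exactly the paragraph preceding the corollary, which uses the toric $1$-skeleton of a GKM space, Proposition \ref{symplectic positive} to get positivity w.r.t.\ $[\omega-\psi]$, monotonicity to upgrade this to positivity w.r.t.\ $c_1^T$, and then Proposition \ref{index betti}. The only difference is that you spell out two steps the paper leaves implicit — that the constant $\beta$ contributes nothing to edge evaluations, and the descent to a generic subcircle via Lemma \ref{positive subactions} — which is a welcome clarification rather than a deviation.
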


We will prove in Section~\ref{subsec:index3}  that there are no Hamiltonian GKM spaces of dimension $8$ with $k_0=3$ and $(b_2,b_4)=(1,2)$.

By Theorem \ref{thm:ChernNumbers}, if an $8$-dimensional,  closed, symplectic manifold with $b_2(M)=1$ admits a Hamiltonian $S^1$-action with isolated fixed points and has index $k_0\neq 1$, then,
for each value of $b_4:=b_4(M)$, there are finitely many possibilities for the Chern numbers of $(M,\omega)$. Combining this with Proposition~\ref{index betti}, we immediately get the following result.

\begin{cor}\label{finitely many chern numbers}
Let $(M,\omega)$ be an 8-dimensional, closed, connected, symplectic manifold admitting a Hamiltonian $S^1$-action with moment map $\psi$ and isolated fixed points. 
Suppose that $(M,\omega,\psi)$ can be described by
a multigraph that is positive with respect to $c_1^{S^1}$. 
If $b_{2}(M)=1$ and the index $k_0$ of $(M,\omega)$ is not $1$, then there are finitely many possibilities for the Chern numbers of $(M,\omega)$.
\end{cor}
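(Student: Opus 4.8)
The plan is to combine the two finiteness results established immediately above, namely Theorem \ref{thm:ChernNumbers} and Proposition \ref{index betti}, which respectively control the Chern numbers (once $b_4$ is fixed) and the Betti numbers. The corollary is essentially a formal consequence of the two, so the proof will consist of verifying that the hypotheses line up and then taking a finite union of finite sets.

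First I would invoke Proposition \ref{index betti}. Its hypotheses are exactly those assumed here: the action is Hamiltonian with isolated fixed points, $(M,\omega,\psi)$ is described by a multigraph that is positive with respect to $c_1^{S^1}$, and the index $k_0$ is not $1$. The proposition then restricts $(b_2,b_4)$ to a short explicit list; in particular $b_4(M)$ can take only finitely many values (in every case $b_4 \leq 6$). Using the standing assumption $b_2(M)=1$, the admissible values of $b_4$ are pinned down to a finite set.

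Next I would apply Theorem \ref{thm:ChernNumbers}, whose hypotheses—an $8$-dimensional closed symplectic manifold carrying a Hamiltonian $S^1$-action with isolated fixed points, with $b_2(M)=1$ and $k_0 \neq 1$—are all in force (note that this step does \emph{not} require the positive-multigraph assumption). The theorem guarantees that, for each fixed value of $b_4$, there are finitely many possibilities for the full tuple $c_1^4[M],\, c_1^2c_2[M],\, c_2^2[M],\, c_1c_3[M],\, c_4[M]$.

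Finally I would conclude by the trivial observation that a finite union, taken over the finitely many admissible values of $b_4$, of finite sets is finite; hence there are finitely many possibilities for the Chern numbers of $(M,\omega)$. The only point needing a moment's attention—and the closest thing to an obstacle—is bookkeeping with the hypotheses: Proposition \ref{index betti} genuinely needs the positivity of the multigraph with respect to $c_1^{S^1}$ (supplied here by assumption), whereas Theorem \ref{thm:ChernNumbers} does not, so both ingredients are available without circularity. There is no remaining analytic or number-theoretic content to supply, all of it having already been carried out in establishing the two cited results.
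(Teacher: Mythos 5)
Your proposal is correct and follows exactly the paper's own argument: the paper derives this corollary by combining Theorem \ref{thm:ChernNumbers} (finitely many Chern numbers for each fixed $b_4$) with Proposition \ref{index betti} (finitely many admissible values of $b_4$ under the positive-multigraph hypothesis), precisely as you do. Your extra remark on which hypothesis feeds which ingredient is accurate bookkeeping and matches the paper's implicit reasoning.
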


From the argument preceding Corollary \ref{cor GKM spaces betti}, we immediately obtain the following result.

\begin{cor}\label{GKM chern numbers}
Let $(M,\omega)$ be a closed 8-dimensional, symplectic manifold which can be endowed with a Hamiltonian GKM action with moment map $\psi$. 
Assume that $(M,\omega)$ is monotone, that the index $k_0$ of $(M,\omega)$ is not $1$ and that $b_2=1$. 
Then there are finitely many possibilities for the Chern numbers of $(M,\omega)$.
\end{cor}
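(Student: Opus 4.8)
The plan is to reduce everything to Corollary~\ref{finitely many chern numbers}, which already delivers the finiteness of the Chern numbers for a Hamiltonian $S^1$-action with isolated fixed points, $b_2(M)=1$ and $k_0\neq 1$, \emph{provided} the action can be described by a multigraph that is positive w.r.t.\ $c_1^{S^1}$. Since the Chern numbers are intrinsic invariants of $(M,\omega)$, independent of the chosen action, the entire task is to manufacture such a positive circle multigraph starting from the given GKM torus action; once this is in place the statement follows at once.

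First I would invoke the argument recalled immediately before Corollary~\ref{cor GKM spaces betti}. A Hamiltonian GKM space always admits a toric $1$-skeleton associated with its GKM graph $\Gamma$ (Example~\ref{example spaces toric one skeleton}(i), together with Remark~\ref{graph GKM}), and the monotonicity hypothesis $c_1=\lambda[\omega]$ with $\lambda>0$ gives $c_1^T=\lambda\,[\omega-\psi]+\beta$ for some $\beta\in H_T^*(\{pt\};\R)$. Combining this with Proposition~\ref{symplectic positive}, which ensures that $\Gamma$ is positive w.r.t.\ $[\omega-\psi]$, and using $\lambda>0$, one concludes that $\Gamma$ is positive w.r.t.\ $c_1^T$.

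Next I would pass from the torus to a generic subcircle. Choosing $\xi\in\mathfrak{t}$ generic (so that $w(\xi)\neq 0$ for every isotropy weight $w$) and setting $S^1=\exp\langle\xi\rangle$ guarantees $M^{S^1}=M^T$, so the fixed points remain isolated and the induced $S^1$-action is Hamiltonian with moment map $\langle\psi,\xi\rangle$. By Lemma~\ref{positive subactions}, applied with $K=S^1$ (the implication (i)$\Rightarrow$(iii)), the positivity of $\Gamma$ w.r.t.\ $c_1^T$ produces a multigraph describing $(M,J,S^1)$ that is positive w.r.t.\ $r_{S^1}(c_1^T)=c_1^{S^1}$.

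Finally, with this positive circle multigraph in hand, and the hypotheses $b_2(M)=1$ and $k_0\neq 1$ carried over unchanged, Corollary~\ref{finitely many chern numbers} applies verbatim and yields finitely many possibilities for the Chern numbers of $(M,\omega)$. The only point requiring care, and hence the main (if modest) obstacle, is the descent to the circle: one must check that $\xi$ is generic in the precise sense of Lemma~\ref{positive subactions}, so that $M^{S^1}=M^T$ and no weights degenerate, and that the equivariant restriction indeed sends $c_1^T$ to $c_1^{S^1}$. Both facts are standard, but they are exactly where positivity could be lost if the subcircle were chosen carelessly.
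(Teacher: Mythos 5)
Your proposal is correct and follows essentially the same route as the paper: the paper derives this corollary "from the argument preceding Corollary~\ref{cor GKM spaces betti}", i.e.\ GKM spaces admit a toric $1$-skeleton, so by Proposition~\ref{symplectic positive} and monotonicity the GKM graph is positive w.r.t.\ $c_1^T$, and then Corollary~\ref{finitely many chern numbers} applies. The only difference is that you make explicit (via Lemma~\ref{positive subactions}) the descent from the torus to a generic subcircle, a step the paper leaves implicit; this is a faithful and slightly more careful rendering of the same argument.
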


\begin{remark}
If the index of $(M,\omega)$ is equal to $2$ and $b_2=1$, then \eqref{eq:Silvia}, \eqref{eq:c_1^2c2} and \eqref{eq:ineq} in the proof of Theorem~\ref{main theorem chern}
 imply that 
$$
c_2^2[M] = 96 + b_4, \quad c_1^4[M]=16 k  \quad \text{and} \quad c_1^2c_2[M]=96 + 4 k\,.
$$ 
In Table~\ref{values of k} we can see the possible values of $k$, depending on the value of $b_4$. 
\begin{table}[h!]
\begin{center}
\begin{tabular}{| c || c |}
\hline
$b_4= 2 $   & $k \in \{18, 25, 32\}$ \\ \hline
$b_4=3 $ & $ k \in \{17, \ldots, 34\} $  \\ \hline
$b_4 = 4$  & $  k \in \{16, \ldots, 36\} $ \\ \hline
$b_4=5 $ & $k \in \{16, \ldots, 37\} $  \\ \hline
$b_4 = 6$  & $  k \in \{15, \ldots, 39\} $ \\ \hline
\end{tabular}
\end{center}
\label{values of k}
\caption{Index 2: Possible values of $k$ s.t. $c_1^4[M]=16 k$ and  $c_1^2c_2[M]=96 + 4 k$}
\end{table}
(Note that, since $k_0=2$, the Chern number $c_1^4[M]$ must be a multiple of $2^4$. Moreover, when $b_4=2$, we have $\chi(M)=6$ and so we can use the values in Table~\ref{spin}.)
 \end{remark}

\section{Examples}\label{section examples}

In this section we  explicitly describe the torus action and 
compute the equivariant and ordinary cohomology ring and Chern classes of some special 8-dimensional, positive
monotone symplectic manifolds, $V, W $ and $Q$ that  are exactly the examples  in our classification result, Main Theorem (see page \pageref{main theorem}). 
They are all smooth algebraic varieties with  holomorphic $\mathbf{T}:= (\mathbb{C}^*)^k$-actions  and, as we will see in Lemma~\ref{invariant kahler}, they also have K\"{a}hler structures  which  are invariant under the actions of $(S^1)^k < \mathbf{T}$ and for which the $(S^1)^k$ -actions are Hamiltonian.

In order to describe their equivariant cohomology rings and Chern classes, we will follow the strategy described in Section \ref{section Ham actions}: prove the existence of canonical classes and compute their equivariant structure constants. Similar computations will be done for the equivariant Chern classes. 

We begin with the following observation, which is useful when the manifold satisfies $b_2(M)=1$. This condition holds for the aforementioned
manifolds $V,W$ and $Q$.
\begin{lemma}\label{tau 1}
Let $(M,\omega,\psi)$ be a Hamiltonian $T$-space with $b_2(M)=1$ and discrete fixed point set $M^T$ and let $k_0$ be the index of $(M,\omega)$.
Consider a generic element $\xi$ of $\mathfrak{t}$
and the corresponding component of the moment map $\varphi:=\psi^\xi$.
Let $p_i$ be the only fixed point of index $i$ w.r.t.\ $\varphi$, for $i=0,1$. 
Let $\Gamma=(V,E)$ be a multigraph describing $(M,\omega,\psi)$ and let $e$ (resp.\ $e'$) be the only edge between $p_0$ and $p_1$ (resp.\ the only edge between $p_{n-1}$
and $p_n$) see Lemma \ref{multigraph b2=1}. 
Then, there exists a canonical class $\tau_1\in H^2_T(M;\Z)$ w.r.t. $\xi$ and it is given by 
$$
\tau_1=\frac{c_1^T(p_0)-c_1^T}{k_0}\,.
$$
Moreover,  $k_0=c_1^T[e]=c_1^T[e']$.
\end{lemma}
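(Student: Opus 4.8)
The plan is to first determine the value $c_1^T[e]=k_0$ and then recover $\tau_1$ by verifying the canonical--class axioms for the explicit class $\frac{c_1^T(p_0)-c_1^T}{k_0}$. I begin with the equivariant first Chern class. Since the action is Hamiltonian with isolated fixed points, the space is equivariantly formal and $r\colon H^2_T(M;\Z)\to H^2(M;\Z)$ is surjective; writing $c_1=k_0\,\eta$ with $\eta$ primitive (by definition of the index) and choosing an equivariant extension $\eta^T$ of $\eta$, I obtain $c_1^T=k_0\,\eta^T+\beta$ for some constant $\beta\in H^2(BT;\Z)$. Consequently $c_1^T(p_0)-c_1^T=k_0\,(\eta^T(p_0)-\eta^T)$, so $\frac{c_1^T(p_0)-c_1^T}{k_0}=\eta^T(p_0)-\eta^T$ is automatically an integral class in $H^2_T(M;\Z)$.

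The heart of the argument is to prove $c_1^T[e]=k_0$. Because the two endpoint weights of an edge are opposite, a constant evaluates to zero on every edge, so the evaluation is insensitive to the chosen extension and $c_1^T[e]=k_0\,\eta^T[e]$. To compute $\eta^T[e]$ I use the Kirwan class $\gamma_{p_1}$ at $p_1$, which exists by Proposition~\ref{kirwan classes}. As $p_1$ is the unique fixed point of index $1$, the restriction $r(\gamma_{p_1})$ is the single degree--two element of the Kirwan basis, hence a generator of $H^2(M;\Z)\cong\Z$; thus $\eta=\pm r(\gamma_{p_1})$, and since the edge evaluation of an equivariant class depends only on its ordinary restriction up to a constant, $\eta^T[e]=\pm\gamma_{p_1}[e]$. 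A short computation gives $\gamma_{p_1}[e]=1$: indeed $\gamma_{p_1}(p_0)=0$ by the Kirwan vanishing condition (recall $e$ joins $p_0=i(e)$ to $p_1=t(e)$, cf.\ Lemma~\ref{multigraph b2=1}), while $\gamma_{p_1}(p_1)=\Lambda^-_{p_1}=w_{p_1,e}$ is the unique negative weight at $p_1$, so $\gamma_{p_1}[e]=\frac{0}{w_{p_0,e}}+\frac{w_{p_1,e}}{w_{p_1,e}}=1$. Finally the sign is fixed by positivity: since $b_2(M)=1$ forces $(M,\omega)$ to be positive monotone, one has $c_1^T[e]>0$ (the moment image of $e$ runs from $p_0$ to $p_1$, so $\psi(p_1)-\psi(p_0)$ is a positive multiple of $w_{p_0,e}$). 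Hence $\eta^T[e]=1$ and $c_1^T[e]=k_0$.

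With $c_1^T[e]=k_0$ established, I verify that $\tau_1:=\frac{c_1^T(p_0)-c_1^T}{k_0}$ is the canonical class at $p_1$. Condition (2) is immediate, as $\tau_1(p_0)=0$ and $p_0$ is the only fixed point other than $p_1$ with index at most $1$ (since $b_0=b_2(M)=1$). For condition (1), the edge relation $c_1^T(p_0)-c_1^T(p_1)=c_1^T[e]\,w_{p_0,e}$ together with $c_1^T[e]=k_0$ yields $\tau_1(p_1)=w_{p_0,e}=-\Lambda^-_{p_1}$, which is exactly the value prescribed by the sign convention of Remark~\ref{our conventions cc}. Thus a canonical class at $p_1$ exists and, by its uniqueness (Goldin--Tolman), equals $\tau_1$. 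The equality $c_1^T[e']=k_0$ then follows by replacing $\xi$ with $-\xi$: this interchanges minimum and maximum and makes $p_{n-1}$ the unique index--one fixed point, so the identical argument applied to the reversed ordering (with the corresponding Kirwan class and edge $e'$ of Lemma~\ref{multigraph b2=1}) gives $c_1^T[e']=k_0$.

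The only genuinely delicate step is proving $\eta^T[e]=1$, i.e.\ that the primitive generator $\eta$ evaluates to $1$ on the edge $e$; this is where the primitivity of $\eta$, the identification of $r(\gamma_{p_1})$ as a generator of $H^2(M;\Z)$, and the sign supplied by positive monotonicity all come together. Everything else is bookkeeping with restrictions to fixed points and the $(-1)^{\lambda_p}$ convention.
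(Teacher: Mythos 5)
Your proof is correct and rests on essentially the same ingredients as the paper's: the Kirwan class at the unique index-one fixed point as a primitive generator of $H^2(M;\Z)$, the definition of the index $k_0$, positivity of $c_1^T[e]$ coming from positive monotonicity (forced by $b_2(M)=1$), the edge relation between $p_0$ and $p_1$, and the replacement $\xi\mapsto -\xi$ to handle $e'$. The only difference is organizational: the paper expands $c_1^T=a\tau_1+b$ in the canonical/Kirwan basis and solves for $\tau_1$ (getting $a=-c_1^T[e]$, $|a|=k_0$), whereas you first pin down $c_1^T[e]=k_0$ and then verify the canonical-class axioms directly for the explicit class $\frac{c_1^T(p_0)-c_1^T}{k_0}$ — the same argument run in the opposite direction.
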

\begin{proof}
First  we observe that, by \cite[Lemma 3.1]{T}, a fixed point $P$ of index greater than 1 satisfies $\varphi(P)>\varphi(p_1)>\varphi(p_0)$. 
Since there is only one fixed point of index $1$, it follows that the Kirwan class $\tau_1$ at $p_1$ is  a canonical class. Since Kirwan classes are a basis of $H^*_T(M;\Z)$ as an
$H_T^*(pt;\Z)$-module, for degree reasons we have that 
\begin{equation}\label{eq c1T}
c_1^T=a\tau_1+b\,, \quad\text{for some }a\in \Z \text{ and }b\in H^2_T(\{pt\};\Z)\,.
\end{equation}
Since $\tau_1(p_0)=0$, it follows that $b=c_1^T(p_0)$. 

Let $-w_1$ be the weight in the negative normal bundle at $p_1$. Since $\tau_1(p_1)=w_1$ (see Remark \ref{our conventions cc}), and since there is an edge between $p_0$ and $p_1$,  \eqref{eq c1T} gives that 
$
a=-c_1^T[e]\,.
$

Since $r(\tau_1)$ generates $H^2(M;\Z)\simeq \Z$, it must be a primitive element, and by definition of index, it follows that $k_0=\lvert c_1^T[e] \rvert$. Since
it is easy to check that $c_1^T[e]$ is always positive, we have $k_0=c_1^T[e]$.  In order to prove that $c_1^T[e]=c_1^T[e']$, it is sufficient to consider the component of the moment map corresponding to $-\xi$. The details are left to the reader. 
\end{proof}

\begin{remark}\label{rmk:indexmultigraph}
If $(M,\omega, \psi)$ is as in Lemma~\ref{tau 1}, the symplectic form can be rescaled and the moment map can be translated in such a way that the weight sum formula in \eqref{wsf} holds. In this case, the  index $k_0$ of $(M,\omega)$ can be retrieved from the multigraph embedded in $\frak{t}^*$ through the moment map, since it is equal to the affine length of the edge $e$ from $p_0$ to $p_1$. 
\end{remark}

The next lemma explains the link between algebraic torus actions and Hamiltonian $T$-actions on smooth Fano varieties.

\begin{lemma} \label{invariant kahler}
Let $\mathbf{T}= (\mathbb{C}^*)^k$ be an algebraic torus acting algebraically on a smooth Fano variety $X$.  Then there exists a $(S^1)^k$-invariant K\"{a}hler form $\omega$ on $X$  satisfying $c_{1}(X) = [\omega]$,  for which the action of $(S^1)^k < \mathbf{T}$ is Hamiltonian.
\end{lemma}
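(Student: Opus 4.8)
The plan is to construct $\omega$ in three stages: first exhibit some Kähler form representing $c_1(X)$, then average it over the compact torus to make it invariant, and finally use a cohomological criterion to promote the resulting invariant symplectic action to a Hamiltonian one.

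First I would use that $X$ is Fano, so that the anticanonical bundle $K_X^{-1}$ is ample and $c_1(X)=c_1(K_X^{-1})$. By the standard differential-geometric consequence of the Kodaira embedding theorem, the ample line bundle $K_X^{-1}$ carries a Hermitian metric whose Chern curvature $\omega_0$ is a positive $(1,1)$-form; thus $\omega_0$ is a Kähler form with $[\omega_0]=c_1(X)$. Next, writing $K:=(S^1)^k<\mathbf{T}$ for the compact torus, I note that $K$ acts on $X$ by biholomorphisms, so that for each $g\in K$ the pullback $g^*\omega_0$ is again a positive $(1,1)$-form. I would then average with respect to the normalized Haar measure on $K$ and set
\[
\omega:=\int_K g^*\omega_0\,dg.
\]
Since positivity and the $(1,1)$-type are preserved under this convex averaging, $\omega$ is a $K$-invariant Kähler form; and since $K$ is connected, each $g^*$ acts as the identity on $H^*(X;\R)$, so $[g^*\omega_0]=c_1(X)$ for all $g$, whence $[\omega]=c_1(X)$.

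It then remains to verify that the $K$-action on $(X,\omega)$ is Hamiltonian. For $\xi\in\mathfrak{k}$ with fundamental vector field $\xi^\#$, the $K$-invariance of $\omega$ gives $\mathcal{L}_{\xi^\#}\omega=0$, so Cartan's formula yields $d(\iota_{\xi^\#}\omega)=\mathcal{L}_{\xi^\#}\omega-\iota_{\xi^\#}d\omega=0$; that is, $\iota_{\xi^\#}\omega$ is a closed $1$-form. Here the Fano hypothesis enters again through the vanishing $b_1(X)=0$: applying Kodaira vanishing to the ample bundle $K_X^{-1}$ gives $H^q(X,\mathcal{O}_X)=H^q\bigl(X,K_X\otimes K_X^{-1}\bigr)=0$ for all $q>0$, whence $H^{0,1}=H^{1,0}=0$ and $H^1(X;\R)=0$. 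Consequently each closed form $\iota_{\xi^\#}\omega$ is exact, and choosing primitives that depend linearly on $\xi$ and averaging them over $K$ produces a $K$-invariant map $\psi\colon X\to\mathfrak{k}^*$ obeying \eqref{equation mm}, which makes the action Hamiltonian.

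The content of the argument is entirely standard, so the only points that require care are that the averaging simultaneously preserves positivity and the cohomology class — which is exactly where holomorphicity and connectedness of $K$ are used — and the passage from the closed $1$-forms $\iota_{\xi^\#}\omega$ to an honest moment map. I expect the latter to be the main (though still routine) obstacle: one must check that the primitives can be chosen linearly in $\xi$ and then $K$-averaged into a genuine, $K$-invariant $\mathfrak{k}^*$-valued moment map, rather than a mere collection of Hamiltonians for individual circle subgroups.
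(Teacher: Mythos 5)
Your proof is correct, but it takes a genuinely different route from the paper's. The paper works in the equivariant algebraic category: $\mathbf{T}$-invariance of $TX$ makes $-K_X$ a $\mathbf{T}$-invariant ample line bundle, some power of it is $\mathbf{T}$-linearizable (Brion), and Brion's embedding result then gives a $\mathbf{T}$-equivariant projective embedding $X \hookrightarrow \mathbb{CP}^N$ for a \emph{linear} action; the invariant K\"ahler form and the moment map are simply pulled back from the Fubini--Study structure on $\mathbb{CP}^N$, and since the restricted class is $\ell\, c_1(X)$ for some $\ell>0$, a final rescaling finishes the proof. You instead argue differential-geometrically: a positively curved metric on $K_X^{-1}$ gives a K\"ahler form in $c_1(X)$ on the nose, averaging over the compact torus preserves positivity, $(1,1)$-type and (by connectedness of $K$) the cohomology class, and then the Fano hypothesis is used a second time through Kodaira vanishing to get $H^1(X;\R)=0$, so the closed $1$-forms $\iota_{\xi^\#}\omega$ are exact and can be assembled --- linearly in $\xi$, then averaged, which works because for an abelian group the fundamental vector fields are themselves invariant --- into an invariant moment map. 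The trade-off: the paper's route gets the Hamiltonian structure for free from the linear model and needs no Hodge theory, but relies on linearization machinery and requires the rescaling step; your route avoids Brion's theorems entirely, produces a representative of $c_1(X)$ without rescaling, and in fact only needs a holomorphic action of the compact torus on a Fano (or indeed any compact K\"ahler manifold with $b_1=0$), not an algebraic $\mathbf{T}$-action, so it is slightly more general. All the steps you flag as requiring care do go through exactly as you describe.
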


\begin{proof}

 As the tangent bundle $TX$ of $X$ is $\mathbf{T}$-invariant, the anticanonical line bundle
$$
L=-K_X=\det TX=\wedge^n(TX)
$$
is also $\mathbf{T}$-invariant. Then there exists $m>0$ such that $L^{\otimes m}$ is $\mathbf{T}$-linearizable\footnote{A line bundle $L$ over a complex algebraic manifold $X$ with an algebraic $G$-action is called $G$-linearizable if the $G$-action on $X$  lifts to an action on $L$ which is linear on the fibers of $L$ \cite[Definition 3.2.3]{Br}.} (see \cite[Theorem 5.2.1]{Br}). Moreover, since $X$ is Fano, we have that $L$ is ample and so $L^{\otimes m}$ is also ample (as $(c_{1}(L^{\otimes m}))^\ell = m^\ell c_{1}(L)^\ell$ for each $\ell>0$). 

By \cite[Proposition 3.2.6]{Br}  we have that there exists a $\mathbf{T}$-equivariant embedding $i:X\to \mathbb{CP}^N$ into some projective space $\mathbb{CP}^N$, for some linear action on $\mathbb{CP}^N$ (Hamiltonian with respect to the Fubini-Study K\"ahler form).
Hence, $X$ has a $(S^1)^k$-invariant K\"ahler structure $\omega$ induced by the Fubini-Study K\"ahler form in $\mathbb{CP}^N$ and the $(S^1)^k$-action on $X$ is Hamiltonian.

The pull-back of the Fubini-Study metric satisfies $[\omega] = c_{1}(L^{\otimes \ell})$ for some integer $\ell>0$, therefore $[\omega] = \lambda c_{1}(X)$ for some positive constant $\lambda$. 
Since the form remains invariant and K\"{a}hler after rescaling by a positive constant, the result follows.

\end{proof}

\subsection{Index 2: The Fano-Mukai fourfold $V$ and its equivariant cohomology ring}\label{subsec V}

In this section, we describe an example of a closed, monotone  symplectic  manifold of dimension $8$ with index $2$, which is acted
on in a Hamiltonian way by a 2-dimensional torus with 6 fixed points. In particular,
we give the isotropy weights at each fixed point and compute its equivariant cohomology ring and Chern classes.

\subsubsection{Torus actions on the generalized flag variety $\Omega:= G_{2}/P$}
$\;$\\

Let $G_2$ be the complexification of the exceptional simple Lie group (of complex dimension $14$) and let $P$ be a parabolic subgroup (of complex dimension $9$) associated to a long root.
The generalized flag manifold 
$$
\Omega:=G_2/P \subset \mathbb{P}(\frak{g}_2)\subset \mathbb{CP}^{13}
$$
is known to be GKM for a $T^2$-action with GKM graph as in Figure~\ref{omegaweights} (see \cite{GHZ}). 
From this graph and from Lemma~\ref{invariant kahler} we obtain the following result.

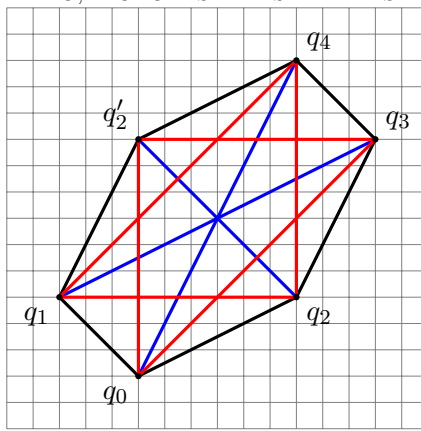
\begin{figure}[h]
 \begin{tikzpicture}[scale = 0.35]

\draw[step=1cm,gray,very thin] (-8,-8) grid (8,8);

\draw[very thick, black] (-6,-3) -- (-3,-6) -- (3,-3) -- (6,3) -- (3,6) -- (-3,3) -- (-6,-3);

\draw[very thick, blue] (-3,-6) -- (3,6);

\draw[very thick, blue] (-6,-3) -- (6,3);

\draw[very thick, blue] (-3,3) -- (3,-3);

\draw[very thick, red] (-6,-3) -- (3,6);

\draw[very thick, red] (-3,-6) -- (6,3);

\draw[very thick, red] (-6,-3) -- (3,-3);

\draw[very thick, red] (3,-3) -- (3,6);

\draw[very thick, red] (6,3) -- (-3,3);

\draw[very thick, red] (-3,-6) -- (-3,3);

\draw[fill] (-3,-6) circle [radius=0.1];

\draw[fill] (-6,-3) circle [radius=0.1];

\draw[fill] (3,-3) circle [radius=0.1];

\draw[fill] (6,3) circle [radius=0.1];

\draw[fill] (3,6) circle [radius=0.1];

\draw[fill] (-3,3) circle [radius=0.1];

\node [below left] (p_1) at (-6,-3) {$q_1$};

\node [below left] (p_1) at (-3,-6) {$q_0$};

\node [below right] (p_1) at (3,-3) {$q_2$};

\node [above left] (p_1) at (-3,3) {$q_2'$};

\node [above right] (p_1) at (6,3) {$q_3$};

\node [above right] (p_1) at (3,6) {$q_4$};

\end{tikzpicture}

\caption{GKM graph of  $\Omega$ in $\frak{t}^*$.}
\label{omegaweights}
\end{figure}


\begin{theorem} \label{gtwoflag}
There exists a K\"{a}hler form $\omega$ on $\Omega$ for which $c_{1}(\Omega) = [\omega]$ and a holomorphic $(\C^*)^2$-action on $\Omega$  such that the action of $T^2<(\C^*)^2$ is Hamiltonian with respect to $\omega$ and has $6$ isolated fixed points,  with weights given in Table~\ref{table:weightsOmega}.
\begin{table}[h]
\centering
\begin{tabular}{|c||l|}
\hline
      & \multicolumn{1}{c|}{Weights}       \\ \hline

$q_0$ & $ \{(2,1),(1,1), (0,1), (-1,1), (1,2)\}$      \\ \hline
$q_1$ & $  \{(1,-1),(1,0), (1,1), (1,2),(2,1)\} $      \\ \hline
$q_2$ & $ \{(1,2),(0,1), (-1,0), (-2,-1), (-1,1)\}   $      \\ \hline
$q_2'$ & $  \{(-1,-2),(0,-1), (1,0), (2,1),(1,-1)\}  $      \\ \hline

$q_3$ & $   \{(-1,1),(-1,0), (-1,-1), (-1,-2), (-2,-1) \} $      \\ \hline

$q_4$ & $   \{(-2,-1),(-1,-1), (0,-1), (1,-1), (-1,-2) \}$      \\ \hline
\end{tabular}\caption{Weights for the $T^2$-action on $\Omega$}\label{table:weightsOmega}
\end{table}
\end{theorem}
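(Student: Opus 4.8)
The plan is to derive the theorem from two ingredients: the Fano property of the generalized flag variety $\Omega = G_2/P$, which lets us invoke Lemma \ref{invariant kahler}, and the GKM description of the $T^2$-action recorded in Figure \ref{omegaweights} and taken from \cite{GHZ}.

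First I would set up the action. The group $G_2$ has rank $2$, so its maximal torus is $\mathbf{T} = (\mathbb{C}^*)^2$, and it acts algebraically on $\Omega$ by left translation. As $\Omega$ is a generalized flag variety it is smooth, projective and Fano (its anticanonical bundle is ample). Hence Lemma \ref{invariant kahler} applies and produces an $(S^1)^2$-invariant K\"{a}hler form $\omega$ with $c_1(\Omega) = [\omega]$ for which the restricted $T^2$-action is Hamiltonian; this establishes the first three assertions at once. Writing $\psi$ for the associated moment map, the $T^2$-fixed points of $\Omega$ are the $\mathbf{T}$-fixed points, which are indexed by the coset space $W/W_P$, where $W$ is the Weyl group of $G_2$ (of order $12$) and $W_P = \langle s\rangle$ is generated by the simple reflection defining $P$ (of order $2$). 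This gives exactly $6$ isolated fixed points, labelled $q_0, q_1, q_2, q_2', q_3, q_4$ as in Figure \ref{omegaweights}; and since $\dim_{\mathbb{C}}\Omega = \dim G_2 - \dim P = 14 - 9 = 5$, each fixed point carries five isotropy weights.

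The remaining task is to read the weights off the GKM graph. By \cite{GHZ} the $T^2$-action is GKM, so at each fixed point the five isotropy weights are pairwise linearly independent, and each edge of the graph is the moment-map image of an isotropy $2$-sphere joining two fixed points. Because $\Omega$ is a monotone Hamiltonian GKM space, it admits a toric $1$-skeleton (Example \ref{example spaces toric one skeleton}(i)), so by Proposition \ref{symplectic positive} its graph is positive with respect to $[\omega-\psi]$; concretely, for an edge $e$ with endpoints $p$ and $q$ the vector $\psi(q)-\psi(p)$ is a positive multiple of the weight $w_{p,e}$, while $w_{q,e}=-w_{p,e}$ (see Section \ref{Ham GKM spaces} and Remark \ref{positivity for symplectic}). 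I would therefore, at each vertex in Figure \ref{omegaweights}, list the five incident edges and record for each one the primitive integral vector pointing away from that vertex; collecting these quintuples yields Table \ref{table:weightsOmega}.

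The only genuinely delicate point is this last piece of bookkeeping, and I expect it to be the main (though purely finite and mechanical) obstacle: one must confirm that every vertex has precisely five incident edges, that the primitive direction along each edge is taken with the sign dictated by the positivity above, and that the two labels attached to a shared edge at its endpoints are negatives of one another. For instance, the edge joining $q_0$ at $(-3,-6)$ to $q_4$ at $(3,6)$ has $\psi(q_4)-\psi(q_0) = (6,12) = 6\,(1,2)$, so it contributes the weight $(1,2)$ at $q_0$ and $(-1,-2)$ at $q_4$, matching Table \ref{table:weightsOmega}; the remaining fourteen edges are verified identically.
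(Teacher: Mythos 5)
Your overall route is the same as the paper's: invoke Lemma \ref{invariant kahler} to get the invariant K\"ahler form with $c_{1}(\Omega)=[\omega]$ and the Hamiltonian $T^2$-action, cite \cite{GHZ} for the GKM property, count the six fixed points, and read the isotropy weights off the GKM graph of Figure \ref{omegaweights}. Your supporting details are fine: the count $|W/W_P|=12/2=6$, the dimension count giving five weights per point, and the use of Example \ref{example spaces toric one skeleton}(i) together with Proposition \ref{symplectic positive} to conclude that $\psi(q)-\psi(p)$ is a positive multiple of $w_{p,e}$, which pins down the \emph{direction} of each weight.

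The gap is the last step, where you declare each weight to be the \emph{primitive} integral vector along its edge. The moment-map image of an isotropy sphere can never determine the magnitude of a weight: by the localization identity in Remark \ref{geometric interpretation}, $\psi(q)-\psi(p)=\bigl(\int_{S^2_e}\omega\bigr)\,w_{p,e}$, so an edge of given direction and affine length is equally consistent with a primitive weight $u$ on a sphere of area $A$ and with the weight $2u$ on a sphere of area $A/2$. Nothing in your argument excludes the second possibility, so ``record the primitive vector'' is an assumption, not a deduction. It happens to be true here because the isotropy weights of $G_2/P$ at a fixed point are Weyl-group translates of roots of $G_2$, and every root of $G_2$ is primitive in its weight lattice; this Lie-theoretic description of the axial function is exactly what the cited sources (\cite{GHZ}, and Proposition 5.24 of \cite{GHS}) supply and what the paper's one-line proof leans on. Alternatively, you can close the gap inside your own framework: the figure is drawn in the normalization for which the weight sum formula \eqref{wsf} holds, so writing the five weights at $q_0$ as $c_i u_i$ with $u_i$ the primitive directions $(-1,1),(2,1),(0,1),(1,1),(1,2)$ and $c_i\in\Z_{>0}$, the equation $\sum_i c_i u_i=-\psi(q_0)=(3,6)$ forces, on second coordinates, $c_1+c_2+c_3+c_4+2c_5=6$, whence all $c_i=1$; the same computation (or the symmetry of the graph) settles the remaining five vertices, and only then does your edge-by-edge bookkeeping become a proof of Table \ref{table:weightsOmega}.
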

\noindent (See also  Section 5.3.1 in \cite{GHS}, and, in particular, Proposition 5.24, for more information on how to construct the GKM graph corresponding to a coadjoint orbit with $c_1=[\omega]$.)

\begin{remark}
Note that, by Remark~\ref{rmk:indexmultigraph},  the index $k_0=3$ of $\Omega$ can be retrieved from the  multigraph in Figure~\ref{omegaweights}, as we can choose a generic component of the moment map in a such a way that $q_0$ and $q_1$ are respectively the minimum and the index-$2$ critical point.
\end{remark}
\FloatBarrier

%

\FloatBarrier

\subsubsection{The Fano-Mukai $4$-fold $V$ and its torus action.}\label{sec:5.1.2}
$\;$\\

Let us now consider  smooth hyperplane sections 
$$
V_{18}:= \Omega \cap \mathbb{CP}^{12} \subset \mathbb{CP}^{13} 
$$
of $\Omega$, known as Fano-Mukai fourfolds of genus $10$ and degree $18$. As shown by Prokhorov and Zaidenberg \cite[Remark 13.4]{PZ}, the moduli space of these manifolds is $1$-dimensional. It contains a unique element $V_{18}^
a$ with an action of $\C\times \C^*$ and a unique element $V_{18}^s$ with an effective action of $GL(2,\C)$. A general element $V_{18}^g\notin\{V_{18}^a,V_{18}^s\}$ admits an action of $(\C^*)^2$ \cite[Theorem 1.3]{PZ} which is the restriction to $V_{18}^g$ of the $(\C^*)^2$-action on $\Omega$ \cite[Theorem 1.2]{PZ}.  Let $V$ be one of these general elements. Then we have the following result.


\begin{theorem}
\label{thm weights V}
There exists a K\"{a}hler form $\omega$ on $V$  for which $c_{1}(V) = [\omega]$ and a holomorphic $(\C^*)^2$-action, such that the action of $T^2<(\C^*)^2$ is Hamiltonian with respect to  $\omega$ and has $6$ isolated fixed points, with weights given in Table~\ref{table:weightsV}.

\begin{table}[h]
\centering
\begin{tabular}{|c||l|}
\hline
      & \multicolumn{1}{c|}{Weights}       \\ \hline

$q_0$ & $ \{(2,1),(1,1), (0,1), (-1,1)\}$      \\ \hline
$q_1$ & $  \{(1,-1),(1,0), (1,1), (1,2)\} $      \\ \hline
$q_2$ & $ \{(1,2),(0,1), (-1,0), (-2,-1)\}   $      \\ \hline
$q_2'$ & $  \{(-1,-2),(0,-1), (1,0), (2,1)\}  $      \\ \hline

$q_3$ & $   \{(-1,1),(-1,0), (-1,-1), (-1,-2) \} $      \\ \hline

$q_4$ & $   \{(-2,-1),(-1,-1), (0,-1), (1,-1) \}$      \\ \hline
\end{tabular}\caption{Weights for the $T^2$-action on $V$}\label{table:weightsV}
\end{table}
\end{theorem}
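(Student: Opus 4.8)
The plan is to deduce everything by restricting the already-understood $T^2$-action on $\Omega=G_2/P$ to the invariant hyperplane section $V$. First I would produce the K\"ahler form for free: a general Fano--Mukai fourfold $V=V_{18}^g$ is a smooth Fano variety carrying an algebraic $(\C^*)^2$-action which, by \cite[Theorems 1.2, 1.3]{PZ}, is the restriction of the $(\C^*)^2$-action on $\Omega$ from Theorem \ref{gtwoflag}. Hence Lemma \ref{invariant kahler} immediately supplies a $T^2$-invariant K\"ahler form $\omega$ with $c_1(V)=[\omega]$ for which the action of $T^2<(\C^*)^2$ is Hamiltonian, settling the first assertion. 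Next I would identify the fixed-point set. Since the $T^2$-action on $V$ is the restriction of that on $\Omega$, we have $V^{T^2}=V\cap \Omega^{T^2}\subseteq \Omega^{T^2}=\{q_0,q_1,q_2,q_2',q_3,q_4\}$, a set of six points; on the other hand the action on $V$ is Hamiltonian with isolated fixed points, so $\lvert V^{T^2}\rvert=\chi(V)$. As a Fano--Mukai fourfold of genus $10$ has $b_2(V)=1$ and $b_4(V)=2$, Lemma \ref{c4 and c1c3} gives $\chi(V)=2+2b_2(V)+b_4(V)=6$. Therefore $V^{T^2}=\Omega^{T^2}$: all six fixed points of $\Omega$ lie on $V$, and I keep the labelling of Table \ref{table:weightsOmega}.

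The weights then reduce to a normal-bundle calculation. At each fixed point $q$ the inclusion $V\hookrightarrow\Omega$ gives a $T^2$-equivariant splitting $T_q\Omega=T_qV\oplus N_q$, where $N_q$ is the fibre of the normal bundle $N=\mathcal{O}_\Omega(V)|_V$. Consequently the four weights of $V$ at $q$ are precisely the five weights of $\Omega$ at $q$ listed in Table \ref{table:weightsOmega} with the single \emph{normal weight} $n_q=c_1^T(N)(q)$ deleted, so the whole theorem comes down to pinning down $n_q$ among the five tangent weights. For this I would use that $V$ is a hyperplane section, whence $\mathcal{O}_\Omega(V)=\mathcal{O}_\Omega(1)$ is the ample generator of $\mathrm{Pic}(\Omega)\cong\Z$, together with the fact (recorded after Theorem \ref{gtwoflag}) that $\Omega$ has Fano index $3$, i.e. $-K_\Omega=\det T\Omega=\mathcal{O}_\Omega(3)$. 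Passing to equivariant first Chern classes and restricting to $q$ yields $\sum_{i=1}^5 w_i^{\Omega,q}=c_1^T(\Omega)(q)=3\,n_q$, so that
\[
n_q=\frac{1}{3}\sum_{i=1}^5 w_i^{\Omega,q}.
\]
Evaluating this at the six vertices of Table \ref{table:weightsOmega} gives $n_{q_0}=(1,2)$, $n_{q_1}=(2,1)$, $n_{q_2}=(-1,1)$, $n_{q_2'}=(1,-1)$, $n_{q_3}=(-2,-1)$, $n_{q_4}=(-1,-2)$ --- in each case one of the five tangent weights --- and deleting it produces exactly Table \ref{table:weightsV}. Geometrically this says the GKM graph of $V$ (the first graph of Figure \ref{vfig0}) is obtained from that of $\Omega$ (Figure \ref{omegaweights}) by removing the three central diagonals $q_0q_4$, $q_1q_3$, $q_2q_2'$.

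The main obstacle I anticipate is the equivariant bookkeeping in the displayed step: the identity $-K_\Omega=\mathcal{O}_\Omega(3)$ holds automatically for non-equivariant line bundles, but its equivariant refinement is only valid up to twisting by a single global character $\mu\in\mathfrak{t}^*$, which shifts every $c_1^T(\mathcal{O}_\Omega(1))(q)$ by the same constant and would replace $3\,n_q$ by $\sum_i w_i^{\Omega,q}+\mu$. I would kill $\mu$ by the weight-sum normalisation $c_1^T=[\omega-\psi]$ of Lemma \ref{lemma monotone chi six} (available since $\chi(M)=6$): summing $3\,n_q=\sum_i w_i^{\Omega,q}+\mu$ over all six fixed points and using that the full weight multiset of $\Omega$ sums to zero by Lemma \ref{weights in pairs} (as do the paired normal weights $n_{q_i}=-n_{q_j}$ attached to each removed isotropy sphere) forces $6\mu=0$, so $\mu=0$ and the formula holds on the nose. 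The only remaining inputs requiring care are that $V$ is genuinely a smooth Fano variety (so Lemma \ref{invariant kahler} applies) and that $b_2(V)=1$, $b_4(V)=2$; both I would simply quote from \cite{PZ}.
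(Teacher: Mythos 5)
Your proof is correct, but it reaches Table~\ref{table:weightsV} by a genuinely different route than the paper. The paper also restricts the action on $\Omega$ to $V$ and reduces everything to deciding which edges of the GKM graph of $\Omega$ survive in $V$, but it decides this \emph{globally}, sphere by sphere: since $[V]$ is Poincar\'e dual to $\tau=c_1(\Omega)/3=[\omega]/3$, any invariant sphere $S\not\subset V$ satisfies $S\cdot[V]=\tfrac13\int_S\omega$, and (because all six fixed points lie on $V$) such a sphere meets $V$ transversely in its two poles, forcing $\int_S\omega=6$; reading the integral lengths $3$, $6$, $9$ off Figure~\ref{omegaweights} and counting edges ($15$ for $\Omega$ versus $6\cdot4/2=12$ for $V$) shows exactly the three long diagonals are deleted. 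You instead work \emph{pointwise}: you split $T_q\Omega=T_qV\oplus N_q$ equivariantly and compute the normal weight from the equivariant refinement of $-K_\Omega\cong\mathcal{O}_\Omega(V)^{\otimes 3}$, killing the character ambiguity $\mu$ by a global summation; this yields the closed formula $n_q=\tfrac13\sum_i w_i^{\Omega,q}$ without ever enumerating invariant spheres or their areas, and it also makes explicit (via $\chi(V)=6$) the fact, left implicit in the paper, that $V^{T^2}=\Omega^{T^2}$. Both arguments ultimately rest on the same inputs — $\mathrm{index}(\Omega)=3$ and $\mathcal{O}_\Omega(V)=\mathcal{O}_\Omega(1)$ — the paper spending them on areas via ABBV, you on weights via adjunction. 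One presentational point: your justification that $\sum_q n_q=0$ by pairing the normal weights ``along the removed isotropy spheres'' is circular as phrased, since at that stage you do not yet know the normal weights assemble into spheres; the clean fix, which uses only ingredients you already invoke, is $\sum_q n_q=\sum_q\sum_i w_i^{\Omega,q}-\sum_q\sum_i w_i^{V,q}=0-0=0$ by Lemma~\ref{weights in pairs} applied separately to $\Omega$ and to $V$ (and note that the weight-sum normalisation of Lemma~\ref{lemma monotone chi six} is then not needed at all).
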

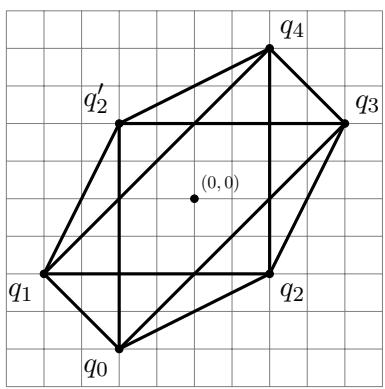
\begin{figure}[h] 

 \begin{tikzpicture}[scale = 0.5]

\draw[step=1cm,gray,very thin] (-5,-5) grid (5,5);

\draw[very thick, black] (-4,-2) -- (-2,-4) -- (2,-2) -- (4,2) -- (2,4) -- (-2,2) -- (-4,-2);

\draw[very thick, black] (-4,-2) -- (2,4);

\draw[very thick, black] (-2,-4) -- (4,2);

\draw[very thick, black] (-4,-2) -- (2,-2);

\draw[very thick, black] (2,-2) -- (2,4);

\draw[very thick, black] (4,2) -- (-2,2);

\draw[very thick, black] (-2,-4) -- (-2,2);

\draw[fill] (-2,-4) circle [radius=0.1];

\draw[fill] (-4,-2) circle [radius=0.1];

\draw[fill] (2,-2) circle [radius=0.1];

\draw[fill] (4,2) circle [radius=0.1];

\draw[fill] (2,4) circle [radius=0.1];

\draw[fill] (-2,2) circle [radius=0.1];

\draw[fill] (0,0) circle [radius=0.1];

\node [below left] (p_1) at (-4,-2) {$q_1$};

\node [below left] (p_1) at (-2,-4) {$q_0$};

\node [below right] (p_1) at (2,-2) {$q_2$};

\node [above left] (p_1) at (-2,2) {$q_2'$};

\node [above right] (p_1) at (4,2) {$q_3$};

\node [above right] (p_1) at (2,4) {$q_4$};

\node [above right, scale = 0.615] (p_1) at (0,0) {$(0,0)$};
\end{tikzpicture}

\caption{ The multigraph describing the $T^2$-action on $V$.}
\label{vfig1}
\end{figure}

\FloatBarrier

\begin{proof}

Let $\omega$ be the invariant K\"{a}hler form on $\Omega$ from Theorem \ref{gtwoflag}. Since $V \subset \Omega$ is a $T^2$-invariant symplectic submanifold of $\Omega$, it follows that $V$ is GKM for the restricted $T^2$-action and  that the corresponding GKM graph of $V$ is a subgraph of the GKM graph of $\Omega$. 
Consequently, to obtain this last GKM graph we just have to determine which edges in the GKM graph of $\Omega$ (corresponding to  $T^2$-invariant spheres in $\Omega$) are not in the GKM graph of $V$.

Note that any $T^2$-invariant sphere $S\subset \Omega$ that is not in $V$ must  intersect $V$ transversely in two fixed points. Moreover, as $\Omega$ is a Fano variety of index $3$, we have that $c_1(\Omega)=3\tau$, where $\tau$ is a generator of $H^2(\Omega,\Z)$. Since $V$ is a hyperplane section of the embedding associated to a line bundle with first Chern class equal to $\tau$, we have that $\tau$ is Poincar\'e dual to $[V]\in H_8(\Omega,\Z)$.   From the fact that $c_1=[\omega]$, we obtain
$$
S\cdot [V] = \int_S i^* \tau = \frac{1}{3} \int_S i^* \omega\,,
$$
where $i:S\to \Omega$ is the inclusion map. 
Therefore $S\cdot [V]=2$ if and only if $\int_S i^* \omega=6$.

Given a GKM graph of a Hamiltonian $T$-space, and considering its image in $\text{Lie}(T)^*$ via the moment map, using the ABBV formula it is easy to see that
the symplectic area of an invariant sphere corresponding to an edge $e$ is exactly the integral length of $e$ (i.e., the length of $e$ measured with respect to the dual lattice). 
From Figure~\ref{omegaweights}, we can see that the only $T^2$-invariant spheres with area equal to $6$ are exactly the ones corresponding to the blue edges. We conclude that the GKM graph of $V$ is the one obtained by deleting the blue edges in the GKM graph of $\Omega$ and the result follows.
\end{proof}

\begin{remark}
Note that, by Remark~\ref{rmk:indexmultigraph},  the index $k_0=2$ of $V$ can be retrieved from the  multigraph in Figure~\ref{vfig1}, as we can choose a generic component of the moment map in a such a way that $q_0$ and $q_1$ are respectively the minimum and the index-$2$ critical point.
\end{remark}

Let us now  compute the equivariant cohomology ring of $V$ and its Chern classes.
Let $x:=(1,0)$ and $y:=(0,1)$ be a basis of $\ell^*$, the dual lattice of $T$. Then the weights of the action  in Theorem \ref{thm weights V}, as well as the values of the equivariant
 first Chern class at the fixed points, are written in Table \ref{values c1T V}.

Note that we have chosen a moment map $\psi\colon V \to \mathfrak{t}^*$ so that $c_1^T=[\omega-\psi]$, thus implying the weight sum formula \eqref{wsf}. 

\renewcommand{\arraystretch}{1.2}
\begin{table}[h]
\begin{center}
  \begin{tabular}{| c || c  c  c  c | c | }
    \hline
     &  &  & & & $c_1^T(q)=-\psi(q)$ \\ \hline \hline
    $q_0$ & $-x+y$, & $y$, & $x+y$, & $2x+y$ & $2x+4y$ \\ \hline
    $q_1$ & $x-y$, & $x$, & $x+y$, & $x+2y$ & $4x+2y$\\ \hline
    $q_2$ &  $-2x-y$, &  $-x$, & $y$, & $x+2y$ & $-2x+2y$\\ \hline
   $q'_2$ & $-x-2y$, & $-y$, & $x$, & $2x+y$ & $2x-2y$\\ \hline
  $q_3$ &$-x-2y$, & $-x-y$, & $-x$, & $-x+y$ & $-4x-2y$\\ \hline
   $q_4$ & $-2x-y$, & $-x-y$, & $-y$, & $x-y$ & $-2x-4y$ \\ 
    \hline
  \end{tabular}
 \end{center}
  \caption{
Weights at the fixed points of the effective $T$-action on $V$ and the corresponding value of $c_1^T$.}
 \label{values c1T V}
\end{table}

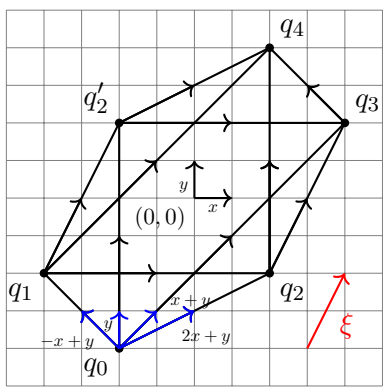
\begin{figure}[h] 

\begin{tikzpicture}[scale = 0.5]

\draw[step=1cm,gray,very thin] (-5,-5) grid (5,5);

\draw[thick, black] (-4,-2) -- (-2,-4) -- (2,-2) -- (4,2) -- (2,4) -- (-2,2) -- (-4,-2);

\draw [->, thick] (-2,-4) -- (-3,-3);

\draw [->, thick] (-2,-4) -- (0,-3);

\draw [->, thick] (2,-2) -- (3,0);

\draw [->, thick] (4,2) -- (3,3);

\draw [->, thick] (-2,2) -- (0,3);

\draw [->, thick] (-4,-2) -- (-3,0);

\draw[thick, black] (-4,-2) -- (2,4);

\draw [->, thick] (-4,-2) -- (-1,1);

\draw[ thick, black] (-2,-4) -- (4,2);

\draw [->, thick] (-2,-4) -- (1,-1);

\draw[ thick, black] (-4,-2) -- (2,-2);

\draw [->, thick] (-4,-2) -- (-1,-2);

\draw[thick, black] (2,-2) -- (2,4);

\draw [->, thick] (2,-2) -- (2,1);

\draw[ thick, black] (4,2) -- (-2,2);

\draw [->, thick] (-2,2) -- (1,2);

\draw [->, thick] (2,-2) -- (2,1);

\draw[ thick, black] (-2,-4) -- (-2,2);

\draw [->, thick] (-2,-4) -- (-2,-1);

\draw[fill] (-2,-4) circle [radius=0.1];

\draw[fill] (-4,-2) circle [radius=0.1];

\draw[fill] (2,-2) circle [radius=0.1];

\draw[fill] (4,2) circle [radius=0.1];

\draw[fill] (2,4) circle [radius=0.1];

\draw[fill] (-2,2) circle [radius=0.1];

\node [below left, thick, scale = 0.8] (p_1) at (0,0) {$(0,0)$};

\draw [->, thick] (0,0) -- (1,0);

\draw [->, thick] (0,0) -- (0,1);

\draw [->, thick, blue] (-2,-4) -- (-3,-3);

\draw [->, thick, blue] (-2,-4) -- (-2,-3);

\draw [->, thick, blue] (-2,-4) -- (-1,-3);

\draw [->, thick, blue] (-2,-4) -- (0,-3);

\draw [->, thick, red] (3,-4) -- (4,-2);
\node [above right, scale = 1, red] (p_1) at (3.6,-4) {$\xi$};

\node [left, scale = 0.6] (p_1) at (0,0.3) {$y$};

\node [below right, scale = 0.6] (p_1) at (0.2,0) {$x$};

\node [below left, scale = 0.6] (p_1) at (-2.5,-3.5) {$-x+y$};

\node [left, scale = 0.6] (p_1) at (-2,-3.4) {$y$};

\node [above right, scale = 0.6] (p_1) at (-0.8,-3.1) {$x+y$};

\node [right, scale = 0.6] (p_1) at (-0.5,-3.7) {$2x+y$};

\node [below left] (p_1) at (-4,-2) {$q_1$};

\node [below left] (p_1) at (-2,-4) {$q_0$};

\node [below right] (p_1) at (2,-2) {$q_2$};

\node [above left] (p_1) at (-2,2) {$q_2'$};

\node [above right] (p_1) at (4,2) {$q_3$};

\node [above right] (p_1) at (2,4) {$q_4$};
\end{tikzpicture}

\caption{The oriented GKM graph describing the action on $V$ and its weights.}
\label{vfig}
\end{figure}
Let $\xi:=(1,2)\in \mathfrak{t}$ be the generic vector that we use to orient the GKM graph of the $T$-action on $V$, as in Figure \ref{vfig}. 
Then it is easy to see that the GKM graph associated to this action is index-increasing, i.e.
$$
p\prec q \implies \lambda_p<\lambda_q \quad \text{for all the edges }e=(p,q) \text{ of the GKM graph.}
$$
Therefore, as an immediate consequence of \cite[Theorem 1.6]{GT}, we have that, for each fixed point $q$ in $V^T$, there exists a canonical class $\tau_q\in H^{2\lambda(q)}_T(V;\Z)$
(see Remark \ref{our conventions cc} for our conventions on canonical classes). Moreover,
its values at the fixed points can be computed by the recipe given in the same theorem. 
(In our case we will only need to use  \cite[Theorem 1.6]{GT} to compute the values of the restrictions of the canonical classes
at $q_2$ and $q'_2$, for reasons that will be clear from the following.)

Note that we have named the fixed points in such a way that $q_i$ has index $i$ and $q'_2$ has index $2$. 
Let $\tau_i\in H^{2i}_T(V;\Z)$ be the canonical class at $q_i$ and $\tau'_2\in H_T^4(V;\Z)$ the canonical class at $q'_2$.

Using Lemma \ref{tau 1} and the values of $c_1^T$ in Table \ref{values c1T V}, we obtain that the canonical class $\tau_1$ of $V$ (with respect to the chosen $\xi$) is given by
\begin{equation}\label{tau 1 V}
\tau_1=\frac{2x+4y - c_1^T}{2}\,.
\end{equation}
Its values at all fixed points can therefore be computed from Table \ref{values c1T V} and can be found in the first column of Table \ref{canonical classes V}.
Next we compute the restrictions to the fixed points of the canonical classes of degree 4, namely $\tau_2$ and $\tau'_2$. For these computations we follow the recipe in \cite[Theorem 1.6]{GT}.
In order to do so, for any $\eta\in \mathfrak{t}^*$, consider the projection $\rho_\eta\colon \mathfrak{t}^*\to \xi^{\perp}$ given by 
$$
\rho_\eta(\alpha):=\alpha-\displaystyle \frac{\langle \alpha, \xi \rangle }{\langle \eta , \xi \rangle}\eta
$$
and extend it to $\mathbb{S}(\mathfrak{t}^*)$, by imposing that it is an (algebra) endomorphism.  For every edge $e=(p,q)$ of the GKM graph of $V$ such that $\lambda(q)-\lambda(p)=1$, we need to compute the following
\begin{equation}\label{def theta}
\Theta(p,q)=\frac{\rho_{\eta(p,q)}(\Lambda_p^-)}{\rho_{\eta(p,q)}\left( \frac{\Lambda_q^-}{\eta(p,q)}\right)}
\end{equation}
which, by \cite[Theorem 1.6]{GT}, is a non-zero integer. Here $\eta(p,q)$ denotes the weight labeling the edge $(p,q)$ of the GKM graph.
According to the same theorem, in order to compute the restrictions to the fixed points of $\tau_2$ and $\tau'_2$, we need to compute $\Theta(q'_2,q_3)$, $\Theta(q_3,q_4)$ and $\Theta(q_2,q_3)$.
We begin with $\Theta(q'_2,q_3)$. One can check that $$\eta(q'_2,q_3)=x,\;\; \Lambda_{q'_2}^-=y(x+2y)\;\;\text{ and }\;\;\Lambda_{q_3}^-/x=(x+y)(x+2y).$$ 
Therefore, since $\xi=(1,2)$, using \eqref{def theta} it is easy to check that
$\Theta(q'_2,q_3)=1$. Analogously it can be checked that $\Theta(q_3,q_4)=1$ and $\Theta(q_2,q_3)=3$. To determine $\tau_2(q_3)$ it is enough to use \cite[Equation (4.1)]{GT}, which gives immediately that $\tau_2(q_3)=3x(x+y)$.
In order to compute $\tau_2(q_4)$ we observe that, using the notation in \cite[Theorem 1.2]{GT}, the only path from $q_2$ to $q_4$ is the one with edges $(q_2,q_3)$ and $(q_3,q_4)$. Therefore, by \cite[Equation (1.1)]{GT}, we have
$$
\tau_2(q_4)=\Lambda_{q_4}^-\cdot \frac{\psi(q_3)-\psi(q_2)}{\psi(q_4)-\psi(q_2)}\cdot \frac{\Theta(q_2,q_3)}{x+2y}\cdot \frac{\Theta(q_3,q_4)}{-x+y}=(2x+y)(x+y)\,.
$$

Similar computations can be done for $\tau'_{2}$, and its values at the fixed points can be found in Table \ref{canonical classes V}.

In order to compute the values of $\tau_3$, it is not necessary to use \cite[Theorem 1.6]{GT}. Indeed, since the only fixed points where $\tau_3$
does not vanish are $q_3$ and $q_4$, and there is a smooth invariant sphere in $V$ that has $q_3$ and $q_4$ as fixed points, the canonical class $\tau_{3}$
must be the equivariant Poincar\'e dual to this sphere and  we obtain the values in Table \ref{canonical classes V}. 
In this table we do not list the values of $\tau_0$ and $\tau_4$, as computing their restrictions to the fixed points is trivial (see Remark \ref{trivial cc}).
\renewcommand{\arraystretch}{1.1}
\begin{table}[h]
\begin{center}
  \begin{tabular}{| c || c | c | c | c |}
    \hline
               & $\tau_1$ & $\tau_2$ & $\tau'_2$ & $\tau_3$  \\ \hline \hline
    $q_0$            &  0                             & 0 &  0 &0  \\ \hline
    $q_1$            &    $-x+y$   &  0&  0 & 0  \\ \hline
    $q_2$            &      $2x+y$               &  $x(2x+y)$ & 0 & 0  \\ \hline
    $q'_2$           &  $3y$ &  $0$ & $y(x+2y)$ &0   \\ \hline
    $q_3$            &  $3x+3y$ &  $3x(x+y)$ & $(x+y)(x+2y)$  & $x(x+y)(x+2y)$  \\ \hline
    $q_4$            &  $2x+4y$ & $(2x+y)(x+y)$ & $3y(x+y)$ &$y(2x+y)(x+y)$   \\ \hline

      \end{tabular}
 \end{center}
  \caption{
Restrictions to the fixed points of the ``non-trivial'' canonical classes on $V$.}
 \label{canonical classes V}
\end{table}

\begin{theorem}\label{equiv cohomology and chern classes V}
Let $V$ be the Fano-Mukai 4-fold of genus 10 and degree 18 endowed with the Hamiltonian $T$-action described in Theorem \ref{thm weights V}. 
Let $x,y$ be a $\Z$-basis of the dual lattice $\ell^*$.
Then there exists a basis of the equivariant cohomology ring $H_T^*(V;\Z)$ given by canonical classes $\{\tau_i\}_{i=0}^4\cup \{\tau'_2\}$, with  $\tau_i\in H_T^{2i}(V;\Z)$, for  $i=0,\ldots,4$, and $\tau'_2\in H_T^4(V;\Z)$, satisfying
the following properties and relations:
\begin{align}
 \begin{aligned}\label{product V}
 & \tau_1^2= (-x+y)\tau_1+3\tau_2+3\tau'_2\,,  & \tau_1\tau_2 & = (2x+y)\tau_2+3\tau_3, \\
 & \tau_1\tau'_2= 3y \,\tau'_2+3\tau_3\,, & \tau_1\tau_3 & = 3(x+y)\tau_3+\tau_4, \\
 & \tau_2^2=x(2x+y)\tau_2+3x\,\tau_3+\tau_4\,,  & \tau_2\tau'_2 & =3(x+y)\tau_3, \\
& (\tau'_2)^2 = y(x+2y)\tau'_2+(x+2y)\tau_3 + \tau_4\,. \\ 
 \end{aligned}
 \end{align}
 Moreover, the equivariant Chern classes $c_j^T\in H_T^{2j}(V;\Z)$ of the tangent bundle satisfy 
 {\small \begin{align}
 \begin{aligned}\label{equiv chern classes V}
 & c_1^T=-2\tau_1+2x+4y\\
 & c_2^T=7(\tau_2+\tau'_2)-7(x+y)\tau_1-x^2+6xy+6y^2\\
 & c_3^T=-24 \tau_3 + 8(x+2y)\tau_2+8(2x+y)\tau'_2-2(3x^2+7xy+3y^2)\tau_1-2(x^3+x^2y-3xy^2-2y^3)\\
 & c_4^T=6 \tau_4-6(x+2y)\tau_3+(x^2+7xy+7y^2)\tau_2+(7x^2+7xy+y^2)\tau'_2-(x^3+5x^2y+5xy^2+y^3)\tau_1 + \\
 &\;\;\;\;\;\;\;\; (-x+y)y(x+y)(2x+y).\\
 \end{aligned}
 \end{align}}
\end{theorem}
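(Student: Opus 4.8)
The plan is to exploit the fact that the restrictions of all the relevant canonical classes to the fixed points have already been recorded in Table~\ref{canonical classes V}, and to turn each of the three assertions of the theorem into a bounded linear-algebra computation over $H^*(BT;\Z)=\Z[x,y]$, following the strategy outlined in Section~\ref{section Ham actions}.

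First I would dispose of the existence of the basis. The generic vector $\xi=(1,2)$ induces on the fixed points the ordering $q_0\prec q_1\prec q_2\prec q'_2\prec q_3\prec q_4$, as one reads off from $\varphi=\langle\psi,\cdot\rangle$ using $\psi=-c_1^T$ from the last column of Table~\ref{values c1T V}; along this ordering the GKM graph is index-increasing, so \cite[Theorem 1.6]{GT} supplies a canonical class at every fixed point. By the general discussion in Section~\ref{section Ham actions}, once canonical classes exist at all fixed points the family $\{\tau_i\}_{i=0}^4\cup\{\tau'_2\}$ is automatically a basis of $H_T^*(V;\Z)$ over $H^*(BT;\Z)$, and their restrictions give a $\Z$-basis of $H^*(V;\Z)$.

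Next I would establish the product relations \eqref{product V}. The key observation is that restriction to a fixed point is a ring homomorphism, so $(\tau_i\tau_j)(p)=\tau_i(p)\,\tau_j(p)$ is simply the product of the polynomials listed in Table~\ref{canonical classes V}. Knowing the restriction of each product at every fixed point, I would recover its coordinates in the canonical basis by applying Lemma~\ref{from restrictions to constants} recursively along the ordering above, using the sign convention of Remark~\ref{our conventions cc} so that the denominator at $p_i$ is $\tau_i(p_i)$, the product of the positive weights. The degree constraint — that the coefficient of $\tau_s$ in $\tau_i\tau_j$ must be homogeneous of degree $\lambda_i+\lambda_j-\lambda_s$ — both organizes the computation and serves as a consistency check; in particular it forces the vanishing of several terms (for instance there is no $\tau_0$-contribution to $\tau_1^2$, since $(\tau_1^2)(q_0)=0$).

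Finally, the equivariant Chern classes \eqref{equiv chern classes V} are handled identically. By Lemma~\ref{equiv chern classes}, $c_j^T(p)=\sigma_j(w_1^p,\dots,w_4^p)$ is the $j$-th elementary symmetric polynomial in the four weights listed for $p$ in Table~\ref{values c1T V}; evaluating these at all six fixed points and feeding them into the recursion of Lemma~\ref{from restrictions to constants}, exactly as in Remark~\ref{remark chern classes}, yields the coefficients in \eqref{equiv chern classes V}. I do not expect any conceptual obstacle: the whole argument is a finite, purely mechanical computation, so the only real difficulty is the bookkeeping and the avoidance of arithmetic slips across the many polynomial evaluations. As a safeguard I would cross-check the output by verifying that the constant terms reproduce the ordinary cohomology ring and Chern classes of Corollary~\ref{cohomology ring V}, and that the resulting Chern numbers match the entries for $V$ in Table~\ref{Table intro}.
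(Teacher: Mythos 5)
Your proposal is correct and follows essentially the same route as the paper: existence of the canonical classes via the index-increasing GKM graph and \cite[Theorem 1.6]{GT} (together with the restrictions already assembled in Table~\ref{canonical classes V}), then computation of the structure constants and of the coefficients of $c_j^T$ by restricting to fixed points and running the recursion of Lemma~\ref{from restrictions to constants}, exactly as in Section~\ref{restrictions and structure constants} and Remark~\ref{remark chern classes}. The paper's proof is precisely this (with the details left to the reader), so your added consistency checks against Corollary~\ref{cohomology ring V} and Table~\ref{Table intro} are a sensible but inessential supplement.
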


\begin{proof}
We have already proved that canonical classes exist and their values at the fixed points are given in Table \ref{canonical classes V}. We can then  compute the coefficients of the equivariant Chern classes with respect to this basis  by the procedure described in Section \ref{restrictions and structure constants}. Similarly, the coefficients of the equivariant Chern classes can be computed from their restrictions to the fixed points, which just depend on the isotropy weights (see Remark \ref{remark chern classes}). The details are left to the reader.
\end{proof}

Corollary \ref{cohomology ring V}  is a consequence of Theorem \ref{equiv cohomology and chern classes V} and \eqref{structure constants}.


\subsection{Index 3: The quintic del Pezzo fourfold $W$ and its equivariant cohomology ring}\label{subsec W}
$\;$\\

Let us consider a del Pezzo $4$-fold of degree-$5$ obtained as the intersection 
$$
W=W_5 = Gr (2,5) \cap L
$$
of the Grassmanian $Gr (2,5)$ (under the Pl\"{u}cker embedding in $\mathbb{P} (\Lambda^2 \C^5)=\C \mathbb{P}^9$) and a codimension-$2$ projective subspace $L$ of $\C \mathbb{P}^9$. 

Identifying $Gr (2,5)$  with the set of lines in $\mathbb{CP}^4$, we see that there are two types of projective planes in $Gr (2,5)$. Namely, the vertex type planes $\sigma_{3,1}$ consisting of lines in $\mathbb{CP}^4$ through a given point and  contained in a fixed projective hyperplane of $\mathbb{CP}^4$, and the non-vertex type planes $\sigma_{2,2}$ consisting of lines contained in a fixed projective plane of $\mathbb{CP}^4$.

The manifold $W$ contains exactly one $\sigma_{2,2}$ plane $\Xi$ and a $1$-parameter family of $\sigma_{3,1}$ planes. Each of these planes intersects $\Xi$ along a line tangent to a fixed conic $C\subset \Xi$. Let $R$ be the union of all the $\sigma_{3,1}$ planes in $W$. It is shown in \cite[Proposition 3.4]{PZ0} that $R$ is the intersection of $W$  with a projective hyperplane and that $R$ is singular along $\Xi$. In particular, we have 
$$
C\subset \Xi \subset R\subset W,
$$
with $\Xi \simeq \mathbb{CP}^2$ and $\dim_{\C} R=3$. 

Let $Bl_{\Sigma} (W)$ be the blow up of $W$ along  $\Xi$. It is shown in \cite[Proposition 2.2]{PZ} that $Bl_{\Sigma} (W)$ can be identified with the blow up $Bl_{\Gamma}(\mathbb{CP}^4)$ of  $\mathbb{CP}^4$ along a twisted cubic curve $\Gamma\subset \mathbb{CP}^4$ and that we have the following commutative  diagram,
\vspace{.3cm}
\begin{equation} \label{diagram}\begin{tikzcd}
	& {\widetilde{H} \subset Bl_{\Xi}(W)\simeq Bl_{\Gamma}(\mathbb{CP}^4)\supset \widetilde{R}} \\
	{\Xi\subset R \subset W} && {\mathbb{CP}^4 \supset H =\langle \Gamma \rangle\supset\Gamma}
	\arrow["\rho", from=1-2, to=2-1]
	\arrow["\varphi"', from=1-2, to=2-3]
	\arrow["\tau"', dashed, from=2-1, to=2-3]
\end{tikzcd}\end{equation}

\vspace{.3cm}
\noindent where $\rho : Bl_{\Xi}(W) \to W$ and $\varphi:Bl_{\Gamma}(\mathbb{CP}^4) \to \mathbb{CP}^4$ are  the blow up maps and $\tau:W\subset L \simeq \mathbb{CP}^7 \to \mathbb{CP}^4$ is the projection with center $\Xi$ (a birational map). Let $\widetilde{H}:=\rho^{-1}(\Xi)$ and $\widetilde{R}:=\varphi^{-1}(\Gamma)$ be the corresponding exceptional divisors. Then  $\widetilde{H}$ can  be identified with the proper transform of the projective hyperplane $H =\langle \Gamma \rangle \simeq \mathbb{CP}^3$ spanned by $\Gamma$, and $\widetilde{R}$ can be identified with the proper transform of $R$  in $Bl_{\Xi}(W)$ (see \cite[Proposition 2.2]{PZ}). 

It is shown in \cite[Corollary 2.2.2]{PZ} that $W$ is a compactification of $\C^4$ and that
$$
W\setminus R \simeq Bl_{\Xi}(W) \setminus \left( \widetilde{R} \cup \widetilde{H} \right) \simeq \mathbb{CP}^4 \setminus H \simeq\mathbb{CP}^4 \setminus \mathbb{CP}^3 \simeq \C^4. 
$$
Moreover, by \cite[Corollary 2.2.4]{PZ}, the automorphism group $Aut(W)$ of $W$ leaves $\Xi$ invariant and can be identified with the group of automorphisms of $\mathbb{CP}^4$ that leaves $\Gamma$ invariant
$$
Aut(W)=Aut(W,\Xi)=Aut(\mathbb{CP}^4,\Gamma).
$$
Note that $R$ is also $Aut(W)$-invariant  and that the action of  $Aut(W)$ on $R$ is effective (cf \cite[Proposition 5.6]{PZ}). Let $T^2$ be a maximal torus in $Aut(W)$. In the following result we  show that the  action of $T^2$ on $W$ has $6$ fixed points and we compute the corresponding weights.
\begin{theorem}\label{Wweight}
Let $W$ be the quintic del Pezzo fourfold defined as above and let  $T^2$ be a maximal torus of  $Aut(W)$. Then the action of $T^2$ on $W$ has $6$ isolated fixed points and  the weights at these points  are listed in Table~\ref{table:weightsW}

\begin{table}[h!]
\centering
\begin{tabular}{|c||l|} 
\hline
      & \multicolumn{1}{c|}{Weights}       \\ \hline

$p_0$ & $ \{(3,0),(2,1),(1,2),(0,3)\}$      \\ \hline
$p_1$ & $ \{(-3,0),(-1,1),(1,2),(0,3)\}$      \\ \hline
$p_2$ & $ \{(3,0),(1,-1),(2,1),(0,-3)\}  $      \\ \hline
$p_2'$ & $  \{(-1,-2),(-2,-1),(-1,1),(-2,2)\} $      \\ \hline

$p_3$ & $    \{(-3,0),(1,-1),(-1,1),(0,-3)\}$      \\ \hline

$p_4$ & $   \{(-1,-2),(2,-2), (-2,-1), (1,-1)\}$      \\ \hline
\end{tabular}\caption{Weights for the $T^2$-action on $W$}\label{table:weightsW}
\end{table}

Moreover, this action is Hamiltonian with respect to  a K\"{a}hler form $\omega$ on $W$ satisfying $c_{1}(W) = [\omega]$. The image of the moment map, together with the invariant spheres, are given in Figure \ref{wfig}.

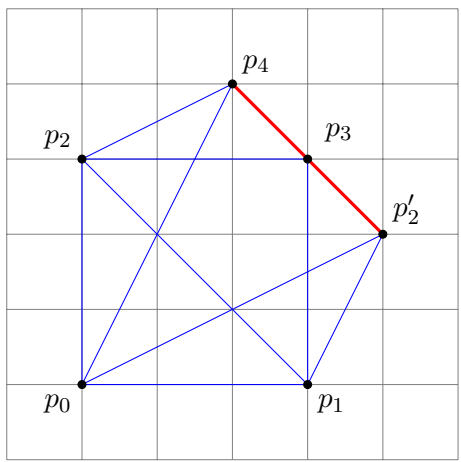
\begin{figure}[h] 
\centering
\begin{tikzpicture}[scale=1]

\draw[step=1cm,gray,very thin] (-3,-3) grid (3,3);
\draw[blue] (-2,1) -- (1,1);

\draw[blue]  (-2,1) -- (-2,-2);

\draw[blue]  (-2,-2) -- (1,-2);

\draw[blue]  (1,-2) -- (1,1);

\draw[blue]  (-2,-2) -- (0,2);

\draw[blue]  (-2,-2) -- (2,0);

\draw[blue]  (-2,1) -- (1,-2);

\draw[blue]  (-2,1) -- (0,2);

\draw[blue]  (1,-2) -- (2,0);

\draw[very thick, red]  (2,0) -- (0,2);


\filldraw [black] (2,0) circle (1.5pt);

\filldraw [black] (0,2) circle (1.5pt);

\filldraw [black] (1,-2) circle (1.5pt);

\filldraw [black] (-2,1) circle (1.5pt);

\filldraw [black] (1,1) circle (1.5pt);

\filldraw [black] (-2,-2) circle (1.5pt);

\node [below left] (p_1) at (-2,-2) {$p_0$};

\node [below right] (p_2) at (1,-2) {$p_1$};

\node [above left] (p_3) at (-2,1) {$p_2$};

\node [above right] (p_4) at (2,0) {$p_2'$};

\node [above right] (p_5) at (1.1,1.1) {$p_3$};

\node [above right] (p_6) at (0,2) {$p_4$};
\end{tikzpicture}

\caption{ The moment image of the invariant spheres in $W$. }

\label{wfig}
\end{figure}
\end{theorem}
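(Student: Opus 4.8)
The plan is to read off the Kähler and Hamiltonian statements from Lemma~\ref{invariant kahler} and to compute the isotropy weights through the explicit birational model recorded in diagram~\eqref{diagram}. Since $W$ is a smooth Fano fourfold on which the maximal algebraic torus $(\mathbb{C}^*)^2<Aut(W)$ acts, Lemma~\ref{invariant kahler} immediately produces an invariant Kähler form $\omega$ with $c_1(W)=[\omega]$ for which the $T^2$-action is Hamiltonian. The number of fixed points then equals $\chi(W)=6$: the quintic del Pezzo fourfold has $(b_2,b_4)=(1,2)$, so $\chi(W)=6$ by Proposition~\ref{sixfp}(1). It remains to locate the six fixed points and to determine their weights.

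First I would make the action completely explicit. Using $Aut(W)=Aut(\mathbb{CP}^4,\Gamma)$, write $\mathbb{C}^5=\mathrm{Sym}^3(\mathbb{C}^2)\oplus\mathbb{C}$, with $\Gamma\subset H=\mathbb{P}(\mathrm{Sym}^3\mathbb{C}^2)$ the twisted cubic $[\,s^3:s^2t:st^2:t^3:0\,]$. The maximal torus $T^2$ is the diagonal torus of $GL(2,\mathbb{C})$ modulo scalars, acting with weights $(3,0),(2,1),(1,2),(0,3)$ on the basis $e_1^3,e_1^2e_2,e_1e_2^2,e_2^3$ and (after normalising) trivially on the extra summand. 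From these weights one obtains the five fixed points $P_0,\dots,P_4$ of $\mathbb{CP}^4$, their tangent weights, and the fact that $\Gamma$ passes through $P_0,P_3$ with tangent directions of weight $(-1,1)$ and $(1,-1)$ respectively. As a first consistency check, the equivariant isomorphism $W\setminus R\simeq\mathbb{CP}^4\setminus H\simeq\mathbb{C}^4$ shows that the unique fixed point of $W\setminus R$ is the image of $P_4$, with tangent weights $(3,0),(2,1),(1,2),(0,3)$ --- exactly the weights listed for $p_0$.

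The remaining points I would treat via the $T^2$-equivariant isomorphism $Bl_\Xi(W)\cong Bl_\Gamma(\mathbb{CP}^4)$. All nine fixed points of $Bl_\Gamma(\mathbb{CP}^4)$ and their weights are computed by the standard blow-up formula: over a fixed point of $\Gamma$ with tangent weight $\sigma$ and normal weights $\nu_1,\nu_2,\nu_3$, the fixed point in the direction $\nu_i$ carries weights $\sigma,\nu_i$ and $\nu_j-\nu_i$ for $j\neq i$. Since $\rho\colon Bl_\Xi(W)\to W$ is an isomorphism away from $\widetilde H=\rho^{-1}(\Xi)$, the three fixed points lying off $\widetilde H$ --- namely $P_4$ together with the exceptional points over $P_0,P_3$ in the directions transverse to $H$ (weights $(-3,0)$ and $(0,-3)$) --- descend to $p_0,p_1,p_2$ with the weights of Table~\ref{table:weightsW}. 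The three fixed points of $W$ on $\Xi$ are the fixed points of $\Xi\cong\mathbb{CP}^2$; using that $\widetilde H\cong Bl_\Gamma H$ is a $\mathbb{P}^1$-bundle over $\Xi=\mathbb{P}(\text{net of quadrics through }\Gamma)$ (diagram~\eqref{diagram} and \cite[Proposition 2.2]{PZ}), their weights along $\Xi$ are the weights $(4,2),(3,3),(2,4)$ of the net (the $2\times2$ minors $z_0z_2-z_1^2$, $z_0z_3-z_1z_2$, $z_1z_3-z_2^2$), while the two normal weights at each point are recovered by pairing the two fixed points in the corresponding $\mathbb{P}^1$-fiber of $\widetilde H$ and inverting the blow-up formula. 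Isolatedness of $W^{T^2}$ follows because $Bl_\Gamma(\mathbb{CP}^4)$ has exactly nine isolated fixed points (the normal weights at $P_0$ and at $P_3$ are pairwise distinct), so $W^{T^2}=\rho\big((Bl_\Xi W)^{T^2}\big)$ is finite. Finally, once $c_1=[\omega]$ the form may be normalised as in Lemma~\ref{lemma monotone chi six}, so the weight sum formula~\eqref{wsf} gives $\psi(p)=-\sum_i w_i^p$ and fixes the moment image of each fixed point; the collinearity of $(4,2),(3,3),(2,4)$ shows that $\Xi$ maps onto a single segment, which is the thick edge of Figure~\ref{wfig} and the reason $W$ fails to be GKM, while the invariant spheres are the isotropy $\mathbb{P}^1$'s joining fixed points with opposite weights.

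The main obstacle is the bookkeeping of the birational correspondence: one must determine precisely which of the nine fixed points of $Bl_\Gamma(\mathbb{CP}^4)$ lie on the proper transform $\widetilde H$, pair those on $\widetilde H$ into the three $\mathbb{P}^1$-fibers over the fixed points of $\Xi$, and invert the blow-up formula to extract the normal weights $N_{\Xi/W}$. All of this rests on the structure of diagram~\eqref{diagram} and on identifying $\Xi$ with the net-of-quadrics $\mathbb{CP}^2$ together with its induced $T^2$-action; once this identification is in hand, matching against Table~\ref{table:weightsW} is a routine check.
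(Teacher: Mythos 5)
Your proposal is correct and is essentially the paper's own proof: both arguments linearize the $T^2$-action on $\mathbb{CP}^4$ preserving the twisted cubic $\Gamma$, compute the nine fixed points and their weights on $Bl_\Gamma(\mathbb{CP}^4)\simeq Bl_\Xi(W)$ via the blow-up weight formula, let the three fixed points off the exceptional divisor $\widetilde{H}$ descend unchanged to $p_0,p_1,p_2$, and recover the weights at the three fixed points on $\Xi$ by pairing the six fixed points of $\widetilde{H}$ into $\mathbb{P}^1$-fibers and inverting the blow-up formula (Lemma~\ref{lemma_blowup}), with Lemma~\ref{invariant kahler} supplying the monotone K\"ahler and Hamiltonian structure. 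The only divergence is a bookkeeping detail: you fix the fiber pairing by identifying $\Xi$ equivariantly with the net of quadrics through $\Gamma$ (weights $(4,2),(3,3),(2,4)$, whose collinearity also explains the non-GKM thick edge), whereas the paper finds the same pairing $\{A,q_2\},\{B,D\},\{C,q_3\}$ purely combinatorially, as the unique matching in which two weights agree and one pair is opposite.
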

\begin{proof}
 Up to conjugation, we can assume  $T^2$ to be the diagonal torus in $GL(2,\C)\subset Aut(W)$. It has a unique fixed point $p_0$ in 
$$
W\setminus R \simeq \C^4\simeq \mathbb{CP}^4\setminus H
$$
(cf. \cite[Lemma 5.7]{PZ}).
Up to a change in coordinates, the curve $\Gamma \subset \mathbb{CP}^4$ can be parametrized as
 \begin{equation}\label{eq:par}
 \phi([X:Y]) = [0:X^3:X^2Y : XY^2 : Y^3],
 \end{equation}
so that $H=\langle \Gamma \rangle$ is identified with the projective hyperplane $\{z_0=0\}$ (cf. \cite[Notation 5.1]{PZ}). In these coordinates, $p_0$ is identified with  $q_0=[1:0:0:0] \in \mathbb{CP}^4$ and $W\setminus R \simeq \C^4$ is identified with the set of points of the form $[1:z_1:z_2:z_3:z_4]$. As it is shown in \cite[Notation 5.1 and Lemma 5.7]{PZ}, the $T^2$-action on $W\setminus R$ is written in these coordinates as
\begin{equation}\label{eq:torusaction}
(\lambda, \mu)\cdot(z_1,z_2,z_3,z_4)=(\lambda^3z_1,\lambda^2\mu z_2, \lambda \mu^2 z_3,\mu^3 z_4).
\end{equation}
The corresponding action on $\mathbb{CP}^4\setminus E$ extends naturally to a $T^2$-action on $\mathbb{CP}^4$ with $5$ fixed points: $q_0$  and the points 
$$
q_1:=[0:1:0:0:0], \, q_2=[0:0:1:0:0],\, q_3=[0:0:0:1:0]\, \text{and} \, q_4=[0:0:0:0:1].
$$
Note that this action has  exactly two fixed points in $\Gamma$: the points $q_1$ and $q_4$. The weights of the $T^2$-action on $\mathbb{CP}^4$ at the five fixed points are shown in Table~\ref{table:weighsCP4}.
\begin{table}[h!]
\begin{tabular}{|c||l|}
\hline
      & \multicolumn{1}{c|}{Weights}       \\ \hline
$q_0$ & $\{(3,0),(2,1),(1,2),(0,3)\}$      \\ \hline
$q_1$ & $\{(-3,0),(-1,1),(-2,2),(-3,3)\}$  \\ \hline
$q_2$ & $\{(-2,-1),(1,-1),(-1,1),(-2,2)\}$ \\ \hline
$q_3$ & $\{(-1,-2),(2,-2),(1,-1),(-1,1)\}$ \\ \hline
$q_4$ & $\{(0,-3),(3,-3),(2,-2),(1,-1)\}$  \\ \hline 
\end{tabular} \caption{Weights for the $T^2$-action on $\mathbb{CP}^4$}
\label{table:weighsCP4}
\end{table}

Since $\Xi$ is $T^2$-invariant, we can perform the blow up equivariantly and obtain a natural $T^2$-action on the blow up $Bl_{\Xi}(W)$. Through the identification in \eqref{diagram}, we obtain a $T^2$-action on $Bl_{\Gamma}(\mathbb{CP}^4)$.

\begin{figure}[h]
\centering
\begin{tikzpicture} [scale=0.8]
\draw[blue] (0,0) -- (0,3);

\draw[blue] (0,0) -- (0,-4);

\draw[red] (0,0) -- (-3,-3);

\draw[blue] (0,0) -- (3,-3);

\filldraw [black] (0,0) circle (1.5pt);

\node [above left] (p_1) at (0,0) {$q_1$};

\filldraw [black] (0,3) circle (1.5pt);

\node [above right] (p_1) at (0,3) {$q_2$};

\filldraw [black] (3,-3) circle (1.5pt);

\node [above right] (p_1) at (3,-3) {$q_4$};

\filldraw [black] (-3,-3) circle (1.5pt);

\node [above left] (p_1) at (-3,-3) {$q_0$};

\filldraw [black] (0,-4) circle (1.5pt);

\node [above right] (p_1) at (0,-4) {$q_3$};

\node [right] (p_1) at (0,1.5) {$(-1,1)$};

\node [below right] (p_1) at (2,1) {$H$};

\node [below left] (p_1) at (1.6,-1.5) {$(-3,3)$};

\node [above left] (p_1) at (-1.5,-1.5) {$(-3,0)$};

\node [left] (p_1) at (0,-2) {$(-2,2)$};

  \draw[thick] (0,0) .. controls (0,3) and (2,-2) .. (3,-3);

\draw[blue] (10,0) -- (10,3);

\draw[blue] (10,0) -- (10,-4);

\draw[red] (10,0) -- (7,-3);

\draw[blue] (10,0) -- (13,-3);

\filldraw [black] (10,0) circle (1.5pt);

\filldraw [black] (10,3) circle (1.5pt);

\filldraw [black] (13,-3) circle (1.5pt);

\filldraw [black] (7,-3) circle (1.5pt);

\filldraw [black] (10,-4) circle (1.5pt);

\node [above left] (p_1) at (10,0) {$q_4$};

\node [above right] (p_1) at (10,3) {$q_3$};

\node [above right] (p_1) at (13,-3) {$q_1$};

\node [above left] (p_1) at (7,-3) {$q_0$};

\node [above right] (p_1) at (10,-4) {$q_2$};

\node [left] (above right) at (2,0) {$\Gamma$};

\node [right] (p_1) at (10,1.5) {$(1,-1)$};

\node [below left] (p_1) at (11.6,-1.5) {$(3,-3)$};

\node [above left] (p_1) at (8.5,-1.5) {$(0,-3)$};

\node [left] (p_1) at (10,-2) {$(2,-2)$};

\node [below right] (p_1) at (12,1) {$H$};

  \draw[thick] (10,0) .. controls (10,3) and (12,-2) .. (13,-3);

\node [left] (above right) at (12,0) {$\Gamma$};

\draw[red] (-2,-10) -- (2,-10);

\draw[red] (-2,-10) -- (0,-12.5);

\draw[blue] (2,-10) -- (0,-12.5);

\draw[blue] (0,-12.5) -- (0,-14.5);

\draw[red] (-2,-10) -- (-3.5,-12.5);

\draw[blue] (2,-10) -- (3.5,-12.5);

\draw[red] (-2,-10) -- (-2,-7);

\draw[blue] (2,-10) -- (2,-7);

\draw[blue,dashed] (0,-12.5) -- (0,-7);

\filldraw [black] (-2,-10) circle (1.5pt);
\filldraw [black] (2,-10) circle (1.5pt);
\filldraw [black] (0,-12.5) circle (1.5pt);
\filldraw [black] (0,-14.5) circle (1.5pt);
\filldraw [black] (-3.5,-12.5) circle (1.5pt);
\filldraw [black] (3.5,-12.5) circle (1.5pt);
\node [above left] (p_1) at (-3.5,-12.5) {$q_0$};
\node [above left] (p_1) at (0,-13) {$A$};
\node [above right] (p_1) at (2,-10) {$B$};
\node [above right] (p_1) at (0,-14.5) {$q_3$};
\node [above right] (p_1) at (3.5,-12.5) {$q_4$};

\node [left] (p_1) at (-2,-8.5) {$(-1,1)$};

\node [right] (p_1) at (2,-8.5) {$(-1,1)$};

\node [left] (p_1) at (0,-8.5) {$(-1,1)$};

\node [left] (p_1) at (-2.8,-11.2) {$(-3,0)$};

\node [right] (p_1) at (2.8,-11.2) {$(-3,3)$};

\node [left] (p_1) at (0,-13.5) {$(-2,2)$};

\node [above right] (p_1) at (0.5,-12.5) {$(-1,1)$};

\node [above left] (p_1) at (-0.5,-12.5) {$(-1,-2)$};

\node [above left] (p_1) at (2,-10) {$(0,-3)$};

\node [above right] (p_1) at (-2,-10) {$(0,3)$};

\node [below] (p_1) at (-1.8,-10.8) {$(1,2)$};

\node [below] (p_1) at (1.8,-10.8) {$(1,-1)$};

\node [above left] (p_1) at (-0.5,-12.5) {$(-1,-2)$};

\node  [right] (p_1) at (2.8,-10) {$\widetilde{H}$};

\filldraw [black] (0,-7) circle (1.5pt);

\filldraw [black] (-2,-7) circle (1.5pt);

\filldraw [black] (2,-7) circle (1.5pt);


\draw[red] (8,-10) -- (12,-10);

\draw[red] (8,-10) -- (10,-12.5);

\draw[blue] (12,-10) -- (10,-12.5);

\draw[blue] (10,-12.5) -- (10,-14.5);

\draw[red] (8,-10) -- (6.5,-12.5);

\draw[blue] (12,-10) -- (13.5,-12.5);

\draw[red] (8,-10) -- (8,-7);

\draw[blue] (12,-10) -- (12,-7);

\draw[blue,dashed] (10,-12.5) -- (10,-7);

\filldraw [black] (8,-10) circle (1.5pt);
\filldraw [black] (12,-10) circle (1.5pt);
\filldraw [black] (10,-12.5) circle (1.5pt);
\filldraw [black] (10,-14.5) circle (1.5pt);
\filldraw [black] (6.5,-12.5) circle (1.5pt);
\filldraw [black] (13.5,-12.5) circle (1.5pt);
\node [above left] (p_1) at (6.5,-12.5) {$q_0$};
\node [above left] (p_1) at (10,-13) {$C$};
\node [above right] (p_1) at (12,-10) {$D$};
\node [above right] (p_1) at (10,-14.5) {$q_2$};
\node [above right] (p_1) at (13.5,-12.5) {$q_1$};

\filldraw [black] (10,-7) circle (1.5pt);

\filldraw [black] (8,-7) circle (1.5pt);

\filldraw [black] (12,-7) circle (1.5pt);

\node [left] (p_1) at (8,-8.5) {$(1,-1)$};

\node [right] (p_1) at (12,-8.5) {$(1,-1)$};

\node [left] (p_1) at (10,-8.5) {$(1,-1)$};

\node [left] (p_1) at (7.2,-11.2) {$(0,-3)$};

\node [right] (p_1) at (12.8,-11.2) {$(3,-3)$};

\node  [right] (p_1) at (12.8,-10) {$\tilde{H}$};

\node [left] (p_1) at (10,-13.5) {$(2,-2)$};

\node [above right] (p_1) at (10.5,-12.5) {$(1,-1)$};

\node [above left] (p_1) at (9.5,-12.5) {$(-2,-1)$};

\node [above left] (p_1) at (12,-10) {$(-3,0)$};

\node [above right] (p_1) at (8,-10) {$(3,0)$};

\node [below] (p_1) at (8.3,-10.8) {$(2,1)$};

\node [below] (p_1) at (11.8,-10.9) {$(-1,1)$};

\end{tikzpicture}
\caption{Determining the weights on the blow-up $Bl_{\Gamma}(\mathbb{CP}^4)$ along $\Gamma$: The two fixed points  $q_1,q_4\in \mathbb{CP}^4$ that are in $\Gamma\subset \mathbb{CP}^4$  are shown on  top. The edges corresponding to $T^2$-invariant spheres in $H \simeq \mathbb{CP}^3$ are colored blue, and those that   are not in $H$ are colored red.  At the bottom, we have the weights at the fixed points in $Bl_{\Gamma}(\mathbb{CP}^4)$. The edges corresponding to invariant spheres in the proper transform $\widetilde{H}  \simeq Bl_{\Gamma}(\mathbb{CP}^3)$ are colored blue,  and those that are not in  $\widetilde{H}$  are colored red.}
\label{fig:weights3}
\end{figure}
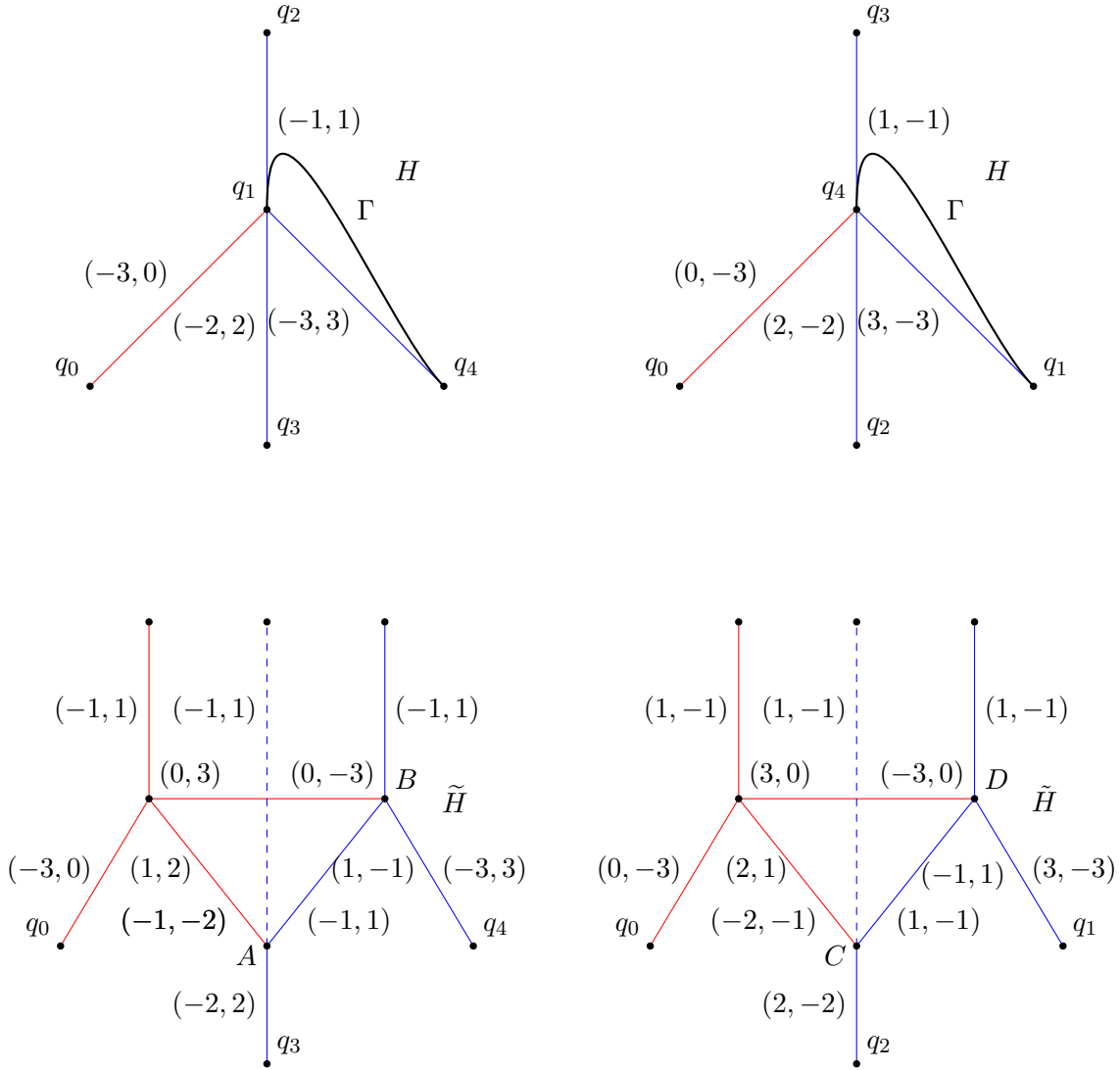

Note that this action has nine fixed points and that six of these points are on the exceptional divisor $\widetilde{R}=\varphi^{-1}(\Gamma)$. The weights at these fixed points are presented in Figure~\ref{fig:weights3}. Of these six points only the points represented by $A,B,C$ and $D$ in Figure~\ref{fig:weights3} are in the proper transform  $\widetilde{H}\simeq \rho^{-1}(\Xi)$ of $H$ and so  the points in $Bl_\Xi(W)$ corresponding to the remaining two fixed points in $\widetilde{R}$ are  not in the exceptional divisor of $Bl_\Xi(W)$. Consequently, they are also fixed points for the $T^2$-action on $W$ and their  weights  (for the $T^2$-action on $W$) remain the same after the blow down to $W$. Note that the same is true for $p_0$ (the fixed point in $W$ corresponding to $q_0$), since this point is not in the exceptional divisor $\varphi^{-1}(\Gamma)$, as $q_0\notin  \widetilde{H}$.

Hence, we already have  the weights of the $T^2$-action on $W$ at  three  fixed points. We will now determine  the weights  at the remaining fixed points (i.e. those that are in $\Xi$). These have to be obtained from the weights at the fixed points $A,B,C$ and $D$ by contracting the exceptional divisor.  By Lemma~\ref{lemma_blowup}, we just have to see which pairs of points in $\{A,B,C,D,q_2,q_3\}$ have two weights in common  and one weight that is symmetric. We can easily see that the only possibility is to identify  $A$ with $q_2$, $B$ with $D$ and $C$ with $q_3$ in the blow down. 

Again by Lemma~\ref{lemma_blowup} we see that the weights of the $T^2$-action at the resulting fixed points in $W$ are 
\begin{align*}
\{(-1,-2),(-2,-1),& (-1,1),(-2,2)\},  \quad \{(-3,0),(1,-1),(-1,1),(0,-3)\} \quad \text{and} \\ &  \{(-1,-2),(2,-2),(-2,-1),(1,-1)\}.
\end{align*}
We conclude that the weights at the fixed points for the $T^2$-action on $W$ are those in the statement of the theorem.

Finally, since $W$ is a Fano variety, we have by Lemma \ref{invariant kahler} that there exists a $T^2$-invariant K\"{a}hler form  $\omega$ for which the  $T^2$-action  is Hamiltonian and $c_{1} = [\omega]$. The corresponding moment map image is shown in Figure \ref{wfig}.
\end{proof}

\begin{lemma}\label{lemma_blowup} Let $M$ be an $8$-dimensional, symplectic manifold equipped with 
 a Hamiltonian $T^2$-action,  and let $\Sigma \subset M$ be a $T^2$-invariant $4$-dimensional submanifold of $M$. 
 Let $P\in \Sigma$ be a fixed point for the $T^2$-action on $M$ and let $w_1,w_2,w_3,w_4$ be the weights for this action at $P$, where $w_1,w_2$ are the weights  on $T_P\Sigma$ and $w_3,w_4$ are the weights on the normal bundle of $\Sigma$. 
 
If $Bl_\Sigma(M)$ is a $T^2$-equivariant blow up of $M$ along $\Sigma$ with blow up map $\pi:Bl_\Sigma(M) \to M$, then the weights of the induced $T^2$-action at the two fixed points in the fiber $\pi^{-1}(P)$ are 
 $$
 \{w_1,w_2,w_3,w_4-w_3\}\quad \text{and} \quad  \{w_1,w_2,w_4,w_3-w_4\}.
 $$
\end{lemma}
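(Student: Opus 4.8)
The plan is to reduce to the standard local model of an equivariant blow-up and simply read off the weights from the two affine charts of the exceptional $\mathbb{P}^1$. First I would invoke equivariant linearization at the fixed point $P$: since $T^2$ fixes $P$, a $T^2$-invariant neighborhood of $P$ is equivariantly diffeomorphic to a neighborhood of the origin in $T_PM\cong\mathbb{C}^4$, on which $T^2$ acts linearly with weights $w_1,w_2,w_3,w_4$. Because $\Sigma$ is $T^2$-invariant with $P\in\Sigma$, the tangent space $T_P\Sigma$ is a $T^2$-invariant subspace, and by hypothesis it is the weight-$(w_1,w_2)$ summand, whereas the normal representation $N_P\Sigma$ carries weights $w_3,w_4$. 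Thus I may choose complex coordinates $(z_1,z_2,z_3,z_4)$ in which $z_i$ has weight $w_i$ and $\Sigma=\{z_3=z_4=0\}$ locally.

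Next, the equivariant blow-up $Bl_\Sigma(M)$ is, near the fiber over $P$, modeled on the blow-up of $\mathbb{C}^4$ along the coordinate plane $\{z_3=z_4=0\}$, namely
\[
\widetilde{\mathbb{C}}^4=\{((z_1,z_2,z_3,z_4),[\ell_3:\ell_4])\mid z_3\ell_4=z_4\ell_3\}\subset \mathbb{C}^4\times\mathbb{P}^1,
\]
with $T^2$ acting diagonally. The action on the $\mathbb{P}^1$-factor is induced by the normal representation, so $[\ell_3:\ell_4]$ transforms with weights $(w_3,w_4)$. The exceptional fiber over $P$ is $\{0\}\times\mathbb{P}^1=\mathbb{P}(N_P\Sigma)$, and (assuming $w_3\neq w_4$, so that the fixed points are isolated) its $T^2$-fixed points are $[1:0]$ and $[0:1]$, corresponding to the two weight lines of $N_P\Sigma$.

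Finally I would compute the isotropy weights in each standard chart. On $\{\ell_3\neq 0\}$ set $u:=\ell_4/\ell_3$, giving coordinates $(z_1,z_2,z_3,u)$ with $z_4=z_3u$; here $u$ carries weight $w_4-w_3$, so the weights at $[1:0]$ are $\{w_1,w_2,w_3,w_4-w_3\}$. Symmetrically, on $\{\ell_4\neq 0\}$ set $v:=\ell_3/\ell_4$, giving $(z_1,z_2,v,z_4)$ with $v$ of weight $w_3-w_4$, so the weights at $[0:1]$ are $\{w_1,w_2,w_4,w_3-w_4\}$, exactly as claimed. The only genuinely delicate point is the equivariant linearization and the identification of $Bl_\Sigma(M)$ with the local model near the exceptional fiber; everything after that is a direct chart computation, so I expect no real obstacle beyond carefully matching the two normal weights to the two points of the exceptional $\mathbb{P}^1$.
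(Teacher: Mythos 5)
Your proposal is correct and follows essentially the same route as the paper's proof: identify the exceptional fiber $\pi^{-1}(P)$ with $\mathbb{P}(N_P\Sigma)$, note the induced action $[z_3:z_4]\mapsto[\lambda^{w_3}z_3:\lambda^{w_4}z_4]=[z_3:\lambda^{w_4-w_3}z_4]$, and read off the weights at the two fixed points. Your write-up simply makes explicit the equivariant linearization and the chart computation that the paper leaves implicit.
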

\begin{proof}
Since the exceptional divisor $\widetilde{\Sigma}$ is the projectivization of the normal bundle of $\Sigma$, the fiber $\pi^{-1}(P)$ is a $T^2$-invariant $\mathbb{CP}^1$ where $T^2$ acts by 
$$
\lambda \cdot [z_3:z_4] =  [\lambda^{w_3}z_3:\lambda^{w_4}z_4] =  [z_3: \lambda^{w_4-w_3 }z_4],\quad \lambda \in T^2.
$$
(Here, for $\lambda=(\lambda_1,\lambda_2)\in T^2$ and a weight $w=(w^1,w^2)$, we write $\lambda^{w} :=\lambda_1^{w^1}\lambda_2^{w^2}$). 
\end{proof}
\begin{figure}[h] \label{BU}
\centering
\includegraphics[width=0.5\textwidth]{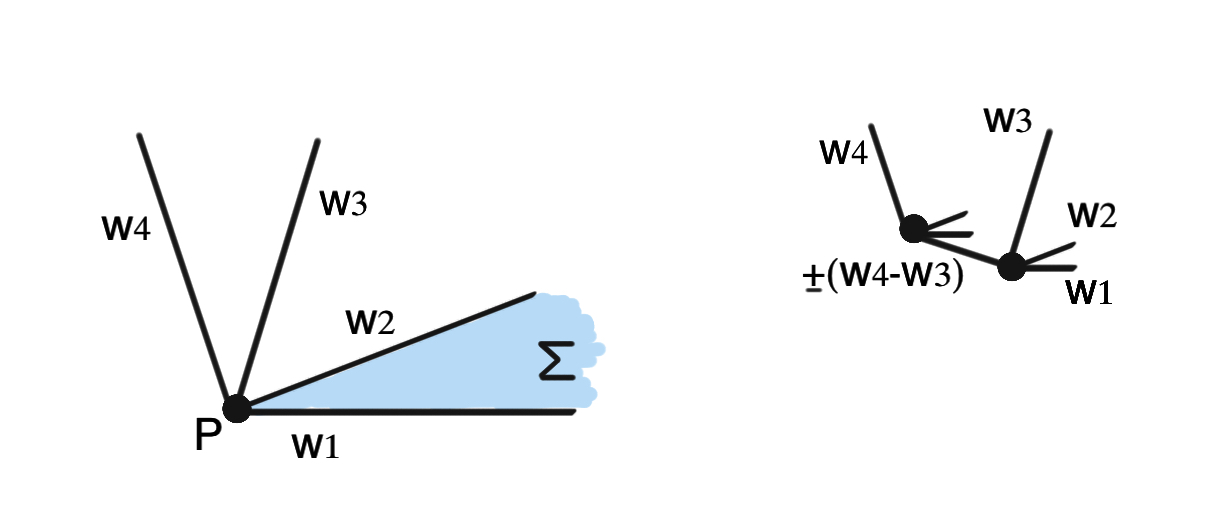}
\caption{Weights and Blow up along $\Sigma$}
\end{figure}
\color{black}

As it is easily seen, the action of the torus $T$ on $W$ is not effective. Indeed, the subgroup $\Z_3$ of the diagonal circle in $T$ acts trivially on $W$, as it acts trivially
on an open neighborhood of $p_0$. 
In the following we compute the weights of the effective action of $\mathbb{T}:=T/\Z_3$. \label{eff action}

Let $\xi_1:=(1,0)$ and $\xi_2:=(0,1)$ be a basis of the lattice $\ell$ of $T$ and let $x=(1,0)$, $y=(0,1)$ be the dual basis. In this notation, the $\Z_3$ that acts trivially is exactly $\exp\Z\Big\langle\displaystyle \frac{\xi_1+\xi_2}{3}\Big\rangle$.
The lattice of $\mathbb{T}$ is then given by $\widetilde{\ell}:=\Z\langle \xi_1, \frac{\xi_1+\xi_2}{3}\rangle$. A basis of the dual lattice $\widetilde{\ell}^*$ is given, for example, by 
$$\alpha:=-x+y\quad \text{and} \quad\beta:=3y\,.$$ Observe that, in this new notation, the weights of the $\mathbb{T}$-action at $p_0$ are given by 
$$
3x=-3\alpha+\beta, \;\;3y=\beta,\;\; 2x+y=-2\alpha+\beta, \;\; x+2y=-\alpha+\beta\,.
$$
These weights and those at the remaining fixed points are listed in Table~\ref{values c1T}.

\renewcommand{\arraystretch}{1.2}
\begin{table}[H]
\begin{center}
  \begin{tabular}{| c || c  c  c  c | c | }
    \hline
     &  &  & & & $c_1(P)=-\psi(P)$ \\ \hline \hline
    $p_0$ & $-3\alpha+\beta$, & $-2\alpha+\beta$, & $-\alpha+\beta$, & $\beta$ & $-6\alpha+4\beta$ \\ \hline
    $p_1$ & $3\alpha-\beta$, & $\alpha$, & $-\alpha+\beta$, & $\beta$ & $3\alpha+\beta$\\ \hline
    $p_2$ &  $-3\alpha+\beta$, &  $-\alpha$, & $-2\alpha+\beta$, & $-\beta$ & $-6\alpha+\beta$\\ \hline
   $p'_2$ & $\alpha-\beta$, & $2\alpha-\beta$, & $\alpha$, & $2\alpha$ & $6\alpha-2\beta$\\ \hline
  $p_3$ &$3\alpha-\beta$, & $-\alpha$, & $\alpha$, & $-\beta$ & $3\alpha-2\beta$\\ \hline
   $p_4$ & $\alpha-\beta$, & $-2\alpha$, & $2\alpha-\beta$, & $-\alpha$ & $-2\beta$ \\ 
    \hline
  \end{tabular}
 \end{center}
 \caption{Weights at the fixed points of the effective $\mathbb{T}$-action on $W$ and the corresponding value of $c_1^{\mathbb{T}}$.} \label{values c1T}
\end{table}

In what follows we prove the existence of canonical classes with respect to a certain choice of generic $\xi\in \mathfrak{t}$ (see Section \ref{section examples}). 
 
Choose a generic $\xi$ in the cone determined by the inequalities $\langle \beta, \xi \rangle>0$ and $\langle \alpha, \xi \rangle >0$, and so that the $\xi$-component of the moment
map $\varphi:=\psi^\xi$ satisfies
$p_2\prec p'_2$: It can be easily checked, that
this means choosing a $\xi$ in the above cone and on the right of the line through $p_0$ with direction $x+4y$.
These choices, as well as the corresponding orientation on the multigraph describing $W$, are  illustrated in Figure \ref{multigraph W oriented}.

\begin{figure}[H]
\centering
\begin{tikzpicture}[scale=1.4]

\draw [red, fill=blue!42,ultra thin] (-2,-2) -- (-2,2) -- (2,2) -- (-2,-2);

\draw[step=0.33333 cm,gray,very thin] (-3,-3) grid (3,3);

\draw [->, thick] (-2,-2) -- (-2,-0.5);

\draw [->, thick] (-2,-2) -- (-1,0);

\draw [->, thick] (-2,-2) -- (0,-1);

\draw [->, thick] (-2,-2) -- (-0.5,-2);

\draw [->, thick] (-2,-2) -- (-2,-0.5);

\draw [thick](-2,1) -- (1,1);

\draw [->, thick] (-2,1) -- (-1,1);

\draw [thick] (-2,-2) -- (-2,1);

\draw [->, very thick, blue] (-2,-2) -- (-2,-1);

\draw [thick] (-2,-2) -- (1,-2);

\draw [->, very thick, blue] (-2,-2) -- (-1,-2);

\draw [thick]  (1,-2) -- (1,1);

\draw [thick]  (1,-2) -- (1,1);

\draw [- >, thick]  (1,-2) -- (1,-0.7);

\draw [thick] (-2,-2) -- (0,2);

\draw [->,very thick, blue] (-2,-2) -- (-1.6666,-1.3333);

\draw  [thick] (-2,-2) -- (2,0);

\draw [->,very thick, blue] (-2,-2) -- (-1.3333,-1.6666);

\draw [thick]  (-2,1) -- (1,-2);

\draw [->, thick]  (1,-2) -- (-0.5,-0.5);

\draw [thick] (-2,1) -- (0,2);

\draw [->,thick] (-2,1) -- (-1,1.5);

\draw [thick] (1,-2) -- (2,0);

\draw [->,thick] (1,-2) -- (1.5,-1);

\draw [thick]  (2,0) -- (0,2);

\draw [->,thick] (2,0) -- (1.5,0.5);

\draw [->,thick] (1,1) -- (0.5,1.5);

\draw [->,very thick, red ] (-2,-2) -- (-1,-0.66);

\node [above right ] (p_1) at (-1,-0.66) {$\xi$};



\draw [->,very thick, blue] (-2,-2) -- (-2.3333, -1.6666);

\filldraw  (2,0) circle (1.5pt);

\filldraw  (0,0) circle (1.5pt);

\filldraw  (0,2) circle (1.5pt);

\filldraw  (1,-2) circle (1.5pt);

\filldraw  (-2,1) circle (1.5pt);

\filldraw  (1,1) circle (1.5pt);

\filldraw (-2,-2) circle (1.5pt);

\node [below right, scale = 0.8] (p_1) at (0,0) {$(0,0)$};

\draw [->] (0,0) -- (0.3333,0);

\draw [->] (0,0) -- (0,0.3333);

\node [left, scale = 0.6] (p_1) at (0,0.1666) {$y$};

\node [above, scale = 0.6] (p_1) at (0.17,0) {$x$};

\node [below left] (p_1) at (-2,-2) {$p_0$};

\node [below right] (p_2) at (1,-2) {$p_1$};

\node [above left] (p_3) at (-2,1) {$p_2$};

\node [above right] (p_4) at (2,0) {$p_2'$};

\node [above right] (p_5) at (1.1,1.1) {$p_3$};

\node [above right] (p_6) at (0,2) {$p_4$};

\node [below, scale = 0.8] (p_1) at (-1,-2) {$-3\alpha + \beta$};

\node [below, scale = 0.8] (p_1) at (-2.3333,-1.8) {$\alpha$};

\node [below right, scale = 0.8] (p_1) at (-1.3333,-1.6) {$-2 \alpha + \beta$};

\node [below right, scale = 0.8] (p_1) at (-2.1,-0.6) {$-\alpha + \beta$};

\node [left ,scale = 0.8] (p_1) at (-2,-1.5) {$\beta$};

\end{tikzpicture}

\caption{Oriented multigraph describing the action on $W$.}
\label{multigraph W oriented}

\end{figure}
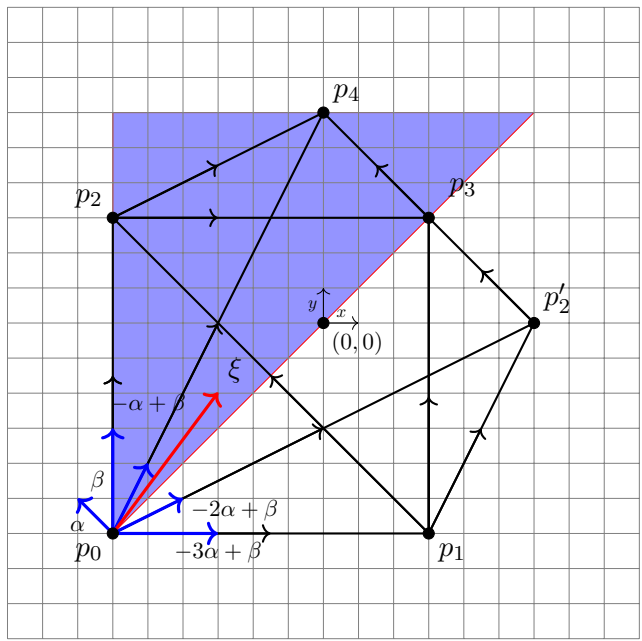
\begin{remark}
Note that, by Remark~\ref{rmk:indexmultigraph},  the index $k_0=3$ of $W$ can be retrieved from  Figure~\ref{multigraph W oriented}. Indeed, we can see in this figure that the edge connecting $p_0$ to $p_1$ has affine length equal to $3$. 
\end{remark}

\subsubsection{The canonical class $\tau_1\in H^2_{\mathbb{T}}(W;\Z)$:}
Using Lemma \ref{tau 1} and the values of $c_1^\mathbb{T}$ in Table \ref{values c1T}, we obtain that the canonical class $\tau_1$ of $W$ at $p_1$ (with respect to the chosen $\xi$) is given by
\begin{equation}\label{tau 1 W}
\tau_1=\frac{-6\alpha+4\beta - c_1^\mathbb{T}}{3}\,.
\end{equation}
Its values at all fixed points can therefore be computed from Table \ref{values c1T} and can be found in Table \ref{canonical classes W}.

\renewcommand{\arraystretch}{1.2}
\begin{table}[h]
\begin{center}
  \begin{tabular}{| c || c | c | c | c |}
    \hline
               & $\tau_1$ & $\tau_2$ & $\tau'_2$ & $\tau_3$  \\ \hline \hline
    $p_0$            &  0                             & 0 &  0 &0  \\ \hline
    $p_1$            &    $-3\alpha+\beta$   &  0&  0 & 0  \\ \hline
    $p_2$            &      $\beta$               &  $\alpha \beta$ & 0 & 0  \\ \hline
    $p'_2$           &  $-4\alpha + 2 \beta$ &  $0$ & $(-2\alpha+\beta)(-\alpha+\beta)$ &0   \\ \hline
    $p_3$            &  $-3\alpha+2\beta$ &  $\alpha \beta$ & $(-3\alpha+\beta)\beta$  & $\alpha\beta(-3\alpha+\beta)$  \\ \hline
    $p_4$            &  $-2\alpha+2\beta$ & $2\alpha(-\alpha+\beta)$ & $(-2\alpha+\beta)(-\alpha+\beta)$ &$2\alpha(-2\alpha+\beta)(-\alpha+\beta)$   \\ \hline

      \end{tabular}
 \end{center}

  \caption{
Restrictions to the fixed points of the ``non-trivial'' canonical classes on $W$.}
 \label{canonical classes W}
\end{table}

\subsubsection{The canonical classes $\tau_2$ and $\tau'_2$ in $H^4_{\mathbb{T}}(W;\Z)$:}
Consider Kirwan classes $\widetilde{\tau}_{2}$ at $p_2$ and $\widetilde{\tau}'_2$ at $p'_2$, which are elements of $H_\mathbb{T}^4(W;\Z)$. 
Since $\xi$ is chosen so that $p_2 \prec p'_2$, by the properties of the Kirwan classes we have:
 
 \begin{equation}\label{tau2W}
\begin{cases}
 \widetilde{\tau}_{2}(p_2)=\displaystyle\Lambda_{p_2}^-\\
  \widetilde{\tau}_{2}(p'_2) \text{ is not necessarily zero}
 \end{cases}
 \quad 
  \begin{cases}
 \widetilde{\tau}'_{2}(p'_2)=\displaystyle\Lambda_{p'_2}^-\\
  \widetilde{\tau}'_{2}(p_2) =0\,.
 \end{cases}
 \end{equation}
 Since the only points which, in the ordering induced by $\varphi$, lie above $p'_2$ are $p_3$ and $p_4$, we conclude that $\widetilde{\tau}'_{2}=:\tau'_2$ is the canonical class at $p'_2$. 
 What we want to prove is that, given a Kirwan class $\widetilde{\tau}_{2}$, we can modify it so that it is zero at $p'_2$, obtaining the canonical class $\tau_2$. 
 
 As $\widetilde{\tau}_{2}$ is a Kirwan class, $p_0\prec p_2$ and $p_1\prec p_2$, we have that $\widetilde{\tau}_{2}(p_0)=\widetilde{\tau}_{2}(p_1)=0$. At $p'_2$, there is only one weight of the $\mathbb{T}$-action multiple of $2\alpha-\beta$ (namely $2\alpha-\beta$), and only one weight multiple of $\alpha-\beta$ (namely $\alpha-\beta$, see Figure \ref{multigraph W oriented}).
 This implies that the connected components of the isotropy submanifolds $M^{2\alpha-\beta}$ and $M^{\alpha-\beta}$ containing $p'_2$ are spheres, which we call respectively $S_0^2$ and $S_1^2$.
The induced action of $\mathbb{T}$
 on $S^2_0$ is Hamiltonian and has exactly two fixed points, $p'_2$ and $q$. Since the image of the moment map for the $\mathbb{T}$-action on $S^2_0$ must be on the line
 $p'_2+\R\langle 2\alpha - \beta \rangle$, it is easy to see from Figure \ref{multigraph W oriented} that $q=p_0$.
 For degree reasons, the integral $\int_{S^2_0}\widetilde{\tau}_2$ must be a polynomial of degree one in $\Z[\alpha, \beta]$. 
 Therefore the ABBV formula applied to the submanifold $S^2_0$, together with the fact that $ \widetilde{\tau}_2(p_0)=0$, gives 
\begin{equation}\label{eq:*}
 \widetilde{\tau}_2(p'_2)=a (-2\alpha+\beta)\,\quad\text{for some  }a\in \Z[\alpha,\beta]\,.
\end{equation}
 By repeating the same argument for the invariant sphere $S^2_1$, we see that the $\mathbb{T}$-fixed points on $S^2_1$ are $p'_2$ and $p_1$, and that there exists an element $b$ of degree one in $\Z[\alpha, \beta ]$ such that 
\begin{equation}\label{eq:**}
 \widetilde{\tau}_2(p'_2)=b (-\alpha+\beta)\,.
\end{equation}
 Since $-2\alpha+\beta$ and $-\alpha+\beta$ are coprime, if regarded as polynomials in $\Z[\alpha,\beta]$,  we deduce, from  \eqref{eq:*} and \eqref{eq:**}, that there exists $k\in \Z$ such that 
 $$
 \widetilde{\tau}_2(p'_2)=k(-2\alpha+\beta)(-\alpha+\beta)=k \Lambda_{p'_2}^-\,.
 $$
 It is then easy to see that 
 $$
 \tau_2:=\widetilde{\tau}_2-k \widetilde{\tau}'_2
 $$
 is the canonical class at $p_2$. Let us now compute the restriction of $\tau_2$ at all the fixed points of $W$. Since $\tau_2$ is a canonical class, the values that we already know are:
 \begin{equation}\label{first values tau2}
 \tau_2(p_0)=\tau_2(p_1)=\tau_2(p'_2)=0\quad \text{and}\quad \tau_2(p_2)=\Lambda_{p_2}^-=\alpha \beta\,.
 \end{equation}
 What is left to compute is $\tau_2(p_3)$ and $\tau_2(p_4)$\,.
 Consider the connected component of $M^\beta$ containing $p_3$. This is an invariant sphere in $M$ and, by the same argument as above, since $\tau_2(p_1)=0$,
  we must have that $\tau_2(p_3)$ is a multiple of $\beta$. 
Considering the connected component of the isotropy submanifold $M^\alpha$ containing 
 $p'_2$, one can easily see that the only $\mathbb{T}$-fixed points on this component are $p'_2, p_3$ and $p_4$. Since the $\mathbb{T}$-action on $M^\alpha$ has three fixed points, the submanifold $M^\alpha$ must be equivariantly symplectomorphic to $\C P^2$ with a suitably rescaled Fubini-Study form and standard $S^1$-action (see \cite{Ka}).
 (Here the equivariance is with respect to the residual $\mathbb{T}/H_\alpha$-action). 
 In this $\C P^2$, consider the two invariant spheres containing $p'_2$ and $p_3$, and $p'_2$ and $p_4$. Applying arguments similar to those above, as well as the fact that $\tau_2(p'_2)=0$,
 we see that 
 $\tau_2(p_3)$ and $\tau_2(p_4)$ must be multiples of $\alpha$. Since $\alpha$ and $\beta$ are coprime
 when regarded as polynomials in $\Z[\alpha,\beta]$, we obtain that there exists $m_1\in \Z$ such that 
 $$
 \tau_2(p_3)=m_1\,\alpha\,\beta\,.
 $$
 
 Arguing as above for the connected component of the isotropy submanifold $M^{-\alpha+\beta}$ containing $p_0$ and $p_4$, we conclude that there exists $m_2\in \Z$ such that
 $$
 \tau_2(p_4)=m_2\, \alpha \,(-\alpha+\beta)\,.
 $$
 What is left to do is to determine the integers $m_1$ and $m_2$. In order to do so, we use two conditions. Consider the 4-dimensional isotropy submanifold $M^\alpha$ containing $p'_2,p_3$ and $p_4$, and the whole manifold $W$. Then, by degree reasons, we must have 
 \begin{equation}\label{first eq for tau2}
 \int_{M^\alpha}\tau_2 \in \Z\,\quad\text{and}\quad \int_W \tau_2=0\,.
 \end{equation}
 For both equations we use the ABBV formula, which for the first integral gives
 $$
 \int_{M^\alpha}\tau_2=\frac{m_1\alpha\beta}{\alpha(-\alpha)} + \frac{m_2\alpha(-\alpha+\beta)}{2\alpha^2}\in \Z\,.
$$
This is satisfied if and only if $m_2=2m_1$\,.
The second condition in \eqref{first eq for tau2}, together with the ABBV formula, gives that
$$
\int_W \tau_2=\frac{1}{(-2\alpha+\beta)(-3\alpha+\beta)}+\frac{m_1\alpha\beta}{\alpha(-\alpha)(-3\alpha+\beta)\beta} + \frac{2m_1\alpha(-\alpha+\beta)}{2\alpha^2(-2\alpha+\beta)(-\alpha+\beta)}=0
$$
which is satisfied if and only if $m_1=1$. This completes the computation of the values of $\tau_2$ at all the fixed points, which are summarized in Table \ref{canonical classes W}.

What is left to compute are the values of the canonical class $\tau'_2$ at the fixed points. We notice that the $\mathbb{T}$-action on the connected component $N$ of $M^\alpha$ containing $p'_2$ has fixed points $p'_2,p_3$ and $p_4$, which are exactly the fixed points at which the canonical class $\tau'_2$ can be nonzero. Therefore the equivariant Poincar\'e dual to $N$ in $W$ is the canonical class at $p'_2$, and its value at a fixed point $q\in N$ is exactly the product of the weights in the normal bundle of $N$ at $q$ in $W$, giving exactly the values described in Table  \ref{canonical classes W}. 

\subsubsection{The canonical class $\tau_3\in H^6_{\mathbb{T}}(W;\Z)$:} The only fixed points at which $\tau_3$ is not zero are $p_3$ and $p_4$. By definition of canonical class, we have
$\tau_3(p_3)=\alpha\,\beta(-3\alpha+\beta)$. The value of $\tau_3$ at $p_4$ can be found using
the ABBV formula, noting that  $\int_W \tau_3=0$ and  that the only points at which $\tau_3$ is not necessarily $0$ are $p_3$ and $p_4$.

We end this section with the following result.
\begin{theorem}\label{equiv cohomology and chern classes W}
Let $W$ be the quintic del Pezzo fourfold with the Hamiltonian $\mathbb{T}$-action described in Theorem \ref{Wweight} and page \pageref{eff action}.
Let $\alpha, \beta$ be a $\Z$-basis of the dual lattice $\widetilde{\ell}^*$.
Then there exists a basis of the equivariant cohomology ring $H_{\mathbb{T}}^*(W;\Z)$
given by canonical classes $\{\tau_i\}_{i=0}^4\cup \{\tau'_2\}$, with  $\tau_i\in H_{\mathbb{T}}^{2i}(W;\Z)$, $i=0,\ldots,4$, and  $\tau'_2\in H_{\mathbb{T}}^4(W;\Z)$, satisfying the following properties and relations:
\begin{align}
 \begin{aligned}\label{product W}
& \tau_1^2=(-3\alpha+\beta)\tau_1+3\tau_2+2\tau'_2, & \tau_1\tau_2 & =\beta \tau_2 + \tau_3,\\   
& \tau_1\tau'_2=(-4\alpha+2\beta)\tau'_2+\tau_3, & \tau_1\tau_3 & =(-3\alpha+2\beta)\tau_3+\tau_4,\\
& \tau_2^2=\alpha\beta \tau_2 + \tau_4, & \tau_2\tau'_2 & =\beta \tau_3 - \tau_4,\\\
& (\tau'_2)^2= (-2\alpha+\beta)(-\alpha+\beta)\tau'_2-2\alpha\,\tau_3+2\tau_4\,.\\
  \end{aligned}
 \end{align}
Moreover, the equivariant Chern classes $c_j^{\mathbb{T}}\in H_{\mathbb{T}}^{2j}(W;\Z)$ of the tangent bundle satisfy
{\small \begin{align}
 \begin{aligned}\label{equiv chern classes V}
 & c_1^{\mathbb{T}}= - 3 \tau_1  + (-6\alpha + 4 \beta) .\\
 & c_2^{\mathbb{T}}=  13 \tau_2 + 9\tau_2' +  (4\alpha-7\beta)  \tau_1 + (11\alpha^2 - 18 \alpha \beta + 6 \beta^2) .\\
 & c_3^{\mathbb{T}}= - 16 \tau_3  + 9 (2\beta - 3\alpha) \tau_2  +(7\alpha+6\beta) \tau_{2}'    - (\alpha-5\beta)(\alpha-\beta)\tau_1  -6\alpha^3 + 22 \alpha^2 \beta  - 18 \alpha \beta^2 + 4 \beta^3.\\
 & c_4^{\mathbb{T}}= 6 \tau_4  -4\beta\, \tau_3 + (12\alpha^2 -15\alpha \beta + 5 \beta^2 )\tau_2   + (2 \alpha^2 +  \beta \alpha + \beta^2) \tau_2'   +\beta(\beta-\alpha)(\alpha-\beta)\tau_1   \\
& \hspace{.8cm } +  (-3\alpha+\beta) (-2\alpha+\beta)(-\alpha+\beta) \beta .
 \end{aligned}
 \end{align}}
\end{theorem}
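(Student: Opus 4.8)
The plan is to reduce the entire statement to the restriction data already assembled in Table \ref{canonical classes W}, together with the recursive procedure of Lemma \ref{from restrictions to constants}. Since a canonical class has been shown to exist at every fixed point in the preceding subsections, Proposition \ref{kirwan classes} and \cite[Lemma 2.8]{GT} guarantee that $\{\tau_i\}_{i=0}^4\cup\{\tau'_2\}$ is a basis of $H^*_{\mathbb{T}}(W;\Z)$ as a module over $H^*(B\mathbb{T};\Z)=\Z[\alpha,\beta]$. Throughout I fix the ordering $p_0\prec p_1\prec p_2\prec p'_2\prec p_3\prec p_4$ induced by the generic component $\varphi$ chosen in Figure \ref{multigraph W oriented}.

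For the ring structure I would use that the restriction $\iota_p^*$ to a fixed point is a ring homomorphism, so that for any two basis classes one has $(\tau_p\tau_q)(p')=\tau_p(p')\,\tau_q(p')$ for every $p'\in W^{\mathbb{T}}$; these products are read off directly from Table \ref{canonical classes W}. Feeding the resulting restriction vector into Lemma \ref{from restrictions to constants} and processing the fixed points in increasing $\varphi$-order, the coefficient of each canonical class is recovered by subtracting the contributions already determined and dividing by $\Lambda^-_i$. Carrying this out for each of the seven products displayed in \eqref{product W} yields the stated structure constants. Products involving $\tau_0=1$ or falling in degrees exceeding $8$ are trivial or vanish, so only these seven need genuine computation.

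For the equivariant Chern classes, Lemma \ref{equiv chern classes} gives $c^{\mathbb{T}}_j(p)=\sigma_j(w^p_1,\ldots,w^p_4)$, the $j$-th elementary symmetric polynomial in the isotropy weights listed in Table \ref{values c1T}. With these restrictions in hand, the same recursion of Lemma \ref{from restrictions to constants} expresses each $c^{\mathbb{T}}_j$ in the canonical basis, producing the displayed formulas; the expression for $c_1^{\mathbb{T}}$ is immediate from the definition of $\tau_1$ in \eqref{tau 1 W}. The identity $c_1^{\mathbb{T}}=-3\tau_1+(-6\alpha+4\beta)$, together with $k_0=3=c_1^{\mathbb{T}}[e]$ from Lemma \ref{tau 1}, also serves as a useful consistency check on the weight data.

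The computation is entirely mechanical once the two tables are available, so the only real obstacle is bookkeeping. At each step of the recursion one divides a polynomial difference by the product of positive weights $\Lambda^-_i$, and must confirm that the quotient lies in $\Z[\alpha,\beta]$ rather than merely in its field of fractions; this integrality is guaranteed \emph{a priori} by the basis property, and in practice functions as an arithmetic check that the entries of Tables \ref{canonical classes W} and \ref{values c1T} are mutually consistent. The part demanding most care is managing the six-dimensional system of such divisions correctly, and in particular separating the coefficients of the two degree-four classes $\tau_2$ and $\tau'_2$, which is precisely where the strict inequality $p_2\prec p'_2$ in the chosen ordering is used.
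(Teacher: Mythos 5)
Your proposal is correct and takes essentially the same route as the paper, whose proof of this theorem simply invokes, \emph{mutatis mutandis}, the proof given for $V$: the restrictions already assembled in Table \ref{canonical classes W}, pointwise products of these restrictions combined with the recursion of Lemma \ref{from restrictions to constants} to extract the structure constants, and Lemma \ref{equiv chern classes} together with the same recursion for the equivariant Chern classes. One inessential slip: products of basis classes of total degree exceeding $8$ do \emph{not} vanish in $H^*_{\mathbb{T}}(W;\Z)$ (for instance $\tau_2\tau_3$ restricts nontrivially to $p_4$), but this does not affect your argument, since the theorem asserts only the seven displayed relations.
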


\begin{proof}
The proof of this theorem follows, \textit{mutatis mutandis}, that of Theorem \ref{equiv cohomology and chern classes V}.
\end{proof}

Corollary \ref{cohomology ring W} is a consequence of Theorem \ref{equiv cohomology and chern classes W} and the procedure explained in Section \ref{restrictions and structure constants}.

 \subsection{Index 4: The Grassmannian $Q=\text{Gr}(2,4)$ and its equivariant cohomology ring}\label{subsec Q}

Let $Q$ be the Grassmannian of complex 2-planes in $\C^4$. It is well-known that $Q$ can be realized as a degenerate coadjoint orbit of $SU(4)$, endowed with the action
of a $3$-dimensional torus $T$, with 6 fixed points. For details on how this construction works, as well as for the computation of the weights at the fixed points, we refer the reader to
\cite[Section 5.3.1, Example 5.25 (2)]{GHS}, and the references therein. 
The manifold $Q$ inherits a symplectic form preserved by the torus action with moment map $\psi\colon Q \to \mathfrak{t}^*$, and
the symplectic form can be chosen to be monotone and to satisfy $c_1=[\omega]$ (see Lemma \ref{invariant kahler} as well as \cite[Proposition 5.24]{GHS} for a combinatorial proof). 
Using the notation in \cite{GHS},
let 
$$\alpha=x_1-x_2, \quad \beta=x_2-x_3\quad \text{and} \quad \gamma=x_3-x_4$$ 
be a choice of simple roots, which are a basis of $\mathfrak{t}^*$, and choose 
$\xi\in \mathfrak{t}$ so that $(x_i-x_j)(\xi)>0$ for all $1\leq i < j \leq 4$ (for instance $\xi=\alpha^*+\beta^*+\gamma^*$, where $\alpha^*,\beta^*,\gamma^*$ is a basis of $\mathfrak{t}$
dual to $\alpha,\beta,\gamma$). 
The $T$-action on $Q$ is a GKM action with GKM graph given in Figure \ref{gkm Q}.
Moreover, the weights at each fixed point and the values of the first Chern class are listed in Table \ref{values c1T Q}.

\renewcommand{\arraystretch}{1.2}
\begin{table}[h]
\begin{center}
  \begin{tabular}{| c || c  c  c  c | c | }
    \hline
     &  &  & & & $c_1^T(q)=-\psi(q)$ \\ \hline \hline
    $q_0$ & $\beta$, & $\beta+\gamma$, & $\alpha+\beta+\gamma$, & $\alpha+\beta$ & $2(\alpha+2\beta+\gamma)$ \\ \hline
    $q_1$ & $-\beta$, & $\alpha$, & $\gamma$, & $\alpha+\beta+\gamma$ & $2(\alpha+\gamma)$\\ \hline
    $q_2$ &  $-\gamma$, &  $-\beta-\gamma$, & $\alpha$, & $\alpha+\beta$ & $2(\alpha-\gamma)$\\ \hline
   $q'_2$ & $-\alpha$, & $-\alpha-\beta$, & $\beta+\gamma$, & $\gamma$ & $-2(\alpha-\gamma)$\\ \hline
  $q_3$ &$-\alpha-\beta-\gamma$, & $-\alpha$, & $-\gamma$, & $\beta$ & $-2(\alpha+\gamma)$\\ \hline
   $q_4$ & $-\alpha-\beta-\gamma$, & $-\beta-\gamma$, & $-\alpha-\beta$, & $-\beta$ & $-2(\alpha+2\beta+\gamma)$ \\ 
    \hline
  \end{tabular}
 \end{center}
  \caption{
Weights at the fixed points of $T$-action on $Q$ and the corresponding value of $c_1^T$.}
 \label{values c1T Q}
\end{table}

\begin{remark}
Note that, by Remark~\ref{rmk:indexmultigraph},  the index $k_0=4$ of $Q$ can be retrieved from the    affine length of the edge connecting $q_0$ to $q_1$.
\end{remark}
  
Even if the equivariant cohomology ring of $Q$ is well-known, for completeness we carry on computations similar to those in the previous sections.
The first observation is that canonical classes exist, as the GKM graph of $Q$ is index-increasing (see \cite[Theorem 1.6]{GT}). Therefore, we compute their restrictions to the fixed point 
set which, in turn, determine the (equivariant) structure constants.

Let $\tau_i\in H^{2i}_T(Q;\Z)$ be the canonical class at $q_i$, for all $i=0,\ldots,4$ and $\tau'_2\in H^4_T(Q;\Z)$ the canonical class at $q'_2$. 
By Lemma \ref{tau 1}, $\tau_1$ is given by 
$$
\tau_1=\frac{2\alpha+4\beta+2\gamma-c_1^T}{4}\,,
$$ 
and its values are as in Table \ref{canonical classes Q}
\renewcommand{\arraystretch}{1.2}
\begin{table}[h]
\begin{center}
  \begin{tabular}{| c || c | c | c | c |}
    \hline
                          &  $\tau_1$                         & $\tau_2$                                                     & $\tau'_2$                                   & $\tau_3$  \\ \hline \hline
    $q_0$            &  0                                     & 0                                                                 &  0                                               & 0  \\ \hline
    $q_1$            &  $\beta$                           &  0                                                                &  0                                               & 0  \\ \hline
    $q_2$            &  $\beta+\gamma$            &  $\gamma(\beta+\gamma)$                       & 0                                                 & 0  \\ \hline
    $q'_2$           &  $\alpha+\beta$                &  $0$                                                           & $\alpha(\alpha+\beta)$               & 0   \\ \hline
    $q_3$            &  $\alpha+\beta+\gamma$ &  $\gamma(\alpha+\beta+\gamma)$           & $\alpha(\alpha+\beta+\gamma)$ & $\alpha\gamma(\alpha+\beta+\gamma)$  \\ \hline
    $q_4$            &  $\alpha+2\beta+\gamma$ & $(\alpha+\beta+\gamma)(\beta+\gamma)$ & $(\alpha+\beta)(\alpha+\beta+\gamma)$ &$(\alpha+\beta)(\beta+\gamma)(\alpha+\beta+\gamma)$   \\ \hline
\end{tabular}
 \end{center}
  \caption{
Restrictions to the fixed points of the ``non-trivial'' canonical classes on $Q$.}
 \label{canonical classes Q}
\end{table}

We now  compute the restrictions to the fixed points of the canonical classes $\tau_2$ and $\tau'_2$. In order to do this, we follow the same procedure as in Section \ref{subsec V}. For $\tau_2$, the value $\tau_2(q_2)$ at $q_2$ is given (by definition) by $\gamma(\beta+\gamma)$. In order to compute $\tau_2(q_3)$
it is sufficient to compute $\Theta(q_2,q_3)$ which, with the choice of $\xi$ given at the beginning of this section, is easily seen to be 1. Therefore \cite[(4.1)]{GT} gives
$\tau_2(q_3)=\gamma(\alpha+\beta+\gamma)$. In order to compute $\tau_2(q_4)$ we see that the only path in the path-formula of \cite[Theorem 1.6]{GT} from $q_2$ to $q_4$ is the one containing the edges $(q_2,q_3)$
and $(q_3,q_4)$.  Since $\Theta(q_3,q_4)=1$, the formula \cite[(1.3)]{GT} gives that $\tau_2(q_4)=(\alpha+\beta+\gamma)(\beta+\gamma)$.
Note that there is no path in the path formula from $q_2$ to $q'_2$, so by \cite[Theorem 1.6]{GT} it follows that $\tau_2(q'_2)=0$. 

Similar computations can be carried out for $\tau'_2$. Its values at the fixed points are listed in Table \ref{canonical classes Q}. 

As for $\tau_3$, it is sufficient to observe that the equivariant Poincar\'e dual to the $T$-invariant sphere connecting $q_3$ to $q_4$ satisfies the defining properties of the canonical class
at $q_3$. Therefore its values can immediately be computed, and are written in Table \ref{canonical classes Q}.  

Arguing as in the previous examples, we use Table \ref{canonical classes Q} and the procedure described in Section \ref{restrictions and structure constants} to compute the equivariant structure constants, obtaining the
following result.
\begin{theorem}\label{equiv cohomology and chern classes Q}
Let $Q$ be the Grassmannian $Gr(2,4)$ with the Hamiltonian $T$-action described in Section \ref{subsec Q}.
Let $\alpha, \beta, \gamma$ be a $\Z$-basis of the dual lattice $\widetilde{\ell}^*$.
Then, there exists a basis of the equivariant cohomology ring $H_T^*(Q;\Z)$
given by canonical classes $\{\tau_i\}_{i=0}^4\cup \{\tau'_2\}$, with $\tau_i\in H_{T}^{2i}(Q;\Z)$, for  $i=0,\ldots,4$, and $\tau'_2\in H_{T}^4(Q;\Z)$, satisfying the following properties and relations:
\begin{align}
 \begin{aligned}\label{product Q}
 & \tau_1^2= \beta\tau_1+\tau_2+\tau'_2, & \quad  \tau_1\tau_2 & = (\beta+\gamma)\tau_2+\tau_3,\\
 & \tau_1\tau'_2= (\alpha+\beta)\tau'_2+\tau_3, & \quad   \tau_1\tau_3 & = (\alpha+\beta+\gamma)\tau_3+\tau_4,\\
 & \tau_2^2=\gamma(\beta+\gamma)\tau_2+\gamma\,\tau_3+\tau_4, & \quad (\tau'_2)^2 & = \alpha(\alpha+\beta)\tau'_2+\alpha\tau_3 + \tau_4, \\
& \tau_2\tau'_2=(\alpha+\beta+\gamma)\tau_3\,.\\
 \end{aligned}
 \end{align}

Moreover, the equivariant Chern classes $c_j^T\in H_{T}^{2j}(Q;\Z)$ of the tangent bundle satisfy
{\small \begin{align}
 \begin{aligned}\label{equiv chern classes V}
 & c_1^T=   - 4\tau_1 + 2(\alpha+2\beta+\gamma). \\
 & c_2^T=   7\tau_2'  + 7\tau_2 - 7(\alpha +  \beta +  \gamma) \tau_1 +  (\alpha^2+6\alpha \beta + 3 \alpha \gamma + 6 \beta^2 + 6 \beta \gamma +\gamma^2). \\
 & c_3^T=   - 12 \tau_{3} +(9\alpha+6\beta+3\gamma)\tau_2 + (3\alpha +6\beta+ 9\gamma) \tau_{2}'  + (-3 \alpha^2- 7 \alpha \beta - 8 \alpha \gamma - 4 \beta^2 - 7 \beta \gamma -3 \gamma^2) \tau_1 \\
&  \hspace{.8cm} +  (\alpha + 2\beta + \gamma)(2\alpha \beta + \alpha \gamma +2\beta^2 + 2\beta \gamma)  .  \\
& c_4^T=    6 \tau_4  -3(\alpha+2\beta+\gamma)\tau_3 + (3\alpha^2+4\alpha \beta +2\alpha \gamma + \beta^2+\beta \gamma)\tau_2+(\alpha \beta+2\alpha \gamma+\beta^2+4\beta \gamma+3\gamma^2)\tau_2'\\
& \hspace{.8cm} -(\alpha+\beta+\gamma)(\alpha \beta+2 \alpha \gamma+\beta^2+\beta \gamma) \tau_1 + \beta (\beta+\gamma) (\alpha+\beta+\gamma)(\alpha+\beta) .
 \end{aligned}
 \end{align}}
\end{theorem}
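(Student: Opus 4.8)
The plan is to follow verbatim the strategy already used for $V$ in Theorem~\ref{equiv cohomology and chern classes V}, since every conceptual ingredient is by now in place and only the bookkeeping remains. The existence of the canonical classes $\{\tau_i\}_{i=0}^4\cup\{\tau'_2\}$ (with the sign convention of Remark~\ref{our conventions cc}) follows from the fact that the GKM graph of $Q$ in Figure~\ref{gkm Q} is index-increasing with respect to the generic $\xi$ chosen at the start of Section~\ref{subsec Q}, so that \cite[Theorem 1.6]{GT} applies; their restrictions to $Q^T$ have already been assembled in Table~\ref{canonical classes Q}. Invoking Proposition~\ref{kirwan classes} together with \cite[Lemma 2.8]{GT} (which says a canonical class is a Kirwan class), these classes form a basis of $H_T^*(Q;\Z)$ as a module over $H^*(BT;\Z)$, which settles the first assertion of the theorem.

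For the product relations \eqref{product Q} I would argue pointwise. Fixing two basis classes $\tau_p,\tau_q$, I compute the restriction of their product at each fixed point $r\in Q^T$ as $(\tau_p\tau_q)(r)=\tau_p(r)\,\tau_q(r)$, reading the two factors off Table~\ref{canonical classes Q}. The expansion $\tau_p\tau_q=\sum_s a^s_{p,q}\tau_s$ is then recovered by the recursion of Lemma~\ref{from restrictions to constants}, using the ordering on $Q^T$ induced by $\varphi=\psi^\xi$. The two index-two vertices $q_2,q'_2$ cause no trouble: since canonical classes vanish at every other fixed point of equal or lower index (Definition~\ref{canonical class def}), one has $\tau_2(q'_2)=\tau'_2(q_2)=0$, so the recursion closes cleanly even across that tie. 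Each coefficient $a^s_{p,q}\in H^*(BT;\Z)=\Z[\alpha,\beta,\gamma]$ is forced to be homogeneous of degree $\lambda_p+\lambda_q-\lambda_s$, and carrying out the divisions yields exactly the constants displayed in \eqref{product Q}.

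For the equivariant Chern classes I would use Lemma~\ref{equiv chern classes}: at each fixed point $r$ the restriction $c_j^T(r)$ equals the $j$-th elementary symmetric polynomial in the four isotropy weights listed in Table~\ref{values c1T Q}. Feeding these restrictions into the same recursion of Lemma~\ref{from restrictions to constants} expresses each $c_j^T$ in the canonical basis, as observed in Remark~\ref{remark chern classes}, producing the stated formulas for $c_1^T,\dots,c_4^T$. The corresponding ordinary relations and ordinary Chern classes follow at once by applying the restriction map $r$ and taking constant terms of the equivariant structure constants, exactly as in \eqref{structure constants}.

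The only genuine effort here is arithmetic: every step is a deterministic polynomial computation over $\Z[\alpha,\beta,\gamma]$, so there is no conceptual obstacle. The main thing to monitor is internal consistency — that each application of the recursion indeed returns an element of the integral span of the canonical classes of the predicted degree — and this is guaranteed \emph{a priori}, because the $\tau_s$ form a basis and because all products $\tau_p\tau_q$ and all $c_j^T$ lie in $H_T^*(Q;\Z)$. As in the $V$ and $W$ cases, the remaining routine verifications may be left to the reader.
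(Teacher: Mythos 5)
Your proposal is correct and takes essentially the same approach as the paper: the paper's proof of this theorem is declared to follow, \emph{mutatis mutandis}, that of Theorem \ref{equiv cohomology and chern classes V}, which is exactly your argument --- existence of canonical classes from the index-increasing GKM graph via \cite[Theorem 1.6]{GT}, the restrictions taken from Table \ref{canonical classes Q}, and both the structure constants \eqref{product Q} and the equivariant Chern classes obtained from fixed-point restrictions (elementary symmetric polynomials of the weights, Lemma \ref{equiv chern classes}) through the recursion of Lemma \ref{from restrictions to constants}, with the arithmetic left to the reader. Your additional observation that the tie $\varphi(q_2)=\varphi(q'_2)$ is harmless because $\tau_2(q'_2)=\tau'_2(q_2)=0$ is a correct refinement consistent with the paper's setup, not a different method.
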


\begin{proof}
The proof of this theorem follows, \textit{mutatis mutandis}, that of Theorem \ref{equiv cohomology and chern classes V}.
\end{proof}


We conclude this section with the following corollary of Theorem \ref{equiv cohomology and chern classes Q}, which gives the well-known structure constants of the cohomology ring
of $Q$,   obtained by using \eqref{structure constants}. Note that these correspond   (modulo sign)  to the
ordinary Schubert classes (see \cite[Proposition 6.5]{ST}).


\begin{cor}\label{cohomology ring Q}
Let $Q$ be the Grassmannian of complex planes in $\C^4$. Then there exists a basis $\{\zeta_i\}_{i=0,\ldots,4}\cup\{\zeta'_2\}$ of the cohomology ring $H^*(Q;\Z)$, with  $\zeta_i\in H^{2i}(Q;\Z)$,  $i=0,\ldots,4$, and $\zeta'_2\in H^4(Q;\Z)$, satisfying the
following properties and relations: 
\begin{equation}\label{product Q o}
  \zeta_1^2 = \zeta_2+\zeta'_2, \quad \zeta_1\zeta_3= \zeta_2^2=(\zeta'_2)^2 =\zeta_4, \quad 
  \zeta_1\zeta_2 =  \zeta_1\zeta'_2= \zeta_3, \quad   \zeta_2\zeta'_2=0 
 \end{equation}
 Moreover, the Chern classes are given by
 $$
 c_1=-4 \, \zeta_1,\quad c_2= 7\,\zeta_2 + 7\,\zeta_2^\prime, \quad c_3=-12\, \zeta_3 \quad \text{and} \quad c_4=6 \,\zeta_4.
 $$
\end{cor}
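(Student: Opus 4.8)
The plan is to deduce the ordinary cohomology ring and Chern classes of $Q$ directly from their equivariant counterparts in Theorem \ref{equiv cohomology and chern classes Q}, by applying the restriction map $r\colon H_T^*(Q;\Z)\to H^*(Q;\Z)$. First I would set $\zeta_i:=r(\tau_i)$ for $i=0,\ldots,4$ and $\zeta'_2:=r(\tau'_2)$. Since the GKM graph of $Q$ is index-increasing, the canonical classes $\{\tau_p\}_{p\in Q^T}$ exist and form a basis of $H_T^*(Q;\Z)$ over $H^*(BT;\Z)$ (Proposition \ref{kirwan classes} and the discussion following Definition \ref{canonical class def}); consequently their restrictions form a $\Z$-basis of $H^*(Q;\Z)$, with $\zeta_i\in H^{2i}(Q;\Z)$ and $\zeta'_2\in H^4(Q;\Z)$. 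This furnishes the claimed basis, so it remains only to identify the products and the Chern classes.

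Next I would invoke Section \ref{restrictions and structure constants}: the ordinary structure constants satisfy $b^s_{p,q}=r(a^s_{p,q})$, i.e.\ they are the constant terms of the polynomials $a^s_{p,q}\in H^*(BT;\Z)=\Z[\alpha,\beta,\gamma]$. Since $r$ is a ring homomorphism whose restriction to $H^*(BT;\Z)$ is the augmentation sending $\alpha,\beta,\gamma\mapsto 0$, the ordinary relations are obtained simply by setting $\alpha=\beta=\gamma=0$ in the equivariant relations \eqref{product Q}. Carrying this out term by term recovers \eqref{product Q o}: for example $\tau_1^2=\beta\tau_1+\tau_2+\tau'_2$ restricts to $\zeta_1^2=\zeta_2+\zeta'_2$, while $\tau_2\tau'_2=(\alpha+\beta+\gamma)\tau_3$ restricts to $\zeta_2\zeta'_2=0$, and similarly for the remaining products. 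The very same procedure applied to the equivariant Chern classes $c_j^T$ of Theorem \ref{equiv cohomology and chern classes Q} kills every term carrying a positive-degree polynomial coefficient and leaves only the leading canonical-class terms, yielding $c_1=-4\zeta_1$, $c_2=7\zeta_2+7\zeta'_2$, $c_3=-12\zeta_3$ and $c_4=6\zeta_4$.

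I do not expect a genuine obstacle, as the corollary is a formal consequence of the equivariant computation already carried out. The only point requiring care is the compatibility of $r$ with taking constant terms, which is precisely \eqref{structure constants} together with the degree count $\deg a^s_{p,q}=\lambda_p+\lambda_q-\lambda_s$, ensuring that a nonzero constant term survives under $r$ exactly when $\lambda_s=\lambda_p+\lambda_q$. The resulting relations agree, modulo sign, with the ordinary Schubert calculus on $Q$ recorded in \cite[Proposition 6.5]{ST}, which provides an independent consistency check.
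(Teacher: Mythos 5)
Your proposal is correct and takes essentially the same route as the paper: the corollary is deduced from Theorem \ref{equiv cohomology and chern classes Q} by applying the restriction map $r$ (equivalently, taking constant terms of the equivariant structure constants as in \eqref{structure constants}), with $\zeta_i = r(\tau_i)$ and $\zeta'_2 = r(\tau'_2)$ forming a $\Z$-basis because restrictions of canonical classes give a basis of $H^*(Q;\Z)$. Your term-by-term reductions of \eqref{product Q} and of the equivariant Chern classes all check out, so there is nothing to add.
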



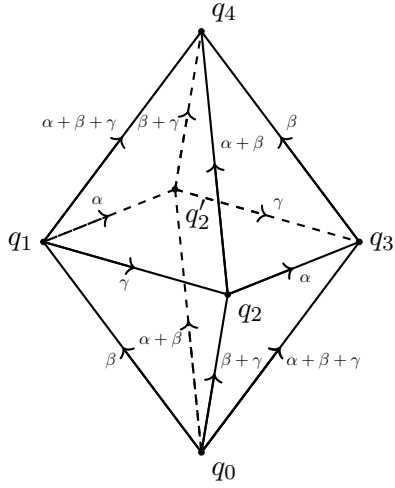
\begin{figure}[h] 

 \begin{tikzpicture}[scale = 0.7]

\filldraw  (-3,0) circle (1.5pt);

\filldraw  (3,0) circle (1.5pt);

\filldraw  (0.5,-1) circle (1.5pt);

\filldraw  (-0.5,1) circle (1.5pt);

\filldraw  (0,4) circle (1.5pt);

\filldraw  (0,-4) circle (1.5pt);

\draw [thick]  (-3,0) -- (0,4);

\draw [thick]  (-3,0) -- (0,-4);

\draw [thick]  (3,0) -- (0,4);

\draw [thick]  (3,0) -- (0,-4);

\draw [thick, dashed]  (-0.5,1) -- (0,4);

\draw [thick, dashed]  (-0.5,1) -- (0,-4);

\draw [thick]  (0.5,-1) -- (0,4);

\draw [thick]  (0.5,-1) -- (0,-4);

\draw [thick,dashed]  (-0.5,1) -- (3,0);

\draw [thick, dashed]  (-0.5,1) -- (-3,0);

\draw [thick]  (0.5,-1) -- (3,0);

\draw [thick]  (0.5,-1) -- (-3,0);

\draw [->, thick] (0,-4) -- (1.5,-2);

\draw [->, thick] (0,-4) -- (-1.5,-2);

\draw [->, thick] (0,-4) -- (0.25,-2.5);

\draw [->, thick, dashed] (0,-4) -- (-0.25,-1.5);

\draw [->, thick] (0.5,-1) -- (1.75,-0.5);

\draw [->, thick] (-3,0) -- (-1.25,-0.5);

\draw [->, thick] (0,-4) -- (1.5,-2);

\draw [->, thick] (-3,0) -- (-1.5,2);

\draw [->, thick] (3,0) -- (1.5,2);

\draw [->, thick] (0.5,-1) -- (0.25,1.5);

\draw [->, thick, dashed] (-0.5,1) -- (-0.25,2.5);

\draw [->, thick, dashed] (-0.5,1) -- (1.25,0.5);

\draw [->, thick, dashed] (-3,0) -- (-1.75,0.5);

\node [below right] (p_1) at (0,-4) {$q_0$};

\node [above right] (p_1) at (0,4) {$q_4$};

\node [right] (p_1) at (3,0) {$q_3$};

\node [left] (p_1) at (-3,0) {$q_1$};

\node [below right] (p_1) at (0.5,-1) {$q_2$};

\node [below right] (p_1) at (-0.5,1) {$q_2'$};

\node [below right, scale=0.6] (p_1) at (0.25,-2) {$\beta + \gamma$};

\node [below right, scale=0.6] (p_1) at (1.5,-2) {$\alpha + \beta + \gamma$};

\node [below left, scale=0.6] (p_1) at (-1.5,-2) {$ \beta $};

\node [below left, scale=0.6] (p_1) at (-0.25,-1.6) {$ \alpha +  \beta $};

\node [below right, scale=0.6] (p_1) at (1.75,-0.5) {$ \alpha $};

\node [below left, scale=0.6] (p_1) at (-1.25,-0.6) {$ \gamma $};

\node [above right, scale=0.6] (p_1) at (1.25,0.5) {$ \gamma $};

\node [above left, scale=0.6] (p_1) at (-1.75,0.6) {$ \alpha $};

\node [above right, scale=0.6] (p_1) at (1.5,2) {$ \beta $};

\node [above left, scale=0.6] (p_1) at (-0.3,2) {$\beta + \gamma$};

\node [above left, scale=0.6] (p_1) at (-1.5,2) {$\alpha + \beta + \gamma$};

\node [above right, scale=0.6] (p_1) at (0.25,1.6) {$ \alpha +  \beta $};

 (1.5,-2), (-1.5,2), (1.5,2), (0.25,1.5), (-0.25,2.5), (1.25,0.5),(-1.75,0.5)

\end{tikzpicture}

\caption{The oriented GKM graph of $Q$.}
\label{gkm Q}

\end{figure}

\subsection{Torus actions on $\mathbb{CP}^2 \times \mathbb{CP}^2$}

We conclude this section with the following well-known result, which will be useful
in Section~\ref{sec:index3_CP2}.

\begin{theorem}\label{thm:CP2CP2}
Consider the Fano $4$-fold $\left( X=\mathbb{CP}^2 \times \mathbb{CP}^2, \omega = 3 \,\omega_0 \oplus 3 \,\omega_0 \right)$, where $\omega_0$ is the Fubini-study form on $\mathbb{CP}^2$.
Then  $c_{1}(X) = [\omega]$ and  the weights for the  standard (toric) $T^4$-action on $X$ 
at the  $9$ fixed points
are 
{\small \begin{align*}
& \{(1,0,0,0),(0,1,0,0),(0,0,1,0),(0,0,0,1)\} ,  \quad  \{(1,0,0,0),(0,1,0,0),(0,0,-1,-1),(0,0,0,1)\} ,  \\  &  \{(1,0,0,0),(0,1,0,0),(0,0,1,0),(0,0,-1,-1)\} ,   \quad  \{(-1,-1,0,0),(0,1,0,0),(0,0,1,0),(0,0,0,1)\}  & \\  & \{(-1,-1,0,0),(0,1,0,0),(0,0,-1,-1),(0,0,0,1)\} ,  \,\,  \{(-1,-1,0,0),(0,1,0,0),(0,0,1,0),(0,0,-1,-1)\}  \\  & \{(1,0,0,0),(-1,-1,0,0),(0,0,1,0),(0,0,0,1)\} ,   \,\,  \{(1,0,0,0),(-1,-1,0,0), (0,0,-1,-1),(0,0,0,1)\},  \\ &  \{(1,0,0,0),(-1,-1,0,0) ,(0,0,1,0),(0,0,-1,-1)\}. 
\end{align*}}
The moment polytope is $\Delta^2 \subset \mathbb{R}^4$ where $\Delta \subset \mathbb{R}^2$ is the triangle with vertices $(0,0),(3,0),(0,3)$.
\end{theorem}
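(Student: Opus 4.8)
The plan is to treat $X=\mathbb{CP}^2\times\mathbb{CP}^2$ as a product of two copies of the standard toric $\mathbb{CP}^2$ and to compute everything one factor at a time, combining via the K\"unneth isomorphism $H^*(X;\mathbb{Z})\cong H^*(\mathbb{CP}^2;\mathbb{Z})\otimes H^*(\mathbb{CP}^2;\mathbb{Z})$. I write $\pi_i\colon X\to\mathbb{CP}^2$ for the two projections and $h\in H^2(\mathbb{CP}^2;\mathbb{Z})$ for the positive generator, so that $h_i:=\pi_i^*h$ generate $H^2(X;\mathbb{Z})$. Every piece of data in the statement then decomposes into a contribution from each factor.

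The identity $c_1(X)=[\omega]$ is immediate from this description. On each factor $c_1(\mathbb{CP}^2)=3h$, and the Fubini--Study form $\omega_0$, normalized by $\int_{\mathbb{CP}^1}\omega_0=1$, represents $h$. Since $TX=\pi_1^*T\mathbb{CP}^2\oplus\pi_2^*T\mathbb{CP}^2$, the Whitney product formula gives $c_1(X)=3h_1+3h_2$, while $[\omega]=[3\omega_0\oplus3\omega_0]=3h_1+3h_2$; hence the two classes coincide.

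Next I would set up the action and read off the remaining data. The torus $T^4=T^2\times T^2$ acts factorwise, each $T^2$ acting on its $\mathbb{CP}^2$ by the standard linear action on homogeneous coordinates, for which the fan of $\mathbb{CP}^2$ has primitive ray generators $(1,0),(0,1),(-1,-1)$. Each factor has exactly three coordinate fixed points, so $X^{T^4}$ is the set of nine product fixed points, in bijection with the pairs of vertices of the moment polytope. For the moment image I would compute, factor by factor, the moment map of $(\mathbb{CP}^2,3\omega_0)$, whose image is the dilated simplex $\Delta=\{(x,y):x,y\ge0,\ x+y\le3\}$ with vertices $(0,0),(3,0),(0,3)$; the product moment map then has image $\Delta\times\Delta=\Delta^2\subset\mathbb{R}^4$. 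Finally, the data attached to each of the nine fixed points is obtained by recording, in the first two coordinates, the primitive generators of the cone of the fan corresponding to the relevant vertex of the first $\Delta$ (equivalently, the inward primitive normals of the two facets of $\Delta$ meeting there), and, in the last two coordinates, the analogous generators for the vertex of the second $\Delta$. Carrying this out at all nine vertices reproduces the table in the statement.

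I do not expect a genuine obstacle: the whole argument reduces to the elementary toric geometry of a single $\mathbb{CP}^2$, performed twice and assembled by K\"unneth. The only point requiring care is the bookkeeping of conventions --- fixing the parametrization of each $T^2$ (equivalently the choice of ray generators $(1,0),(0,1),(-1,-1)$) and the sign and normalization of the moment map --- so that the vectors come out exactly as listed and are compatible with the claimed polytope $\Delta^2$ and with the normalization $c_1(X)=[\omega]$.
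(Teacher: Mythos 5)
Your overall strategy --- factor-by-factor toric geometry of $\mathbb{CP}^2$ assembled via K\"unneth --- is the natural one (the paper itself offers no proof, stating the result as well known), and the pieces concerning $c_1(X)=[\omega]$, the count of nine fixed points, and the moment polytope $\Delta^2$ are correct. However, the step where you read off the data at each fixed point contains a genuine error: you identify the isotropy weights at a toric fixed point with the primitive generators of the corresponding cone of the fan (equivalently, the inward primitive facet normals at the vertex). That identification is false. For a smooth toric variety, the weights of the torus action on the tangent space at the fixed point of a maximal cone $\mathrm{cone}(v_1,v_2)$ are the \emph{dual} basis $u_1,u_2$, with $\langle u_i,v_j\rangle=\delta_{ij}$ --- equivalently, with the moment-map conventions of this paper, the primitive directions of the edges of the polytope emanating from the vertex. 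The two notions coincide at the vertex $(0,0)$ of $\Delta$, where $\mathrm{cone}((1,0),(0,1))$ is self-dual (which is what makes the slip easy to miss), but they differ at the other two vertices: at $(3,0)$ the weights are $(-1,0),(-1,1)$, not $(0,1),(-1,-1)$, and at $(0,3)$ they are $(0,-1),(1,-1)$, not $(1,0),(-1,-1)$.

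A cheap test exposes the problem: by Lemma~\ref{weights in pairs}, the multiset of all weights of a torus action on a closed almost complex manifold with isolated fixed points satisfies $N(w)=N(-w)$, whereas the list you produce (which agrees with the table as printed in Theorem~\ref{thm:CP2CP2}) contains $(1,0,0,0)$ six times and $(-1,0,0,0)$ not at all, so it cannot be the weight multiset of \emph{any} such action. Carrying out your own strategy correctly --- e.g.\ computing the weights at $[1:0:0]$, $[0:1:0]$, $[0:0:1]$ for the linear action $(\lambda_1,\lambda_2)\cdot[z_0:z_1:z_2]=[z_0:\lambda_1 z_1:\lambda_2 z_2]$ on each factor --- yields, per factor, the weight sets $\{(1,0),(0,1)\}$, $\{(-1,0),(-1,1)\}$, $\{(0,-1),(1,-1)\}$, and the nine pairwise concatenations of these form the correct weight table (these do satisfy the pairing, and the weight sum formula \eqref{wsf} after translating the polytope by $(-1,-1,-1,-1)$). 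In other words, what your computation produces is the fan data attached to each vertex of $\Delta^2$ rather than the isotropy weights; it reproduces the printed table only because that table records exactly the same conflation of facet normals with weights. As a proof of the statement about isotropy weights, the dualization step is missing, and with it in place the conclusion one reaches differs from the table as printed.
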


\section{The algorithm}\label{sec:algorithm}

In this section, we review the algorithm introduced in \cite{GS} that will be used in our classification results. Even though
this algorithm produces the possible  circle actions on a manifold with a certain data (e.g. with given Betti numbers), we can still use it to obtain classification results for actions of higher-dimensional tori.  The strategy to classify  possible torus actions on a manifold with a certain data  will be to obtain the possible circle actions on the manifold with the same data and then show that they are restrictions of a global torus action to generic subcircles.

Let $(M,\omega)$ be a closed, symplectic manifold of dimension $2n$ equipped with a circle action with isolated fixed points. In this case the weights of the action at a fixed point $p$ are nonzero integers  $w_1,\ldots, w_n$. Let $W^+$ and $W^-$ be the multisets of positive and negative weights at all fixed points of $M$
$$
W^+ = \displaystyle{\bigsqcup_{p_i\in M^{S^1}}}\{w_{ik}\mid w_{ik}>0\}\quad \text{and} \quad W^-= \displaystyle{\bigsqcup_{p_i\in M^{S^1}}}\{w_{ik}\mid w_{ik}<0\},
$$ 
where $\bigsqcup$ denotes the disjoint union of multisets (cf. Remark~\ref{rmk:circle2}).

By \cite[Proposition 2.11]{Ha} we have  $W_+=-W_-$, and so we can always choose a bijection $g:W^+\to W^-$ taking each $w_{ik}$ to some $w_{jl}$ with $w_{ik}=-w_{jl}$, leading to a multigraph $(V,E)$ as in Definition~\ref{def:multigraph} (see also Remark~\ref{rmk:circle2}). 

We can choose a direction on the multigraph as in Remark~\ref{rmk:circle2}. Moreover, we  label the (directed) edges with a map $\mathsf{w}:E\to \Z_{>0}$ given by $\mathsf{w}(e_{ik})=\lvert w_{ik} \rvert$ and work with \textbf{labeled multigraphs} $(V,E,\mathsf{w})$.


Jang and Tolman  \cite[Lemma 5]{JaTo} showed that, given a circle action on a closed almost complex manifold $M$ with isolated fixed points, there always exists a multigraph describing $M$ obtained from $W^+$ and $W^-$ with no self-loops. Therefore, we will  consider  the family $\mathcal{W}$ of all directed multigraphs determined by $W^+$ and $W^-$ that have no self-loops.

Given a multigraph $\Gamma=(V,E)$ describing $(M,\omega, \psi)$, we define its \textbf{magnitude function} $\mathsf{m}: E \to \Z$ as the evaluation of $c_1^{S^1}$ on the edges of $\Gamma$,
$$
\mathsf{m}(e):= c_1^{S^1}[e] 
$$
(cf. Definition~\ref{evaluation alpha}), where  $c_1^{S^1}$ is the first equivariant Chern class.
\begin{remark}\label{rem:algo}
Note that, for each fixed point $p\in M^{S^1}$, we have 
$$c_1^{S^1}(p)=\left(\sum_{j=1}^n w_{j}\right)x = \Sigma(p) \,x,$$ 
where $\{w_1,\ldots,w_n\}$ is the multiset of weights at $p$ and $x$ is  a degree-$2$ generator of $H_{S^1}({pt}, \Z) \simeq \Z[x]$.
\end{remark}

The sum of the values of the magnitude function over all the edges of a multigraph $\Gamma=(V,E)$ in $\mathcal{W}$  does not depend on $\Gamma$ (i.e. it is independent of the choice of bijection between $W^-$ and $W^+$).  More precisely, we have the following result.

\begin{proposition}\cite[Proposition 4.6]{GS}\label{PropGS}
Let $(M,\omega)$ be a closed symplectic manifold equipped with a Hamiltonian $S^1$-action with isolated fixed points and moment map $\psi:M\to \R$. Let $\mathcal{W}$ be the family of directed multigraphs describing $(M,\omega, \psi)$ with no self-loops. Then the magnitude $\mathsf{m}\colon E\to \Z$ associated to a multigraph  $\Gamma=(V,E)\in\mathcal{W}$  satisfies 
\begin{equation}\label{eq:cqcn-1}
\sum_{e\in E} \mathsf{m}(e) = c_1 c_{n-1}[M]=\sum_{i=0}^n b_{2i}(M)\left( 6\, i (i-1) + \frac{5 n -3n^2}{2}\right),
\end{equation}
where $b_{2i}(M)$ is the $2i$-th Betti number of $M$.
\end{proposition}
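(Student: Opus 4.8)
My plan is to establish the two asserted equalities separately: the combinatorial identity $\sum_{e\in E}\mathsf{m}(e)=c_1c_{n-1}[M]$, and the cohomological evaluation $c_1c_{n-1}[M]=\sum_{i=0}^n b_{2i}(M)\big(6i(i-1)+\tfrac{5n-3n^2}{2}\big)$. The first is an immediate consequence of the localization formula of Lemma~\ref{poincare dual} and already exhibits the independence of the sum from the chosen multigraph $\Gamma\in\mathcal{W}$; the second is the analytic heart of the statement and is where the real work lies.

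For the first equality, recall that by definition $\mathsf{m}(e)=c_1^{S^1}[e]$. Since every $\Gamma=(V,E)\in\mathcal{W}$ is a multigraph describing $(M,\omega,\psi)$ with $T=S^1$, I would apply \eqref{poincare dual alpha} to the class $\alpha=c_1^{S^1}\in H^2_{S^1}(M;\Z)$, whose restriction to ordinary cohomology is $r(c_1^{S^1})=c_1$. This yields
\[
\sum_{e\in E}\mathsf{m}(e)=\sum_{e\in E}c_1^{S^1}[e]=\int_M r(c_1^{S^1})\cup c_{n-1}=c_1c_{n-1}[M],
\]
and, as the right-hand side does not involve $\Gamma$, the magnitude sum is the same for all members of $\mathcal{W}$.

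For the second equality I would pass to the Hirzebruch $\chi_y$-genus $\chi_y(M)=\sum_{p=0}^n\chi^p(M)\,y^p$, the genus attached to the power series $\tfrac{x(1+ye^{-x})}{1-e^{-x}}$, which is expressible in terms of Chern numbers. The crucial geometric input is that, for a Hamiltonian $S^1$-action with isolated fixed points, localization of the \emph{equivariant} $\chi_y$-genus, combined with the perfect Morse structure of a generic component $\varphi$ of the moment map (which gives $N_k=b_{2k}(M)$), forces
\[
\chi_y(M)=\sum_{k=0}^n b_{2k}(M)\,(-y)^k,\qquad\text{equivalently}\qquad \chi^p(M)=(-1)^p b_{2p}(M)
\]
(the special values $y=0$ and $y=-1$ recover $\mathrm{Td}(M)=1$ and $\chi(M)=c_n[M]=\sum_i b_{2i}(M)$). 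I would then invoke the classical Libgober--Wood identity
\[
\sum_{p=0}^n(-1)^p\Big(p-\tfrac{n}{2}\Big)^2\chi^p(M)=\frac{n}{12}\,c_n[M]+\frac16\,c_1c_{n-1}[M].
\]
Substituting $\chi^p=(-1)^p b_{2p}$ cancels the signs on the left, and using $c_n[M]=\sum_i b_{2i}(M)$ gives $c_1c_{n-1}[M]=\sum_i\big(6(i-\tfrac n2)^2-\tfrac n2\big)b_{2i}(M)$. Finally the elementary difference $6(i-\tfrac n2)^2-\tfrac n2-\big(6i(i-1)+\tfrac{5n-3n^2}{2}\big)=3(n-1)(n-2i)$ is annihilated upon summation, since Poincar\'e duality $b_{2i}=b_{2(n-i)}$ forces $\sum_i(n-2i)b_{2i}(M)=0$, turning the coefficient into the asserted $6i(i-1)+\tfrac{5n-3n^2}{2}$.

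The main obstacle is the identification $\chi^p(M)=(-1)^p b_{2p}(M)$: this is not formal and rests on the rigidity of the equivariant $\chi_y$-genus under the circle action, so that the nonequivariant genus can be read off from the local index contributions indexed by the Morse index of the fixed points. Once this is in place, the remaining steps — the Libgober--Wood substitution and the Poincar\'e-duality bookkeeping — are routine; in dimension $8$ the resulting formula specializes to Lemma~\ref{c4 and c1c3}.
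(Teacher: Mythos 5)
Your proof is correct and takes essentially the same route as the paper and its cited source: the first equality is exactly the application of Lemma \ref{poincare dual} with $\alpha=c_1^{S^1}$ (this is precisely how the paper argues in the proof of Lemma \ref{lemma betti numbers}), while the second equality is the content of \cite[Corollary 3.1]{GS}, which is proved there just as you reconstruct it — via the identification $\chi^p(M)=(-1)^p b_{2p}(M)$ coming from rigidity of the equivariant $\chi_y$-genus plus perfection of the moment map, the Libgober--Wood identity, and the Poincar\'e-duality cancellation $\sum_i(n-2i)b_{2i}(M)=0$. Your coefficient bookkeeping (difference $3(n-1)(n-2i)$) and the dimension-$8$ specialization to Lemma \ref{c4 and c1c3} both check out.
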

Given a labeled directed multigraph $\Gamma=(V,E,\mathsf{w})$ we define 
the map $\delta\colon V\times E\to \{-1,0,1\}$ given by
$$
\delta(p,e)= \left\{ \begin{array}{rl} 1,& \text{if $i(e)=p$} \\ -1,& \text{if $t(e)=p$} \\ 0, & \text{otherwise,} \end{array} \right.
$$
where $i(e)$ and $t(e)$ are the initial and terminal points of $e$.
It is easy to check that, from the definition of $c_1^{S^1}[e]$,  Remark~\ref{rem:algo}  and the definition of $\delta$, fixing an edge $\tilde{e}\in E$,  we always have 
\begin{equation}\label{eq:delta}
\sum_{e\in E} \big(\delta\left(\mathsf{i}(\tilde{e}),e\right)-\delta\left(\mathsf{t}(\tilde{e}),e\right) \big)\mathsf{w}(e)-\mathsf{w}(\tilde{e})\mathsf{m}(\tilde{e})=0.
\end{equation}
Hence, considering an ordering $E=\{e_1,\ldots,e_{\lvert E \rvert} \}$ on the edge set  and the  $(\lvert E\rvert \times \lvert E\rvert)$-matrices $A(\Gamma)=(a_{ij})$ and $\diag(\mathsf{m}(E))$ defined by
$$
a_{ij}:=\delta \left(\mathsf{i}(e_i),e_j\right)-\delta\left(\mathsf{t}(e_i),e_j\right)
$$ 
and
$$
\diag(\mathsf{m}(E)) :=\diag\left(\mathsf{m}(e_1),\ldots,\mathsf{m}(e_{\lvert E \rvert})\right),
$$
we have that  the weight vector  $\mathsf{w}(E):=(\mathsf{w}(e_1),\ldots,\mathsf{w}(e_{\lvert E \rvert})\in \Z_{>0}^{\lvert E\rvert }$ must be in the kernel of
\begin{equation}\label{eq:matrices}
A(\Gamma)-\diag(\mathsf{m}(E)).
\end{equation}

Therefore, given positive integers $n, N$ and integers $N_i\geq 0$ for $i=0,\ldots,n$, such that $\sum_{i=0}^n N_i=N$, the  possible isotropy weights for a Hamiltonian $S^1$-action on a closed symplectic  manifold  $(M,\omega)$ of dimension $n$ with $N$ isolated fixed points, such that the number of fixed points 
with $i$ negative weights is $N_i$ (i.e. such that $b_{2i}(M)=N_i$), must satisfy several  linear relations  determined by the following \textbf{algorithm} (see \cite{GS} for details): 
\begin{enumerate}
\item Consider all possible multigraphs $\Gamma=(V,E)$ with vertex set $V=\{p_1,\ldots,p_N\}$ and no self loops,  such that the number of vertices
which are the endpoints of $i$ edges is $N_i$. 
\item Determine the matrices $A(\Gamma)$ associated to the multigraphs $\Gamma$.
\item Look for possible functions $\mathsf{m}:E\to \Z$ satisfying \eqref{eq:cqcn-1}
and such that
\begin{equation}\label{algo}
\Big(  \ker \left(A(\Gamma)-\diag\left(\mathsf{m}(E)\right)\right)\Big)\cap \Z_{>0}^{\lvert E\rvert }\neq \emptyset,
\end{equation}
(these functions will be the possible magnitudes of the multigraphs). 
\item When they exist, the weight vectors  $\mathsf{w}(E)$ associated to the edges of the multigraphs $\Gamma$ must  be in the kernel of the  matrices
$$
A(\Gamma)-\diag(\mathsf{m}(E)).
$$
\end{enumerate}

If we assume the functions $\mathsf{m}:E\to \Z$ to be positive, they correspond to partitions of 
\begin{equation}\label{eq:int}
\sum_{i=0}^n N_i \left(6i(i-1)+\frac{5n-3n^2}{2}\right) \in \Z
\end{equation}
into $\lvert E\rvert$ positive integers $\mathsf{m}(e)$ and so there is only a finite number of possibilities for $\mathsf{m}$ and the algorithm ends.
\begin{remark}
One cannot expect that, in general, the magnitudes associated to multigraphs describing manifolds with circle actions are positive. For example, if the number of fixed points is large,  then $c_1c_{n-1}[M]$ can be negative (cf. Proposition~\ref{PropGS}). However, if for example $(M,\omega)$ is an $8$-dimensional, closed,  positive  monotone symplectic manifold, as is the case of $\chi(M)=6$, we can rescale the symplectic form and translate the moment map in such a way that $c_1^{S^1}=[\omega-\psi]$ (cf. Lemma~\ref{lemma monotone chi six}), and then Theorem~\ref{existence multigraph dim 8} gives conditions that ensure the existence of positive magnitudes. 
\end{remark}

\section{Proof of Main Theorem and other classification results}

In this section we give a proof of Main Theorem and of Theorem~\ref{thm:add}. The proofs proceed with the following steps:
\begin{enumerate}
\item[1)] Use the algorithm introduced in \cite{GS} and reviewed in Section~\ref{sec:algorithm} to classify the possible circle actions on a manifold satisfying the hypothesis of these theorems. 
\item[2)] Show that the circle actions obtained are restrictions of actions of higher-dimen\-sional tori  with the same isotropy data  as the ones described in the examples of Section~\ref{section examples}.
\item[3)] Observe that the isotropy data obtained completely determines the (equivariant) cohomology ring and Chern classes of the corresponding manifolds as it was proved in Section~\ref{section examples}.
\end{enumerate}

Note that the algorithm introduced in \cite{GS} can be used in the proof of Main Theorem because, by Theorem~\ref{existence multigraph dim 8} and Lemma~\ref{lemma monotone chi six}, we have the existence of a positive multigraph with respect to the first equivariant Chern class. In Theorem~\ref{thm:add},
the existence of a positive multigraph is an assumption.

We recall that all the accompanying files can be found at 
$$
 \href{https://www.math.tecnico.ulisboa.pt/~lgodin/CircleActions/}{\text{http://www.math.ist.utl.pt/$\sim$lgodin/CircleActions/}}
$$

\subsection{Classification results for $b_2(M)=1$ and $b_4(M)=2$:}\label{sec:class}
We begin with the following result that will be used throughout this section.
\begin{lemma}\label{lemma:mag}
Let $(M,\omega,\psi)$ be as in Main Theorem and consider a multigraph $\Gamma=(V,E)$ describing it as in Theorem~\ref{existence multigraph dim 8}. Then the index $k_0$ of $(M,\omega)$ satisfies
$$
k_0= \gcd\,\,  \{\mathsf{m}(e),\,\,\, e\in E \}.
$$
\end{lemma}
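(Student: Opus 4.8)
The plan is to prove the two divisibilities $k_0 \mid \gcd\{\mathsf{m}(e) : e \in E\}$ and $\gcd\{\mathsf{m}(e) : e \in E\} \mid k_0$ separately. Both quantities are positive integers: by Lemma~\ref{lemma monotone chi six} and Theorem~\ref{existence multigraph dim 8} the multigraph $\Gamma$ is positive with respect to $c_1^{S^1}$ for the generic subcircle $S^1 \subset T$ orienting it, so every magnitude $\mathsf{m}(e) = c_1^{S^1}[e]$ is strictly positive and the $\gcd$ is well defined. Once both divisibilities are in hand, equality is immediate.

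For the first divisibility I would reuse the argument already carried out inside the proof of Lemma~\ref{lemma betti numbers}, which shows that $c_1^{S^1}[e]$ is a positive multiple of $k_0$ for every edge. Concretely: by Proposition~\ref{sixfp}~(1) we have $b_2(M)=1$, so $H^2(M;\Z)\cong\Z$ and, by definition of the index, $c_1 = k_0\,\eta$ for a primitive class $\eta$; equivariant formality yields an extension $\eta^{S^1}$ with $c_1^{S^1} = k_0\,\eta^{S^1} + \beta$ for some $\beta \in H^2_{S^1}(\{pt\};\Z)$, and since $\beta$ restricts to the same constant at both endpoints of an edge while $w_{p,e}=-w_{q,e}$, one has $\beta[e]=0$. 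Hence $\mathsf{m}(e) = k_0\,\eta^{S^1}[e]$ with $\eta^{S^1}[e]\in\Z_{>0}$ by Corollary~\ref{real edge}, so $k_0 \mid \mathsf{m}(e)$ for every $e$ and therefore $k_0 \mid \gcd\{\mathsf{m}(e)\}$.

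The reverse divisibility is where the earlier structural results do the real work, and it becomes essentially immediate. By Theorem~\ref{existence multigraph dim 8} the multigraph may be chosen to contain an edge $e_0$ joining the minimum $p_0$ to the unique index-one fixed point $p_1$, and Lemma~\ref{tau 1} gives $c_1^T[e_0] = k_0$. Since the evaluation of $c_1^T$ on an edge coincides with the evaluation of its restriction $c_1^{S^1}$ to the orienting generic subcircle (invariance of edge evaluations under restriction to generic subtori, Lemma~\ref{positive subactions}), this means $\mathsf{m}(e_0) = k_0$. As $\gcd\{\mathsf{m}(e)\}$ divides every magnitude, in particular it divides $\mathsf{m}(e_0)=k_0$, whence $\gcd\{\mathsf{m}(e)\} \mid k_0$. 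Combining the two divisibilities yields $\gcd\{\mathsf{m}(e)\} = k_0$.

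I do not anticipate a genuine obstacle, since both ingredients are already available from Lemma~\ref{tau 1} and from the proof of Lemma~\ref{lemma betti numbers}. The only point requiring minor care is the identification $\mathsf{m}(e) = c_1^{S^1}[e] = c_1^T[e]$ just mentioned: it guarantees that the edge $e_0$ realizing the value $k_0$ in Lemma~\ref{tau 1} (a statement about the full torus class $c_1^T$) is genuinely the same edge whose magnitude, computed via the orienting circle, equals $k_0$. This is exactly what Lemma~\ref{positive subactions} provides, and it is what makes the two halves of the argument compatible.
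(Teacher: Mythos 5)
Your proposal is correct and follows essentially the same route as the paper's proof: divisibility of every magnitude by $k_0$ via the argument inside the proof of Lemma~\ref{lemma betti numbers}, and the reverse divisibility from Lemma~\ref{tau 1} applied to the edge joining $p_0$ to $p_1$ guaranteed by Theorem~\ref{existence multigraph dim 8}. The extra care you take in identifying $c_1^T[e_0]$ with $c_1^{S^1}[e_0]$ via restriction to the orienting generic subcircle is a point the paper leaves implicit, and it is handled exactly as you say.
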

\begin{proof}
We first note that $k_0$ divides $\mathsf{m}(e)=c_1^{S^1}[e]$ for every $e\in E$ (see proof of Lemma~\ref{lemma betti numbers}).  Moreover, by Lemma~\ref{tau 1}, we know that $k_0$ is exactly the magnitude of the edges connecting $p_0$ to $p_1$ and $p_3$ to $p_4$.
\end{proof}

Given the properties of the magnitudes of Lemma~\ref{lemma:mag}, in the following sections we run the algorithm for each separate index $k_0=i$ with $i=1,\ldots,4$ (cf. Theorem~\ref{main theorem chern}).

We begin by first running Part $1$ of the file {\texttt CircleActions$\_$dim$8\_$nfpt$6\_$General.nb} to generate the list of all possible multigraphs $\Gamma=(E,V)$ describing a circle action on an $8$-dimensional manifold with $6$ fixed points (these multigraphs can be seen in the file {\texttt Multigraphs.pdf}). 

Then we run Part $2$ of the same file  to obtain the complete list of determinants of the matrices $A(\Gamma)-\diag(\mathsf{m}(E))$ in \eqref{eq:matrices} (listed in the file {\texttt Determinants.nb}).

After this, we run the  \texttt{c++} files  {\texttt Partitions-$\#$.cpp}, where $\#$ denotes the order number of the multigraph in the list above, to obtain partitions of  $c_1c_{3}[M]$ into $12=\lvert E\rvert = 4 \chi(M)/2$ positive integers $\mathsf{m}(e)$ (according to the algorithm described in Section~\ref{sec:algorithm}), for which there exists a determinant of a matrix $A(\Gamma_i)-\diag(\mathsf{m}(E_i))$ that is zero. 

We then run  Part $3$ of  the files  {\texttt Index\emph{i}.nb} (with $i=1,\ldots,4$)  to sort these partitions according to the rank of these singular matrices  \eqref{eq:matrices},  dividing them into two sets: those that produce singular matrices with maximal rank, and those that produce  matrices of lower rank. 
 
In the first case, we also select the partitions that produce matrices with null spaces containing vectors with positive components, according to \eqref{algo}. We store the first type of partitions in the files {\texttt Part$\#$toprk.txt} and the others in the files {\texttt Part$\#$smallrk.txt}.

Then, Part $4$ of  the files  {\texttt Index\emph{i}.nb} (with $i=1,\ldots,4$)   considers the first set of partitions,  producing a list of the corresponding isotropy weights, checking if they satisfy the polynomial equations in  \cite[Equation (1.1)]{GS} obtained from the Atiyah-Bott-Berline-Vergne Localization Theorem.

Finally, part $5$ of {\texttt Index\emph{i}.nb} (with $i=1,\ldots,4$) considers the second set of partitions and
\begin{itemize}
\item selects those that originate matrices with null spaces containing vectors with positive entries (see \eqref{algo});
\item produces the list of the corresponding isotropy weights;
\item selects those isotropy weights that satisfy several necessary properties such as
\begin{enumerate}
\item[(i)]\label{conditioniii} at each fixed point, the isotropy weights must be coprime integers; 
\item[(ii)] at a fixed point $P$ of index $2i$, the first $i$ weights are negative and the others are positive;
\item[(iii)] if there are three multiple edges between two vertices, then the weights corresponding to these edges must be coprime \cite[Lemma 8.2(1) ]{GS}.
\end{enumerate}
\end{itemize}
At each step, the resulting lists of isotropy weights are saved in different files so that it is easy to verify which isotropy weights are discarded with each test performed.

\subsubsection{Index 1}\label{secresults:index1}

If we assume that the index of the manifold $(M,\omega)$ is $k_0=1$, the greatest common divisor of all the $12$ elements of the partitions considered must be $1$ and the first and last elements of the partitions are always equal to $1$ (cf. Lemma~\ref{lemma:mag}) since these correspond to the magnitudes of the edges connecting $p_0$ to $p_1$ and $p_3$ to $p_4$.
The possible partitions can be found in the file {\texttt PartitionsIndex1.zip}.

In the end, the program eliminates all possibilities. Indeed, no potential multiset of isotropy weights satisfies all the necessary conditions and so we conclude {\bf that there are no possible circle actions in the index-$1$ case}.

\subsubsection{Index 2}
When the index is $2$, the greatest common divisor of the $12$ elements of the partitions considered must be  $2$ and the first and last elements of the partitions are always $2$ (cf. Lemma~\ref{lemma:mag}).
The possible partitions can be found in the file {\texttt PartitionsIndex2.zip}.
%
%
The isotropy weights and the corresponding multigraphs that satisfy all the necessary conditions can be found in the file ResultsIndex2.pdf and are shown in Figures~\ref{fig:Index2} and \ref{fig:Index2Weights}.

\begin{figure}[h!]
\begin{center}
\includegraphics[scale=0.33] {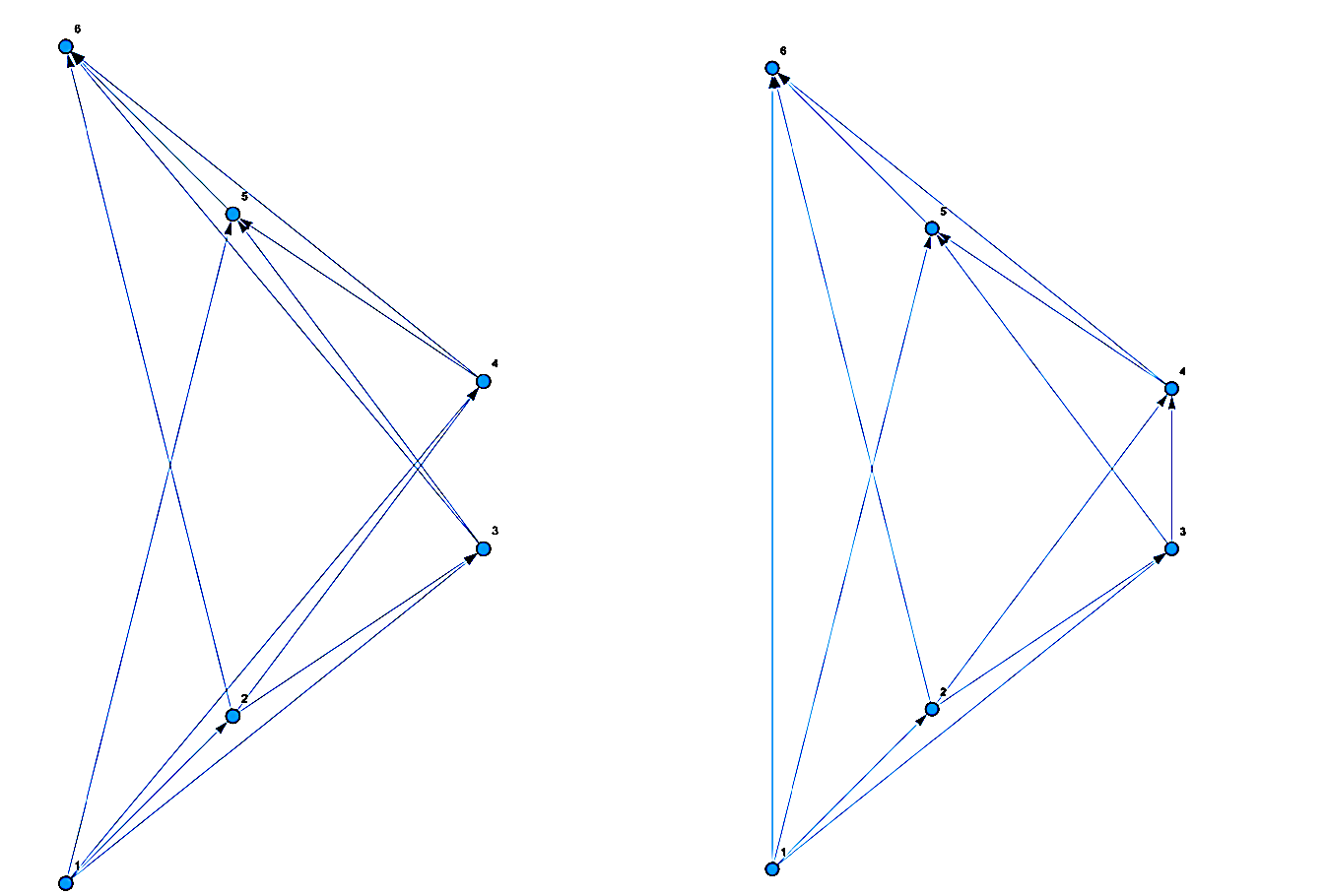}
\caption{Possible multigraphs for Index $2$}\label{fig:Index2}
\end{center}
\end{figure}

\begin{figure}[h!]
\begin{center}
\includegraphics[scale=0.7] {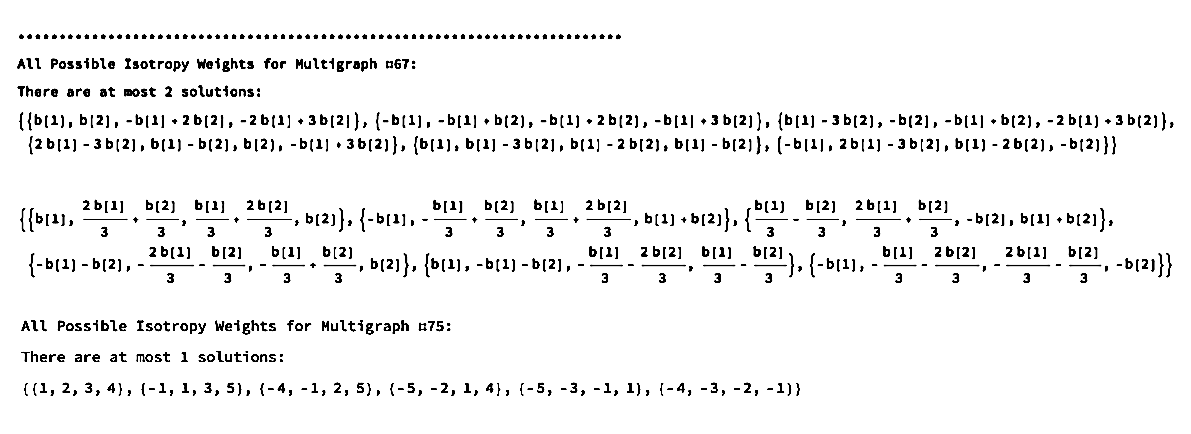}
\caption{Possible isotropy weights for Index $2$}\label{fig:Index2Weights}
\end{center}
\end{figure}
In all cases, the Chern numbers obtained from the isotropy weights in Figure~\ref{fig:Index2Weights} are
\begin{align*}
c_1^4[M] = 288, \quad c_1^2c_2[M]=168 \quad \text{and} \quad c_2^2[M]=98.
\end{align*}
The first two multisets of weights give us a $2$-dimensional family of circle actions that generate the  $T$-action on the Fano-Mukai $4$-fold $V$ described in Section~\ref{subsec V}.\\

\noindent  $\bullet$ The first $2$-dimensional family of weights listed in Figure~\ref{fig:Index2Weights} generates the effective $T$-action on the Fano-Mukai $4$-fold $V$. Indeed, if we consider the basis ${\emph b}_1 =2x+y=(2,1)$ and ${\emph b}_2=x+y=(1,1)$ of the dual lattice $\ell^*$ of $T$, where $x$ and $y$ are as in Section~\ref{sec:5.1.2}, then the weights for the action of $T$ listed in Table~\ref{values c1T V} become those in Table~\ref{weigthsV}.

\renewcommand{\arraystretch}{1.2}
\begin{table}[h!]
\begin{center}
  \begin{tabular}{| c || c  c  c  c |  }
    \hline
     &  &  & & \\ \hline \hline
    $q_0$ & $-2\bb_1+3\bb_2$, & $-\bb_1+2\bb_2$, & $\bb_2$, & $\bb_1$  \\ \hline
    $q_1$ & $2\bb_1-3\bb_2$, & $\bb_1-\bb_2$, & $\bb_2$, & $-\bb_1+3\bb_2$ \\ \hline
    $q_2$ &  $-\bb_1$, &  $-\bb_1+\bb_2$, & $-\bb_1+2\bb_2$, & $-\bb_1+3\bb_2$ \\ \hline
   $q'_2$ & $\bb_1-3\bb_2$, & $\bb_1-2\bb_2$, & $\bb_1-\bb_2$, & $\bb_1$\\ \hline
  $q_3$ &$\bb_1-3\bb_2$, & $-\bb_2$, & $-\bb_1+\bb_2$, & $-2\bb_1+3\bb_2$\\ \hline
   $q_4$ & $-\bb_1$, & $-\bb_2$, & $\bb_1-2\bb_2$, & $2\bb_1-3\bb_2$  \\ 
    \hline
  \end{tabular}
 \end{center}
  \caption{
Weights for the effective $T$-action on $V$ for the new basis of $\ell^*$.}
 \label{weigthsV}
\end{table}
 The multigraphs obtained for these circle actions from the projection of the $T$-action to a generic subcircle  are always as the one on the left of Figure~\ref{fig:Index2}. Moreover, considering  the corresponding moment maps, we obtain the following ordering of the fixed point set as defined in \eqref{ordering fixed points}
$$
q_0 \prec q_2 \prec q_3 \prec q_1 \prec q_2^\prime \prec q_4.
$$
Note that this ordering is different from the one chosen in Section~\ref{subsec V} to compute the equivariant cohomology for the $T$-action on $V$.
\vspace{.2cm}


\noindent  $\bullet$ The second $2$-dimensional family of weights listed in Figure~\ref{fig:Index2Weights}  with $\text{b[1]},\text{b[2]} \in \Z_{>0}$  satisfying $\text{b[2]} >   \text{b[1]}$,  $2\text{b[1]}+ \text{b[2]}= 0 \pmod 3$  and $\text{b[1]}+ 2\text{b[2]}=0 \pmod 3$ with 
$$\gcd \left(\text{b[1]},\text{b[2]},\frac{2\text{b[1]}+ \text{b[2]}}{3},\frac{\text{b[1]}+ 2\text{b[2]}}{3}\right)=1,$$
also generates the effective $T$-action on the Fano-Mukai $4$-fold $V$ since, writing $u=b_1 +b_2 \in \Z$ and $v=\frac{b_1+2b_2}{3}\in \Z$, we recover the first  family of weights\footnote{These conditions ensure that the weights are integers and that the corresponding circles actions are effective.} . The multigraphs obtained from the corresponding projections of the $T$-action are always as the one on the left of Figure~\ref{fig:Index2} and, considering the ordering on the fixed point set given by the associated moment maps, we obtain that 
$$
q_1 \prec q_0\prec q_2^\prime \prec q_2 \prec q_4 \prec q_3.
$$

\begin{remark}
The third multiset of weights is a particular case of the first with $b[1]=1$ and $b[2]=2$. The multigraph for this action (on the right of Figure~\ref{fig:Index2}) is different from the one of the previous action only for the edges that are labeled  $1$. These edges correspond to free gradient spheres which we know depend on the metric considered. Therefore, we can still obtain this multigraph and this multiset of weights from a subcircle of $T$.
\end{remark}
%

\subsubsection{Index 3}\label{subsec:index3}
When the index is $3$, the greatest common divisor of the $12$ elements of the partitions considered must be $3$ and the first and last elements of the partitions are always $3$ (cf. Lemma~\ref{lemma:mag}).
The possible partitions can be found in the file {\texttt PartitionsIndex3.zip}.
%

We proceed as in the previous cases but now, for index-3, we further select the isotropy weights that originate a possible set of Chern numbers according to Table~\ref{non spin}.

The isotropy weights  that satisfy all the necessary conditions can be found in the file Results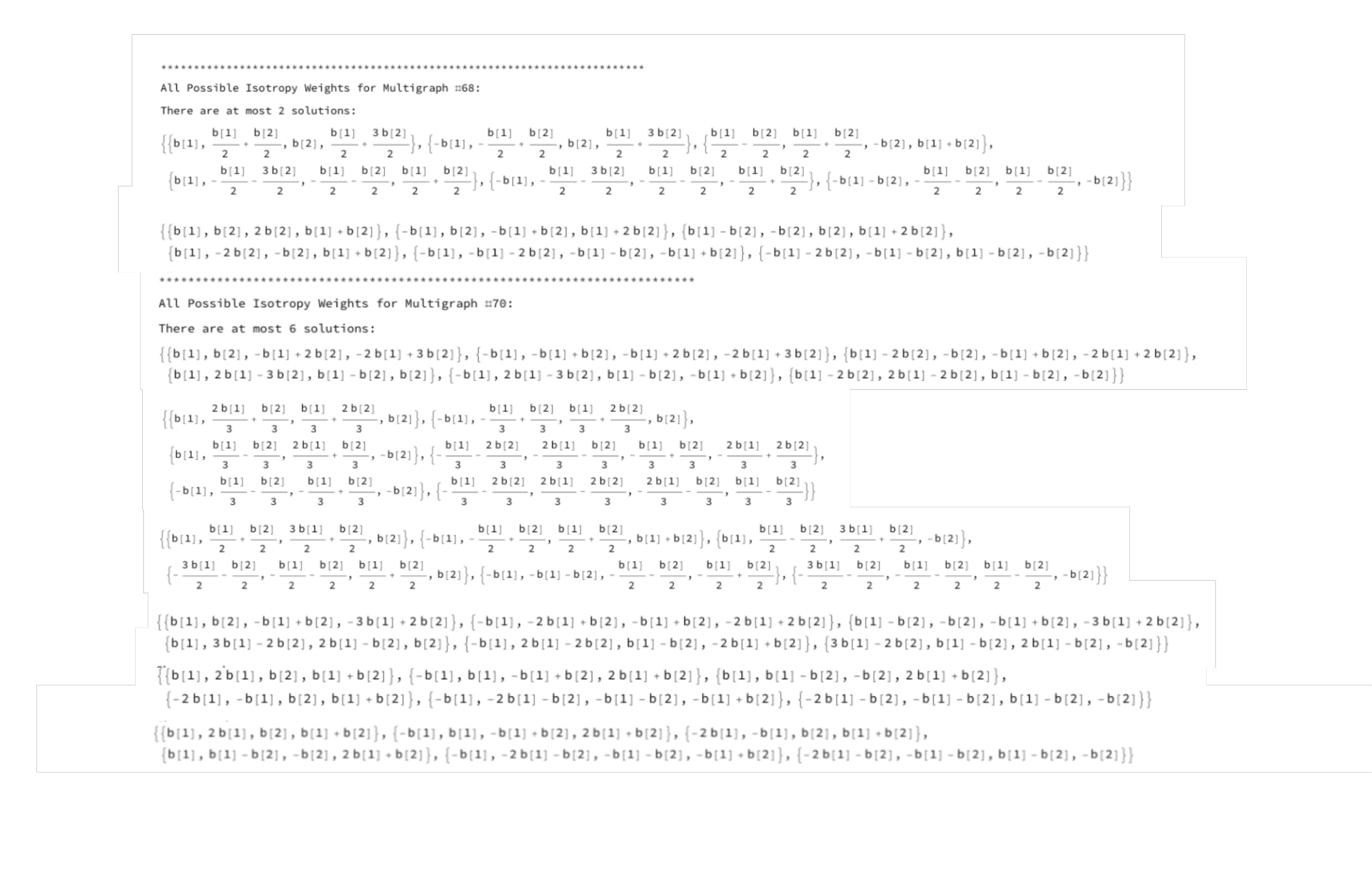 and are presented in Figure~\ref{fig:Index3Weights}. The corresponding multigraphs are shown in Figure~\ref{fig:Index3}, where the first two multisets of weights  correspond to the multigraph on the left, while the other four possibilities correspond to the multigraph on the right. In all possible cases the Chern numbers obtained from the  weights are
\begin{align*}
c_1^4[M] = 405, \quad c_1^2c_2[M]=198 \quad \text{and} \quad c_2^2[M]=97.
\end{align*}

\begin{figure}[h!]
\begin{center}
\includegraphics[scale=0.33,trim={1cm .2cm 4cm .2cm},clip] {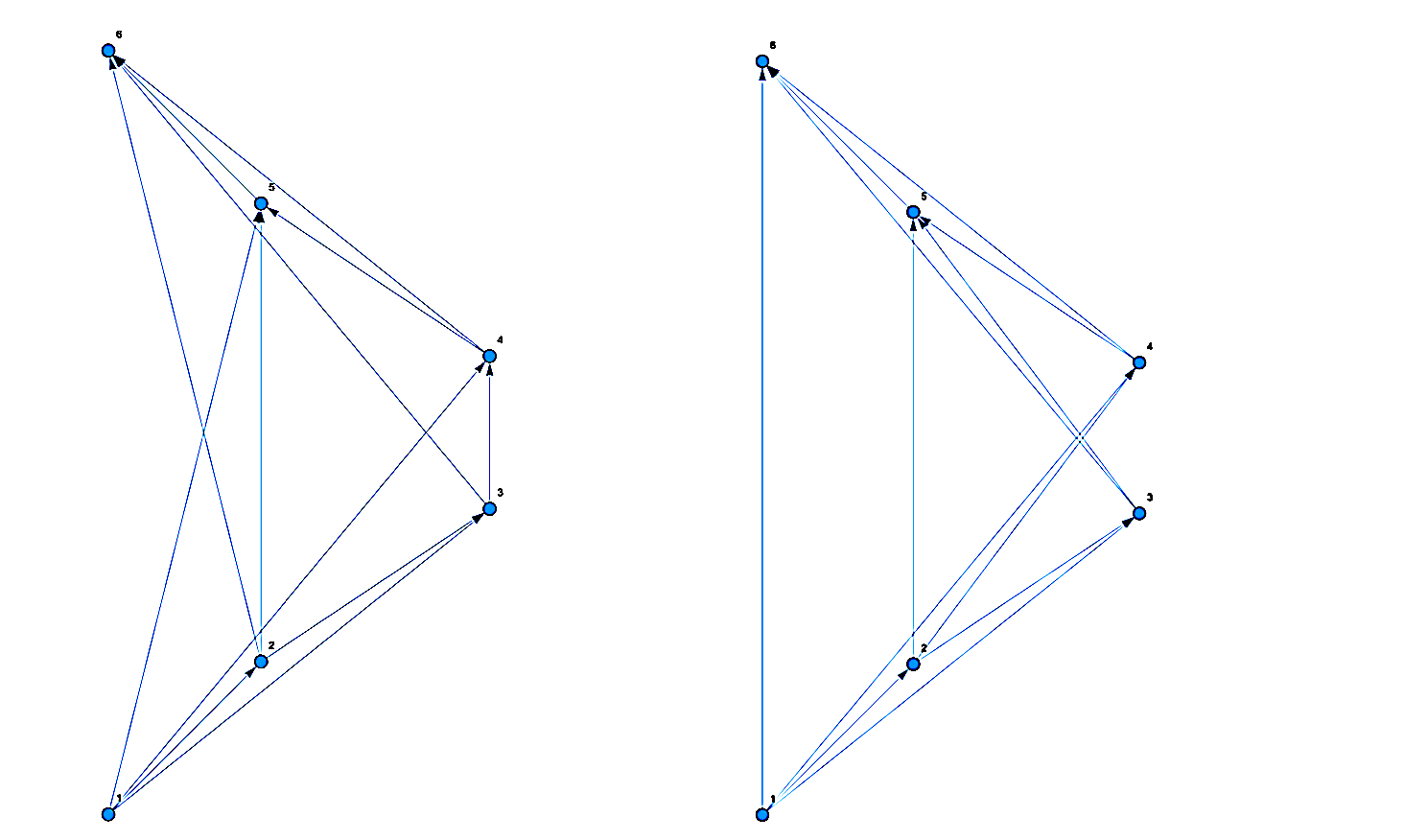}
\caption{Multigraphs for Index $3$}\label{fig:Index3}
\end{center}
\end{figure}

\begin{figure}[h!]
\begin{center}
\includegraphics[scale=0.5,trim={4cm 3cm 5cm 1cm},clip] {Index3.pdf}
\caption{Isotropy weights for Index $3$}\label{fig:Index3Weights}
\end{center}
\end{figure}

All the multisets of weights obtained give us a $2$-dimensional family of circle actions that generate the effective $\mathbb{T}$-action on the quintic del Pezzo fourfold $W$ described in Section~\ref{subsec W}.


\noindent  $\bullet$  The first $2$-dimensional family of weights listed in Figure~\ref{fig:Index3Weights}  with $\text{b[1]},\text{b[2]} \in \Z_{>0}$ satisfying  $\text{b[2]} >   \text{b[1]}$, $ \text{b[1]} = \text{b[2]} \mod 2$ and 
$$
\gcd\left(\text{b[1]},\text{b[2]} ,\frac{\text{b[1]}+\text{b[2]}}{2},\frac{\text{b[1]}+3\text{b[2]} }{2}\right) = 1,
$$ 
generates the effective $\mathbb{T}$-action on the quintic del Pezzo fourfold $W$ (cf. Section~\ref{subsec W}) since,
writing $\hat{\alpha}=\frac{b_1 +b_2}{2} \in \Z$ and $\hat{\beta}=\frac{b_1+3b_2}{2}\in \Z$, the $2$-dimensional family of weights becomes
\begin{align*}
\left\{ \{3\ha - \hb, \ha, -\ha+\hb, \hb\},\{-3\ha + \hb, -2\ha+\hb, -\ha + \hb,\hb\} , \{ 2\ha-\hb,\ha, \ha - \hb,2\ha\}, \right. \\ \left. \{3\ha - \hb, -\hb, -\ha, \ha\} , \{-3\ha+\hb, -\hb, -\ha, -2\ha+\hb\} , \{-2\ha, -\ha, 2\ha-\hb, \ha-\hb\} \right\},
\end{align*}
which is the same as that of Table~\ref{values c1T}.
The multigraphs obtained from the projections of the $\mathbb{T}$-action to a generic subcircle defined by $\ha \,v_1 + \hb \,v_2 \in \widetilde{\ell}$ are always as the one on the left of Figure~\ref{fig:Index3} and, considering the ordering on the fixed point set given by the associated moment maps, we obtain that 
$$
p_1 \prec p_0\prec p_2^\prime \prec  p_3  \prec p_2 \prec p_4.
$$


\noindent  $\bullet$ The second $2$-dimensional family of weights listed in Figure~\ref{fig:Index3Weights} also generates the effective $\mathbb{T}$-action on  $W$ since, if we consider the basis ${\emph b}_1 =\alpha -\beta=(-1,-2)$ and ${\emph b}_2=\alpha=(-1,1)$ of the dual lattice $\tilde{\ell}^*$ of $\mathbb{T}$ given in Section~\ref{subsec W}, the $\mathbb{T}$-weights  listed in Table~\ref{values c1T} become those in Table~\ref{NewweigthsW}.

\renewcommand{\arraystretch}{1.2}
\begin{table}[h!]
\begin{center}
  \begin{tabular}{| c || c  c  c  c |  }
    \hline
    $p_0$ & $-\bb_1 - 2 \bb_2$, & $-\bb_1-\bb_2$, & $-\bb_1$, & $-\bb_1+\bb_2$  \\ \hline
    $p_1$ & \,\,\,$\bb_1+2 \bb_2$, & $\bb_2$, &  $-\bb_1$, & $-\bb_1+\bb_2$  \\ \hline
    $p_2$ &  $-\bb_1-2 \bb_2$, &   -$\bb_2$, & $-\bb_1-\bb_2$, & $\bb_1-\bb_2$ \\ \hline
   $p'_2$ & $\bb_1$, & $\bb_1+\bb_2$, & $\bb_2$, & $2\bb_2$\\ \hline
  $p_3$ &$\bb_1+2\bb_2$, & $-\bb_2$, &  $\bb_2$, & $\bb_1-\bb_2$\\ \hline
   $p_4$ & $\bb_1$, & $-2\bb_2$, & $\bb_1+\bb_2$, & $-\bb_2$  \\ 
    \hline
  \end{tabular}
 \end{center}
  \caption{
Weights for the effective $\mathbb{T}$-action on $W$ for the new basis of $\tilde{\ell}^*$.}
 \label{NewweigthsW}
\end{table}
The multigraphs obtained for these circle actions from the projection of the  $\mathbb{T}$-action to a generic subcircle are always as the one on the left of Figure~\ref{fig:Index2}. Moreover, considering  the corresponding moment maps, we obtain the following ordering of the fixed point set as defined in \eqref{ordering fixed points}
$$
p_2^\prime \prec p_1\prec p_3 \prec p_4 \prec p_0 \prec p_2.
$$

 
\noindent  $\bullet$ Similarly, the third $2$-dimensional family of weights listed in Figure~\ref{fig:Index3Weights} generates  the $\mathbb{T}$-action on  $W$ since, if we consider the basis ${\emph b}_1 = \beta$ and ${\emph b}_2=-\alpha+\beta$ of the dual lattice $\tilde{\ell}^*$ of $\mathbb{T}$,  the weights for the action of $\mathbb{T}$ listed in Table~\ref{values c1T} become those in Table~\ref{NewweigthsW_3}.
\renewcommand{\arraystretch}{1.2}
\begin{table}[h!]
\begin{center}
  \begin{tabular}{| c || c  c  c  c |  }
    \hline
    $p_0$ & $-2\bb_1 +3  \bb_2$, & $-\bb_1+2\bb_2$, & $\bb_2$, & $\bb_1$  \\ \hline
    $p_1$ & \,\,\,$2\bb_1 -3  \bb_2$, & $\bb_1-\bb_2$, &  $\bb_2$, & $\bb_1$ \\ \hline
    $p_2$ &  $-2\bb_1+3 \bb_2$, &   $-\bb_1+\bb_2$, & $-\bb_1+2\bb_2$, & $-\bb_1$ \\ \hline
   $p'_2$ & $-\bb_2$, & $\bb_1-2\bb_2$, & $\bb_1-\bb_2$, & $2\bb_1-2\bb_2$\\ \hline
  $p_3$ &$2\bb_1-3\bb_2$, & $-\bb_1+\bb_2$, &  $\bb_1-\bb_2$, & $-\bb_1$\\ \hline
   $p_4$ & $-\bb_2$, & $-2\bb_1+2\bb_2$, & $\bb_1-2\bb_2$, & $-\bb_1+\bb_2$  \\ 
    \hline
  \end{tabular}
 \end{center}
 \caption{
Weights for the effective $\mathbb{T}$-action on $W$ for the new basis of $\tilde{\ell}^*$.}
 \label{NewweigthsW_3}
\end{table}
The multigraphs obtained for these circle actions from the projection of the  $\mathbb{T}$-action to a generic subcircle are always as the one on the right of Figure~\ref{fig:Index3} and the ordering on the fixed point set obtained from the corresponding moment maps is
$$
p_0 \prec p_2\prec p_4 \prec p_1 \prec p_3 \prec p_2^\prime.
$$

\noindent  $\bullet$  The forth $2$-dimensional family of weights  in Figure~\ref{fig:Index3Weights} 
with $\text{b[1]},\text{b[2]} \in \Z_{>0}$ such that  $\text{b[2]} >   \text{b[1]}$, $ \text{b[1]} = \text{b[2]} \mod 2$ and 
$$
\gcd\left(\text{b[1]},\text{b[2]} ,\frac{\text{b[1]}+\text{b[2]}}{2},\frac{3\text{b[1]}+ \text{b[2]} }{2}\right) = 1,
$$ 
also generates the  $\mathbb{T}$-action on $W$.  In particular,
writing $u=\frac{2b_1 +b_2}{3} \in \Z$ and $v=\frac{b_1+2b_2}{3}\in \Z$ and considering 
$\hat{\alpha}=u-v$ and $\hat{\beta}=2u-v$, the $2$-dimensional family of weights becomes
\begin{align*}
\left\{\{\hb,  -\ha + \hb,-2\ha+\hb,  -3\ha + \hb, \} , \{ -\hb, -\ha, -2\ha+\hb,-3\ha+\hb\} , \{\hb,  \ha, -\ha+\hb,  3\ha - \hb \}, \right.  \\ \left. \{2\ha-\hb, \ha-\hb, -\ha, -2\ha,\} , \{-\hb,\ha,  -\ha, 3\ha - \hb \} , \{ 2\ha-\hb,2\ha, \ha, \ha - \hb \}\right\},
\end{align*}
which is the same as that of Table~\ref{values c1T}.
The multigraphs obtained from the projection of the $\mathbb{T}$-action to the corresponding  generic subcircles are always as the one on the right of Figure~\ref{fig:Index3} and the ordering of the fixed point set obtained  is 
$$
p_0 \prec p_2\prec p_1 \prec p_4 \prec p_3\prec p_2^\prime. 
$$

\noindent  $\bullet$  The fifth $2$-dimensional family of weights  in Figure~\ref{fig:Index3Weights}  with $ \text{b[2]} >  \text{b[1]}>0$ and 
$$\text{b[1]} = \text{b[2]} \pmod{2}$$
also generates the $\mathbb{T}$-action on $W$.  In particular,
writing $u=\frac{-b_1 +b_2}{2} \in \Z$ and $v=\frac{b_1+b_2}{2}\in \Z$ and considering 
$\hat{\alpha}=v$ and $\hat{\beta}=-u+2v$, the $2$-dimensional family of weights becomes
\begin{align*}
\left\{\{-\ha+\hb,  \ha,  \hb, 3\ha-\hb \} , \{ \ha-\hb, 2\ha-\hb, \ha,2\ha \} , \{-\ha+\hb,  -2\ha + \hb, \hb, - 3\ha + \hb \}, \right.  \\ \left.  \{ -\hb, -\ha, \ha, 3\ha - \hb \}, \{\ha-\hb, -2\ha,-\ha,  2\ha - \hb\} , \{-\hb,-\ha,  -2\ha+\hb, -3\ha + \hb \} \right\}.
\end{align*}
The multigraphs obtained from the projection of the $\mathbb{T}$-action to the generic subcircles are always as the one on the right of Figure~\ref{fig:Index3} and the ordering of the fixed point set obtained  is 
$$
p_1 \prec p_2^\prime \prec p_0 \prec p_3 \prec p_4\prec p_2. 
$$

\noindent  $\bullet$ The sixth $2$-dimensional family of weights listed in Figure~\ref{fig:Index3Weights} with $ \text{b[2]} >  2\text{b[1]}>0$ and $\gcd (\text{b[1]},\text{b[2]})=1$
generates  the $\mathbb{T}$-action on  $W$ since, if we consider the basis ${\emph b}_1 = - 2\alpha + \beta$ and ${\emph b}_2= - 3 \alpha+\beta$ of the dual lattice $\tilde{\ell}^*$ of $\mathbb{T}$,  the weights for the action of $\mathbb{T}$ listed in Table~\ref{values c1T} become those in Table~\ref{NewweigthsW_6}.
\renewcommand{\arraystretch}{1.2}
\begin{table}[h!]
\begin{center}
  \begin{tabular}{| c || c  c  c  c |  }
    \hline
    $p_0$ & $ \bb_2$, & $\bb_1$, & $2\bb_1-\bb_2$, & $3 \bb_1-2 \bb_2$  \\ \hline
    $p_1$ & \,\,\,$- \bb_2$, & $\bb_1-\bb_2$, &  $2\bb_1-\bb_2$, & $3\bb_1-2\bb_2$ \\ \hline
    $p_2$ &  $ \bb_2$, &   $-\bb_1+\bb_2$, & $\bb_1$, & $-3\bb_1+2\bb_2$ \\ \hline
   $p'_2$ & $-2\bb_1+\bb_2$, & $-\bb_1$, & $\bb_1-\bb_2$, & $2\bb_1-2\bb_2$\\ \hline
  $p_3$ &$-\bb_2$, & $-\bb_1+\bb_2$, &  $\bb_1-\bb_2$, & $-3\bb_1+2\bb_2$\\ \hline
   $p_4$ & $-2\bb_1 + \bb_2$, & $-2\bb_1+2\bb_2$, & $-\bb_1$, & $-\bb_1+\bb_2$  \\ 
    \hline
  \end{tabular}
 \end{center}
  \caption{
Weights for the effective $\mathbb{T}$-action on $W$ for the new basis of $\tilde{\ell}^*$.}
 \label{NewweigthsW_6}
\end{table}
The multigraphs obtained for these circle actions from the projection of the  $\mathbb{T}$-action to a generic subcircle are always as the one on the right of Figure~\ref{fig:Index3} and the ordering on the fixed point set obtained from the corresponding moment maps is
$$
p_2 \prec p_4\prec p_3 \prec p_0 \prec p_2^\prime \prec p_1.
$$


$\bullet$ The seventh $2$-dimensional family of weights listed in Figure~\ref{fig:Index3Weights} with $ \text{b[2]} >  \text{b[1]}>0$ and $\gcd (\text{b[1]},\text{b[2]})=1$
generates  the $\mathbb{T}$-action on  $W$ as we can easily see, if we consider the basis ${\emph b}_1 = - \alpha$ and ${\emph b}_2= 2\alpha - \beta$ of the dual lattice $\tilde{\ell}^*$ of $\mathbb{T}$.
The multigraphs obtained from the projection of the $\mathbb{T}$-moment polytope are always as the one on the right of Figure~\ref{fig:Index3}
and the ordering of the fixed point set obtained from the corresponding moment maps is 
$$
p_4 \prec p_3\prec p_2 \prec p_2^\prime \prec p_1 \prec p_0.
$$

$\bullet$ The eighth $2$-dimensional family of weights listed in Figure~\ref{fig:Index3Weights} with $ \text{b[2]} >  \text{b[1]}>0$ and $\gcd (\text{b[1]},\text{b[2]})=1$
also generates  the $\mathbb{T}$-action on  $W$. We  can easily see this again by considering the basis ${\emph b}_1 = - \alpha$ and ${\emph b}_2= 2\alpha - \beta$ of the dual lattice $\tilde{\ell}^*$ of $\mathbb{T}$ as in the previous example.
The multigraphs obtained from the projection of the $\mathbb{T}$-moment polytope are again as the one on the right of Figure~\ref{fig:Index3}
and the only thing different from the previous family is the ordering of the fixed point set obtained from the corresponding moment maps, which now interchanges $p_2$ and $p_2^\prime$:
$$
p_4 \prec p_3\prec p_2^\prime \prec p_2 \prec p_1 \prec p_0.
$$

\subsubsection{Index 4}
For index  $4$, since the greatest common divisor of the elements of the partitions considered is $4$ there is only one possible partition of $48$: the one with all elements equal to $4$.
We proceed as in the previous indices  and, in the end, the only multiset of  isotropy weights that satisfies all the necessary conditions is listed in Figure~\ref{fig:Index4Weights}. The corresponding multigraph is the one in Figure~\ref{fig:Index4}. The Chern numbers obtained from these isotropy weights are
\begin{align*}
c_1^4[M] = 512, \quad c_1^2c_2[M]=224 \quad \text{and} \quad c_2^2[M]=98.
\end{align*}
\begin{figure}[h!]
\begin{center}
\includegraphics[scale=0.33] {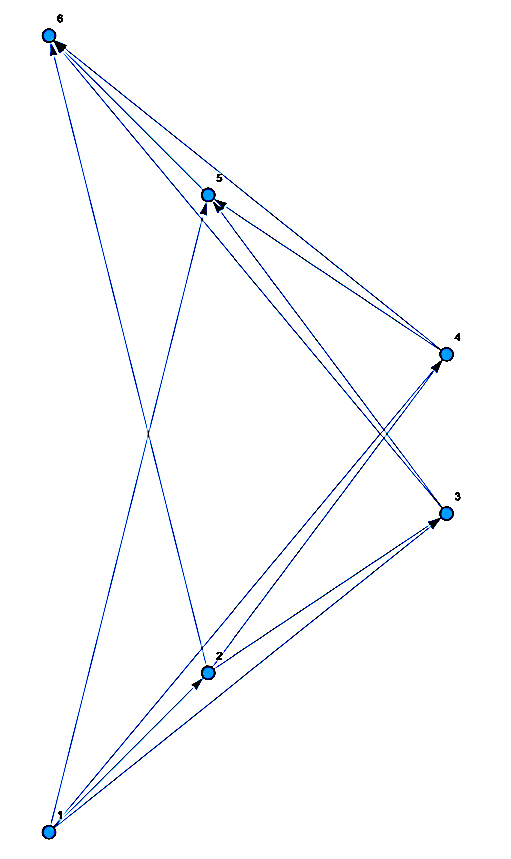}
\caption{Multigraph for Index $4$}\label{fig:Index4}
\end{center}
\end{figure}

\begin{figure}[h!]
\begin{center}
\includegraphics[scale=0.8] {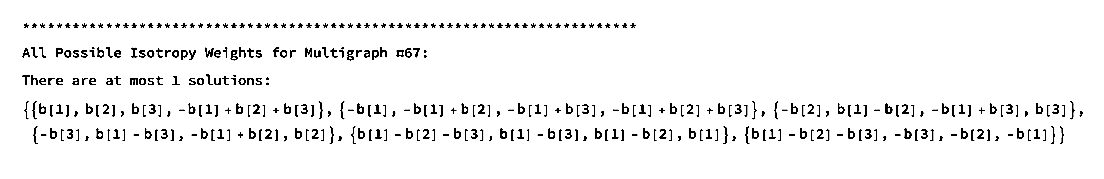}
\caption{Isotropy weights for Index $4$}\label{fig:Index4Weights}
\end{center}
\end{figure}
The $3$-dimensional family of weights  in Figure~\ref{fig:Index4Weights}  with $\text{b[1]},\text{b[2]},\text{b[3]} \in \Z_{>0}$ satisfying  $\text{b[2]} >   \text{b[1]}$, $ \text{b[3]} > \text{b[1]}$ and 
$$
\gcd\left(\text{b[1]},\text{b[2]} ,\text{b[3]} \right) = 1,
$$ 
generates the action of the $3$-dimensional torus $\mathbb{T}$ on the Grassmannian $Q= Gr(2,4)$ described in Section~\ref{subsec Q}.  Indeed,
if we consider the basis ${\emph b}_1 =  \beta$, ${\emph b}_2= \beta + \gamma$  and ${\emph b}_3= \alpha+ \beta$  of  $\frak{t}^*$,  the weights for the action of $T$ listed in Table~\ref{values c1T Q} become those in Table~\ref{NewweigthsQ}.
\renewcommand{\arraystretch}{1.2}
\begin{table}[h!]
\begin{center}
  \begin{tabular}{| c || c  c  c  c |  }
    \hline
    $q_0$ & $ \bb_1$, & $\bb_2$, & $-\bb_1+\bb_2+\bb_3$, & $ \bb_3$  \\ \hline
    $q_1$ & \,\,\,$- \bb_1$, & $-\bb_1+\bb_3$, &  $-\bb_1+\bb_2$, & $-\bb_1+\bb_2+\bb_3$ \\ \hline
    $q_2$ &  $ \bb_1-\bb_2$, &   $-\bb_2$, & $-\bb_1+\bb_3$, & $\bb_3$ \\ \hline
   $q'_2$ & $\bb_1-\bb_3$, & $-\bb_3$, & $\bb_2$, & $-\bb_1+\bb_2$\\ \hline
  $q_3$ &$\bb_1-\bb_2-\bb_3$, & $\bb_1-\bb_3$, &  $\bb_1-\bb_2$, & $\bb_1$\\ \hline
   $q_4$ & $\bb_1-\bb_2-\bb_3$, & $-\bb_2$, & $-\bb_3$, & $-\bb_1$  \\ 
    \hline
  \end{tabular}
 \end{center}
  \caption{
Weights for the effective $T$-action on $Q$ for the new basis of $\frak{t}^*$.}
 \label{NewweigthsQ}
\end{table}
The multigraphs obtained for these circle actions from the projection of the  $T$-action to a generic subcircle are always as the one in Figure~\ref{fig:Index4} and the ordering on the fixed point set obtained from the corresponding moment maps is
$$
q_0 \prec q_1\prec q_2 \prec q_2^\prime \prec q_3 \prec q_4.
$$
\subsection{Additional Results}
In this section we prove Theorem~\ref{thm:add}. For this, we consider the additional cases for the Betti numbers under the conditions of Proposition~\ref{index betti} and rule out some of the possibilities using the algorithm in \cite{GS}.  

For the remaining of this section $(M,\omega)$ denotes  a  closed, symplectic manifold admitting a Hamiltonian $T^2$-action with isolated fixed points.
\subsubsection{Index 2, $b_2(M)=1$ and $3\leq b_4(M)\leq 6$} In this section, we consider the case where the index $k_0$ of $(M,\omega)$ is $2$ and we assume $b_2(M)=1$, running Part 1 of the file {\texttt index2b2\_1\_b4\_i.nb}, with $i=3,4,5,6$ to generate the list of multigraphs $\Gamma=(E,V)$ describing a circle action when $3\leq b_4(M)\leq 6$. 
Note that, from the classification results in \cite{GS}, if $b_2(M)=b_4(M)=1$, then the index must be $k_0=5$ and that, if $b_2(M)=1$ and $b_2(M)=2$, the classification was completed in Main Theorem.

By Theorem~\ref{thm:ChernNumbers}, the number of fixed points is, in these cases, equal to $4+b_4(M)$ and that $c_1c_{3}[M]=52 - 2\,b_4(M)$. 
Then, for each value of $b_4(M)$, we run Part 2 of the file {\texttt index2b2\_1\_b4\_i.nb}, considering partitions $\mathsf{m}(e)$ of $52 -2 b_4(M)$ into 
$$\lvert E \rvert = 4(4+b_4(M))/2= 8+2b_4(M)$$ 
positive even integers, such that the first and last elements of the partitions are always equal to $2$ (cf. Lemma~\ref{tau 1}), checking which  of these annihilate a determinant of a matrix $A(\Gamma_i) - \text{diag}(\mathsf{m}(E_i))$, (according to the algorithm described in Section~\ref{sec:algorithm}). These  partitions can be found in the files {\texttt graph\#.txt} stored in the folder {\texttt b4\_i} for $i=3,4,5,6$, that can be accessed through the download buttons ``Additional Files".

We run Part 3 of the file {\texttt index2b2\_1\_b4\_i.nb} to sort these partitions according to the rank of the matrices, dividing them into two sets: those that originate singular matrices with maximal rank, and those that originate matrices of lower rank. In the first case, we also select the partitions that originate matrices with null spaces containing vectors with positive entries. We store the first type of partitions in the files Part$\#$toprk.txt and the others in the files Part$\#$smallrk.txt.

Then, Part 4 of the file {\texttt index2b2\_1\_b4\_i.nb} considers the first set of partitions, producing a list of the corresponding isotropy weights, checking if they satisfy the polynomial equations in  \cite[Equation (1.1)]{GS} (which are obtained from the ABBV localization formula). 

Finally, Part $5$ of {\texttt index2b2\_1\_b4\_i.nb} considers the second set of partitions, selects those that originate matrices with null spaces containing vectors with positive entries,  produces the list of the corresponding isotropy weights and then selects those that satisfy  \cite[Equation (1.1)]{GS} as well as the necessary conditions $(i)$ to $(iii)$ listed in Section~\ref{secresults:index1}.
At each step, the resulting lists of isotropy weights are saved in different files so that it is easy to verify which isotropy weights are discarded with each test performed. All these files can be found in the folders {\texttt b4\_i} for $i=3,4,5,6$.

In the end, the program eliminates all possibilities. Indeed, no potential multiset of isotropy weights satisfies all the necessary conditions and so, in particular,  we conclude {\bf that there are no possible $T^2$-actions} with isolated fixed points in the index-$2$ case when $b_2(M)=1$ and $3\leq b_4(M)\leq 6$.

\subsubsection{Index $3$}\label{sec:index3_CP2} If the index of $(M,\omega)$ is $3$, then we know, from Proposition~\ref{index betti}, that $(b_2(M),b_4(M))$  is either $(1,2)$ or $(2,3)$ and the first case was already studied in Section~\ref{subsec:index3}. Let us  know describe the results obtained in the second case.  For this, we run
Part 1 of the file {\texttt Index3b2\_2\_b4\_3.nb} to generate the list of multigraphs $\Gamma=(E,V)$ describing a circle action when $b_2(M)=2$ and $b_4(M)=3$. Note that the number of fixed points is, in this case, equal to $9$ and that $c_1c_{3}[M]=54$. We obtain the multigraphs listed in the file {\texttt Multigraphsb2\_2\_b4\_3.zip}.

Since there is only one partition $\mathsf{m}(e)$ of $54$ into  $18 = \lvert E \rvert = 4 (6+b_4(M))/2$ positive integers that are multiples of $3$, namely the partition $L$ with all $18$ elements equal to $3$ (cf. Lemma~\ref{tau 1}), we run Part 2 of  {\texttt Index3b2\_2\_b4\_3.nb} to determine which  matrices $A(\Gamma_i) - \text{diag}(L)$ are singular, (according to the algorithm described in Section~\ref{sec:algorithm}). The indices of the multigraphs that produce these matrices are stored in the file {\texttt numberofgraphs.txt}.

We run Part 3 of the file {\texttt Index3b2\_2\_b4\_3.nb} to sort the singular matrices according to their rank, dividing them into two sets. The indices of the matrices with maximal rank are stored in the file {\texttt Matricestoprk.txt} and the remaining ones in the file {\texttt Matricessmallrk.txt}.

Then, Part 4 of the file {\texttt Index3b2\_2\_b4\_3.nb} considers the first set of matrices, producing a list of the corresponding isotropy weights, checking if they satisfy the polynomial equations in  \cite[Equation (1.1)]{GS}.

Finally, Part $5$ of  {\texttt Index3b2\_2\_b4\_3.nb}  considers the second set of matrices and runs the necessary tests as in the previous sections.
At each step, the resulting lists of isotropy weights are saved in different files so that it is easy to verify which isotropy weights are discarded with each test performed. All these files can be found in the folder {\texttt IsotropyWeights} that can be accessed through the download button ``Additional Files" at the bottom of the  \href{https://www.math.tecnico.ulisboa.pt/~lgodin/CircleActions/page6.html}{Additional Results} webpage.

Up to a permutation of the vertices corresponding to fixed points of index $4$, the $2$-dimensional and $4$-dimensional families of isotropy weights that satisfy all the necessary conditions  as in previous sections can be found in the file {\texttt Conclusion.pdf} (accessed through the download button ``Results" at the bottom of the webpage)  and are presented in Figure~\ref{fig:weightsCP2CP2}. 
\begin{figure}[h!]
\begin{center}
\includegraphics[scale=0.5,trim={0.9cm 9.7cm 1.3cm 0.3cm},clip] {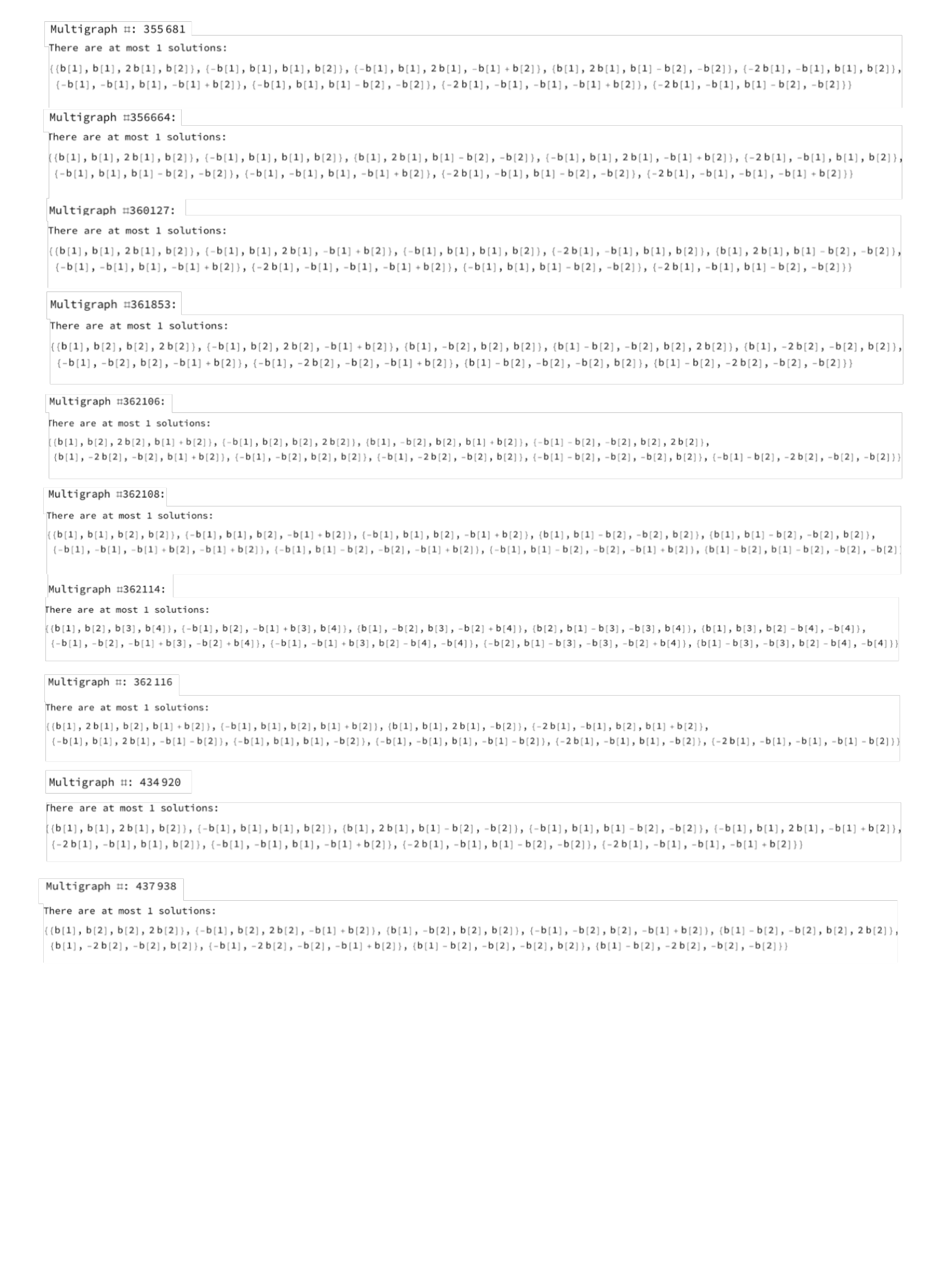}
\caption{Weights}\label{fig:weightsCP2CP2}
\end{center}
\end{figure}
The corresponding multigraphs are  shown in Figure~\ref{fig:CP2CP2}. In all cases the Chern numbers obtained are 
\begin{align*}
c_1^4[M] = 486, \quad c_1^2c_2[M]=216 \quad \text{and} \quad c_2^2[M]=99.
\end{align*}

It is easy to check that the $4$-dimensional family of isotropy weights corresponding to the seventh multigraph in Figure~\ref{fig:CP2CP2} generates an action of a $4$-torus whose weights are the same as those of the toric action on $\C P^2\times \C P^2$ described in Theorem~\ref{thm:CP2CP2}.
Moreover, the remaining $2$-dimensional families of weights listed in Figure~\ref{fig:weightsCP2CP2} generate actions of $2$-subtori of this $4$-dimensional torus. For example, the first $2$-dimensional family of weights of Figure~\ref{fig:CP2CP2} can be obtained from the seventh by taking ${\texttt b[2]=b[1]}$  and  ${\texttt b[4]= 2 b[1]}$. 

\begin{figure}[h!]
\begin{center}
\includegraphics[scale=0.33,trim={0cm 2.5cm 0cm 0cm},clip] {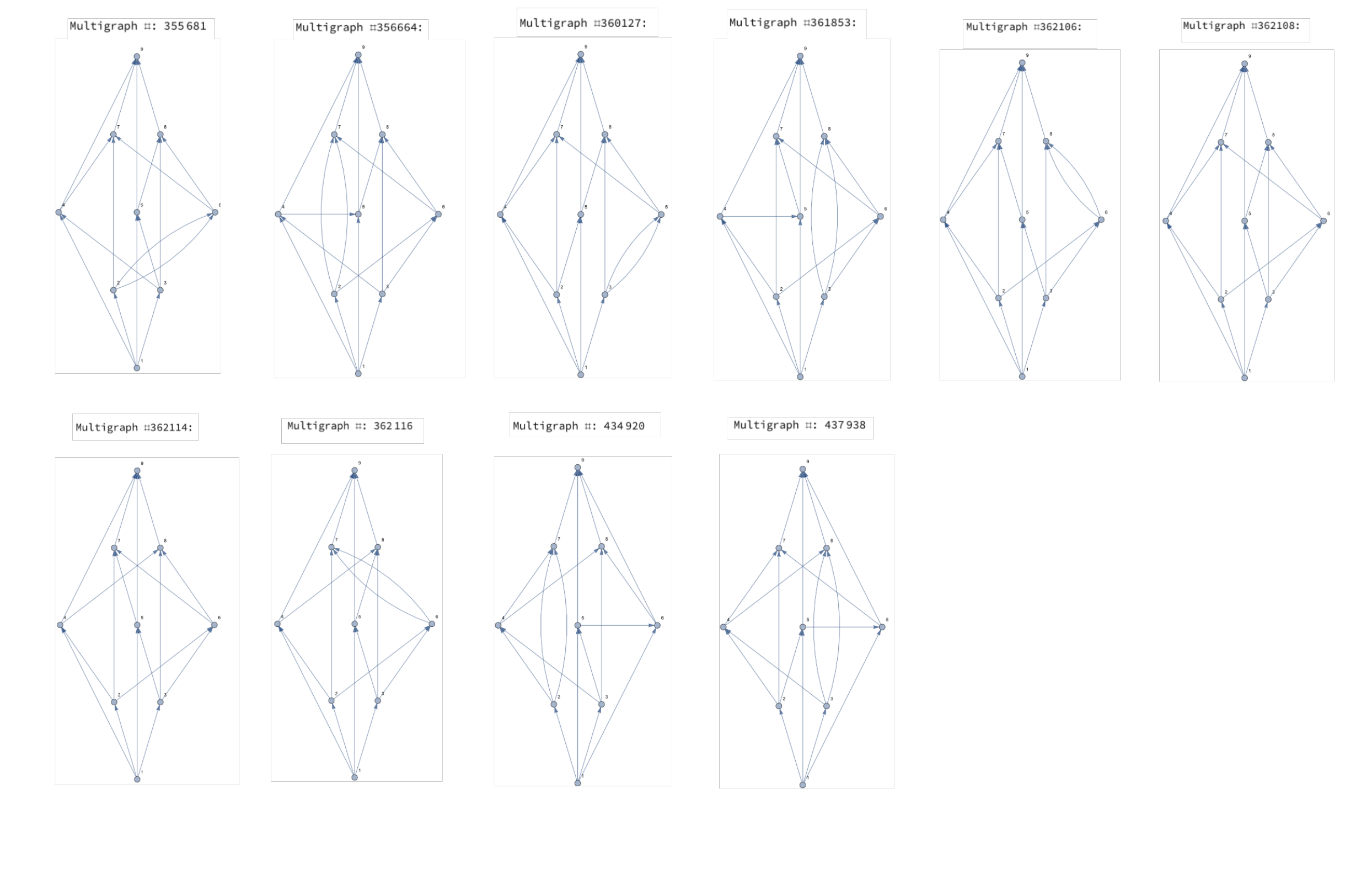}
\caption{Multigraphs}\label{fig:CP2CP2}
\end{center}
\end{figure}

\bibliographystyle{plain}
\bibliography{Fano_fourfold}

\begin{thebibliography}{10}

\bibitem{AB}
M.~Atiyah and R.~Bott.
\newblock The moment map and equivariant cohomology.
\newblock {\em Topology}, 23(1):1--28, 1984.

\bibitem{HBJ}
T.~Berger, F.~Hirzebruch, and R.~Jung.
\newblock {\em Manifolds and modular forms}, volume E20 of {\em Aspects of
  Mathematics}.
\newblock Friedr. Vieweg \& Sohn, Braunschweig, 1992.
\newblock With appendices by Nils-Peter Skoruppa and by Paul Baum.

\bibitem{BV}
N.~Berline and M.~Vergne.
\newblock Classes caract\'{e}ristiques \'{e}quivariantes. {F}ormule de
  localisation en cohomologie \'{e}quivariante.
\newblock {\em C. R. Acad. Sci. Paris S\'{e}r. I Math.}, 295(9):539--541, 1982.

\bibitem{BB}
A.~Bialynicki-Birula.
\newblock Some theorems on actions of algebraic groups.
\newblock {\em Ann. of Math. (2)}, 98:480--497, 1973.

\bibitem{Br}
M.~Brion.
\newblock Linearization of algebraic group actions.
\newblock {\em Adv. Lect. Math.}, 41:291--340, 2018.

\bibitem{Cha}
I.~Charton.
\newblock Toric one-skeletons for complexity-one spaces.
\newblock {\em arXiv}, 2001.11386, 2020.

\bibitem{ChaT}
I.~Charton.
\newblock Tall and monotone complexity one spaces of dimension six.
\newblock {\em PhD Thesis}, Cologne, 2021.

\bibitem{CK23}
I.~Charton and L.~Kessler.
\newblock Monotone symplectic six-manifolds that admit a hamiltonian gkm action
  are diffeomorphic to smooth fano threefolds.
\newblock {\em arXiv}, 2308.10541v2, 2023.

\bibitem{CSS}
I.~Charton, S.~Sabatini, and D.~Sepe.
\newblock Compact monotone tall complexity one {${ T}$}-spaces.
\newblock {\em arXiv}, 2307.04198, 2023.

\bibitem{CK}
Y.~Cho and M.~Kim.
\newblock Unimodality of the {B}etti numbers for {H}amiltonian circle action
  with isolated fixed points.
\newblock {\em Math. Res. Lett.}, 21(4):691--696, 2014.

\bibitem{FP10}
J.~Fine and D.~Panov.
\newblock Hyperbolic geometry and non-{K}\"{a}hler manifolds with trivial
  canonical bundle.
\newblock {\em Geom. Topol.}, 14(3):1723--1763, 2010.

\bibitem{FP15}
J.~Fine and D.~Panov.
\newblock Circle-invariant fat bundles and symplectic {F}ano 6-manifolds.
\newblock {\em J. Lond. Math. Soc. (2)}, 91(3):709--730, 2015.

\bibitem{Fu1}
M.~Furushima.
\newblock Complex analytic compactifications of {${\bf C}^3$}.
\newblock volume~76, pages 163--196. 1990.
\newblock Algebraic geometry (Berlin, 1988).

\bibitem{Fu2}
M.~Furushima.
\newblock The complete classification of compactifications of {${\bf C}^3$}
  which are projective manifolds with the second {B}etti number one.
\newblock {\em Math. Ann.}, 297(4):627--662, 1993.

\bibitem{GS}
L.~Godinho and S.~Sabatini.
\newblock New tools for classifying {H}amiltonian circle actions with isolated
  fixed points.
\newblock {\em Found. Comput. Math.}, 14(4):791--860, 2014.

\bibitem{GHS}
L.~Godinho, F.~von Heymann, and S.~Sabatini.
\newblock 12, 24 and beyond.
\newblock {\em Adv. Math.}, 319:472--521, 2017.

\bibitem{GZ}
O.~Goertsches and L.~Zoller.
\newblock Reconstructing the orbit type stratification of a torus action from
  its equivarant cohomology.
\newblock {\em J. Algebraic Combin.}, 56(3):799--822, 2022.

\bibitem{GT}
R.~Goldin and S.~Tolman.
\newblock Towards generalizing {S}chubert calculus in the symplectic category.
\newblock {\em J. Symplectic Geom.}, 7(4):449--473, 2009.

\bibitem{GKM}
M.~Goresky, R.~Kottwitz, and R.~MacPherson.
\newblock Equivariant cohomology, {K}oszul duality, and the localization
  theorem.
\newblock {\em Invent. Math.}, 131(1):25--83, 1998.

\bibitem{GHZ}
V.~Guillemin, T.~Holm, and C.~Zara.
\newblock A {GKM} description of the equivariant cohomology ring of a
  homogeneous space.
\newblock {\em J. Algebraic Combin.}, 23(1):21--41, 2006.

\bibitem{GSZ}
V.~Guillemin, S.~Sabatini, and C.~Zara.
\newblock Polynomial assignments.
\newblock {\em Indag. Math. (N.S.)}, 25(5):992--1018, 2014.

\bibitem{GuS}
V.~Guillemin and S.~Sternberg.
\newblock {\em Supersymmetry and equivariant de {R}ham theory}.
\newblock Mathematics Past and Present. Springer-Verlag, Berlin, 1999.

\bibitem{GuZa}
V.~Guillemin and C.~Zara.
\newblock Combinatorial formulas for products of {T}hom classes.
\newblock In {\em Geometry, mechanics, and dynamics}, pages 363--405. Springer,
  New York, 2002.

\bibitem{HL}
M.~Harada and G.~Landweber.
\newblock Surjectivity for {H}amiltonian {$G$}-spaces in {$K$}-theory.
\newblock {\em Trans. Amer. Math. Soc.}, 359(12):6001--6025, 2007.

\bibitem{Ha}
A.~Hattori.
\newblock {$S^1$}-actions on unitary manifolds and quasi-ample line bundles.
\newblock {\em J. Fac. Sci. Univ. Tokyo Sect. IA Math.}, 31(3):433--486, 1985.

\bibitem{Hi}
F.~Hirzebruch.
\newblock Some problems on differentiable and complex manifolds.
\newblock {\em Ann. of Math. (2)}, 60:213--236, 1954.

\bibitem{Ja}
D.~Jang.
\newblock Almost complex torus manifolds---graphs and {H}irzebruch genera.
\newblock {\em Int. Math. Res. Not. IMRN}, (17):14594--14609, 2023.

\bibitem{JaTo}
D.~Jang and S.~Tolman.
\newblock Hamiltonian circle actions on eight-dimensional manifolds with
  minimal fixed sets.
\newblock {\em Transform. Groups}, 22(2):353--359, 2017.

\bibitem{JR}
J.~Jones and J.~Rawnsley.
\newblock Hamiltonian circle actions on symplectic manifolds and the signature.
\newblock {\em J. Geom. Phys.}, 23(3-4):301--307, 1997.

\bibitem{Ka}
Y.~Karshon.
\newblock Periodic {H}amiltonian flows on four-dimensional manifolds.
\newblock {\em Mem. Amer. Math. Soc.}, 141(672):viii+71, 1999.

\bibitem{Ki}
F.~Kirwan.
\newblock {\em Cohomology of quotients in symplectic and algebraic geometry},
  volume~31 of {\em Mathematical Notes}.
\newblock Princeton University Press, Princeton, NJ, 1984.

\bibitem{D}
D.~Lehmer.
\newblock Dickson's history of the theory of numbers.
\newblock {\em Bull. Amer. Math. Soc.}, 26(6):281, 1920.

\bibitem{L}
N.~Lindsay.
\newblock Hamiltonian circle actions on complete intersections.
\newblock {\em Bull. Lond. Math. Soc.}, 54(1):206--212, 2022.

\bibitem{LP19}
N.~Lindsay and D.~Panov.
\newblock {$S^1$}-invariant symplectic hypersurfaces in dimension 6 and the
  {F}ano condition.
\newblock {\em J. Topol.}, 12(1):221--285, 2019.

\bibitem{LO}
G.~Lupton and J.~Oprea.
\newblock Cohomologically symplectic spaces: Toral actions and the gottlieb
  group.
\newblock {\em Trans. Amer. Math. Soc.}, 347:261--288., 1995.

\bibitem{MH}
J.~Milnor and D.~Husemoller.
\newblock {\em Symmetric bilinear forms}, volume Band 73 of {\em Ergebnisse der
  Mathematik und ihrer Grenzgebiete [Results in Mathematics and Related
  Areas]}.
\newblock Springer-Verlag, New York-Heidelberg, 1973.

\bibitem{O}
K.~Ono.
\newblock Some remarks on group actions in symplectic geometry.
\newblock {\em J. Fac. Sci. Univ. Tokyo Sect. IA Math.}, 35:431--437, 1988.

\bibitem{PS}
T.~Peternell and M.~Schneider.
\newblock Compactifications of {${\bf C}^3$}. {I}.
\newblock {\em Math. Ann.}, 280(1):129--146, 1988.

\bibitem{P2}
Y.~Prokhorov.
\newblock Fano threefolds of genus {$12$} and compactifications of {${\bf
  C}^3$}.
\newblock {\em Algebra i Analiz}, 3(4):162--170, 1991.

\bibitem{P}
Y.~Prokhorov.
\newblock Compactifications of {${\bf C}^4$} of index {$3$}.
\newblock In {\em Algebraic geometry and its applications (Yaroslavlc, 1992)},
  volume E25 of {\em Aspects Math.}, pages 159--169. Friedr. Vieweg,
  Braunschweig, 1994.

\bibitem{PZ0}
Y.~Prokhorov and M.~Zaidenberg.
\newblock Examples of cylindrical {F}ano fourfolds.
\newblock {\em Eur. J. Math.}, 2(1):262--282, 2016.

\bibitem{PZ}
Y.~Prokhorov and M.~Zaidenberg.
\newblock Fano-{M}ukai fourfolds of genus 10 as compactifications of {$\Bbb
  C^4$}.
\newblock {\em Eur. J. Math.}, 4(3):1197--1263, 2018.

\bibitem{S}
S.~Sabatini.
\newblock On the {C}hern numbers and the {H}ilbert polynomial of an almost
  complex manifold with a circle action.
\newblock {\em Commun. Contemp. Math.}, 19(4):1750043, 51, 2017.

\bibitem{ST}
S.~Sabatini and S.~Tolman.
\newblock New techniques for obtaining {S}chubert-type formulas for
  {H}amiltonian manifolds.
\newblock {\em J. Symplectic Geom.}, 11(2):179--230, 2013.

\bibitem{T}
S.~Tolman.
\newblock On a symplectic generalization of petrie's conjecture.
\newblock {\em Trans. Amer. Math. Soc.}, 362(8):3963--3996, 2010.

\end{thebibliography}

\end{document}